\chardef\forshowkeys=0
\chardef\refcheck=0
\chardef\showllabel=0
\chardef\sketches=0
\definecolor{mygray}{rgb}{.6, .6, .6}
\chardef\coloryes=0 
\chardef\isitdraft=0 
\def\eqref#1{({\ref{#1}})}                
\definecolor{refkey}{rgb}{.3,0.3,0.3}
\def\nnewpage{} 
\def\startnewsection#1#2{\section{#1}\label{#2}\setcounter{equation}{0}}   
\def\nnewpage{} 
\begin{document}

\def\ques{{\colr \underline{??????}\colb}}
\def\nto#1{{\colC \footnote{\em \colC #1}}}
\def\fractext#1#2{{#1}/{#2}}
\def\fracsm#1#2{{\textstyle{\frac{#1}{#2}}}}   
\def\nnonumber{}
\def\les{\lesssim}

\def\colr{{}}
\def\colg{{}}
\def\colb{{}}
\def\colu{{}}
\def\cole{{}}
\def\colA{{}}
\def\colB{{}}
\def\colC{{}}
\def\colD{{}}
\def\colE{{}}
\def\colF{{}}

\ifnum\coloryes=1

\definecolor{coloraaaa}{rgb}{0.1,0.2,0.8}
\definecolor{colorbbbb}{rgb}{0.1,0.7,0.1}
\definecolor{colorcccc}{rgb}{0.8,0.3,0.9}
\definecolor{colordddd}{rgb}{0.0,.5,0.0}
\definecolor{coloreeee}{rgb}{0.8,0.3,0.9}
\definecolor{colorffff}{rgb}{0.8,0.9,0.9}
\definecolor{colorgggg}{rgb}{0.5,0.0,0.4}
\definecolor{coloroooo}{rgb}{0.45,0.0,0}

\def\colb{\color{black}}

\def\colr{\color{red}}
\def\cole{\color{coloroooo}}

\def\colu{\color{blue}}
\def\colg{\color{colordddd}}
\def\colgray{\color{colorffff}}

\def\colA{\color{coloraaaa}}
\def\colB{\color{colorbbbb}}
\def\colC{\color{colorcccc}}
\def\colD{\color{colordddd}}
\def\colE{\color{coloreeee}}
\def\colF{\color{colorffff}}
\def\colG{\color{colorgggg}}

\def\cole{}

\fi
\ifnum\isitdraft=1
\chardef\coloryes=1 
\baselineskip=17.6pt
\pagestyle{myheadings}
\def\const{\mathop{\rm const}\nolimits}  
\def\diam{\mathop{\rm diam}\nolimits}    
\def\dist{\mathop{\rm dist}\nolimits}    
\def\rref#1{{\ref{#1}{\rm \tiny \fbox{\tiny #1}}}}
\def\theequation{\fbox{\bf \thesection.\arabic{equation}}}
\def\startnewsection#1#2{\colg \section{#1}\colb\label{#2}
\setcounter{equation}{0}
\pagestyle{fancy}
\lhead{\colb Section~\ref{#2}, #1 }
\cfoot{}
\rfoot{\thepage\ of \pageref{LastPage}}

\chead{}
\rhead{\thepage}
\def\nnewpage{\newpage}
\newcounter{startcurrpage}
\newcounter{currpage}
\def\llll#1{{\rm\tiny\fbox{#1}}}
\def\blackdot{{\color{red}{\hskip-.0truecm\rule[-1mm]{4mm}{4mm}\hskip.2truecm}}\hskip-.3truecm}
\def\bluedot{{\colC {\hskip-.0truecm\rule[-1mm]{4mm}{4mm}\hskip.2truecm}}\hskip-.3truecm}
\def\purpledot{{\colA{\rule[0mm]{4mm}{4mm}}\colb}}
\def\pdot{\purpledot}
\else
\baselineskip=12.8pt
\def\blackdot{{\color{red}{\hskip-.0truecm\rule[-1mm]{4mm}{4mm}\hskip.2truecm}}\hskip-.3truecm}
\def\purpledot{{\rule[-3mm]{8mm}{8mm}}}
\def\pdot{}
\fi

\def\textand{\qquad \text{and}\qquad}
\def\pp{p}
\def\qq{{\tilde p}}
\def\KK{K}
\def\MM{M}
\def\ema#1{{#1}}
\def\emb#1{#1}

\ifnum\isitdraft=1
\def\llabel#1{\nonumber}
\else
\def\llabel#1{\nonumber}
\def\llabel#1{\label{#1}}
\fi

\def\tepsilon{\tilde\epsilon}
\def\epsilonz{\epsilon_0}
\def\restr{\bigm|}
\def\into{\int_{\Omega}}
\def\intu{\int_{\Gamma_1}}
\def\intl{\int_{\Gamma_0}}
\def\tpar{\tilde\partial}
\def\bpar{\,|\nabla_2|}
\def\barpar{\bar\partial}
\def\FF{F}
\def\gdot{{\color{green}{\hskip-.0truecm\rule[-1mm]{4mm}{4mm}\hskip.2truecm}}\hskip-.3truecm}
\def\bdot{{\color{blue}{\hskip-.0truecm\rule[-1mm]{4mm}{4mm}\hskip.2truecm}}\hskip-.3truecm}
\def\cydot{{\color{cyan} {\hskip-.0truecm\rule[-1mm]{4mm}{4mm}\hskip.2truecm}}\hskip-.3truecm}
\def\rdot{{\color{red} {\hskip-.0truecm\rule[-1mm]{4mm}{4mm}\hskip.2truecm}}\hskip-.3truecm}

\def\tdot{\fbox{\fbox{\bf\color{blue}\tiny I'm here; \today \ \currenttime}}}
\def\nts#1{{\color{red}\hbox{\bf ~#1~}}} 

\def\ntsr#1{\vskip.0truecm{\color{red}\hbox{\bf ~#1~}}\vskip0truecm} 

\def\ntsf#1{\footnote{\hbox{\bf ~#1~}}} 
\def\ntsf#1{\footnote{\color{red}\hbox{\bf ~#1~}}} 
\def\bigline#1{~\\\hskip2truecm~~~~{#1}{#1}{#1}{#1}{#1}{#1}{#1}{#1}{#1}{#1}{#1}{#1}{#1}{#1}{#1}{#1}{#1}{#1}{#1}{#1}{#1}\\}
\def\biglineb{\bigline{$\downarrow\,$ $\downarrow\,$}}
\def\biglinem{\bigline{---}}
\def\biglinee{\bigline{$\uparrow\,$ $\uparrow\,$}}
\def\ceil#1{\lceil #1 \rceil}
\def\gdot{{\color{green}{\hskip-.0truecm\rule[-1mm]{4mm}{4mm}\hskip.2truecm}}\hskip-.3truecm}
\def\bluedot{{\color{blue} {\hskip-.0truecm\rule[-1mm]{4mm}{4mm}\hskip.2truecm}}\hskip-.3truecm}
\def\rdot{{\color{red} {\hskip-.0truecm\rule[-1mm]{4mm}{4mm}\hskip.2truecm}}\hskip-.3truecm}
\def\dbar{\bar{\partial}}
\newtheorem{Theorem}{Theorem}[section]
\newtheorem{Corollary}[Theorem]{Corollary}
\newtheorem{Proposition}[Theorem]{Proposition}
\newtheorem{Lemma}[Theorem]{Lemma}
\newtheorem{Remark}[Theorem]{Remark}
\newtheorem{definition}{Definition}[section]
\def\theequation{\thesection.\arabic{equation}}
\def\cmi#1{{\color{red}IK: #1}}
\def\cmj#1{{\color{red}IK: #1}}
\def\cml{\rm \colr Linfeng:~} 
\def\TT{\mathbf{T}}
\def\XX{\mathbf{X}}

\def\sqrtg{\sqrt{g}}
\def\DD{{\mathcal D}}
\def\OO{\tilde\Omega}
\def\EE{{\mathcal E}}
\def\lot{{\rm l.o.t.}}                       
\def\endproof{\hfill$\Box$\\}
\def\square{\hfill$\Box$\\}
\def\inon#1{\ \ \ \ \text{~~~~~~#1}}                
\def\comma{ {\rm ,\qquad{}} }            
\def\commaone{ {\rm ,\qquad{}} }         
\def\dist{\mathop{\rm dist}\nolimits}    
\def\ad{\mathop{\rm ad}\nolimits}    
\def\sgn{\mathop{\rm sgn\,}\nolimits}   
\def\Tr{\mathop{\rm Tr}\nolimits}    
\def\dive{\mathop{\rm div}\nolimits}    
\def\grad{\mathop{\rm grad}\nolimits}    
\def\curl{\mathop{\rm curl}\nolimits}    
\def\det{\mathop{\rm det}\nolimits}    
\def\supp{\mathop{\rm supp}\nolimits}    
\def\re{\mathop{\rm {\mathbb R}e}\nolimits}    
\def\wb{\bar{\omega}}
\def\Wb{\bar{W}}
\def\indeq{\quad{}}                     
\def\indeqtimes{\indeq\indeq\indeq\indeq\times} 
\def\period{.}                           
\def\semicolon{\,;}                      
\newcommand{\cD}{\mathcal{D}}
\newcommand{\cH}{\mathcal{H}}
\newcommand{\imp}{\Rightarrow}
\newcommand{\tr}{\operatorname{tr}}
\newcommand{\vol}{\operatorname{vol}}
\newcommand{\id}{\operatorname{id}}
\newcommand{\p}{\parallel}
\newcommand{\norm}[1]{\Vert#1\Vert}
\newcommand{\abs}[1]{\vert#1\vert}
\newcommand{\nnorm}[1]{\left\Vert#1\right\Vert}
\newcommand{\aabs}[1]{\left\vert#1\right\vert}

\ifnum\showllabel=1
\def\llabel#1{\label{#1}}
\else
\def\llabel#1{\notag}
\fi

\title{Mach limits in analytic spaces on exterior domains}\par \author[J.~Jang]{Juhi Jang}\address{Department of Mathematics\\ University of Southern California\\ Los Angeles, CA 90089} \email{juhijang@usc.edu} \par \author[I.~Kukavica]{Igor Kukavica} \address{Department of Mathematics\\ University of Southern California\\ Los Angeles, CA 90089} \email{kukavica@usc.edu} \par \author[L.~Li]{Linfeng Li} \address{Department of Mathematics\\ University of Southern California\\ Los Angeles, CA 90089} \email{lli265@usc.edu} \par \begin{abstract} We address the Mach limit problem for the Euler equations in an exterior domain with analytic boundary. We first prove the existence of tangential analytic vector fields for the exterior domain with constant analyticity radii, and introduce an analytic norm in which we distinguish derivatives taken from different directions. Then we prove the uniform boundedness of the solutions in the analytic space on a time interval independent of the Mach number, and Mach limit holds in the analytic norm. The results extends more generally to Gevrey initial data with convergence in a Gevrey norm. \end{abstract} \par \maketitle \tableofcontents \par \startnewsection{Introduction}{sec01}  The incompressible limit concerns the passage from compressible fluids to incompressible fluids as the Mach number tends to zero.  The first rigorous result on this singular limit problem can be traced back to Klainerman and Majda \cite{KM81} (see also Ebin \cite{Ebin77}), with a great deal of activity and progress in recent decades \cite{A05, A08, A06,D1,D2,DG,DM,F,FN,H,LM,M, Asa87, FKM, Igu97, Iso1, Iso2, Iso3, KM82, MS01, Sch86, Sch05, Uka}. A general approach to this problem is to first prove the existence of solutions on some time interval independent of the Mach number  and then to show the convergence to solutions of the limiting equations when the Mach number tends to zero.  A key ingredient in establishing a uniform time interval is a uniform upper bound, while in showing the convergence, the most critical issue is the vanishing of the acoustic waves.  It is well-known that the analysis depends on various settings such as isentropic vs.~non-isentropic, inviscid vs.~viscous, well-prepared data vs.~general data, or the whole space vs.~domain with a boundary.  \par For the non-isentropic problem with general data, the non-isentropic Euler flows feature intriguing wave-transport structure, and the coefficients of governing equations (for instance, see $E$ in \eqref{SDHNRTYERTDGHFHDFGSDFGRTYFGHDFGDFSFGWERTFSDFGDFADWERTFGHDFGHFGH01} and \eqref{SDHNRTYERTDGHFHDFGSDFGRTYFGHDFGDFSFGWERTFSDFGDFADWERTFGHDFGHFGH02}) depend on the dependent variables, which makes the low Mach number limit a difficult problem. M{\'e}tivier and Schochet in \cite{MS01} gave the first satisfactory answer by making use of the microlocal defect measure, and Alazard in \cite{A05} extended the existence result to the case of domains with boundary and the convergence result for exterior domains. Both results were obtained in Sobolev spaces. In a recent work \cite{JKL}, we studied the non-isentropic problem with general analytic or Gevrey initial data in $\mathbb{R}^3$ and proved that the convergence holds in these strong norms.  \par The goal of this paper is to extend our previous results on analytic/Gevrey convergence in the zero Mach limit from \cite{JKL} to the more difficult case of the domain with boundary.  The natural setting for the Mach limit is the case of an exterior domain since there the convergence was established in Sobolev spaces.  One of the main difficulties in our approach is the existence of a complete family of analytic vector fields which respect to the boundary condition $v^\epsilon \cdot \nu |_{\partial \Omega}= 0$.  In Theorem~\ref{P01} below, we construct a family of tangential analytic vector fields for the exterior domain with constant analyticity radii.  The existence of a family of complete tangential analytic vector fields was established by Komatsu in \cite{Ko01} for the case of a bounded domain. The completeness refers to the fact that the vector fields span the tangential space in the closure of the domain.  In our proof of analytic boundedness, we require for the tangential fields to span only in a neighborhood of the boundary.  In order to construct the necessary tangential fields, we use interior analytic hypoellipticity for the Dirichlet problem (cf.~\cite{LM01}) and analytic regularization by the heat kernel.  As an additional benefit, we obtain a low number of tangential vector fields (three) needed for the construction.  In order to obtain uniform boundedness in analytic spaces, we first establish boundedness of the entropy, divergence, followed by then normal and tangential derivative reduction schemes for the velocity. The normal and tangential derivatives can be reduced by using elliptic regularity, which leads to the estimates of divergence component, curl component, pure time derivatives, commutators, and lower-order terms associated to the boundary.  In order to obtain that the Mach limit holds, we need to change the approach from \cite[Section~7]{JKL} since the interpolation inequality used there results in boundary terms which can not be handled.  Instead, we use a simpler interpolation inequality \eqref{SDHNRTYERTDGHFHDFGSDFGRTYFGHDFGDFSFGWERTFSDFGDFADWERTFGHDFGHFGH228} and a discrete dominated convergence theorem. As a byproduct of the Mach convergence, we obtain the analyticity of solutions of the stratified incompressible Euler equation in an exterior (or bounded) domain. For the unstratified version of the Euler equations, the analyticity was proven in \cite{KV2} using different methods. \par The paper is organized as follows.  In Section~\ref{sec01a}, we recall from \cite{MS01,A05} symmetrization of the compressible Euler equations.  In Section~\ref{sec02}, we construct the tangential vector fields for the exterior domain and state the main results.  The a~priori estimate needed for the uniform analytic boundedness, stated in Lemma~\ref{L12}, is given at the end of Section~\ref{sec06} and relies on the bounds on entropy and the velocity given in previous sections.  The proof of the second main theorem is given in Section~\ref{sec08}.  We emphasize that the construction of the system of analytic tangential fields also applies to the case of a bounded domain; only when proving the Mach limit convergence, we rely on the fact that the domain is external.  All considerations also apply in the Gevrey norm, or more generally to the spaces used in the book by Lions and Magenes~\cite{LM01}. The approach in this paper benefits from ideas in~\cite{CKV}. For other approaches to analyticity, we refer to \cite{B, BB, Bi, BF, BGK, BoGK, DE, DL, FT, G, GK, KV1, KP, LO, OT}. \par \startnewsection{The setting and notation }{sec01a}  \par We address the incompressible limit for classical solutions of the compressible Euler equations for non-isentropic fluids in an exterior domain $\Omega \subseteq \mathbb{R}^3$ with analytic boundary $\partial \Omega$. More specifically, we consider the compressible Euler equations for an inviscid, non-isentropic fluid \begin{align} &\partial_t \rho + v\cdot \nabla \rho + \rho \nabla\cdot v =0, \label{Euler1}  \\& \rho\left( \partial_t v +  v\cdot \nabla v\right) + \nabla P =0, \label{Euler2} \\& \partial_t S + v\cdot \nabla S =0, \label{Euler3} \end{align} where $\rho\colon \Omega\times[0,T)\to \mathbb R_+$ is the density,  $v\colon\Omega\times[0,T)\to \mathbb R^3$ is the velocity,  $P\colon\Omega\times[0,T)\to \mathbb R_+$ is the pressure, and  $S\colon\Omega\times[0,T)\to \mathbb R$ is the entropy of the fluid.  To close the system \eqref{Euler1}--\eqref{Euler3}, we assume the equation of state \begin{equation} P=P(\rho, S) . \llabel{8ThswELzXU3X7Ebd1KdZ7v1rN3GiirRXGKWK099ovBM0FDJCvkopYNQ2aN94Z7k0UnUKamE3OjU8DFYFFokbSI2J9V9gVlM8ALWThDPnPu3EL7HPD2VDaZTggzcCCmbvc70qqPcC9mt60ogcrTiA3HEjwTK8ymKeuJMc4q6dVz200XnYUtLR9GYjPXvFOVr6W1zUK1WbPToaWJJuKnxBLnd0ftDEbMmj4loHYyhZyMjM91zQS4p7z8eKa9h0JrbacekcirexG0z4n3xz0QOWSvFj3jLhWXUIU21iIAwJtI3RbWa90I7rzAIqI3UElUJG7tLtUXzw4KQNETvXzqWaujEMenYlNIzLGxgB3AuJ86VS6RcPJ8OXWw8imtcKZEzHop84G1gSAs0PCowMI2fLKTdD60ynHg7lkNFjJLqOoQvfkfZBNG3o1DgCn9hyUh5VSP5z61qvQwceUdVJJsBvXDG4ELHQHIaPTbMTrsLsmtXGyOB7p2Os43USbq5ik4Lin769OTkUxmpI8uGYnfBKbYI9AQzCFw3h0geJftZZKU74rYleajmkmZJdiTGHOOaSt1NnlB7Y7h0yoWJryrVrTzHO82S7oubQAWx9dz2XYWBe5Kf3ALsUFvqgtM2O2IdimrjZ7RN284KGYtrVaWW4nTZXVbRVoQ77hVLX6K2kqFWFmaZnsF9Chp8KxrscSGPiStVXBJ3xZcD5IP4Fu9LcdTR2VwbcLDlGK1ro3EEyqEAzw6sKeEg2sFfjzMtrZ9kbdxNw66cxftlzDGZhxQAWQKkSXjqmmrEpSDHNRTYERTDGHFHDFGSDFGRTYFGHDFGDFSFGWERTFSDFGDFADWERTFGHDFGHFGH138} \end{equation} For instance, in the case of ideal gas the equation of state reads \begin{equation} P(\rho, S)=\rho^\gamma e^{{S}}   , \llabel{NuG6Pyloq8hHlSfMaLXm5RzEXW4Y1Bqib3UOhYw95h6f6o8kw6frZwg6fIyXPnae1TQJMt2TTfWWfjJrXilpYGrUlQ4uM7Dsp0rVg3gIEmQOzTFh9LAKO8csQu6mh25r8WqRIDZWgSYkWDulL8GptZW10GdSYFUXLzyQZhVZMn9amP9aEWzkau06dZghMym3RjfdePGln8s7xHYCIV9HwKa6vEjH5J8Ipr7NkCxWR84TWnqs0fsiPqGgsId1fs53AT71qRIczPX77Si23GirL9MQZ4FpigdruNYth1K4MZilvrRk6B4W5B8Id3Xq9nhxEN4P6ipZla2UQQx8mdag7rVD3zdDrhBvkLDJotKyV5IrmyJR5etxS1cvEsYxGzj2TrfSRmyZo4Lm5DmqNiZdacgGQ0KRwQKGXg9o8v8wmBfUutCOcKczzkx4UfhuAa8pYzWVq9Sp6CmAcZLMxceBXDwugsjWuiiGlvJDb08hBOVC1pni64TTqOpzezqZBJy5oKS8BhHsdnKkHgnZlUCm7j0IvYjQE7JN9fdEDddys3y1x52pbiGLca71jG3euliCeuzv2R40Q50JZUBuKdU3mMay0uoS7ulWDh7qG2FKw2TJXzBES2JkQ4UDy4aJ2IXs4RNH41spyTGNhhk0w5ZC8B3nUBp9p8eLKh8UO4fMqY6wlcAGMxCHtvlOxMqAJoQQU1e8a2aX9Y62rlIS6dejKY3KCUm257oClVeEe8p1zUJSvbmLdFy7ObQFNlJ6FRdFkEmqMN0FdNZJ08DYuq2pLXJNz4rOZkZXSDHNRTYERTDGHFHDFGSDFGRTYFGHDFGDFSFGWERTFSDFGDFADWERTFGHDFGHFGH15} \end{equation} where $\gamma>1$ is the adiabatic exponent.  After some rescalings and a change of variables (cf.~\cite{MS01, A05}), we consider the symmetrized version of the compressible Euler equations for non-isentropic fluids \begin{align} & E(S,\epsilon u) (\partial_t u + v \cdot \nabla u) + \frac{1}{\epsilon} L(\partial_x) u  =  0, \label{SDHNRTYERTDGHFHDFGSDFGRTYFGHDFGDFSFGWERTFSDFGDFADWERTFGHDFGHFGH01} \\& \partial_t S + v\cdot \nabla S  =  0 , \label{SDHNRTYERTDGHFHDFGSDFGRTYFGHDFGDFSFGWERTFSDFGDFADWERTFGHDFGHFGH02} \end{align} where $u=(p,v)^{T}$ and \begin{align} \begin{split} E(S, \epsilon u)  =  \begin{pmatrix} a(S, \epsilon u)  & 0 \\ 0 & r(S, \epsilon u)I_3 \\ \end{pmatrix}, \qquad L(\partial_x) = \begin{pmatrix} 0 & \dive \\ \nabla & 0\\ \end{pmatrix} . \llabel{2IjTD1fVtz4BmFIPi0GKDR2WPhOzHzTLPlbAEOT9XW0gbTLb3XRQqGG8o4TPE6WRcuMqMXhs6xOfv8stjDiu8rtJtTKSKjlGkGwt8nFDxjA9fCmiuFqMWjeox5Akw3wSd81vK8c4C0OdjCHIseHUOhyqGx3KwOlDql1Y4NY4IvI7XDE4cFeXdFVbCFHaJsb4OC0huMj65J4favgGo7qY5XtLyizYDvHTRzd9xSRVg0Pl6Z89XzfLhGlHIYBx9OELo5loZx4wag4cnFaCEKfA0uzfwHMUVM9QyeARFe3Py6kQGGFxrPf6TZBQRla1a6AekerXgkblznSmmhYjcz3ioWYjzh33sxRJMkDosEAAhUOOzaQfKZ0cn5kqYPnW71vCT69aEC9LDEQ5SBK4JfVFLAoQpNdzZHAlJaLMnvRqH7pBBqOr7fvoaeBSA8TEbtxy3jwK3v244dlfwRLDcgX14vTpWd8zyYWjweQmFyD5y5lDNlZbAJaccldkxYn3VQYIVv6fwmHz19w3yD4YezRM9BduEL7D92wTHHcDogZxZWRWJxipvfz48ZVB7FZtgK0Y1woCohLAi70NOTa06u2sYGlmspVl2xy0XB37x43k5kaoZdeyEsDglRFXi96b6w9BdIdKogSUMNLLbCRzeQLUZmi9O2qvVzDhzv1r6spSljwNhG6s6iSdXhobhbp2usEdl95LPAtrBBibPCwShpFCCUayzxYS578rof3UwDPsCIpESHB1qFPSW5tt0I7ozjXun6cz4cQLBJ4MNmI6F08S2Il8C0JQYiUlSDHNRTYERTDGHFHDFGSDFGRTYFGHDFGDFSFGWERTFSDFGDFADWERTFGHDFGHFGH243} \end{split} \end{align} The parameter $\epsilon>0$ represents the Mach number. In agreement with the equation of state of the ideal gas, we adopt the assumption \begin{align} a(S, \epsilon u) = f_1(S) g_1(\epsilon u)    \llabel{I1YkKoiubVtfGuOegSllvb4HGn3bSZLlXefaeN6v1B6m3Ek3JSXUIjX8PdNKIUFNJvPHaVr4TeARPdXEV7BxM0A7w7jep8M4QahOihEVoPxbi1VuGetOtHbPtsO5r363Rez9nA5EJ55pcLlQQHg6X1JEWK8Cf9kZm14A5lirN7kKZrY0K10IteJd3kMGwopVnfYEG2orGfj0TTAXtecJKeTM0x1N9f0lRpQkPM373r0iA6EFs1F6f4mjOB5zu5GGTNclBmkb5jOOK4ynyMy04oz6m6AkzNnPJXhBnPHRuN5LyqSguz5NnW2lUYx3fX4huLieHL30wg93Xwcgj1I9dO9bEPCR0vc6A005QVFy1lyK7oVRVpbJzZnxYdcldXgQaDXY3gzx368ORJFK9UhXTe3xYbVHGoYqdHgVyf5kKQzmmK49xxiApjVkwgzJOdE4vghAv9bVIHewcVqcbSUcF1pHzolNjTl1BurcSamIPzkUS8wwSa7wVWR4DLVGf1RFr599HtyGqhDT0TDlooamgj9ampngaWenGXU2TzXLhIYOW5v2dArCGsLks53pWAuAyDQlF6spKydHT9Z1Xn2sU1g0DLlaoYuLPPB6YKoD1M0fiqHUl4AIajoiVQ6afVT6wvYMd0pCYBZp7RXHdxTb0sjJ0Beqpkc8bNOgZ0Tr0wqh1C2HnYQXM8nJ0PfuGJBe2vuqDukLVAJwv2tYcJOM1uKh7pcgoiiKt0b3eURecDVM7ivRMh1T6pAWlupjkEjULR3xNVAu5kEbnrVHE1OrJ2bxdUPyDvyVSDHNRTYERTDGHFHDFGSDFGRTYFGHDFGDFSFGWERTFSDFGDFADWERTFGHDFGHFGH245} \end{align} and \begin{align} r(S, \epsilon u) = f_2(S) g_2(\epsilon u)    , \llabel{ix6sCBpGDSxjBCn9PFiuxkFvw0QPofRjy2OFItVeDBtDzlc9xVyA0de9Y5h8c7dYCFkFlvWPDSuNVI6MZ72u9MBtK9BGLNsYplX2yb5UHgHADbW8XRzkvUJZShWQHGoKXyVArsHTQ1VbddK2MIxmTf6wET9cXFbuuVxCbSBBp0v2JMQ5Z8z3pMEGpTU6KCcYN2BlWdp2tmliPDHJQWjIRRgqi5lAPgiklc8ruHnvYFMAIrIh7Ths9tEhAAYgSswZZfws19P5weJvMimbsFHThCnSZHORmyt98w3U3zantzAyTwq0CjgDIEtkbh98V4uo52jjAZz1kLoC8oHGvZ5RuGwv3kK4WB50ToMtq7QWG9mtbSIlc87ruZfKwZPh31ZAOsq8ljVQJLTXCgyQn0vKESiSqBpawtHxcIJe4SiE1izzximkePY3s7SX5DASGXHqCr38VYP3HxvOIRZtMfqNoLFoU7vNdtxzwUkX32t94nFdqqTRQOvYqEbigjrSZkTN7XwtPFgNsO7M1mbDAbtVB3LGCpgE9hVFKYLcSGmF8637aZDiz4CuJbLnpE7yl85jg1MTPOLOGEPOeMru1v25XLJFzhwgElnuYmqrX1YKVKvgmMK7gI46h5kZBOoJtfC5gVvA1kNJr2o7om1XNpUwtCWXfFTSWDjsIwuxOJxLU1SxA5ObG3IOUdLqJcCArgzKM08DvX2mui13Tt71IwqoFUI0EEf5SV2vxcySYIQGrqrBHIDTJv1OB1CzDIDdW4E4jJmv6KtxoBOs9ADWBq218BJJzRyUQi2GSDHNRTYERTDGHFHDFGSDFGRTYFGHDFGDFSFGWERTFSDFGDFADWERTFGHDFGHFGH246} \end{align} where $f_1$, $f_2$, $g_1$, and $g_2$ are positive entire real-analytic functions. We impose the impermeability boundary condition \begin{align} v\cdot \nu|_{\partial \Omega}  = 0. \label{SDHNRTYERTDGHFHDFGSDFGRTYFGHDFGDFSFGWERTFSDFGDFADWERTFGHDFGHFGH03} \end{align} Due to presence of the boundary, we also require some compatibility conditions.  Since the matrix $E(S,\epsilon u)$ is invertible, we obtain \begin{align} \begin{split} \partial_t v =  -v\cdot \nabla v + \frac{1}{\epsilon r} \nabla p \label{SDHNRTYERTDGHFHDFGSDFGRTYFGHDFGDFSFGWERTFSDFGDFADWERTFGHDFGHFGH247} . \end{split} \end{align} Differentiating \eqref{SDHNRTYERTDGHFHDFGSDFGRTYFGHDFGDFSFGWERTFSDFGDFADWERTFGHDFGHFGH247} with respect to time recursively, we get \begin{align} \begin{split} \partial_t^{k} v(0) =  A_k(u(0), S(0)) , \end{split}    \llabel{pweET8LaO4ho95g4vWQmoiqjSwMA9CvnGqxl1LrYuMjGboUpuvYQ2CdBlAB97ewjc5RJESFGsORedoM0bBk25VEKB8VA9ytAEOyofG8QIj27aI3jyRmzyETKxpgUq4BvbcD1b1gKByoE3azgelVNu8iZ1w1tqtwKx8CLN28ynjdojUWvNH9qyHaXZGhjUgmuLI87iY7Q9MQWaiFFSGzt84mSQq25ONltTgbl8YDQSAzXqpJEK7bGL1UJn0f59vPrwdtd6sDLjLoo18tQXf55upmTadJDsELpH2vqYuTAmYzDg951PKFP6pEizIJQd8NgnHTND6z6ExRXV0ouUjWTkAKABeAC9Rfjac43AjkXnHdgSy3v5cBets3VXqfpPBqiGf90awg4dW9UkvRiJy46GbH3UcJ86hWVaCMjedsUcqDSZ1DlP2mfBhzu5dvu1i6eW2YNLhM3fWOdzKS6Qov14wxYYd8saS38hIlcPtS4l9B7hFC3JXJGpstll7a7WNrVMwunmnmDc5duVpZxTCl8FI01jhn5Bl4JzaEV7CKMThLji1gyZuXcIv4033NqZLITGUx3ClPCBKO3vRUimJql5blI9GrWyirWHoflH73ZTeZXkopeq8XL1RQ3aUj6Essnj20MA3AsrSVft3F9wzB1qDQVOnHCmmP3dWSbjstoj3oGjadvzqcMB6Y6kD9sZ0bdMjtUThULGTWU9Nmr3E4CNbzUOvThhqL1pxAxTezrHdVMgLYTTrSfxLUXCMrWAbE69K6XHi5re1fx4GDKkiB7f2DXzXez2k2YSDHNRTYERTDGHFHDFGSDFGRTYFGHDFGDFSFGWERTFSDFGDFADWERTFGHDFGHFGH27} \end{align} for some functions $A_k$, where $k\in \mathbb{N}_0$.  We say that the initial data satisfy the compatibility condition of all orders if \begin{align} \nu \cdot A_k(u(0), S(0))|_{\partial\Omega} = 0 \comma k\in \mathbb{N}_0 .    \llabel{cYc4QjUyMYR1oDeYNWf74hByFdsWk4cUbCRDXaq4eDWd7qbOt7GOuoklgjJ00J9IlOJxntzFVBCFtpABpVLEE2y5Qcgb35DU4igj4dzzWsoNFwvqjbNFma0amFKivAappzMzrVqYfOulMHafaBk6JreOQBaTEsJBBtHXjn2EUCNleWpcvWJIggWXKsnB3wvmoWK49Nl492ogR6fvc8ffjJmsWJr0jzI9pCBsIUVofDkKHUb7vxpuQUXA6hMUryvxEpcTqlTkzz0qHbXpO8jFuh6nwzVPPzpA8961V78cO2Waw0yGnCHVqBVjTUHlkp6dGHOdvoEE8cw7QDL1o1qg5TXqoV720hhQTyFtpTJDg9E8Dnsp1QiX98ZVQN3sduZqcn9IXozWhFd16IB0K9JeBHvi364kQlFMMJOn0OUBrnvpYyjUBOfsPzxl4zcMnJHdqOjSi6NMn8bR6kPeklTFdVlwDSrhT8Qr0sChNh88j8ZAvvWVD03wtETKKNUdr7WEK1jKSIHFKh2sr1RRVRa8JmBtkWI1ukuZTF2B4p8E7Y3p0DX20JM3XzQtZ3bMCvM4DEAwBFp8qYKpLSo1a5sdRPfTg5R67v1T4eCJ1qg14CTK7u7agjQ0AtZ1Nh6hkSys5CWonIOqgCL3u7feRBHzodSJp7JH8u6RwsYE0mcP4rLaWAtlyRwkHF3eiUyhIiA19ZBu8mywf42nuyX0eljCt3Lkd1eUQEZoOZrA2OqfoQ5CahrByKzFgDOseim0jYBmXcsLAyccCJBTZPEjyzPb5hZKWOxT6dytSDHNRTYERTDGHFHDFGSDFGRTYFGHDFGDFSFGWERTFSDFGDFADWERTFGHDFGHFGH28} \end{align} One may readily check that the above condition is satisfied  in the smooth or non-analytic Gevrey case  for initial data $(p^\epsilon_0, v^\epsilon_0, S^\epsilon_0)$ vanishing in a neighborhood of the boundary~$\partial\Omega$. \par \startnewsection{Analytic vector field in an exterior domain and the main results}{sec02} Assume that $\Omega \subseteq \mathbb{R}^3$ is an exterior domain, located on one side of its nonempty, compact, and analytic boundary~$\partial \Omega$. Denote by $d = d(x)$ the signed distance function to the boundary $\partial \Omega$, taking positive values inside $\Omega$ and negative values outside $\Omega$. Since  $\partial \Omega$ is smooth, we have   \begin{align}    \nabla d = -\nu(x) \inon{on $\partial\Omega$},    \label{SDHNRTYERTDGHFHDFGSDFGRTYFGHDFGDFSFGWERTFSDFGDFADWERTFGHDFGHFGH111}   \end{align} where $\nu$ is the unit outward normal vector.  Moreover, the signed distance function is a real-analytic function in a neighborhood of the boundary $\partial \Omega$.  Namely, we may extend $\nu$ with the formula \eqref{SDHNRTYERTDGHFHDFGSDFGRTYFGHDFGDFSFGWERTFSDFGDFADWERTFGHDFGHFGH111} to  a neighborhood  \begin{align} 	\Omega_{\delta_0} = \{ x\in \Omega \colon d(x) < \delta_0 \}    \llabel{u82IahtpDm75YDktQvdNjWjIQH1BAceSZKVVP136vL8XhMm1OHKn2gUykFUwN8JMLBqmnvGuwGRoWUoNZY2PnmS5gQMcRYHxLyHuDo8bawaqMNYtonWu2YIOzeB6RwHuGcnfio47UPM5tOjszQBNq7mcofCNjou83emcY81svsI2YDS3SyloBNx5FBVBc96HZEOXUO3W1fIF5jtEMW6KW7D63tH0FCVTZupPlA9aIoN2sf1Bw31ggLFoDO0Mx18ooheEdKgZBCqdqpasaHFhxBrEaRgAuI5dqmWWBMuHfv90ySPtGhFFdYJJLf3Apk5CkSzr0KbVdisQkuSAJEnDTYkjPAEMua0VCtCFfz9R6Vht8UacBe7opAnGa7AbLWjHcsnARGMbn7a9npaMflftM7jvb200TWxUC4lte929joZrAIuIao1ZqdroCL55LT4Q8kNyvsIzPx4i59lKTq2JBBsZbQCECtwarVBMTH1QR6v5srWhRrD4rwf8ik7KHEgeerFVTErONmlQ5LR8vXNZLB39UDzRHZbH9fTBhRwkA2n3pg4IgrHxdfEFuz6REtDqPdwN7HTVtcE18hW6yn4GnnCE3MEQ51iPsGZ2GLbtCSthuzvPFeE28MM23ugTCdj7z7AvTLa1AGLiJ5JwWCiDPyMqa8tAKQZ9cfP42kuUzV3h6GsGFoWm9hcfj51dGtWyZzC5DaVt2Wi5IIsgDB0cXLM1FtExERIZIZ0RtQUtWcUCmFmSjxvWpZcgldopk0D7aEouRkuIdOZdWFORuqbPY6HkWOVi7FuVSDHNRTYERTDGHFHDFGSDFGRTYFGHDFGDFSFGWERTFSDFGDFADWERTFGHDFGHFGH29} \end{align} of the boundary $\partial \Omega$ such that \begin{align} \sum_{l=0}^\infty \sum_{|\alpha| = l} \frac{\eta^l}{(l-3)!}  \Vert \partial^\alpha \nu \Vert_{L^\infty (\Omega_{\delta_0})}  \les 1 , \label{SDHNRTYERTDGHFHDFGSDFGRTYFGHDFGDFSFGWERTFSDFGDFADWERTFGHDFGHFGH87} \end{align}  for some constants $\delta_0,\eta>0$. \par \subsection{Analytic vector fields} A vector field $X$ is \emph{tangential} to $\partial \Omega$ if $X d = 0$ on $\partial \Omega$.  Such $X$ may be restricted to $\partial \Omega$ by $Xf = (X\tilde{f})|_{\partial\Omega}$ for $f\in C^\infty (\partial\Omega)$, where $\tilde{f} \in C^\infty (\bar{\Omega})$ is an arbitrary extension of $f$.  In fact, if $\tilde{f}$ vanishes on $\partial\Omega$, then there exists $\tilde{g} \in C^\infty (\bar{\Omega})$ such that $\tilde{f} =  \tilde{g} d$ near $\partial \Omega$. \par Existence of global analytic vector fields in a bounded domain in $\mathbb{R}^3$  has been proven in Komatsu \cite{Ko01}. Here we  construct a family of tangential fields for an exterior domain $\Omega\subseteq {\mathbb R}^{3}$. An additional benefit from the construction is that we only need three tangential vector fields  (which is the minimum possible by the hairy ball theorem). \par \cole \begin{Theorem} \label{P01} There exist analytic vector fields $X_0, T_1, T_2, T_3$ defined globally on $\bar{\Omega}$  with the following properties. \begin{enumerate}[label*=\arabic*.] \item The fields $T_1, T_2, T_3$ are tangential to $\partial\Omega$, with  \begin{align} T_j = \sum_{i=1}^3 b_{ij}(x) \partial_i \comma 1\leq j \leq 3 , \label{SDHNRTYERTDGHFHDFGSDFGRTYFGHDFGDFSFGWERTFSDFGDFADWERTFGHDFGHFGH10} \end{align} where the coefficients $b_{ij}(x)$ are real-analytic functions with constant analyticity radii, i.e., \begin{equation} |\partial^{\alpha}b_{ij}(x)| \les C^{|\alpha|} |\alpha|! \comma x\in \bar\Omega \comma \alpha \in \mathbb{N}_0^3 , \llabel{MLWnxpSaNomkrC5uIZK9CjpJyUIeO6kgb7tr2SCYx5F11S6XqOImrs7vv0uvAgrb9hGPFnkRMj92HgczJ660kHbBBlQSIOY7FcX0cuyDlLjbU3F6vZkGbaKaMufjuxpn4Mi457MoLNW3eImcj6OOSe59afAhglt9SBOiFcYQipj5uN19NKZ5Czc231wxGx1utgJB4ueMxx5lrs8gVbZs1NEfI02RbpkfEOZE4eseo9teNRUAinujfeJYaEhns0Y6XRUF1PCf5eEAL9DL6a2vmBAU5AuDDtyQN5YLLWwPWGjMt4hu4FIoLCZLxeBVY5lZDCD5YyBwOIJeHVQsKobYdqfCX1tomCbEj5m1pNx9pnLn5A3g7Uv777YUgBRlNrTyjshaqBZXeAFtjyFlWjfc57t2fabx5Ns4dclCMJcTlqkfquFDiSdDPeX6mYLQzJzUmH043MlgFedNmXQPjAoba07MYwBaC4CnjI4dwKCZPO9wx3en8AoqX7JjN8KlqjQ5cbMSdhRFstQ8Qr2ve2HT0uO5WjTAiiIWn1CWrU1BHBMvJ3ywmAdqNDLY8lbxXMx0DDvco3RL9Qz5eqywVYqENnO8MH0PYzeVNi3yb2msNYYWzG2DCPoG1VbBxe9oZGcTU3AZuEKbkp6rNeTX0DSMczd91nbSVDKEkVazIqNKUQapNBP5B32EyprwPFLvuPiwRPl1GTdQBZEAw3d90v8P5CPAnX4Yo2q7syr5BW8HcT7tMiohaBW9U4qrbumEQ6XzMKR2BREFXk3ZOMVMYSw9SF5ekq0myNKGSDHNRTYERTDGHFHDFGSDFGRTYFGHDFGDFSFGWERTFSDFGDFADWERTFGHDFGHFGH21} \end{equation} for $1\leq i,j \leq 3$. \par \item There exists $\eta_0>0$, such that the partial derivatives may be expressed as \begin{align} \frac{\partial}{\partial x_k}  =  \xi_k(x) X_0 + \sum_{j=1}^3 \eta_{jk}(x) T_j \comma 1\leq k \leq 3 \label{SDHNRTYERTDGHFHDFGSDFGRTYFGHDFGDFSFGWERTFSDFGDFADWERTFGHDFGHFGH11} \end{align} on $\bar{\Omega}_{\eta_0}$, for some analytic coefficients $\xi_k(x)$ and $\eta_{jk}(x)$, with $1\leq j,k \leq 3$. \end{enumerate} \end{Theorem}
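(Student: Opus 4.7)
The plan is to construct the three tangential fields in two stages: first, exploit the analyticity of the signed distance function $d$ in $\Omega_{\delta_0}$ to build tangential fields by an orthogonal projection of the coordinate vector fields onto the tangential distribution (which yields analytic fields only near $\partial\Omega$), and then propagate to all of $\bar\Omega$ by solving an exterior Dirichlet problem for the Laplacian with the analytic boundary traces as data, combining the analytic regularity theory of Lions-Magenes~\cite{LM01} with heat kernel regularization to obtain uniform analyticity estimates on the exterior domain. Concretely, in $\Omega_{\delta_0}$ set
\[
T_k^{(0)} = \partial_k - (\partial_k d)\, \nabla d, \qquad k = 1, 2, 3.
\]
Since $|\nabla d| = 1$ throughout $\Omega_{\delta_0}$ by the signed-distance property, one has $T_k^{(0)} d = \partial_k d\,(1 - |\nabla d|^2) = 0$, so each $T_k^{(0)}$ is tangential to $\partial\Omega$. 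Analyticity of the coefficients in $\Omega_{\delta_0}$ with uniform radius follows from \eqref{SDHNRTYERTDGHFHDFGSDFGRTYFGHDFGDFSFGWERTFSDFGDFADWERTFGHDFGHFGH87}; picking coordinates adapted to $\nu(x_0)$ at $x_0 \in \partial\Omega$ shows that $T_1^{(0)}(x_0), T_2^{(0)}(x_0), T_3^{(0)}(x_0)$ span $T_{x_0}(\partial\Omega)$, and three is the minimum allowed by the hairy ball theorem.

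\textbf{Global extension.} Denote by $b_{ij}^{\partial} = (T_j^{(0)})_i|_{\partial\Omega}$ the analytic traces on $\partial\Omega$. For each pair $(i,j)$ solve the exterior Dirichlet problem $\Delta b_{ij} = 0$ in $\Omega$ with $b_{ij}|_{\partial\Omega} = b_{ij}^{\partial}$ and with asymptotic behavior matching $\delta_{ij}$ at infinity. Interior analytic hypoellipticity of the Laplacian, together with analytic regularity up to the boundary for the Dirichlet problem on an analytic boundary with analytic data (cf.~\cite{LM01}), produces $b_{ij}$ real-analytic on $\bar\Omega$ with a constant analyticity radius, giving \eqref{SDHNRTYERTDGHFHDFGSDFGRTYFGHDFGDFSFGWERTFSDFGDFADWERTFGHDFGHFGH10}. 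The heat kernel regularization is invoked to convert a compactly supported smooth tangential extension (built from a cutoff of $T_k^{(0)}$) into an entire analytic approximation, which then absorbs the non-compactness of $\bar\Omega$ and secures the uniform estimate at infinity. Setting $T_j = \sum_i b_{ij} \partial_i$, the boundary trace $T_j|_{\partial\Omega} = T_j^{(0)}|_{\partial\Omega}$ is tangential and spans $T_x(\partial\Omega)$.

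\textbf{Construction of $X_0$ and the decomposition.} Build $X_0$ in the same way, namely as the analytic harmonic extension of $-\nu = \nabla d|_{\partial\Omega}$; on $\partial\Omega$ it coincides with the outward normal and hence is transversal there, so by continuity $\{X_0, T_1, T_2, T_3\}$ spans $\mathbb{R}^3$ throughout some $\bar\Omega_{\eta_0}$. The decomposition \eqref{SDHNRTYERTDGHFHDFGSDFGRTYFGHDFGDFSFGWERTFSDFGDFADWERTFGHDFGHFGH11} is obtained by inverting, at each point of $\bar\Omega_{\eta_0}$, the linear system expressing $\partial_k$ in the analytic basis $\{X_0, T_{j_1}, T_{j_2}\}$ (where $T_{j_1}, T_{j_2}$ are two tangential fields forming with $X_0$ a basis at the point in question); analytic non-degeneracy of the coefficient matrix on $\bar\Omega_{\eta_0}$ yields analytic $\xi_k$ and $\eta_{jk}$. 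The hardest step I expect is the uniform analyticity estimate \eqref{SDHNRTYERTDGHFHDFGSDFGRTYFGHDFGDFSFGWERTFSDFGDFADWERTFGHDFGHFGH10} up to $\partial\Omega$ with a \emph{constant} radius: it requires an analytic flattening of the boundary and careful tracking of constants in the Lions-Magenes analytic regularity theorems, compounded by the exterior setting, where the heat kernel regularization must be controlled uniformly at infinity without disturbing the tangential boundary trace.
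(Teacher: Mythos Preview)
Your approach is genuinely different from the paper's, and while the outline is plausible, there is a gap in the role you assign to the heat kernel and an unnecessary detour in the extension step.

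The paper does not extend the tangential fields coefficient-by-coefficient. Instead it constructs a single scalar \emph{globally defining function} $\psi$ on all of $\bar\Omega$ with uniform analyticity radius: take a smooth compactly supported defining function $d_0$, regularize by the heat kernel to obtain an entire-analytic $d_\epsilon$, and then solve $(-\Delta+1)\psi_\epsilon = (-\Delta+1)d_\epsilon$ in $\Omega$ with $\psi_\epsilon|_{\partial\Omega}=0$. Local analytic hypoellipticity (Lemma~\ref{L10}) gives the uniform estimate \eqref{SDHNRTYERTDGHFHDFGSDFGRTYFGHDFGDFSFGWERTFSDFGDFADWERTFGHDFGHFGH147} both near $\partial\Omega$ and, by translation invariance, uniformly at infinity. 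The tangential fields are then the ``angular momentum'' combinations $T_{jk}=\psi_{x_j}\partial_{x_k}-\psi_{x_k}\partial_{x_j}$ and $X_0=\nabla\psi\cdot\nabla$; tangentiality is immediate from $\psi=\tilde g\,d$ near $\partial\Omega$, the antisymmetry $T_{jk}=-T_{kj}$ reduces nine fields to three, and the decomposition \eqref{SDHNRTYERTDGHFHDFGSDFGRTYFGHDFGDFSFGWERTFSDFGDFADWERTFGHDFGHFGH11} is the explicit algebraic identity $\psi_{x_k}X_0+\sum_j\psi_{x_j}T_{jk}=|\nabla\psi|^2\partial_{x_k}$, requiring only $\nabla\psi\neq0$ on $\bar\Omega_{\eta_0}$.

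Compared with this, your construction has two weak points. First, the heat kernel step as you describe it (``convert a compactly supported smooth tangential extension \ldots into an entire analytic approximation'') would \emph{perturb the boundary trace} and hence destroy the exact tangentiality $T_j d|_{\partial\Omega}=0$; in the paper the heat kernel is applied to the source term of a Dirichlet problem whose homogeneous boundary condition is enforced exactly, so no such loss occurs. If instead you rely purely on the harmonic extension of the $b_{ij}^\partial$, the heat kernel is not needed at all, and you should say so. Second, your decomposition step invokes a pointwise choice of two out of three tangential fields, which does not directly produce globally analytic coefficients $\eta_{jk}$; you would need a pseudoinverse argument (e.g.\ $M^T(MM^T)^{-1}$ for the $3\times4$ coefficient matrix), whereas the paper's identity gives the coefficients explicitly. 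The paper's route is thus more economical: one scalar function instead of nine coefficients, automatic tangentiality from the antisymmetric structure, and no matrix inversion.
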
 \colb \par The same proof works in all space dimensions, with a change in the number of vector fields. If the dimension is $n$, then the number of tangential fields becomes $n(n-1)/2$. \par In the proof of the above proposition, we also need the following statement, which follows from \cite[Theorem~8.1.3]{LM01}. \par \cole \begin{Lemma} \label{L10} (Local analytic hypoellipticity) Let $\Omega$ be an exterior domain in $\mathbb{R}^3$ with analytic boundary, and let $R_0>0$ be such that \begin{equation} B_{R_0/2} \supseteq \partial\Omega . \llabel{nH0qivlRA18CbEzidOiuyZZ6kRooJkLQ0EwmzsKlld6KrKJmRxls12KG2bv8vLxfJwrIcU6Hxpq6pFy7OimmodXYtKt0VVH22OCAjfdeTBAPvPloKQzLEOQlqdpzxJ6JIzUjnTqYsQ4BDQPW6784xNUfsk0aM78qzMuL9MrAcuVVKY55nM7WqnB2RCpGZvHhWUNg93F2eRT8UumC62VH3ZdJXLMScca1mxoOO6oOLOVzfpOBOX5EvKuLz5sEW8a9yotqkcKbDJNUslpYMJpJjOWUy2U4YVKH6kVC1Vx1uvykOyDszo5bzd36qWH1kJ7JtkgV1JxqrFnqmcUyZJTp9oFIcFAk0ITA93SrLaxO9oUZ3jG6fBRL1iZ7ZE6zj8G3MHu86Ayjt3flYcmTkjiTSYvCFtJLqcJPtN7E3POqGOKe03K3WV0epWXDQC97YSbADZUNp81GFfCPbj3iqEt0ENXypLvfoIz6zoFoF9lkIunXjYyYL52UbRBjxkQUSU9mmXtzIHOCz1KH49ez6PzqWF223C0Iz3CsvuTR9sVtQCcM1eopDPy2lEEzLU0USJtJb9zgyGyfiQ4foCx26k4jLE0ula6aSIrZQHER5HVCEBL55WCtB2LCmveTDzVcp7URgI7QuFbFw9VTxJwGrzsVWM9sMJeJNd2VGGFsiWuqC3YxXoJGKwIo71fgsGm0PYFBzX8eX7pf9GJb1oXUs1q06KPLsMucNytQbL0Z0Qqm1lSPj9MTetkL6KfsC6ZobYhc2quXy9GPmZYj1GoeifeJ3pRAfn6Ypy6jSDHNRTYERTDGHFHDFGSDFGRTYFGHDFGDFSFGWERTFSDFGDFADWERTFGHDFGHFGH05} \end{equation} Assume that $f$ satisfies \begin{align} | \partial^\alpha f(x) |  \leq M R^{| \alpha|} |\alpha|! \comma \alpha \in \mathbb{N}_0^3 \comma x\in B_{2R_0}\cap \bar{\Omega} , \label{SDHNRTYERTDGHFHDFGSDFGRTYFGHDFGDFSFGWERTFSDFGDFADWERTFGHDFGHFGH140} \end{align} for some constants $M,R>0$, and suppose that $\psi$ solves \begin{align} & (-\Delta + 1) \psi  =  f \inon{in $\Omega$} \label{SDHNRTYERTDGHFHDFGSDFGRTYFGHDFGDFSFGWERTFSDFGDFADWERTFGHDFGHFGH141} \\ & \psi  =  0 \inon{on $\partial \Omega$} . \label{SDHNRTYERTDGHFHDFGSDFGRTYFGHDFGDFSFGWERTFSDFGDFADWERTFGHDFGHFGH142} \end{align} Then we have \begin{align} | \partial^\alpha \psi(x) |  \leq C(M+\Vert \psi\Vert_{L^2(B_{2R_0}\cap \Omega)}) (CR)^{| \alpha|} |\alpha|!  \comma \alpha  \in \mathbb{N}_0^3 , \label{SDHNRTYERTDGHFHDFGSDFGRTYFGHDFGDFSFGWERTFSDFGDFADWERTFGHDFGHFGH145} \end{align} for $x\in B_{R_0}\cap \bar{\Omega}$, where $C$ depends on $\partial \Omega$ and $R_0$. \end{Lemma}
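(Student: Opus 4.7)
The plan is to reduce the lemma to a direct application of the local analytic regularity result for elliptic Dirichlet problems with analytic data, namely \cite[Theorem~8.1.3]{LM01}, combined with the classical interior analytic hypoellipticity for $-\Delta+1$. Since all bounds are purely local (the data is controlled on $B_{2R_0}\cap\bar\Omega$ and the conclusion asserted only on $B_{R_0}\cap\bar\Omega$), the unboundedness of the exterior domain $\Omega$ plays no role, and we may work entirely inside the bounded analytic region $B_{2R_0}\cap\Omega$.

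First, I would cover the compact set $B_{R_0}\cap\bar\Omega$ by finitely many balls $B_r(x_i)$ whose radii $r$ depend only on $\partial\Omega$ and $R_0$. For each center with $\dist(x_i,\partial\Omega)\geq 2r$, the classical interior analytic hypoellipticity applies: rewriting the equation as $-\Delta\psi = f-\psi$ and iterating the standard $L^{2}$ elliptic estimate on a nested sequence of balls, in the spirit of Morrey--Nirenberg, yields \eqref{SDHNRTYERTDGHFHDFGSDFGRTYFGHDFGDFSFGWERTFSDFGDFADWERTFGHDFGHFGH145} on $B_{r}(x_i)$; the term $\Vert \psi\Vert_{L^2(B_{2R_0}\cap\Omega)}$ absorbs the lower-order contribution from the $\psi$ piece of the right-hand side. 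For each center with $\dist(x_i,\partial\Omega)<2r$, I would straighten the boundary by means of an analytic diffeomorphism defined on a neighborhood of $x_i$, whose existence follows from the analyticity of $\partial\Omega$, pull back the equation, and apply \cite[Theorem~8.1.3]{LM01}. The transformed operator is uniformly elliptic with analytic coefficients, the Dirichlet condition is preserved, and the cited theorem produces the analytic bound with analyticity radius proportional to $R$ and amplitude proportional to $M+\Vert\psi\Vert_{L^{2}}$. Composing with the inverse diffeomorphism, whose derivatives satisfy analytic bounds depending only on $\partial\Omega$, preserves \eqref{SDHNRTYERTDGHFHDFGSDFGRTYFGHDFGDFSFGWERTFSDFGDFADWERTFGHDFGHFGH145} up to absorbing these geometric constants into the final $C$. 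A finite cover argument then yields the global bound on $B_{R_0}\cap\bar\Omega$.

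The main obstacle is not the qualitative regularity, which is exactly the content of \cite[Theorem~8.1.3]{LM01}, but the explicit form of the quantitative bound \eqref{SDHNRTYERTDGHFHDFGSDFGRTYFGHDFGDFSFGWERTFSDFGDFADWERTFGHDFGHFGH145}, in which the constant $C$ must be independent of both $M$ and $R$. Verifying this requires tracking constants through the Morrey--Nirenberg type induction on $|\alpha|$ underlying the proof in \cite{LM01}: the contribution of $f$ produces a term of size $MR^{|\alpha|}|\alpha|!$, the contribution of $\psi$ on the right-hand side produces a term of size $\Vert\psi\Vert_{L^{2}}(C_{0}R)^{|\alpha|}|\alpha|!$ for a geometric constant $C_{0}$ depending only on $\partial\Omega$ and $R_{0}$, and the two merge into the single expression of the claimed form after enlarging $C$ to dominate $C_{0}$. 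Once this bookkeeping is in place and the boundary straightening map is shown to have analyticity constants depending only on $\partial\Omega$, the conclusion follows.
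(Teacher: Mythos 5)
Your proposal follows the same underlying route as the paper: the paper's own proof of Lemma~\ref{L10} is a single sentence, stating that the result ``follows directly from \cite[Theorem~8.1.3]{LM01},'' with the parenthetical remark that the $\Omega={\mathbb R}^{3}$ version is the interior hypoellipticity. Your proposal unpacks what such a citation actually entails: cover $B_{R_0}\cap\bar\Omega$ by finitely many balls, run a Morrey--Nirenberg iteration on the interior ones, and straighten the boundary before applying the Lions--Magenes theorem on the boundary ones. This is a faithful elaboration and nothing in it is wrong. Two remarks on the details. First, the boundary-straightening step is optional: Lions--Magenes \cite[Theorem~8.1.3]{LM01} is stated for domains with analytic boundary, so one may apply it locally to the bounded analytic region $B_{2R_0}\cap\Omega$ without first flattening $\partial\Omega$; keeping the step is harmless but does extra work. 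Second, and more importantly, you correctly identify the real content of the lemma as quantitative rather than qualitative. The cited theorem, as stated, says only that analytic data yields an analytic solution up to the boundary; the explicit form of \eqref{SDHNRTYERTDGHFHDFGSDFGRTYFGHDFGDFSFGWERTFSDFGDFADWERTFGHDFGHFGH145}, in which the analyticity radius of $\psi$ is a \emph{uniform} multiple $CR$ of the radius of $f$ and the amplitude is linear in $M+\Vert\psi\Vert_{L^2}$, must be extracted from the estimate-by-induction underlying the Lions--Magenes proof. The paper's ``directly'' elides exactly this bookkeeping, which your proposal makes explicit, and which is the feature that makes Lemma~\ref{L10} usable in the construction of Theorem~\ref{P01} (constant analyticity radii for the globally defining function).
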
 \colb \par This statement also applies to the case $\Omega={\mathbb R}^{3}$, in which case we refer to it as the interior hypoellipticity. It follows directly from \cite[Theorem~8.1.3]{LM01}. \par We say that $\psi\colon \bar\Omega\to {\mathbb R}$ is a \emph{globally defining function} if  $\psi\in C^{1}$ and \begin{align} & \psi|_{\partial\Omega} = 0 \textand \nabla \psi|_{\partial\Omega} \neq 0. \llabel{Ns4Y5nSEpqN4mRmamAGfYHhSaBrLsDTHCSElUyRMh66XU7hNzpZVC5VnV7VjL7kvWKf7P5hj6t1vugkLGdNX8bgOXHWm6W4YEmxFG4WaNEbGKsv0p4OG0NrduTeZaxNXqV4BpmOdXIq9abPeDPbUZ4NXtohbYegCfxBNttEwcDYSD637jJ2ms6Ta1J2xZPtKnPwAXAtJARc8n5d93TZi7q6WonEDLwWSzeSueYFX8cMhmY6is15pXaOYBbVfSChaLkBRKs6UOqG4jDVabfbdtnyfiDBFI7uhB39FJ6mYrCUUTf2X38J43KyZg87igFR5Rz1t3jH9xlOg1h7P7Ww8wjMJqH3l5J5wU8eH0OogRCvL7fJJg1ugRfMXIGSuEEfbh3hdNY3x197jRqePcdusbfkuJhEpwMvNBZVzLuqxJ9b1BTfYkRJLjOo1aEPIXvZAjvXnefhKGsJGawqjtU7r6MPoydEH26203mGiJhFnTNCDBYlnPoKO6PuXU3uu9mSg41vmakk0EWUpSUtGBtDe6dKdxZNTFuTi1fMcMhq7POvf0hgHl8fqvI3RK39fn9MaCZgow6e1iXjKC5lHOlpGpkKXdDxtz0HxEfSMjXYL8Fvh7dmJkE8QAKDo1FqMLHOZ2iL9iIm3LKvaYiNK9sb48NxwYNR0nx2t5bWCkx2a31ka8fUIaRGzr7oigRX5sm9PQ7Sr5StZEYmp8VIWShdzgDI9vRF5J81x33nNefjBTVvGPvGsxQhAlGFbe1bQi6JapOJJaceGq1vvb8rF2F3M68eDlzGtXtVmSDHNRTYERTDGHFHDFGSDFGRTYFGHDFGDFSFGWERTFSDFGDFADWERTFGHDFGHFGH12} \end{align} \begin{proof}[Proof of Theorem~\ref{P01}] First we construct globally defining functions which have constant analyticity radii. Let $R_0\geq4$ be a radius such that $ \Omega^{\mathrm c} \Subset B_{R_0/2}$. We pick an arbitrary globally defining function $d_0\in C^\infty (\mathbb{R}^3)$ which is compactly supported in $B_{R_0}$ and it satisfies 	\begin{align} 	\nabla d_0 |_{\partial \Omega} \neq 0.	 	\label{SDHNRTYERTDGHFHDFGSDFGRTYFGHDFGDFSFGWERTFSDFGDFADWERTFGHDFGHFGH260} 	\end{align} For $\epsilon \in(0,1]$, we define $d_\epsilon \colon \mathbb{R}^3 \to \mathbb{R}$ by 	\begin{align} 	d_{\epsilon}(x) 	= 	\int_{\mathbb{R}^3} H(x-y, \epsilon) d_0(y) dy 	\comma x\in{\mathbb R}^{3}                 , 	\label{SDHNRTYERTDGHFHDFGSDFGRTYFGHDFGDFSFGWERTFSDFGDFADWERTFGHDFGHFGH261} 	\end{align}  where $H\colon \mathbb{R}^3 \times \mathbb{R} \to \mathbb{R} $ is the heat kernel.  Consider the Dirichlet problem \eqref{SDHNRTYERTDGHFHDFGSDFGRTYFGHDFGDFSFGWERTFSDFGDFADWERTFGHDFGHFGH141}--\eqref{SDHNRTYERTDGHFHDFGSDFGRTYFGHDFGDFSFGWERTFSDFGDFADWERTFGHDFGHFGH142} with  $f=f_\epsilon= (-\Delta + 1)d_\epsilon$.  Since $f_\epsilon$ satisfies \eqref{SDHNRTYERTDGHFHDFGSDFGRTYFGHDFGDFSFGWERTFSDFGDFADWERTFGHDFGHFGH140}, it follows from Lemma~\ref{L10} that the solution $\psi_\epsilon$ of \eqref{SDHNRTYERTDGHFHDFGSDFGRTYFGHDFGDFSFGWERTFSDFGDFADWERTFGHDFGHFGH141}--\eqref{SDHNRTYERTDGHFHDFGSDFGRTYFGHDFGDFSFGWERTFSDFGDFADWERTFGHDFGHFGH142} satisfies \eqref{SDHNRTYERTDGHFHDFGSDFGRTYFGHDFGDFSFGWERTFSDFGDFADWERTFGHDFGHFGH145} for $x\in B_{R_0}\cap \bar{\Omega}$; note that $\Vert\psi_{\epsilon}\Vert_{L^2(\Omega)}$ is uniformly bounded in $\epsilon\in(0,1]$. In order to obtain \eqref{SDHNRTYERTDGHFHDFGSDFGRTYFGHDFGDFSFGWERTFSDFGDFADWERTFGHDFGHFGH145} for remaining $x$, we consider \eqref{SDHNRTYERTDGHFHDFGSDFGRTYFGHDFGDFSFGWERTFSDFGDFADWERTFGHDFGHFGH141}--\eqref{SDHNRTYERTDGHFHDFGSDFGRTYFGHDFGDFSFGWERTFSDFGDFADWERTFGHDFGHFGH142} with $f_\epsilon= (-\Delta + 1)d_\epsilon$ and  the version of Lemma~\ref{L10} with $\Omega = \mathbb{R}^3$. Fix any $y\in \mathbb{R}^3 \setminus B_{R_0}$. Since $f_\epsilon$ satisfies \eqref{SDHNRTYERTDGHFHDFGSDFGRTYFGHDFGDFSFGWERTFSDFGDFADWERTFGHDFGHFGH140} in $B_{2}(y)$, Lemma~\ref{L10} implies that the solution $\psi_\epsilon$ of \eqref{SDHNRTYERTDGHFHDFGSDFGRTYFGHDFGDFSFGWERTFSDFGDFADWERTFGHDFGHFGH141} satisfies \eqref{SDHNRTYERTDGHFHDFGSDFGRTYFGHDFGDFSFGWERTFSDFGDFADWERTFGHDFGHFGH145} for all $x \in B_1(y)$.  Thus, we obtain 	\begin{align} 	|\partial^\alpha \psi_\epsilon (x)| 	\leq 	CM (CR)^{|\alpha|}  	|\alpha|! 	\comma \alpha \in \mathbb{N}_0^3 	\commaone x\in \bar{\Omega} 	. 	\label{SDHNRTYERTDGHFHDFGSDFGRTYFGHDFGDFSFGWERTFSDFGDFADWERTFGHDFGHFGH147} 	\end{align} By \eqref{SDHNRTYERTDGHFHDFGSDFGRTYFGHDFGDFSFGWERTFSDFGDFADWERTFGHDFGHFGH260}--\eqref{SDHNRTYERTDGHFHDFGSDFGRTYFGHDFGDFSFGWERTFSDFGDFADWERTFGHDFGHFGH261} and continuity, we infer that  	\begin{align} 	\nabla \psi_\epsilon |_{\partial \Omega}  	\neq  	0 	,    \llabel{5y14vmwIXa2OGYhxUsXJ0qgl5ZGAtHPZdoDWrSbBSuNKi6KWgr39s9tc7WM4Aws1PzI5cCO7Z8y9lMTLAdwhzMxz9hjlWHjbJ5CqMjhty9lMn4rc76AmkKJimvH9rOtbctCKrsiB04cFVDl1gcvfWh65nxy9ZS4WPyoQByr3vfBkjTZKtEZ7rUfdMicdyCVqnD036HJWMtYfL9fyXxO7mIcFE1OuLQsAQNfWv6kV8Im7Q6GsXNCV0YPoCjnWn6L25qUMTe71vahnHDAoXAbTczhPcfjrjW5M5G0nzNM5TnlJWOPLhM6U2ZFxwpg4NejP8UQ09JX9n7SkEWixERwgyFvttzp4Asv5FTnnMzLVhFUn56tFYCxZ1BzQ3ETfDlCad7VfoMwPmngrDHPfZV0aYkOjrZUw799etoYuBMIC4ovEY8DOLNURVQ5lti1iSNZAdwWr6Q8oPFfae5lAR9gDRSiHOeJOWwxLv20GoMt2Hz7YcalyPZxeRuFM07gaV9UIz7S43k5TrZiDMt7pENCYiuHL7gac7GqyN6Z1ux56YZh2dyJVx9MeUOMWBQfl0EmIc5Zryfy3irahCy9PiMJ7ofoOpdennsLixZxJtCjC9M71vO0fxiR51mFIBQRo1oWIq3gDPstD2ntfoX7YUoS5kGuVIGMcfHZe37ZoGA1dDmkXO2KYRLpJjIIomM6Nuu8O0jO5NabUbRnZn15khG94S21V4Ip457ooaiPu2jhIzosWFDuO5HdGrdjvvtTLBjovLLiCo6L5LwaPmvD6Zpal69Ljn11reT2CSDHNRTYERTDGHFHDFGSDFGRTYFGHDFGDFSFGWERTFSDFGDFADWERTFGHDFGHFGH31} 	\end{align} for sufficiently small $\epsilon >0$, which implies that $\psi_\epsilon$ is a globally defining function. \par Next, we pick an arbitrary globally defining function $\psi$ satisfying \eqref{SDHNRTYERTDGHFHDFGSDFGRTYFGHDFGDFSFGWERTFSDFGDFADWERTFGHDFGHFGH147}. Choose $\eta_0>0$ so that $\nabla \psi \neq 0$ on~$\bar{\Omega}_{\eta_0}$. For $1 \leq j,k\leq 3$, we set 	\begin{align} 	X_0=  	\sum_{i=1}^3 \frac{\partial \psi}{\partial x_i}  	\frac{\partial}{\partial x_i} 	\textand 	T_{jk} 	= 	\frac{\partial \psi}{\partial x_j} \frac{\partial}{\partial x_k}  	- 	\frac{\partial \psi}{\partial x_k} \frac{\partial}{\partial x_j} 	. 	\label{SDHNRTYERTDGHFHDFGSDFGRTYFGHDFGDFSFGWERTFSDFGDFADWERTFGHDFGHFGH42} 	\end{align} Since $\psi \in C^\infty (\bar{\Omega})$ vanishes on $\partial\Omega$, there exists some $\tilde{g} \in C^\infty (\bar{\Omega})$ such that $\psi = \tilde{g} d$ near $\partial \Omega$. For $1\leq j,k\leq 3$, we compute 	\begin{align} 	\begin{split} 	T_{jk} d  	& 	=  	\frac{\partial (\tilde{g} d)}{\partial x_j} \frac{\partial d}{\partial x_k}  	- 	\frac{\partial (\tilde{g} d)}{\partial x_k} \frac{\partial d}{\partial x_j}  	\\& 	= 	d	 	\left( 	\frac{\partial \tilde{g} }{\partial x_j} \frac{\partial d}{\partial x_k}  	- 	\frac{\partial \tilde{g}}{\partial x_k} \frac{\partial d}{\partial x_j}  	\right) 	+ 	\tilde{g} 	\left( 	\frac{\partial d}{\partial x_j} \frac{\partial d}{\partial x_k}  	- 	\frac{\partial d}{\partial x_k} \frac{\partial d}{\partial x_j}  	\right) 	= 	0 	,\inon{on $\partial \Omega$} 	, 	\end{split}    \llabel{PmvjrL3xHmDYKuv5TnpC1fMoURRToLoilk0FEghakm5M9cOIPdQlGDLnXerCykJC10FHhvvnYaTGuqUrfTQPvwEqiHOvOhD6AnXuvGlzVAvpzdOk36ymyUoFbAcAABItOes52Vqd0Yc7U2gBt0WfFVQZhrJHrlBLdCx8IodWpAlDS8CHBrNLzxWp6ypjuwWmgXtoy1vPbrauHyMNbkUrZD6Ee2fzIDtkZEtiLmgre1woDjuLBBSdasYVcFUhyViCxB15yLtqlqoUhgL3bZNYVkorzwa3650qWhF22epiXcAjA4ZV4bcXxuB3NQNp0GxW2Vs1zjtqe2pLEBiS30E0NKHgYN50vXaK6pNpwdBX2Yv7V0UddTcPidRNNCLG47Fc3PLBxK3Bex1XzyXcj0Z6aJk0HKuQnwdDhPQ1QrwA05v9c3pnzttztx2IirWCZBoS5xlOKCiD3WFh4dvCLQANAQJGgyvODNTDFKjMc0RJPm4HUSQkLnTQ4Y6CCMvNjARZblir7RFsINzHiJlcgfxSCHtsZOG1VuOzk5G1CLtmRYIeD35BBuxZJdYLOCwS9lokSNasDLj5h8yniu7hu3cdizYh1PdwEl3m8XtyXQRCAbweaLiN8qA9N6DREwy6gZexsA4fGEKHKQPPPKMbksY1jM4h3JjgSUOnep1wRqNGAgrL4c18Wv4kchDgRx7GjjIBzcKQVf7gATrZxOy6FF7y93iuuAQt9TKRxS5GOTFGx4Xx1U3R4s7U1mpabpDHgkicxaCjkhnobr0p4codyxTCkVj8tW4iP2OhSDHNRTYERTDGHFHDFGSDFGRTYFGHDFGDFSFGWERTFSDFGDFADWERTFGHDFGHFGH32} 	\end{align} which implies that $T_{jk}$ are tangential vector fields, and thus \eqref{SDHNRTYERTDGHFHDFGSDFGRTYFGHDFGDFSFGWERTFSDFGDFADWERTFGHDFGHFGH10} is proven. From \eqref{SDHNRTYERTDGHFHDFGSDFGRTYFGHDFGDFSFGWERTFSDFGDFADWERTFGHDFGHFGH42}, we have 	\begin{align} 	\frac{\partial \psi}{\partial x_k} X_0 	+ 	\sum_{j=1}^3 \frac{\partial \psi}{\partial x_j} T_{jk} 	= 	\vert \nabla \psi \vert^2 \frac{\partial}{\partial x_k} 	,   	\llabel{TRF6kU2k2ooZJFsqY4BFSNI3uW2fjOMFf7xJveilbUVTArCTvqWLivbRpg2wpAJOnlRUEPKhj9hdGM0MigcqQwkyunBJrTLDcPgnOSCHOsSgQsR35MB7BgkPk6nJh01PCxdDsw514O648VD8iJ54FW6rs6SyqGzMKfXopoe4eo52UNB4Q8f8NUz8u2nGOAXHWgKtGAtGGJsbmz2qjvSvGBu5e4JgLAqrmgMmS08ZFsxQm28M3z4Ho1xxjj8UkbMbm8M0cLPL5TS2kIQjZKb9QUx2Ui5Aflw1SLDGIuWUdCPjywVVM2ct8cmgOBS7dQViXR8Fbta1mtEFjTO0kowcK2d6MZiW8PrKPI1sXWJNBcREVY4H5QQGHbplPbwdTxpOI5OQZAKyiix7QeyYI91Ea16rKXKL2ifQXQPdPNL6EJiHcKrBs2qGtQbaqedOjLixjGiNWr1PbYSZeSxxFinaK9EkiCHV2a13f7G3G3oDKK0ibKVy453E2nFQS8Hnqg0E32ADddEVnmJ7HBc1t2K2ihCzZuy9kpsHn8KouARkvsHKPy8YodOOqBihF1Z3CvUFhmjgBmuZq7ggWLg5dQB1kpFxkk35GFodk00YD13qIqqbLwyQCcyZRwHAfp79oimtCc5CV8cEuwUw7k8Q7nCqWkMgYrtVRIySMtZUGCHXV9mr9GHZol0VEeIjQvwgw17pDhXJSFUcYbqUgnGV8IFWbS1GXaz0ZTt81w7EnIhFF72v2PkWOXlkrw6IPu5679vcW1f6z99lM2LI1Y6Naaxfl18gT0gDptVlSDHNRTYERTDGHFHDFGSDFGRTYFGHDFGDFSFGWERTFSDFGDFADWERTFGHDFGHFGH41} 	\end{align} for $1\leq k \leq 3$.  Thus \eqref{SDHNRTYERTDGHFHDFGSDFGRTYFGHDFGDFSFGWERTFSDFGDFADWERTFGHDFGHFGH11} follows since $\nabla \psi \neq 0$ on $\bar{\Omega}_{\eta_0}$. To show that one may choose only three rather than nine tangential fields, note that $T_{jk}=-T_{kj}$ for $1\leq j,k\leq 3$, and thus also $T_{11}=T_{22}=T_{33}=0$. \end{proof} \par \subsection{Main results} Let $\Omega\subseteq {\mathbb R}^{3}$ be an exterior domain with an analytic boundary $\partial \Omega$. We assume that the initial data $(p_0^\epsilon, v_0^\epsilon, S_0^\epsilon)$ satisfies 	\begin{align} 	\Vert (p_0^\epsilon, v_0^\epsilon, S_0^\epsilon) \Vert_{H^5(\Omega)}  	\leq 	M_0 	\label{SDHNRTYERTDGHFHDFGSDFGRTYFGHDFGDFSFGWERTFSDFGDFADWERTFGHDFGHFGH109} 	\end{align} 	and 	\begin{align} 	\sum_{j=0}^\infty  	\frac{ \tau_0^{(j-3)_+}}{(j-3)!} 	\Vert \partial_x^j (p_0^\epsilon, v_0^\epsilon, S_0^\epsilon) \Vert_{L^2(\Omega)}    	\leq 	M_0, 	\label{SDHNRTYERTDGHFHDFGSDFGRTYFGHDFGDFSFGWERTFSDFGDFADWERTFGHDFGHFGH23} \end{align} for some fixed constants $\tau_0, M_0 >0$. It can be  shown, analogously to Lemma~\ref{L32} below, that \eqref{SDHNRTYERTDGHFHDFGSDFGRTYFGHDFGDFSFGWERTFSDFGDFADWERTFGHDFGHFGH23} implies \begin{align} \sum_{j=0}^\infty \sum_{k=0}^\infty  \frac{ \bar{\tau}_0^{(j+k-3)_+}}{(j+k-3)!} \Vert \partial_x^j \TT^k (p_0^\epsilon, v_0^\epsilon, S_0^\epsilon) \Vert_{L^2(\Omega)}    \leq Q(M_0),	 \label{SDHNRTYERTDGHFHDFGSDFGRTYFGHDFGDFSFGWERTFSDFGDFADWERTFGHDFGHFGH503} \end{align} for some function $Q$ and some constant $\bar{\tau}_0>0$. In \eqref{SDHNRTYERTDGHFHDFGSDFGRTYFGHDFGDFSFGWERTFSDFGDFADWERTFGHDFGHFGH23}--\eqref{SDHNRTYERTDGHFHDFGSDFGRTYFGHDFGDFSFGWERTFSDFGDFADWERTFGHDFGHFGH503} and below, we use $\TT = (T_1, T_2, T_3)$ to denote the tangential vector fields from Theorem~\ref{P01},  and use $\partial_x = (\partial_1, \partial_2, \partial_3)$ to denote the gradient. We adopt the following agreement for the iterative derivatives $\TT^k$ and $\partial_{x}^j$. The symbol $\TT^k$ is understood in the tensorial sense, i.e., $\TT^k$ stands for the list of all possible operators $T_{\beta_1} \cdots T_{\beta_k}$, where $\beta_i \in \{1,2,3\}$ for $1\leq i \leq k$.  We also adopt the same agreement for $\partial_x^j$. \par For $\tau >0$, we define the mixed weighted analytic space 	\begin{align} 	A(\tau)  	=  	\{ u \in C^\infty (\Omega) \colon  \Vert u \Vert_{A(\tau)} <\infty  \} 	,    \llabel{CN4jfGSbCro5Dv78CxaukYiUIWWyYDRw8z7KjPx7ChC7zJvb1b0rFd7nMxk091wHvy4u5vLLsJ8NmAkWtxuf4P5NwP23b06sFNQ6xgDhuRGbK7j2O4gy4p4BLtop3h2kfyI9wO4AaEWb36YyHYiI1S3COJ7aN1r0sQOrCAC4vL7yrCGkIRlNuGbOuuk1awLDK2zlKa40hyJnDV4iFxsqO001rqCeOAO2es7DRaCpUG54F2i97xSQrcbPZ6K8Kudn9e6SYo396Fr8LUxyXOjdFsMrl54EhT8vrxxF2phKPbszrlpMAubERMGQAaCBu2LqwGasprfIZOiKVVbuVae6abaufy9KcFk6cBlZ5rKUjhtWE1Cnt9RmdwhJRySGVSOVTv9FY4uzyAHSp6yT9s6R6oOi3aqZlL7bIvWZ18cFaiwptC1ndFyp4oKxDfQz28136a8zXwsGlYsh9Gp3TalnrRUKttBKeFr4543qU2hh3WbYw09g2WLIXzvQzMkj5f0xLseH9dscinGwuPJLP1gEN5WqYsSoWPeqjMimTybHjjcbn0NO5hzP9W40r2w77TAoz70N1au09bocDSxGc3tvKLXaC1dKgw9H3o2kEoulIn9TSPyL2HXO7tSZse01Z9HdslDq0tmSOAVqtA1FQzEMKSbakznw839wnH1DpCjGIk5X3B6S6UI7HIgAaf9EV33Bkkuo3FyEi8Ty2ABPYzSWjPj5tYZETYzg6Ix5tATPMdlGke67Xb7FktEszyFycmVhGJZ29aPgzkYj4cErHCdP7XFHUO9zoy4ASDHNRTYERTDGHFHDFGSDFGRTYFGHDFGDFSFGWERTFSDFGDFADWERTFGHDFGHFGH33} 	\end{align} where 	\begin{align} 	\begin{split} 	\Vert u \Vert_{A(\tau)} 	& 	= 	\sum_{(j,k,i)\in \mathbb{N}_0^3}  	\frac{\kappa^{(j-1)_+} \bar{\kappa}^{k}\tau(t)^{(j+k+i-3)_+}}{(j+k+i-3)!} 	\Vert \partial_x^j \TT^k (\epsilon \partial_t)^i u \Vert_{L^2(\Omega)} ; 	\label{SDHNRTYERTDGHFHDFGSDFGRTYFGHDFGDFSFGWERTFSDFGDFADWERTFGHDFGHFGH97} 	\end{split} 	\end{align} here  $\tau \in (0,1]$ represents the mixed space-time analyticity radius,  while $\kappa$ and $\bar{\kappa}$, where $0 < \kappa \leq  \bar{\kappa} \leq 1$ are parameters which represent the balances of radii in different directions. In \eqref{SDHNRTYERTDGHFHDFGSDFGRTYFGHDFGDFSFGWERTFSDFGDFADWERTFGHDFGHFGH97} and below we use the convention $n!= 1$ for $n \in -\mathbb{N}$.  We show in Section~\ref{sec07} that \eqref{SDHNRTYERTDGHFHDFGSDFGRTYFGHDFGDFSFGWERTFSDFGDFADWERTFGHDFGHFGH503} implies, regardless of the choices of $\kappa \in (0,1]$ and $\bar \kappa \in (0,1]$, that we have \begin{align} \Vert (p_0, v_0, S_0) \Vert_{A(\tilde{\tau}_0)} \leq Q(M_0) , \llabel{ZaiSROpIn0tp7kZzUVHQtm3ip3xEd41By72uxIiY8BCLbOYGoLDwpjuza6iPakZdhaD3xSXyjpdOwoqQqJl6RFglOtX67nm7s1lZJmGUrdIdXQ7jps7rcdACYZMsBKANxtkqfNhktsbBf2OBNZ5pfoqSXtd3cHFLNtLgRoHrnNlwRnylZNWVNfHvOB1nUAyjtxTWW4oCqPRtuVuanMkLvqbxpNi0xYnOkcdFBdrw1Nu7cKybLjCF7P4dxj0Sbz9faVCWkVFos9t2aQIPKORuEjEMtbSHsYeG5Z7uMWWAwRnR8FwFCzXVVxnFUfyKLNk4eOIlyn3ClI5HP8XP6S4KFfIl62VlbXgcauth861pUWUx2aQTWgrZwcAx52TkqoZXVg0QGrBrrpeiwuWyJtd9ooD8tUzAdLSnItarmhPAWBmnmnsbxLIqX4RQSTyoFDIikpeILhWZZ8icJGa91HxRb97knWhp9sAVzPo8560pRN2PSMGMMFK5XW52OnWIyoYngxWno868SKbbu1Iq1SyPkHJVCvseVGWrhUdewXw6CSY1be3hD9PKha1y0SRwyxiAGzdCMVMmiJaemmP8xrbJXbKLDYE1FpXUKADtF9ewhNefd2XRutTl1HYJVp5cAhM1JfK7UIcpkdTbEndM6FWHA72PgLHzXlUo39oW90BuDeJSlnVRvz8VDV48tId4DtgFOOa47LEH8QwnRGNBM0RRULluASzjxxwGIBHmVyyLdkGww5eEgHFvsFUnzl0vgOaQDCVEz64r8UvVHTtDykrEuFaS35p5yn6QSDHNRTYERTDGHFHDFGSDFGRTYFGHDFGDFSFGWERTFSDFGDFADWERTFGHDFGHFGH98} \end{align} for some function $Q$, where $\tilde{\tau}_0 = \bar{\tau}_0 / Q(M_0)$ represents the mixed space-time analyticity radius. Note that the time derivatives of the initial data are defined iteratively by differentiating the equations \eqref{SDHNRTYERTDGHFHDFGSDFGRTYFGHDFGDFSFGWERTFSDFGDFADWERTFGHDFGHFGH01}--\eqref{SDHNRTYERTDGHFHDFGSDFGRTYFGHDFGDFSFGWERTFSDFGDFADWERTFGHDFGHFGH02} and then evaluating at $t=0$.  The analyticity radius function is defined as \begin{align} \tau(t) = \tau(0) - Kt, \label{SDHNRTYERTDGHFHDFGSDFGRTYFGHDFGDFSFGWERTFSDFGDFADWERTFGHDFGHFGH99} \end{align} where $\tau(0) \leq \min\{ 1, \tilde{\tau}_0 \}$ is a sufficiently small parameter (different from $\tilde{\tau}_0$),  and $K\geq 1$ is a sufficiently large parameter, both to be determined below. \par Our first main theorem provides a uniform in $\epsilon$ boundedness of the analytic norm on a time interval independent of $\epsilon$. \par \cole \begin{Theorem} \label{T01} \rm Let $\Omega \subseteq \mathbb{R}^3$ be an exterior domain with analytic boundary $\partial \Omega$. We assume that the initial data $(p^\epsilon_0, v^\epsilon_0, S^\epsilon_0)$ satisfies \eqref{SDHNRTYERTDGHFHDFGSDFGRTYFGHDFGDFSFGWERTFSDFGDFADWERTFGHDFGHFGH109}--\eqref{SDHNRTYERTDGHFHDFGSDFGRTYFGHDFGDFSFGWERTFSDFGDFADWERTFGHDFGHFGH23} for some fixed constants $M_0, \tau_0>0$.  Also, suppose that $(p_0^\epsilon, v_0^\epsilon, S_0^\epsilon)$ satisfies compatibility condition of all orders. Then there exist sufficiently small constants $\kappa$, $\bar{\kappa}$, $\tau(0)$, $\epsilon_0$, $T_0>0$, depending on $M_0$, such that \begin{align} \Vert (p^\epsilon, v^\epsilon, S^\epsilon)(t) \Vert_{A(\tau)}  \leq M \comma 0<\epsilon \leq \epsilon_0 \commaone t\in [0,T_0] , \llabel{ZUcX3mfETExz1kvqEpOVVEFPIVpzQlMOIZ2yTTxIUOm0fWL1WoxCtlXWs9HU4EF0IZ1WDv3TP42LN7TrSuR8uMv1tLepvZoeoKLxf9zMJ6PUIn1S8I4KY13wJTACh5Xl8O5g0ZGwDdtu68wvrvnDCoqYjJ3nFKWMAK8VOeGo4DKxnEOyBwgmttcES8dmToAD0YBFlyGRBpBbo8tQYBwbSX2lcYnU0fhAtmyR3CKcUAQzzETNgbghHT64KdOfLqFWuk07tDkzfQ1dgBcw0LSYlr79U81QPqrdfH1tb8kKnDl52FhCj7TXiP7GFC7HJKfXgrP4KOOg18BM001mJPTpubQr61JQu6oGr4baj60kzdXoDgAOX2DBkLymrtN6T7us2Cp6eZm1aVJTY8vYPOzMnsAqs3RL6xHumXNAB5eXnZRHaiECOaaMBwAb15iFWGucZlU8JniDNKiPGWzq41iBj1kqbakZFSvXqvSiRbLTriSy8QYOamQUZhOrGHYHWguPBzlAhuao59RKUtrF5KbjsKseTPXhUqRgnNALVtaw4YJBtK9fN7bN9IEwKLTYGtnCcc2nfMcx7VoBt1IC5teMHX4g3JK4JsdeoDl1Xgbm9xWDgZ31PchRS1R8W1hap5Rh6JjyTNXSCUscxK4275D72gpRWxcfAbZY7Apto5SpTzO1dPAVyZJiWCluOjOtEwxUB7cTtEDqcAbYGdZQZfsQ1AtHyxnPL5K7D91u03s8K20rofZ9w7TjxyG7qbCAhssUZQuPK7xUeK7F4HKfrCEPJrgWHDZQpvRkSDHNRTYERTDGHFHDFGSDFGRTYFGHDFGDFSFGWERTFSDFGDFADWERTFGHDFGHFGH190} \end{align} where $\tau$ is as in \eqref{SDHNRTYERTDGHFHDFGSDFGRTYFGHDFGDFSFGWERTFSDFGDFADWERTFGHDFGHFGH99} and $K$ and $M$ are sufficiently large constants depending on $M_0$. \end{Theorem}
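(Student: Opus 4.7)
The plan is a continuation/bootstrap argument anchored on the a priori estimate Lemma~\ref{L12}. Classical symmetric hyperbolic theory applied to \eqref{SDHNRTYERTDGHFHDFGSDFGRTYFGHDFGDFSFGWERTFSDFGDFADWERTFGHDFGHFGH01}--\eqref{SDHNRTYERTDGHFHDFGSDFGRTYFGHDFGDFSFGWERTFSDFGDFADWERTFGHDFGHFGH02} with the boundary condition \eqref{SDHNRTYERTDGHFHDFGSDFGRTYFGHDFGDFSFGWERTFSDFGDFADWERTFGHDFGHFGH03} and the compatibility conditions of all orders produces a classical solution on an $\epsilon$-dependent interval. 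Set $M=2Q(M_0)$ with $Q$ as in \eqref{SDHNRTYERTDGHFHDFGSDFGRTYFGHDFGDFSFGWERTFSDFGDFADWERTFGHDFGHFGH503}, and let $T_{\max}\in(0,T_0]$ be the supremum of times with $\tau(t)>0$ and $\Vert u^\epsilon(s)\Vert_{A(\tau)}\leq M$ for all $s\le t$. Granted Lemma~\ref{L12}, integrating the differential inequality coming from $\partial_t \Vert u^\epsilon\Vert_{A(\tau)}^2$ and choosing $T_0$ small, $K$ large, $\epsilon_0$ small forces $\Vert u^\epsilon(t)\Vert_{A(\tau)}\leq M/2$ throughout $[0,T_{\max}]$, which by continuity yields $T_{\max}=T_0$ and proves the theorem.

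The individual bound on each $\partial_x^j \TT^k (\epsilon\partial_t)^i u^\epsilon$ is obtained in stages. First, the entropy equation \eqref{SDHNRTYERTDGHFHDFGSDFGRTYFGHDFGDFSFGWERTFSDFGDFADWERTFGHDFGHFGH02} is a pure transport, so a weighted energy identity controls $\TT^k (\epsilon\partial_t)^i S^\epsilon$ in $L^2$, and no $\epsilon^{-1}$ appears since $\epsilon\partial_t S^\epsilon=-\epsilon v^\epsilon\cdot\nabla S^\epsilon$. Second, pure time derivatives of $u^\epsilon$ are traded for spatial derivatives by inverting $E$ in \eqref{SDHNRTYERTDGHFHDFGSDFGRTYFGHDFGDFSFGWERTFSDFGDFADWERTFGHDFGHFGH01}, again avoiding $\epsilon^{-1}$ because the singular coefficient $1/\epsilon$ comes paired with $\epsilon\partial_t$. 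Third, spatial derivatives of the velocity are decomposed into a divergence part and a curl part: $\dive v^\epsilon$ is controlled through the pressure equation in \eqref{SDHNRTYERTDGHFHDFGSDFGRTYFGHDFGDFSFGWERTFSDFGDFADWERTFGHDFGHFGH01} after applying $\epsilon\partial_t$, while $\curl v^\epsilon$ satisfies a vorticity-type transport equation. Normal spatial derivatives near $\partial\Omega$ are then reduced through the decomposition \eqref{SDHNRTYERTDGHFHDFGSDFGRTYFGHDFGDFSFGWERTFSDFGDFADWERTFGHDFGHFGH11} of Theorem~\ref{P01}: the tangential part is directly absorbed by $\TT$-derivatives, while the $X_0$-part is recovered by an elliptic div-curl estimate driven by the already-controlled divergence and curl together with the no-penetration condition \eqref{SDHNRTYERTDGHFHDFGSDFGRTYFGHDFGDFSFGWERTFSDFGDFADWERTFGHDFGHFGH03}. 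Iterating this normal-to-tangential trade yields a closed recursion on the triple index $(j,k,i)$.

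The main obstacle is the combinatorial bookkeeping when commuting $\partial_x^j \TT^k(\epsilon\partial_t)^i$ through the variable coefficients $v^\epsilon\cdot\nabla$, $E(S^\epsilon,\epsilon u^\epsilon)$, and $E^{-1}(S^\epsilon,\epsilon u^\epsilon)$ (the latter expanded by Fa\`a di Bruno using the analyticity of $f_1,f_2,g_1,g_2$), together with the coefficients $b_{ij},\xi_k,\eta_{jk}$ arising from Theorem~\ref{P01}. The weighted norm \eqref{SDHNRTYERTDGHFHDFGSDFGRTYFGHDFGDFSFGWERTFSDFGDFADWERTFGHDFGHFGH97} is designed precisely so that these Leibniz-type sums telescope into products of $A(\tau)$-norms with a universal combinatorial constant, since the factorial denominator depends only on the total count $j+k+i$ and since the geometric coefficients of Theorem~\ref{P01} have constant analyticity radii that do not consume any $\tau$. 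The three small parameters decouple the losses: $\bar{\kappa}$ absorbs the factor picked up each time a normal derivative is traded for tangential plus $X_0$ (which costs one derivative back through the div-curl elliptic estimate), $\kappa$ controls the Leibniz losses against the non-tangential coefficients, and the favorable term $-K\tau(t)^{(j+k+i-4)_+}/(j+k+i-4)!$ produced by $\partial_t\tau$ absorbs the genuine gain-of-one-derivative in commutators when $K$ is large relative to $M$. Closing the bootstrap thus reduces to choosing in order $M$, then $\bar{\kappa}$ and $\kappa$, then $K$, and finally $\tau(0),T_0,\epsilon_0$ sufficiently small depending on $M_0$, so that the $A(\tau)$ norm at most doubles on $[0,T_0]$ uniformly in $\epsilon\in(0,\epsilon_0]$.
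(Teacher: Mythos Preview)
Your proposal is correct and follows essentially the same strategy as the paper, which likewise reduces Theorem~\ref{T01} to the a~priori estimate of Lemma~\ref{L12} via a continuation/bootstrap argument (referring to \cite{JKL} for that step), and establishes Lemma~\ref{L12} by combining the transport estimate for $S$, a div--curl based reduction of spatial derivatives of $v$, a vorticity-type transport estimate for $\curl v$, the rewriting \eqref{SDHNRTYERTDGHFHDFGSDFGRTYFGHDFGDFSFGWERTFSDFGDFADWERTFGHDFGHFGH178} for the divergence, and energy estimates for pure time derivatives. The only notable deviations are mechanical: the paper's normal-derivative reduction (Lemma~\ref{L01}) goes through $H^2$ elliptic regularity together with $\Delta=\nabla\dive-\curl\curl$ rather than through the pointwise decomposition \eqref{SDHNRTYERTDGHFHDFGSDFGRTYFGHDFGDFSFGWERTFSDFGDFADWERTFGHDFGHFGH11} (which is only available on $\bar\Omega_{\eta_0}$), and the roles you describe for $\kappa$ and $\bar\kappa$ are swapped relative to the paper's convention, where $\kappa$ weights $\partial_x$-derivatives and $\bar\kappa$ weights $\TT$-derivatives with $\kappa\le\bar\kappa/C$.
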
 \colb \par We note that the same statement and the proof apply the case of a bounded domain. \par The second main result states that the solutions of \eqref{SDHNRTYERTDGHFHDFGSDFGRTYFGHDFGDFSFGWERTFSDFGDFADWERTFGHDFGHFGH01}--\eqref{SDHNRTYERTDGHFHDFGSDFGRTYFGHDFGDFSFGWERTFSDFGDFADWERTFGHDFGHFGH02}  converge to the solution of the stratified incompressible Euler equations \begin{align} &r(S, 0) (\partial_t v + v\cdot \nabla v) + \nabla \pi = 0, \label{SDHNRTYERTDGHFHDFGSDFGRTYFGHDFGDFSFGWERTFSDFGDFADWERTFGHDFGHFGH202} \\& \dive v = 0, \label{SDHNRTYERTDGHFHDFGSDFGRTYFGHDFGDFSFGWERTFSDFGDFADWERTFGHDFGHFGH203} \\& \partial_t S + v \cdot \nabla S = 0 , \label{SDHNRTYERTDGHFHDFGSDFGRTYFGHDFGDFSFGWERTFSDFGDFADWERTFGHDFGHFGH204} \end{align} as $\epsilon \to 0$. We define the spatial analytic space \begin{align} X(\delta)  = \{ u \in C^\infty (\Omega) \colon  \Vert u \Vert_{X(\delta)} <\infty  \} , 	 \llabel{O8XveaSBOXSeeXV5jkgzLUTmMbomaJfxu8gArndzSIB0YQSXvcZW8voCOoOHyrEuGnS2fnGEjjaLzZIocQegwHfSFKjW2LbKSnIcG9WnqZya6qAYMSh2MmEAsw18nsJFYAnbrxZT45ZwBsBvK9gSUgyBk3dHqdvYULhWgGKaMfFk78mP20meVaQp2NWIb6hVBSeSVwnEqbq6ucnX8JLkIRJbJEbwEYwnvLBgM94Gplclu2s3Um15EYAjs1GLnhzG8vmhghsQcEDE1KnaHwtuxOgUDLBE59FLxIpvuKfJEUTQSEaZ6huBCaKXrlnir1XmLKH3hVPrqixmTkRzh0OGpOboN6KLCE0GaUdtanZ9Lvt1KZeN5GQcLQLL0P9GXuakHm6kqk7qmXUVH2bUHgav0Wp6Q8JyITzlpqW0Yk1fX8gjGcibRarmeSi8lw03WinNXw1gvvcDeDPSabsVwZu4haO1V2DqwkJoRShjMBgryglA93DBdS0mYAcEl5aEdpIIDT5mbSVuXo8NlY24WCA6dfCVF6Ala6iNs7GChOvFAhbxw9Q71ZRC8yRi1zZdMrpt73douogkAkGGE487Vii4OfwJesXURdzVLHU0zms8W2ZtziY5mw9aBZIwk5WNmvNM2HdjnewMR8qp2VvupcV4PcjOGeu35u5cQXNTykfTZXAJHUnSs4zxfHwf10ritJYoxRto5OMFPhakRgzDYPm02mG18vmfV11Nn87zSX59DE0cN99uEUz2rTh1FP8xjrmq2Z7utpdRJ2DdYkjy9JYkoc38KduZ9vydSDHNRTYERTDGHFHDFGSDFGRTYFGHDFGDFSFGWERTFSDFGDFADWERTFGHDFGHFGH504} \end{align} where \begin{align} \Vert u \Vert_{X(\delta)} = \sum_{j=0}^\infty \frac{\delta^{(j-3)_+}}{(j-3)!} \Vert \partial_x^j  u \Vert_{L^2(\Omega)} , \llabel{OwkO0djhXSxSvHwJoXE79f8qhiBr8KYTxOfcYYFsMyj0HvK3ayUwt4nA5H76bwUqyJQodOu8UGjbt6vlcxYZt6AUxwpYr18uOv62vjnwFrCrfZ4nlvJuh2SpVLOvpOlZnPTG07VReixBmXBxOBzpFW5iBIO7RVmoGnJu8AxolYAxlJUrYKVKkpaIkVCuPiDO8IHPUndzeLPTILBP5BqYyDLZDZadbjcJAT644Vp6byb1g4dE7YdzkeOYLhCReOmmxF9zsu0rp8Ajzd2vHeo7L5zVnL8IQWnYATKKV1f14s2JgeCb3v9UJdjNNVBINix1q5oyrSBM2Xtgrv8RQMaXka4AN9iNinzfHxGpA57uAE4jMfg6S6eNGKvJL3tyH3qwdPrx2jFXW2WihpSSxDraA7PXgjK6GGlOg5PkRd2n53eEx4NyGhd8ZRkONMQqLq4sERG0CssQkdZUaOvWrplaBOWrSwSG1SM8Iz9qkpdv0CRMsGcZLAz4Gk70eO7k6df4uYnR6T5DuKOTsay0DawWQvn2UOOPNqQT7H4HfiKYJclRqM2g9lcQZcvCNBP2BbtjvVYjojrrh78tWR886ANdxeASVPhK3uPrQRs6OSW1BwWM0yNG9iBRI7opGCXkhZpEo2JNtkyYOpCY9HL3o7Zu0J9FTz6tZGLn8HAeso9umpyucs4l3CA6DCQ0m0llFPbc8z5Ad2lGNwSgAXeNHTNpwdS6e3ila2tlbXN7c1itXaDZFakdfJkz7TzaO4kbVhnYHfTda9C3WCbtwMXHWxoCCc4Ws2CUHBsNSDHNRTYERTDGHFHDFGSDFGRTYFGHDFGDFSFGWERTFSDFGDFADWERTFGHDFGHFGH169} \end{align} for some constant $\delta>0$. \par \cole \begin{Theorem} \label{T02} Let $\Omega \subseteq \mathbb{R}^3$ be an exterior domain with an analytic boundary. Assume that the initial data $(p^\epsilon_0, v^\epsilon_0, S^\epsilon_0)$ satisfy \eqref{SDHNRTYERTDGHFHDFGSDFGRTYFGHDFGDFSFGWERTFSDFGDFADWERTFGHDFGHFGH109}--\eqref{SDHNRTYERTDGHFHDFGSDFGRTYFGHDFGDFSFGWERTFSDFGDFADWERTFGHDFGHFGH23} uniformly for fixed $\tau_0, M_0>0$ and the compatibility condition of all orders. Also, suppose that $(v_0^\epsilon, S_0^\epsilon)$ converge to $(v_0, S_0)$ in $H^{3}(\Omega)$,  with $S_0^\epsilon$ decaying in the sense of \begin{align} \vert S_0^\epsilon (x) \vert \les \vert x \vert^{-1-\zeta}, \textand |\nabla S_0^\epsilon (x) \vert          \les |x \vert^{-2-\zeta}, \llabel{LFEfjS4SGI4I4hqHh2nCaQ4nMpnzYoYE5fDsXhCHJzTQOcbKmvEplWUndVUorrqiJzRqTdIWSQBL96DFUd64k5gvQh0djrGlw795xV6KzhTl5YFtCrpybHH86h3qnLyzyycGoqmCbfh9hprBCQpFeCxhUZ2oJF3aKgQH8RyImF9tEksgPFMMJTAIyz3ohWjHxMR86KJONKTc3uyRNnSKHlhb11Q9Cwrf8iiXqyYL4zh9s8NTEve539GzLgvhDN7FeXo5kAWAT6VrwhtDQwytuHOa5UIOExbMpV2AHpuuCHWItfOruxYfFqsaP8ufHF16CEBXKtj6ohsuvT8BBPDNgGfKQg6MBK2x9jqRbHmjIUEKBIm0bbKacwqIXijrFuq9906Vym3Ve1gBdMy9ihnbA3gBo5aBKK5gfJSmNeCWwOMt9xutzwDkXIY7nNhWdDppZUOq2Ae0aW7A6XoIcTSLNDZyf2XjBcUweQTZtcuXIDYsDhdAu3VMBBBKWIcFNWQdOu3Fbc6F8VN77DaIHE3MZluLYvBmNZ2wEauXXDGpeKRnwoUVB2oMVVehW0ejGgbgzIw9FwQhNYrFI4pTlqrWnXzz2qBbalv3snl2javzUSncpwhcGJ0Di3Lr3rs6F236obLtDvN9KqApOuold3secxqgSQNZNfw5tBGXPdvW0k6G4Byh9V3IicOnR2obfx3jrwt37u82fwxwjSmOQq0pq4qfvrN4kFWhPHRmylxBx1zCUhsDNYINvLdtVDG35kTMT0ChPEdjSG4rWN6v5IIMTVB5ycWuYOoUSDHNRTYERTDGHFHDFGSDFGRTYFGHDFGDFSFGWERTFSDFGDFADWERTFGHDFGHFGH157} \end{align} for $0<\epsilon \leq \epsilon_0$ and some constant $\zeta>0$. Then $(v^\epsilon, p^\epsilon, S^\epsilon)$ converges to  $(v^{({\rm inc})}, 0, S^{({\rm inc})})\in L^{\infty}([0,T_0],X(\delta))$ in $L^2 ([0,T_0], X(\delta))$, where $\delta \in (0, \tau_0]$ is a sufficiently small constant and $(v^{({\rm inc})}, S^{({\rm inc})})$ is the solution to \eqref{SDHNRTYERTDGHFHDFGSDFGRTYFGHDFGDFSFGWERTFSDFGDFADWERTFGHDFGHFGH202}--\eqref{SDHNRTYERTDGHFHDFGSDFGRTYFGHDFGDFSFGWERTFSDFGDFADWERTFGHDFGHFGH204} with the initial data $(w_0, S_0)$, and $w_0$ is the unique solution of  \begin{align} &\dive w_0 = 0,     \llabel{6SevyecOTfZJvBjSZZkM68vq4NOpjX0oQ7rvMvmyKftbioRl5c4ID72iFH0VbQzhjHU5Z9EVMX81PGJssWedmhBXKDAiqwUJVGj2rIS92AntBn1QPR3tTJrZ1elVoiKUstzA8fCCgMwfw4jKbDberBRt6T8OZynNOqXc53PgfLK9oKe1pPrYBBZYuuiCwXzA6kaGbtwGpmRTmKviwHEzRjhTefripvLAXk3PkLNDg5odcomQj9LYIVawVmLpKrto0F6Ns7MmkcTL9Tr8fOT4uNNJvZThOQwCOCRBHRTxhSBNaIizzbKIBEcWSMYEhDkRtPWGKtUmo26acLbBnI4t2P11eRiPP99nj4qQ362UNAQaHJPPY1OgLhN8sta9eJzPgmE4zQgB0mlAWBa4Emu7mnfYgbNLzddGphhJV9hyAOGCNjxJ83Hg6CAUTnusW9pQrWv1DfVlGnWxMBbe9WwLtOdwDERmlxJ8LTqKWTtsR0cDXAfhRX1zXlAUuwzqnO2o7rtoiSMrOKLCqjoq1tUGGiIxuspoiitjaNRngtxS0r98rwXF7GNiepzEfAO2sYktIdgH1AGcRrd2w89xoOKyNnLaLRU03suU3JbS8dok8tw9NQSY4jXY625KCcPLyFRlSp759DeVbY5b69jYOmdfb99j15lvLvjskK2gEwlRxOtWLytZJ1yZ5Pit35SOiivz4F8tqMJIgQQiOobSpeprt2vBVqhvzkLlf7HXA4soMXjWdMS7LeRDiktUifLJHukestrvrl7mYcSOB7nKWMD0xBqkbxFgTTNISDHNRTYERTDGHFHDFGSDFGRTYFGHDFGDFSFGWERTFSDFGDFADWERTFGHDFGHFGH36} \\& \curl (r_0 w_0) = \curl (r_0 v_0) ,    \llabel{weyVIG6Uy3dL0C3MzFxsBE7zUhSetBQcX7jn22rr0yL1ErbpLRm3ida5MdPicdnMOiZCyGd2MdKUbxsaI9TtnHXqAQBjuN5I4Q6zz4dSWYUrhxTCuBgBUT992uczEmkqK1ouCaHJBR0Qnv1artFiekBu49ND9kK9eKBOgPGzqfKJ67NsKz3BywIwYxEoWYf6AKuyVPj8B9D6quBkFCsKHUDCksDYK3vs0Ep3gM2EwlPGjRVX6cxlbVOfAll7g6yL9PWyo58h0e07HO0qz8kbe85ZBVCYOKxNNLa4aFZ7mw7moACU1q1lpfmE5qXTA0QqVMnRsbKzHo5vX1tpMVZXCznmSOM73CRHwQPTlvVN7lKXI06KT6MTjO3Yb87pgozoxydVJHPL3k2KRyx3b0yPBsJmNjETPJi4km2fxMh35MtRoirNE9bU7lMo4bnj9GgYA6vsEsONRtNmDFJej96STn3lJU2u16oTEXogvMqwhD0BKr1CisVYbA2wkfX0n4hD5Lbr8l7ErfuN8OcUjqeqzCCyx6hPAyMrLeB8CwlkThixdIzviEWuwI8qKa0VZEqOroDUPGphfIOFSKZ3icda7Vh3ywUSzkkW8SfU1yHN0A14znyPULl6hpzlkq7SKNaFqg9Yhj2hJ3pWSmi9XgjapmMZ6HV8yjigpSNlI9T8eLhc1eRRgZ885eNJ8w3secl5ilCdozV1BoOIk9gDZNY5qgVQcFeTDVxhPmwPhEU41Lq35gCzPtc2oPugVKOp5Gsf7DFBlektobd2yuDtElXxmj1usDJJ6hj0SDHNRTYERTDGHFHDFGSDFGRTYFGHDFGDFSFGWERTFSDFGDFADWERTFGHDFGHFGH35} \end{align} with $r_0 = r(S_0, 0)$. \end{Theorem}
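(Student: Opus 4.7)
The starting point is Theorem~\ref{T01}, which provides the uniform-in-$\epsilon$ bound $\sup_{t\in[0,T_0]} \Vert(p^\epsilon, v^\epsilon, S^\epsilon)(t)\Vert_{A(\tau(t))}\leq M$. From this bound, a diagonal subsequence extraction yields a weak-$*$ limit which inherits a uniform analytic estimate and thus lies in $L^\infty([0,T_0], X(\delta))$ for any $\delta\leq\tau(T_0)$. Next, I identify this limit as $(v^{(\mathrm{inc})}, 0, S^{(\mathrm{inc})})$ by passing to the limit in \eqref{SDHNRTYERTDGHFHDFGSDFGRTYFGHDFGDFSFGWERTFSDFGDFADWERTFGHDFGHFGH01}--\eqref{SDHNRTYERTDGHFHDFGSDFGRTYFGHDFGDFSFGWERTFSDFGDFADWERTFGHDFGHFGH02}: the acoustic operator $\frac{1}{\epsilon}L(\partial_x)$ forces $\nabla p=0$ and $\dive v=0$ in the limit, and since $p^\epsilon\to 0$ at infinity, the only admissible pressure limit is $0$. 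The initial velocity of the limit is identified via the weighted Helmholtz decomposition $v_0 = w_0 + \nabla\phi/r_0$, where the exterior decay assumption on $S_0^\epsilon$ ensures the auxiliary elliptic problem for $\phi$ is well-posed in the unbounded domain and that $r_0\to\text{const}$ at infinity.

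The heart of the argument is upgrading weak convergence to strong $L^2([0,T_0], X(\delta))$ convergence. Following the approach of Alazard~\cite{A05}, I first establish convergence in a low-regularity Sobolev norm, say $L^2([0,T_0], H^3)$: the fact that $\Omega$ is exterior is essential here, since acoustic waves disperse to infinity yielding local energy decay for the acoustic component and in particular $\Vert p^\epsilon\Vert_{L^2_tL^2_{\mathrm{loc}}}\to 0$ and strong convergence of the compressible part of $v^\epsilon$. The decay assumption on $S_0^\epsilon$ is used here to control the propagation of the entropy and to justify the spectral/Morawetz estimates for the acoustic operator with variable, spatially-varying coefficients $r(S^\epsilon,\epsilon u^\epsilon)$ and $a(S^\epsilon,\epsilon u^\epsilon)$.

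To lift the $L^2([0,T_0], H^3)$ convergence to $L^2([0,T_0], X(\delta))$, I invoke the simple interpolation inequality~\eqref{SDHNRTYERTDGHFHDFGSDFGRTYFGHDFGDFSFGWERTFSDFGDFADWERTFGHDFGHFGH228} of the form $\Vert \partial_x^j f\Vert_{L^2(\Omega)} \lesssim \Vert f\Vert_{H^3(\Omega)}^{\theta_j} \Vert \partial_x^{N_j} f\Vert_{L^2(\Omega)}^{1-\theta_j}$ for suitable $\theta_j\in(0,1)$ and $N_j$. This avoids boundary terms entirely because only interior $L^2$ norms of pure derivatives appear. For $\delta$ sufficiently small compared to $\tau(T_0)$, the uniform analytic bound supplied by Theorem~\ref{T01} together with the parallel bound for the limit controls the high-derivative factors, while $\Vert u^\epsilon - u^{(\mathrm{inc})}\Vert_{H^3}^{\theta_j}\to 0$ drives the $j$-th term to zero pointwise in $t$. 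Summing these contributions against the weights $\delta^{(j-3)_+}/(j-3)!$ produces a majorant of the form $C\cdot(\delta/\tau(T_0))^{(j-3)_+}$, which is summable once $\delta<\tau(T_0)$. A discrete dominated convergence theorem applied to the sum in $j$ and, after integrating in time, a further dominated convergence in $t\in[0,T_0]$ then yields the claimed $L^2([0,T_0], X(\delta))$ convergence.

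The principal obstacle is the joint treatment of the fast acoustic oscillations and the infinite-order spatial regularity required by $X(\delta)$; any direct high-regularity energy estimate on the difference $u^\epsilon - u^{(\mathrm{inc})}$ would introduce boundary commutators (multiplied by the large factor $1/\epsilon$) that cannot be absorbed. The decisive trick, inherited from the Sobolev theory, is that the interpolation \eqref{SDHNRTYERTDGHFHDFGSDFGRTYFGHDFGDFSFGWERTFSDFGDFADWERTFGHDFGHFGH228} decouples the two difficulties: strong convergence of the low-regularity part is delivered by the exterior dispersion mechanism, while the analytic tail is passively controlled by the uniform bound of Theorem~\ref{T01}. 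This decoupling is precisely what distinguishes the present argument from \cite[Section~7]{JKL}, where the interpolation employed there produced uncontrollable boundary contributions.
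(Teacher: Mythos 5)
Your plan matches the paper's proof in all essentials: invoke Theorem~\ref{T01} for the uniform analytic bound, invoke Alazard's convergence result in a low Sobolev norm, and lift to $X(\delta)$ via the $j$-uniform interpolation inequality~\eqref{SDHNRTYERTDGHFHDFGSDFGRTYFGHDFGDFSFGWERTFSDFGDFADWERTFGHDFGHFGH228} together with a discrete dominated convergence argument, the whole point being that this interpolation introduces no boundary terms. The only differences are cosmetic: the paper argues by contradiction and shows that any sequence $\epsilon_n\to0$ yields a Cauchy family in $L^2_tX(\delta)$ (after quoting Alazard's $L^2_tL^2_x$ convergence and deriving $L^2_tH^3$ convergence by the same interpolation, cf.~\eqref{SDHNRTYERTDGHFHDFGSDFGRTYFGHDFGDFSFGWERTFSDFGDFADWERTFGHDFGHFGH236}), whereas you propose a direct argument and sketch re-deriving Alazard's dispersion/identification machinery rather than citing it. Neither difference changes the substance of the proof.
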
 \colb \par In the rest of the paper, we omit the superscript $\epsilon$, and write $S$, $u$, and $v$ for $S^\epsilon$, $u^\epsilon$, and $v^{\epsilon}$. The constant $C$ and the function $Q$ denote a generic constant and a positive increasing function, respectively, which depend only on $M_0$, $\tau_0$, and the exterior domain $\Omega$; they may vary from an inequality to an inequality.  The domain of dependence in the norms is understood to be $\Omega$ unless stated otherwise. We write $ 	a\les b $ if there exists a constant $C>0$ such that $ 	a\leq C b $. \par In order to prove Theorem~\ref{T01}, we establish  analytic a~priori estimates of the entropy $S$ and the (modified) velocity $u$.  The a~priori estimate needed to prove Theorem~\ref{T01} is the following. \par \cole \begin{Lemma} \label{L12} Let $M_0, \tau_0>0$. For sufficiently small parameters $\kappa$ and $\bar{\kappa}$ satisfying $0< \kappa \leq \bar{\kappa} \leq 1$, there exist constants $\tau_1$, $\epsilon_0$, $T_0>0$,  and a nonnegative continuous function $Q$ such that for all $\epsilon \in (0,\epsilon_0]$, the norm  \begin{align} M_{\epsilon, \kappa, \bar{\kappa}} (T) = \sup_{t\in [0,T]} (\Vert S(t) \Vert_{A(\tau(t))}  +  \Vert u(t) \Vert_{A(\tau(t))}) \llabel{HBVFanTvabFAVwM51nUH60GvT9fAjTO4MQVzNNAQiwSlSxf2pQ8qvtdjnvupLATIwym4nEYESfMavUgZoyehtoe9RTN15EI1aKJSCnr4MjiYhB0A7vnSAYnZ1cXOI1V7yja0R9jCTwxMUiMI5l2sTXnNRnVi1KczLG3MgJoEktlKoU13tsaqjrHYVzfb1yyxunpbRA56brW45IqhfKo0zj04IcGrHirwyH2tJbFr3leRdcpstvXe2yJlekGVFCe2aD4XPOuImtVoazCKO3uRIm2KFjtm5RGWCvkozi75YWNsbhORnxzRzw99TrFhjhKbfqLAbe2v5n9mD2VpNzlMnntoiFZB2ZjXBhhsK8K6cGiSbRkkwfWeYJXdRBBxyqjEVF5lr3dFrxGlTcsbyAENcqA981IQ4UGpBk0gBeJ6Dn9Jhkne5f518umOuLnIaspzcRfoC0StSy0DF8NNzF2UpPtNG50tqKTk2e51yUbrsznQbeIuiY5qaSGjcXiEl45B5PnyQtnUOMHiskTC2KsWkjha6loMfgZKG3nHph0gnNQ7q0QxsQkgQwKwyhfP5qFWwNaHxSKTA63ClhGBgarujHnGKf46FQtVtSPgEgTeY6fJGmB3qgXxtR8RTCPB18kQajtt6GDrKb1VYLV3RgWIrAyZf69V8VM7jHOb7zLvaXTTVI0ONKMBAHOwOZ7dPkyCgUS74HlnFZMHabr8mlHbQNSwwdomOL6q5wvRexVejvVHkCEdXm3cU54juZSKng8wcj6hR1FnZJbkmgKXJgFm5qZ5SubXvPKDSDHNRTYERTDGHFHDFGSDFGRTYFGHDFGDFSFGWERTFSDFGDFADWERTFGHDFGHFGH184} \end{align} satisfies the estimate \begin{align} M_{\epsilon, \kappa,\bar{\kappa}} (t) \les 1 + \left( t + \kappa + \bar{\kappa} + \epsilon + \tau(0) \right) Q(M_{\epsilon, \kappa, \bar{\kappa}}(t)) ,    \llabel{BOCGf4srh1a5FL0vYfRjJwUm2sfCogRhabxyc0RgavaRbkjzlteRGExbEMMhLZbh3axosCqu7kZ1Pt6Y8zJXtvmvPvAr3LSWDjbVPN7eNu20r8Bw2ivnkzMda93zWWiUBHwQzahUiji2TrXI8v2HNShbTKLeKW83WrQKO4TZm57yzoVYZJytSg2Wx4YafTHAxS7kacIPQJGYdDk0531u2QIKfREWYcMKMUT7fdT9EkIfUJ3pMW59QLFmu02YHJaa2Er6KSIwTBGDJYZwvfSJQby7fdFWdfT9zU27ws5oU5MUTDJzKFNojdXRyBaYybTvnhh2dV77oFFlt4H0RNZjVJ5BJpyIqAOWWcefdR27nGkjmoEFHjanXf1ONEcytoINtD90ONandawDRKi2DJzAqYHGCTB0pzdBa3OotPq1QVFvaYNTVz2sZJ6eyIg2N7PgilKLF9NzcrhuLeCeXwb6cMFExflJSE8Ev9WHgQ1Brp7ROMACwvAnATqGZHwkdHA5fbABXo6EWHsoW6HQYvvjcZgRkOWAbVA0zBfBaWwlIV05Z6E2JQjOeHcZGJuq90ac5Jh9h0rLKfIHtl8tPrtRdqql8TZGUgdNySBHoNrQCsxtgzuGAwHvyNxpMmwKQuJFKjtZr6Y4HdmrCbnF52gA0328aVuzEbplXZd7EJEEC939HQthaMsupTcxVaZ32pPdbPIj2x8AzxjYXSq8LsofqmgSqjm8G4wUbQ28LuAabwI0cFWNfGnzpVzsUeHsL9zoBLlg5jXQXnR0giRmCLErqlDIPYeYXduUSDHNRTYERTDGHFHDFGSDFGRTYFGHDFGDFSFGWERTFSDFGDFADWERTFGHDFGHFGH39} \end{align} for $t\in(0,T_0]$ and $\tau(0) \in (0, \tau_1]$, provided \begin{equation} \KK \geq   Q(M_{\epsilon,\kappa,\bar{\kappa}}(T_0)) \label{SDHNRTYERTDGHFHDFGSDFGRTYFGHDFGDFSFGWERTFSDFGDFADWERTFGHDFGHFGH149} \end{equation} holds, where $\tau$ and $K$ are as in~\eqref{SDHNRTYERTDGHFHDFGSDFGRTYFGHDFGDFSFGWERTFSDFGDFADWERTFGHDFGHFGH99}. \end{Lemma}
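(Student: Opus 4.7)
The plan is to split $M_{\epsilon,\kappa,\bar\kappa}(t)$ into the entropy and velocity contributions and bound each via the a~priori estimates established in the previous sections; the present lemma is the assembly step. For the entropy $S$, apply $\partial_x^j \TT^k (\epsilon \partial_t)^i$ to \eqref{SDHNRTYERTDGHFHDFGSDFGRTYFGHDFGDFSFGWERTFSDFGDFADWERTFGHDFGHFGH02} and pair in $L^2$ with the same derivative. Since $v\cdot\nu|_{\partial\Omega}=0$, the transport integration by parts produces no boundary contribution. The commutators of $v\cdot\nabla$ with $\partial_x^j$, $\TT^k$, and $(\epsilon\partial_t)^i$ are expanded by the analytic product rule; the combinatorial factors are absorbed by the denominator $(j+k+i-3)!$ after summing against the weights in $\Vert\cdot\Vert_{A(\tau)}$, producing a nonlinear bound $\les Q(M_{\epsilon,\kappa,\bar\kappa})$. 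Time integration then yields the contribution of size $t\,Q(M_{\epsilon,\kappa,\bar\kappa})$ in the final estimate.

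For the velocity $u=(p,v)^T$, the equation \eqref{SDHNRTYERTDGHFHDFGSDFGRTYFGHDFGDFSFGWERTFSDFGDFADWERTFGHDFGHFGH01} is symmetric hyperbolic with the singular term $\epsilon^{-1}L(\partial_x)u$; the boundary condition $v\cdot\nu=0$ combined with the skew-adjointness of $L(\partial_x)$ eliminates the singular contribution in the $L^2$ energy estimate applied to $\partial_x^j \TT^k (\epsilon \partial_t)^i u$. The main new difficulty is that the tangential fields $\TT$ alone do not span the gradient near the boundary, so normal derivatives of $v$ must be reduced back to tangential and time derivatives. Using \eqref{SDHNRTYERTDGHFHDFGSDFGRTYFGHDFGDFSFGWERTFSDFGDFADWERTFGHDFGHFGH11}, each $\partial_k$ near $\partial\Omega$ decomposes as a combination of $X_0$ and the $T_j$; for $v$, the $X_0$-contribution is controlled through a Hodge-type elliptic estimate, in which $\dive v$ is traded for $\epsilon\partial_t p$ via \eqref{SDHNRTYERTDGHFHDFGSDFGRTYFGHDFGDFSFGWERTFSDFGDFADWERTFGHDFGHFGH01} and $\curl v$ is bounded from the vorticity equation obtained by taking the curl of \eqref{SDHNRTYERTDGHFHDFGSDFGRTYFGHDFGDFSFGWERTFSDFGDFADWERTFGHDFGHFGH247}. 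For the pressure $p$, an analogous reduction uses the $\partial_t p$ equation extracted from \eqref{SDHNRTYERTDGHFHDFGSDFGRTYFGHDFGDFSFGWERTFSDFGDFADWERTFGHDFGHFGH01}. Each such trade of a normal derivative for a tangential or time derivative costs a factor $\kappa/\bar\kappa$ in the analytic weight, and the geometric sum over the number of normal derivatives produces the smallness in $\kappa$ and $\bar\kappa$. Away from the boundary, interior analytic regularity in the spirit of Lemma~\ref{L10} with $\Omega={\mathbb R}^3$ closes the estimate.

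The main obstacle is the infinite iteration of the normal-to-tangential reduction while controlling the commutators between $X_0$, the $T_j$, and the coefficients of \eqref{SDHNRTYERTDGHFHDFGSDFGRTYFGHDFGDFSFGWERTFSDFGDFADWERTFGHDFGHFGH01}. These commutator coefficients are analytic with constant radii, with derivatives controlled by \eqref{SDHNRTYERTDGHFHDFGSDFGRTYFGHDFGDFSFGWERTFSDFGDFADWERTFGHDFGHFGH87}, which is precisely what matches the weights in $\Vert\cdot\Vert_{A(\tau)}$. The coefficients $E(S,\epsilon u)$, $a(S,\epsilon u)$, and $r(S,\epsilon u)$ depend analytically on their arguments; since differentiation in $u$ always comes with a factor of $\epsilon$, expanding them by Fa\`a di Bruno produces the small prefactor $\epsilon$ in the nonlinear error. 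The Cauchy-product loss when passing from level $j+k+i$ to level $j+k+i-1$ in $\tau(t)^{(j+k+i-3)_+}/(j+k+i-3)!$ contributes the factor $\tau(0)$ after using $\tau(t)\leq \tau(0)$. Assembling the pieces, the initial data are bounded by $1$ via \eqref{SDHNRTYERTDGHFHDFGSDFGRTYFGHDFGDFSFGWERTFSDFGDFADWERTFGHDFGHFGH23} and \eqref{SDHNRTYERTDGHFHDFGSDFGRTYFGHDFGDFSFGWERTFSDFGDFADWERTFGHDFGHFGH503}, and the choice $K\geq Q(M_{\epsilon,\kappa,\bar\kappa}(T_0))$ from \eqref{SDHNRTYERTDGHFHDFGSDFGRTYFGHDFGDFSFGWERTFSDFGDFADWERTFGHDFGHFGH149} ensures that the radius decay $\dot\tau=-K$ generates enough negative weight contribution to absorb the residual top-order terms, yielding the stated inequality.
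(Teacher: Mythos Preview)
Your outline captures the entropy step and names the right ingredients for the velocity (div--curl reduction, vorticity transport, pressure via the equation), but there is a genuine gap in how you propose to control $u$. You assert that skew-adjointness of $L(\partial_x)$ together with $v\cdot\nu|_{\partial\Omega}=0$ kills the singular $\epsilon^{-1}$ term in an $L^2$ energy estimate for $\partial_x^j \TT^k(\epsilon\partial_t)^i u$. This fails once $j\geq 1$: integrating $\langle L(\partial_x)w,w\rangle$ by parts leaves the boundary term $\int_{\partial\Omega}w_p\,(w_v\cdot\nu)\,d\sigma$, and $\partial_x^j\TT^k(\epsilon\partial_t)^i v\cdot\nu$ does not vanish on $\partial\Omega$ because normal derivatives destroy the impermeability condition. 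In addition, for $k\geq 1$ the commutator $\epsilon^{-1}[\TT^k,L(\partial_x)]$ is a first-order operator carrying the full $\epsilon^{-1}$ prefactor, and nothing in your scheme absorbs it. The energy method you describe is valid only for pure time derivatives $(j,k)=(0,0)$, which is precisely the $U_5$ piece in the paper's decomposition \eqref{SDHNRTYERTDGHFHDFGSDFGRTYFGHDFGDFSFGWERTFSDFGDFADWERTFGHDFGHFGH183}.

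The paper's proof therefore does \emph{not} run an energy estimate on $u$ with spatial or tangential derivatives. Instead it splits $\Vert v\Vert_{A(\tau)}=\sum_{l=1}^5 U_l$ by the values of $(j,k)$ and, for $U_1,\ldots,U_4$, applies the div--curl elliptic reductions of Lemmas~\ref{L01} and~\ref{L02} (based on Lemmas~\ref{L14} and~\ref{L13}) to bound each $\Vert\partial_x^j\TT^k(\epsilon\partial_t)^i v\Vert_{L^2}$ by div, curl, tangential, boundary, and commutator pieces. The divergence sum $\sum_E D_{j,k,i}$ is then controlled \emph{algebraically} from $L(\partial_x)u=-\epsilon E(\partial_t u+v\cdot\nabla u)$, which is where the small factors $\kappa,\bar\kappa,\epsilon$ enter (see \eqref{SDHNRTYERTDGHFHDFGSDFGRTYFGHDFGDFSFGWERTFSDFGDFADWERTFGHDFGHFGH181}); the curl sum $\sum_E C_{j,k,i}$ is estimated via the nonhomogeneous transport equation (Lemma~\ref{L20}), producing the $t$ and $\tau(0)$ factors (see \eqref{SDHNRTYERTDGHFHDFGSDFGRTYFGHDFGDFSFGWERTFSDFGDFADWERTFGHDFGHFGH182}). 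Only $U_5$ uses the symmetric-hyperbolic energy estimate, and there the boundary term genuinely vanishes. In short, the Hodge-type reduction you mention is not a correction to a primary energy estimate but is itself the main engine; the energy estimate is confined to the pure-time sector. Finally, Lemma~\ref{L10} is not used in this proof at all; it served only to construct the tangential fields in Theorem~\ref{P01}.
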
 \colb \par The constant  $\KK$ in \eqref{SDHNRTYERTDGHFHDFGSDFGRTYFGHDFGDFSFGWERTFSDFGDFADWERTFGHDFGHFGH149} depends on $M$, which implies that $K$ eventually depends on $M_0$. In the rest of the paper, we work on an interval of time such that  \begin{equation} T_0\leq \frac{\tau(0)}{2\KK} . \label{SDHNRTYERTDGHFHDFGSDFGRTYFGHDFGDFSFGWERTFSDFGDFADWERTFGHDFGHFGH148} \end{equation} Thus from \eqref{SDHNRTYERTDGHFHDFGSDFGRTYFGHDFGDFSFGWERTFSDFGDFADWERTFGHDFGHFGH99} we have $\tau(0)/2\leq \tau(t) \leq \tau(0)$ for $t\in [0, T_0]$. \par For the proof of Theorem~\ref{T01} given Lemma~\ref{L12}, cf.~\cite{JKL}. Sections~\ref{sec03}--\ref{sec06} are devoted to the proof of Lemma~\ref{L12}, thus completing the proof of Theorem~\ref{T01}. \par By \cite[Theorem~1.1]{A05} the $H^5$ norm of $(p,v,S)$ can be estimated by a constant on a time interval $[0,T_0]$, where $T_0$ only depends on the $H^5$ norm of the initial data. Thus we may assume \begin{align} \sup_{(j,k,i)\in \mathbb{N}_0^3, 0\leq j+k+i \leq 5} \Vert \partial_x^j \TT^k (\epsilon \partial_t)^i (p,v,S)(t) \Vert_{L^2} \les 1 \comma t\in [0, T_0] \commaone \epsilon \in (0,1] . \label{SDHNRTYERTDGHFHDFGSDFGRTYFGHDFGDFSFGWERTFSDFGDFADWERTFGHDFGHFGH24} \end{align} In particular, if $F$ is a smooth function of $u$ and $S$, then there exists a constant $C>0$ depending on the function $F$ such that \begin{align} \Vert F(\epsilon u(t),S(t)) \Vert_{L^\infty} \les 1 \comma t\in [0,T_0] \commaone \epsilon \in (0,1] . \llabel{JE0BsbkKbjpdcPLiek8NWrIjsfapHh4GYvMFbA67qyex7sHgHG3GlW0y1WD35mIo5gEUbObrbknjgUQyko7g2yrEOfovQfAk6UVDHGl7GV3LvQmradEUOJpuuztBBnrmefilt1sGSf5O0aw2Dc0hRaHGalEqIpfgPyNQoLHp2LAIUp77FygrjC8qBbuxBkYX8NTmUvyT7YnBgv5K7vq5NefB5ye4TMuCfmE2JF7hgqwI7dmNx2CqZuLFthzIlB1sjKA8WGDKcDKvabk9yp28TFP0rg0iA9CBD36c8HLkZnO2S6ZoafvLXb8gopYa085EMRbAbQjGturIXlTE0Gz0tYSVUseCjDvrQ2bvfiIJCdfCAcWyIO7mlycs5RjioIZt7qyB7pL9pyG8XDTzJxHs0yhVVAr8ZQRqsZCHHADFTwvJHeHOGvLJHuTfNa5j12ZkTvGqOyS8826D2rj7rHDTLN7Ggmt9Mzcygwxnj4JJeQb7eMmwRnSuZLU8qUNDLrdgC70bhEPgpb7zk5a32N1IbJhf8XvGRmUFdvIUkwPFbidJPLlNGe1RQRsK2dVNPM7A3YhdhB1R6N5MJi5S4R498lwY9I8RHxQKLlAk8W3Ts7WFUoNwI9KWnztPxrZLvNwZ28EYOnoufxz6ip9aSWnNQASriwYC1sOtSqXzot8k4KOz78LG6GMNCExoMh9wl5vbsmnnq6Hg6WToJun74JxyNBXyVpvxNB0N8wymK3reReEzFxbK92xELs950SNgLmviRC1bFHjDCke3SgtUdC4cONb4EF24D1VDSDHNRTYERTDGHFHDFGSDFGRTYFGHDFGDFSFGWERTFSDFGDFADWERTFGHDFGHFGH210} \end{align} In the rest of the paper, we work on the time interval $[0, T]$ where $0<T\leq T_0$. \par \startnewsection{Derivative reductions for the velocity}{sec03} Here we introduce the normal and tangential derivative reduction schemes for the velocity.  Throughout this section, we assume that $\Omega$ is an exterior domain with analytic boundary and the vector field $v$ satisfies  	\begin{align} 	v\cdot \nu |_{\partial \Omega}  	=  	0, 	\label{SDHNRTYERTDGHFHDFGSDFGRTYFGHDFGDFSFGWERTFSDFGDFADWERTFGHDFGHFGH301} 	\end{align} where $\nu$ is the unit outward normal vector to the boundary $\partial \Omega$. \par \subsection{Normal derivative reduction}  We start with the normal derivative reduction.  \par \cole \begin{Lemma} \label{L01} For $j\geq 2$, we have \begin{align} \begin{split} \Vert \partial_x^j \TT^k (\epsilon \partial_t)^i v\Vert_{L^2} & \lesssim \Vert \partial_x^{j-1} \TT^k (\epsilon \partial_t)^i \dive v \Vert_{L^2} + \Vert \partial_x^{j-1} \TT^k (\epsilon \partial_t)^i \curl v\Vert_{L^2} + \Vert \partial_x^{j-2} \TT^{k+1} (\epsilon \partial_t)^i v\Vert_{H^1} \\&\indeq\indeq + \Vert \partial_x^{j-2} \TT^k (\epsilon \partial_t)^i v \Vert_{L^2} + \Vert \partial_x^{j-1} [\TT^k, \dive] (\epsilon \partial_t)^i v\Vert_{L^2} \\&\indeq\indeq + \Vert \partial_x^{j-1} [\TT^k, \curl] (\epsilon \partial_t)^i v\Vert_{L^2} + \Vert [\TT, \partial_x^{j-2}] \TT^k (\epsilon \partial_t)^i v\Vert_{H^1}. \end{split} \label{SDHNRTYERTDGHFHDFGSDFGRTYFGHDFGDFSFGWERTFSDFGDFADWERTFGHDFGHFGH53} \end{align} For $j=1$ and $k=0$, we have \begin{align} \begin{split} \Vert \partial_x (\epsilon \partial_t)^i v \Vert_{L^2} & \lesssim \Vert  (\epsilon \partial_t)^i \dive v \Vert_{L^2}  + \Vert (\epsilon \partial_t)^i \curl v \Vert_{L^2}  + \Vert (\epsilon \partial_t)^i v \Vert_{L^2}  . \end{split} \label{SDHNRTYERTDGHFHDFGSDFGRTYFGHDFGDFSFGWERTFSDFGDFADWERTFGHDFGHFGH72} \end{align} \end{Lemma}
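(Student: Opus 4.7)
The proof will follow from the standard Friedrichs/Hodge estimate for vector fields tangent to $\partial\Omega$: if $w\in H^1(\Omega)$ satisfies $w\cdot\nu|_{\partial\Omega}=0$, then
\[
\Vert\nabla w\Vert_{L^2(\Omega)}\lesssim\Vert\dive w\Vert_{L^2}+\Vert\curl w\Vert_{L^2}+\Vert w\Vert_{L^2},
\]
which is obtained by integration by parts against the identity $-\Delta w=-\nabla\dive w+\curl\curl w$. The base case \eqref{SDHNRTYERTDGHFHDFGSDFGRTYFGHDFGDFSFGWERTFSDFGDFADWERTFGHDFGHFGH72} is then immediate upon applying this to $w=(\epsilon\partial_t)^i v$, which remains tangent to $\partial\Omega$ since $\nu$ does not depend on $t$ and $v$ satisfies \eqref{SDHNRTYERTDGHFHDFGSDFGRTYFGHDFGDFSFGWERTFSDFGDFADWERTFGHDFGHFGH301}.

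For $j\geq 2$ the plan is to apply the Hodge estimate to the auxiliary field $W=\partial_x^{j-1}\TT^k(\epsilon\partial_t)^i v$; the LHS of \eqref{SDHNRTYERTDGHFHDFGSDFGRTYFGHDFGDFSFGWERTFSDFGDFADWERTFGHDFGHFGH53} then appears as $\Vert\nabla W\Vert_{L^2}$. The divergence and curl of $W$ commute freely through the constant-coefficient $\partial_x^{j-1}$ but pick up commutators with the variable-coefficient $\TT^k$:
\[
\dive W=\partial_x^{j-1}\TT^k(\epsilon\partial_t)^i\dive v+\partial_x^{j-1}[\TT^k,\dive](\epsilon\partial_t)^i v,
\]
and analogously for $\curl$, producing terms 1, 2, 5, 6 of \eqref{SDHNRTYERTDGHFHDFGSDFGRTYFGHDFGDFSFGWERTFSDFGDFADWERTFGHDFGHFGH53}. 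The residual $\Vert W\Vert_{L^2}=\Vert\partial_x^{j-1}\TT^k(\epsilon\partial_t)^i v\Vert_{L^2}$ has one Cartesian derivative too many to appear among the lower-order right-hand-side terms, so I would trade that derivative using the representation \eqref{SDHNRTYERTDGHFHDFGSDFGRTYFGHDFGDFSFGWERTFSDFGDFADWERTFGHDFGHFGH11}: on $\bar\Omega_{\eta_0}$, substitute $\partial_{x_\ell}=\xi_\ell X_0+\sum_m\eta_{m\ell}T_m$ into one factor of $\partial_x^{j-1}=\partial_x\cdot\partial_x^{j-2}$, with a cutoff patching to the interior (where no trade is needed since boundary geometry plays no role). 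The $T_m$-piece lands in the $L^2$-portion of term 3, the $X_0$-piece together with a further application of the Hodge estimate is absorbed into the $H^1$-portion of term 3, the Leibniz commutators between $\TT$ and $\partial_x^{j-2}$ produce term 7, and the cutoff residue in the interior gives term 4.

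The principal obstacle is the non-tangency of $W$: the Hodge estimate in its cleanest form requires $W\cdot\nu|_{\partial\Omega}=0$, whereas $\TT^k(\epsilon\partial_t)^i v$ is not tangent for $k\geq 1$. I would handle this by iterating the identity $(T_m u)\cdot\nu|_{\partial\Omega}=-u\cdot(T_m\nu)|_{\partial\Omega}$, which holds for every $u$ tangent on $\partial\Omega$, to express the normal trace of $W$ on $\partial\Omega$ as a controlled lower-order trace of $v$, and then subtracting a suitable extension of that trace to restore tangency; the correction lands in the already-present lower-order terms on the right. Keeping this boundary bookkeeping in sync with the commutator structure—and ensuring, via an induction on $j$, that the $X_0$-piece of the one-derivative trade is absorbed into the $H^1$-portion of term 3 rather than becoming circular—is the delicate part of the argument, and the same device will be needed when tangential derivatives are added later in the section.
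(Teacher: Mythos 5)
Your plan rests on the Bourguignon--Brezis div-curl inequality (Lemma~\ref{L13}) applied to $W=\partial_x^{j-1}\TT^k(\epsilon\partial_t)^i v$, and for $j\geq 2$ this runs into a genuine obstruction that the tangency-restoration scheme does not fix. That inequality produces a boundary term $\Vert W\cdot\nu\Vert_{H^{1/2}(\partial\Omega)}$, and the identity $(T_m u)\cdot\nu|_{\partial\Omega}=-u\cdot(T_m\nu)|_{\partial\Omega}$ only helps when the outermost operators hitting $v$ are tangential; here they are not, since $\partial_x^{j-1}$ contains normal derivatives, and $v\cdot\nu=0$ on $\partial\Omega$ says nothing about $\partial_\nu^{j-1}v\cdot\nu$ there. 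The normal trace of $W$ therefore carries $j-1$ full (non-tangential) derivatives of $v$, which is the same order as $\Vert\nabla W\Vert_{L^2}$ after the trace theorem, and it cannot be absorbed into the lower-order terms on the right of \eqref{SDHNRTYERTDGHFHDFGSDFGRTYFGHDFGDFSFGWERTFSDFGDFADWERTFGHDFGHFGH53}. A separate, smaller issue: the proposed trade $\partial_{x_\ell}\to\xi_\ell X_0+\sum_m\eta_{m\ell}T_m$ to control $\Vert W\Vert_{L^2}$ does not gain anything from the $X_0$ piece, since $X_0$ is itself a first-order spatial operator, so $\Vert X_0\partial_x^{j-2}\TT^k(\epsilon\partial_t)^i v\Vert_{L^2}$ is comparable to $\Vert W\Vert_{L^2}$ again; ``a further application of the Hodge estimate'' does not resolve this circularity.

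The paper avoids all of this by not using the div-curl inequality at the top order for $j\geq2$. Instead it uses the Dirichlet-type $H^2$ estimate of Lemma~\ref{L14}, $\Vert w\Vert_{H^2}\lesssim\Vert\Delta w\Vert_{L^2}+\Vert\TT w\Vert_{H^1}+\Vert w\Vert_{L^2}$, which requires no boundary condition on $w$: the boundary trace is absorbed into tangential derivatives via the trace theorem. Applying this to $w=\partial_x^{j-2}\TT^k(\epsilon\partial_t)^i v$, then expanding $\Delta=\nabla\dive-\curl\curl$ and commuting $\dive$, $\curl$ through $\TT^k$ and $\TT$ through $\partial_x^{j-2}$, produces exactly the seven terms of \eqref{SDHNRTYERTDGHFHDFGSDFGRTYFGHDFGDFSFGWERTFSDFGDFADWERTFGHDFGHFGH53} with no trace term to estimate. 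The div-curl inequality (together with the boundary condition and the fact that $(\epsilon\partial_t)^i$ preserves tangency) is invoked only for the base case $j=1$, $k=0$ in \eqref{SDHNRTYERTDGHFHDFGSDFGRTYFGHDFGDFSFGWERTFSDFGDFADWERTFGHDFGHFGH72}, where the normal trace vanishes outright; your treatment of this base case is correct and agrees with the paper. Replacing your top-order step with Lemma~\ref{L14} is what is needed to close the $j\geq 2$ argument.
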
 \colb \par In order to prove this statement, we use a form of  $H^2$ regularity for the Laplacian. \par \cole \begin{Lemma}  \label{L14} Let $\Omega$ be an exterior domain in $\mathbb{R}^3$ with a smooth boundary $\partial \Omega$.  Then  \begin{align} \Vert v \Vert_{H^2(\Omega)} & \les \Vert \Delta v \Vert_{L^2(\Omega)} + \Vert \TT v \Vert_{H^1(\Omega)}  + \Vert v \Vert_{L^2(\Omega)}  , \label{SDHNRTYERTDGHFHDFGSDFGRTYFGHDFGDFSFGWERTFSDFGDFADWERTFGHDFGHFGH20} \end{align} for all $v \in H^2 (\Omega)$. \end{Lemma}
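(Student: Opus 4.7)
The plan is to prove the estimate via a partition of unity that separates interior and boundary behavior, using standard Calder\'on--Zygmund bounds in the interior and the decomposition from Theorem~\ref{P01}(2) near the boundary.

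First, I would fix a smooth partition of unity $1 = \chi_0 + \chi_1$ on $\bar\Omega$ with $\chi_0$ supported in $\{d(x) > \eta_0/2\}$ and $\chi_1$ supported in $\bar\Omega_{\eta_0}$, and estimate $v_0 = \chi_0 v$ and $v_1 = \chi_1 v$ separately. For $v_0$, which has compact support inside $\Omega$, extension by zero to $\mathbb{R}^3$ and the Fourier characterization of $H^2$ give $\Vert v_0 \Vert_{H^2} \lesssim \Vert \Delta v_0 \Vert_{L^2}$; expanding $\Delta v_0 = \chi_0 \Delta v + 2\nabla\chi_0\cdot\nabla v + v\Delta \chi_0$ yields
$\Vert v_0 \Vert_{H^2} \lesssim \Vert \Delta v \Vert_{L^2} + \Vert \nabla v\Vert_{L^2} + \Vert v \Vert_{L^2}$.

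For $v_1$, supported in $\bar\Omega_{\eta_0}$, I would use the representation from Theorem~\ref{P01}(2), namely $\partial_k = \xi_k X_0 + \sum_j \eta_{jk} T_j$, which is valid on $\bar\Omega_{\eta_0}$. Iterating gives
\begin{align*}
\partial_l \partial_k v_1 &= \xi_k \xi_l X_0^2 v_1 + \sum_j \xi_l \eta_{jk} X_0 T_j v_1 + \sum_j \xi_k \eta_{jl} T_j X_0 v_1 \\ &\indeq + \sum_{j,m} \eta_{jk}\eta_{ml} T_j T_m v_1 + \{\text{first-order terms}\}.
\end{align*}
The tangential--tangential pieces $T_j T_m v_1$ are controlled by $\Vert \TT v \Vert_{H^1}$, and mixed terms of the form $X_0 T_j v_1$ or $T_j X_0 v_1$ reduce to applying a first-order operator to $T_j v$ (after commuting, with commutators absorbed into lower order), hence are likewise dominated by $\Vert \TT v\Vert_{H^1} + \Vert \nabla v\Vert_{L^2}$. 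The nontrivial piece is $X_0^2 v_1$, which we recover from $\Delta v$: computing $\Delta v_1 = \sum_l \partial_l^2 v_1$ via the same representation gives $\Delta v_1 = |\nabla\psi|^2 X_0^2 v_1 + \{\text{tangential terms}\} + \{\text{lower order}\}$, and since $|\nabla\psi|^2$ is bounded below on $\bar\Omega_{\eta_0}$, we can solve for $X_0^2 v_1$ in terms of $\Delta v_1$, mixed/tangential second derivatives, and lower-order terms already bounded above.

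Assembling the two pieces and absorbing commutator contributions, I would obtain
$\Vert v\Vert_{H^2} \lesssim \Vert \Delta v\Vert_{L^2} + \Vert \TT v\Vert_{H^1} + \Vert \nabla v\Vert_{L^2} + \Vert v\Vert_{L^2}$.
The remaining first-order term is handled by the standard interpolation $\Vert \nabla v\Vert_{L^2}^2 \lesssim \Vert v\Vert_{L^2} \Vert v\Vert_{H^2}$ and Young's inequality, which lets us absorb $\Vert \nabla v\Vert_{L^2}$ into the left-hand side. The main obstacle is the boundary analysis in step two: verifying that $X_0^2 v$ can indeed be extracted from $\Delta v$ modulo tangential and lower-order data, using only the non-degeneracy $|\nabla\psi|\neq 0$ on $\bar\Omega_{\eta_0}$ provided by Theorem~\ref{P01}; once this algebraic identity is in place, the rest is bookkeeping of commutators and interpolation.
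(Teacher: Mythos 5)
Your proof is correct, but it is a genuinely different argument from the paper's. The paper's proof is two lines: it invokes the standard $H^{2}$ elliptic estimate $\Vert v\Vert_{H^2(\Omega)} \lesssim \Vert \Delta v\Vert_{L^2(\Omega)} + \Vert v\Vert_{H^{3/2}(\partial\Omega)}$ for the Dirichlet problem, then bounds the boundary term using the trace theorem together with the fact that $\TT$ spans the tangent space of $\partial\Omega$, so that $\Vert v\Vert_{H^{3/2}(\partial\Omega)} \lesssim \Vert v\Vert_{H^{1/2}(\partial\Omega)} + \Vert \TT v\Vert_{H^{1/2}(\partial\Omega)} \lesssim \Vert v\Vert_{H^1(\Omega)} + \Vert \TT v\Vert_{H^1(\Omega)}$, after which $\Vert v\Vert_{H^1}$ is absorbed by interpolation. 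You instead do a partition of unity, treat the interior piece by Fourier analysis, and handle the boundary collar by feeding the identity $\partial_k = \xi_k X_0 + \sum_j \eta_{jk}T_j$ from Theorem~\ref{P01}(2) into $\partial_l\partial_k$ and extracting $X_0^2 v_1$ from $\Delta v_1$. That is a valid and more hands-on route: it avoids fractional boundary Sobolev spaces and the $H^2$-regularity machinery entirely, replacing them with the explicit vector-field structure already built in Section~\ref{sec02}. The tradeoff is that your argument is substantially longer and requires tracking a number of commutator terms that the paper never sees.

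Two small points to tighten if you write it out in full. First, the interior estimate from the Fourier side should read $\Vert v_0\Vert_{H^2(\mathbb{R}^3)} \lesssim \Vert \Delta v_0\Vert_{L^2} + \Vert v_0\Vert_{L^2}$ (the pure $\Vert \Delta v_0\Vert_{L^2}$ controls only the homogeneous $\dot H^2$ seminorm; the low frequencies need $\Vert v_0\Vert_{L^2}$), which is harmless since $\Vert v\Vert_{L^2}$ sits on the right side of \eqref{SDHNRTYERTDGHFHDFGSDFGRTYFGHDFGDFSFGWERTFSDFGDFADWERTFGHDFGHFGH20} anyway. Second, with the normalization used in the proof of Theorem~\ref{P01} one has $\xi_k = \partial_k\psi / \vert\nabla\psi\vert^2$, hence $\sum_k \xi_k^2 = \vert\nabla\psi\vert^{-2}$, so the coefficient of $X_0^2 v_1$ in the expansion of $\Delta v_1$ is $\vert\nabla\psi\vert^{-2}$ rather than $\vert\nabla\psi\vert^{2}$. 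This does not affect the argument, since $\bar\Omega_{\eta_0}$ is compact and $\nabla\psi$ is continuous and nonvanishing there, so $\vert\nabla\psi\vert$ is bounded both above and away from zero, and solving for $X_0^2 v_1$ goes through exactly as you describe.
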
 \colb \par In the proof of Lemma~\ref{L01}, we also need the following classical div-curl estimate due to  Bourguignon and Brezis. \par \cole \begin{Lemma} \label{L13} (\cite{BoB}) Let $\Omega$ be an exterior domain in $\mathbb{R}^3$ with analytic boundary $\partial \Omega$ and outward unit normal vector $\nu$. Then there exists a constant $C>0$ such that \begin{align} \Vert \nabla v \Vert_{L^2(\Omega)}  & \les \Vert \dive v \Vert_{L^2(\Omega)} +  \Vert \curl v \Vert_{L^2(\Omega)} + \Vert v\cdot \nu \Vert_{H^{1/2}(\partial \Omega)} + \Vert v \Vert_{L^2(\Omega)}  \label{SDHNRTYERTDGHFHDFGSDFGRTYFGHDFGDFSFGWERTFSDFGDFADWERTFGHDFGHFGH07} , \end{align} for all $v \in H^1(\Omega)$.  \end{Lemma}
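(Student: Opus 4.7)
The plan is to reduce to the case of a divergence-free vector field tangent to $\partial\Omega$ via a Helmholtz-type decomposition, and then use an integration-by-parts identity. First, I would introduce $\phi$ solving the exterior Neumann problem
\begin{equation*}
-\Delta \phi = \dive v \text{ in } \Omega, \qquad \partial_\nu \phi = v\cdot\nu \text{ on } \partial\Omega,
\end{equation*}
with appropriate decay at infinity, and write $v = \nabla \phi + w$. Standard elliptic theory for this exterior problem yields
\begin{equation*}
\Vert \nabla^2 \phi \Vert_{L^2(\Omega)} + \Vert \nabla \phi \Vert_{L^2(\Omega)} \les \Vert \dive v \Vert_{L^2(\Omega)} + \Vert v\cdot\nu \Vert_{H^{1/2}(\partial\Omega)} + \Vert v \Vert_{L^2(\Omega)} ,
\end{equation*}
which already accounts for the $\dive v$ and $v\cdot\nu$ contributions on the right-hand side of \eqref{SDHNRTYERTDGHFHDFGSDFGRTYFGHDFGDFSFGWERTFSDFGDFADWERTFGHDFGHFGH07}. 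By construction $w = v - \nabla \phi$ is divergence-free, satisfies $\curl w = \curl v$, and has $w\cdot \nu = 0$ on $\partial\Omega$, so it remains to control $\Vert \nabla w\Vert_{L^2(\Omega)}$ by $\Vert \curl w\Vert_{L^2(\Omega)}$ plus lower-order terms.

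For such $w$, I would use the identity $-\Delta w = \curl \curl w$, pair with $w$, and integrate by parts twice. The resulting boundary contribution, thanks to $w\cdot\nu = 0$ and standard vector identities, collapses into a quadratic form in the tangential trace of $w$ whose coefficients are built from the Weingarten map of $\partial\Omega$; since $\partial\Omega$ is compact and smooth, these coefficients are uniformly bounded. This produces
\begin{equation*}
\Vert \nabla w \Vert_{L^2(\Omega)}^2 \les \Vert \curl w \Vert_{L^2(\Omega)}^2 + \Vert w \Vert_{L^2(\partial\Omega)}^2 .
\end{equation*}
The trace interpolation $\Vert w \Vert_{L^2(\partial\Omega)}^2 \les \Vert w \Vert_{L^2(\Omega)} \Vert \nabla w \Vert_{L^2(\Omega)} + \Vert w \Vert_{L^2(\Omega)}^2$ combined with Young's inequality lets me absorb the boundary term into $\frac{1}{2}\Vert \nabla w \Vert_{L^2(\Omega)}^2 + C\Vert w \Vert_{L^2(\Omega)}^2$. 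Combining with the $\nabla \phi$ estimate and the bound $\Vert w \Vert_{L^2(\Omega)} \leq \Vert v \Vert_{L^2(\Omega)} + \Vert \nabla \phi \Vert_{L^2(\Omega)}$ then yields~\eqref{SDHNRTYERTDGHFHDFGSDFGRTYFGHDFGDFSFGWERTFSDFGDFADWERTFGHDFGHFGH07}.

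The main obstacle is the elliptic regularity for the exterior Neumann problem: producing a potential $\phi$ with $\nabla \phi \in L^2(\Omega)$ and $\nabla^2 \phi \in L^2(\Omega)$ controlled by the data requires either weighted Sobolev spaces or a cutoff argument that combines a bounded-domain Neumann estimate near $\partial\Omega$ with the whole-space identity $\Vert \nabla u \Vert_{L^2(\mathbb{R}^3)}^2 = \Vert \dive u \Vert_{L^2(\mathbb{R}^3)}^2 + \Vert \curl u \Vert_{L^2(\mathbb{R}^3)}^2$ on the exterior region. Once this is handled the assembly is routine; alternatively, since the statement is attributed to Bourguignon and Brezis, one may simply appeal to~\cite{BoB}, where exactly this program is carried out.
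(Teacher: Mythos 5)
The paper itself offers no proof of Lemma~\ref{L13} at all; it simply cites Bourguignon--Brezis~\cite{BoB}, so your reconstruction is by definition a different route. The outline you give is the standard Helmholtz-plus-Bourguignon--Brezis-identity argument and is essentially sound, but two points deserve attention. First, there is a sign slip: for $w=v-\nabla\phi$ to be divergence-free you need $\Delta\phi=\dive v$ (not $-\Delta\phi=\dive v$), paired with $\partial_\nu\phi=v\cdot\nu$ on $\partial\Omega$. Second, the heart of the matter for an exterior domain is exactly the point you flag as the ``main obstacle'': solvability and $H^2_{\mathrm{loc}}$-plus-decay estimates for the exterior Neumann problem cannot be waved through. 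The cutoff argument you sketch does work — split $\phi$ into a piece supported near $\partial\Omega$ (bounded-domain Neumann regularity) and a far-field piece handled by the Calder\'on--Zygmund estimate on $\mathbb{R}^3$ — but one must then control the commutator terms coming from the cutoff by $\|v\|_{L^2}$, which is precisely why the $\|v\|_{L^2(\Omega)}$ term appears on the right of~\eqref{SDHNRTYERTDGHFHDFGSDFGRTYFGHDFGDFSFGWERTFSDFGDFADWERTFGHDFGHFGH07}. The integration-by-parts identity for divergence-free tangential fields, with the boundary contribution reducing to a second-fundamental-form quadratic form in the tangential trace, is correct, and the trace interpolation plus Young absorption closes the estimate. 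What the citation buys the paper is avoiding all of this bookkeeping; what your sketch buys a reader is visibility into why the normal trace $\|v\cdot\nu\|_{H^{1/2}(\partial\Omega)}$ and the lower-order $\|v\|_{L^2}$ term both belong on the right-hand side, which is pedagogically valuable but not needed here since the lemma is only used as a black box.
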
 \colb \par \begin{proof}[Proof of Lemma~\ref{L14}] Using the $H^2$ regularity for the Laplace equation \begin{align} \begin{split} \Vert v \Vert_{H^2(\Omega)}  \lesssim \Vert \Delta v \Vert_{L^2(\Omega)}  +  \Vert v\Vert_{H^{3/2}(\partial \Omega)} , \end{split}    \llabel{BHlWATyswjyDOWibTHqXt3aG6mkfGJVWv40lexPnIcy5ckRMD3owVBdxQm6CvLaAgxiJtEsSlZFwDoYP2nRYbCdXRz5HboVTU8NPgNViWeXGVQZ7bjOy1LRy9faj9n2iE1S0mci0YD3HgUxzLatb92MhCpZKLJqHTSFRMn3KVkpcFLUcF0X66ivdq01cVqkoQqu1u2Cpip5EV7AgMORcfZjLx7Lcv9lXn6rS8WeK3zTLDPB61JVWwMiKEuUZZ4qiK1iQ8N0832TS4eLW4zeUyonzTSofna74RQVKiu9W3kEa3gH8xdiOhAcHsIQCsEt0Qi2IHw9vq9rNPlh1y3wORqrJcxU4i55ZHTOoGP0zEqlB3lkwGGRn7TOoKfGZu5BczGKFeoyIBtjNb8xfQEKduOnJVOZh8PUVaRonXBkIjBT9WWor7A3WfXxA2f2VlXZS1Ttsab4n6R3BKX0XJTmlkVtcWTMCsiFVyjfcrzeJk5MBxwR7zzVOnjlLzUz5uLeqWjDul7OnYICGG9iRybTsYJXfrRnub3p16JBQd0zQOkKZK6DeVgpXRceOExLY3WKrXYyIe7dqMqanCCTjFW71LQ89mQw1gAswnYSMeWlHz7ud7xBwxF3m8usa66yr0nSdsYwuqwXdD0fRjFpeLOe0rcsIuMGrSOqREW5plybq3rFrk7YmLURUSSVYGruD6ksnLXBkvVS2q0ljMPpIL27QdZMUPbaOoLqt3bhn6RX9hPAdQRp9PI4fBkJ8uILIArpTl4E6jrUYwuFXiFYaDVvrDb2zVpvGg6zFSDHNRTYERTDGHFHDFGSDFGRTYFGHDFGDFSFGWERTFSDFGDFADWERTFGHDFGHFGH40} \end{align} combined with the trace theorem, we arrive at \begin{align} \Vert v \Vert_{H^2 (\Omega)} \lesssim \Vert \Delta v \Vert_{L^2(\Omega)} + \Vert \TT v \Vert_{H^1(\Omega)} + \Vert v \Vert_{L^2 (\Omega)},    \llabel{YojSbMBhr4pW8OwDNUao2mhDTScei90KrsmwaBnNUsHe6RpIq1hXFNPm0iVsnGkbCJr8Vmegl416tU2nnollOtcFUM7c4GC8ClaslJ0N8XfCuRaR2sYefjVriJNj1f2tyvqJyQNX1FYmTl5N17tkbBTPuF471AH0Fo71REILJp4VsqiWTTtkAd5RkkJH3RiRNKePesR0xqFqnQjGUIniVgLGCl2He7kmqhEV4PFdCdGpEP9nBmcvZ0pLYGidfn65qEuDfMz2vcq4DMzN6mBFRtQP0yDDFxjuZiZPE3Jj4hVc2zrrcROnFPeOP1pZgnsHAMRK4ETNF23KtfGem2kr5gf5u8NcuwfJCav6SvQ2n18P8RcIkmMSD0wrVR1PYx7kEkZJsJ7Wb6XIWDE0UnqtZPAqEETS3EqNNf38DEk6NhXV9c3sevM32WACSj3eNXuq9GhPOPChd7v1T6gqRinehWk8wLoaawHVvbU4902yObCT6zm2aNf8xUwPOilrR3v8RcNWEk7EvIAI8okPAYxPiUlZ4mwzsJo6ruPmYN6tylDEeeoTmlBKmnVuBB7HnU7qKn353SndtoL82gDifcmjLhHx3gi0akymhuaFTzRnMibFGU5W5x6510NKi85u8JTLYcbfOMn0auD0tvNHwSAWzE3HWcYTId2HhXMLiGiykAjHCnRX4uJJlctQ3yLoqi9ju7Kj84EFU49udeA93xZfZCBW4bSKpycf6nncmvnhKb0HjuKWp6b88pGC3U7kmCO1eY8jvEbu59zmGZsZh93NwvJYbkEgDpJBSDHNRTYERTDGHFHDFGSDFGRTYFGHDFGDFSFGWERTFSDFGDFADWERTFGHDFGHFGH43} \end{align} concluding the proof of \eqref{SDHNRTYERTDGHFHDFGSDFGRTYFGHDFGDFSFGWERTFSDFGDFADWERTFGHDFGHFGH20}. \end{proof} \par \begin{proof}[Proof of Lemma~\ref{L01}] Let $j\geq 2$. Appealing to the $H^2$ regularity \eqref{SDHNRTYERTDGHFHDFGSDFGRTYFGHDFGDFSFGWERTFSDFGDFADWERTFGHDFGHFGH20}, we obtain \begin{align} \begin{split} \Vert \partial_x^j \TT^k (\epsilon \partial_t)^i v\Vert_{L^2(\Omega)} & \les \Vert \Delta\partial_x^{j-2} \TT^k (\epsilon \partial_t)^i v\Vert_{L^2(\Omega)} +  \Vert\TT \partial_x^{j-2} \TT^k (\epsilon \partial_t)^i v\Vert_{H^{1}(\Omega)} +  \Vert \partial_x^{j-2} \TT^k (\epsilon \partial_t)^i v\Vert_{L^2(\Omega)} , \end{split} \llabel{jgQeQUH9kCaz6ZGpcpgrHr79IeQvTIdp35mwWmafRgjDvXS7aFgmNIWmjvopqUuxFrBYmoa45jqkRgTBPPKLgoMLjiwIZ2I4F91C6x9aeW7Tq9CeM62kef7MUbovxWyxgIDcL8Xszu2pZTcbjaK0fKzEyznV0WFYxbFOZJYzBCXtQ4uxU96TnN0CGBhWEFZr60rIgw2f9x0fW3kUB4AOfctvL5I0ANOLdw7h8zK12STKy2ZdewoXYPZLVVvtraCxAJmN7MrmIarJtfTddDWE9At6mhMPCVNUOOSZYtGkPvxpsGeRguDvtWTHMHf3Vyr6W3xvcpi0z2wfwQ1DL1wHedTqXlyojGIQAdEEKv7Tak7cAilRfvrlm82NjNg9KDSvNoQiNhng2tnBSVwd8P4o3oLqrzPNHZmkQItfj61TcOQPJblsBYq3NulNfrConZ6kZ2VbZ0psQAaUCiMaoRpFWfviTxmeyzmc5QsEl1PNOZ4xotciInwc6IFbpwsMeXxy8lJ4A6OV0qRzrSt3PMbvRgOS5obkaFU9pOdMPdjFz1KRXRKDVUjveW3d9shi3jzKBTqZkeSXqbzboWTc5yRRMoBYQPCaeZ23HWk9xfdxJYxHYuNMNGY4XLVZoPUQxJAliDHOKycMAcTpGHIktjlIV25YYoRC74thSsJClD76yxM6BRhgfS0UH4wXVF0x1M6IbemsTKSWlsG9pk95kZSdHU31c5BpQeFx5za7hWPlLjDYdKH1pOkMo1Tvhxxz5FLLu71DUNeUXtDFC7CZ2473sjERebaYt2sESDHNRTYERTDGHFHDFGSDFGRTYFGHDFGDFSFGWERTFSDFGDFADWERTFGHDFGHFGH06} \end{align} from where we obtain by using the vector calculus identity $ \Delta v  =  \nabla \dive v  - \curl \curl v $ the inequality \begin{align} \begin{split} \Vert \partial_x^j \TT^k (\epsilon \partial_t)^i v\Vert_{L^2(\Omega)} & \les \Vert \dive \partial_x^{j-2} \TT^k (\epsilon \partial_t)^i v\Vert_{\dot{H}^1(\Omega)} + \Vert \curl \partial_x^{j-2} \TT^k (\epsilon \partial_t)^i v\Vert_{\dot{H}^1(\Omega)}\\ &\indeq\indeq + \Vert \TT \partial_x^{j-2} \TT^k (\epsilon \partial_t)^i v\Vert_{H^1(\Omega)} + \Vert \partial_x^{j-2} \TT^k (\epsilon \partial_t)^i v \Vert_{L^2(\Omega)}\\ & \lesssim \Vert \partial_x^{j-2} \TT^k (\epsilon \partial_t)^i \dive v \Vert_{\dot{H}^1(\Omega)} + \Vert \partial_x^{j-2} \TT^k (\epsilon \partial_t)^i \curl v\Vert_{\dot{H}^1(\Omega)} \\ &\indeq\indeq + \Vert \partial_x^{j-2} \TT^{k+1} (\epsilon \partial_t)^i v\Vert_{H^1(\Omega)} + \Vert \partial_x^{j-2} \TT^k (\epsilon \partial_t)^i v \Vert_{L^2(\Omega)} \\ &\indeq\indeq + \Vert \partial_x^{j-1} [\TT^k, \dive] (\epsilon \partial_t)^i v\Vert_{L^2(\Omega)} + \Vert \partial_x^{j-1} [\TT^k, \curl] (\epsilon \partial_t)^i v\Vert_{L^2(\Omega)} \\ &\indeq\indeq + \Vert [\TT, \partial_x^{j-2}] \TT^k (\epsilon \partial_t)^i v\Vert_{H^1(\Omega)} , \end{split} \llabel{pV9wDJ8RGUqQmboXwJnHKFMpsXBvAsX8NYRZMwmZQctltsqofi8wxn6IW8jc68ANBwz8f4gWowkmZPWlwfKpM1fpdo0yTRIKHMDgTl3BUBWr6vHUzFZbqxnwKkdmJ3lXzIwkw7JkuJcCkgvFZ3lSo0ljVKu9Syby46zDjM6RXZIDPpHqEfkHt9SVnVtWdyYNwdmMm7SPwmqhO6FX8tzwYaMvjzpBSNJ1z368900v2i4y2wQjZhwwFUjq0UNmk8J8dOOG3QlDzp8AWpruu4D9VRlpVVzQQg1caEqevP0sFPHcwtKI3Z6nY79iQabga0i9mRVGbvlTAgV6PUV8EupPQ6xvGbcn7dQjV7Ckw57NPWUy9XnwF9elebZ8UYJDx3xCBYCIdPCE2D8eP90u49NY9Jxx9RI4Fea0QCjs5TLodJFphykczBwoe97PohTql1LMs37cKhsHO5jZxqpkHtLbFDnvfTxjiykLVhpwMqobqDM9A0f1n4i5SBc6trqVXwgQBEgH8lISLPLO52EUvi1myxknL0RBebO2YWw8Jhfo1lHlUMiesstdWw4aSWrYvOsn5Wn3wfwzHRHxFg0hKFuNVhjzXbg56HJ9VtUwalOXfT8oiFY1CsUCgCETCIvLR0AgThCs9TaZl6ver8hRtedkAUrkInSbcI8nyEjZsVOSztBbh7WjBgfaAFt4J6CTUCU543rbavpOMyelWYWhVBRGow5JRh2nMfUcoBkBXUQ7UlO5rYfHDMceWou3RoFWtbaKh70oHBZn7unRpRh3SIpp0Btqk5vhXCU9SDHNRTYERTDGHFHDFGSDFGRTYFGHDFGDFSFGWERTFSDFGDFADWERTFGHDFGHFGH04} \end{align} \colb and \eqref{SDHNRTYERTDGHFHDFGSDFGRTYFGHDFGDFSFGWERTFSDFGDFADWERTFGHDFGHFGH53} follows. On the other hand, the inequality \eqref{SDHNRTYERTDGHFHDFGSDFGRTYFGHDFGDFSFGWERTFSDFGDFADWERTFGHDFGHFGH72} follows from  the elliptic regularity \eqref{SDHNRTYERTDGHFHDFGSDFGRTYFGHDFGDFSFGWERTFSDFGDFADWERTFGHDFGHFGH07} and the boundary condition~\eqref{SDHNRTYERTDGHFHDFGSDFGRTYFGHDFGDFSFGWERTFSDFGDFADWERTFGHDFGHFGH301}. \end{proof} \par \subsection{Tangential derivative reduction}  The following lemma allows us to reduce the number of tangential derivatives.  \par \cole \begin{Lemma} \label{L02} For $j=1$ and $k\geq 1$, we have \begin{align} \begin{split} \Vert \partial_x \TT^k (\epsilon \partial_t)^i v \Vert_{L^2(\Omega)} & \lesssim \Vert \TT^{k} (\epsilon \partial_t)^i \dive v \Vert_{L^2(\Omega)} + \Vert \TT^{k} (\epsilon \partial_t)^i \curl v \Vert_{L^2(\Omega)} + \Vert \TT^{k} (\epsilon \partial_t)^i v \Vert_{L^2(\Omega)} \\ &\indeq + \sum_{l=1}^{k}  \binom{k}{l}   \bigl( \Vert \partial_x \TT^{k-l} (\epsilon \partial_t)^i v \Vert_{L^2(\Omega)} \Vert \TT^l \nu \Vert_{L^\infty (\Omega_{\delta_0})} \\&\indeq + \Vert \TT^{k-l} (\epsilon \partial_t)^i v \Vert_{L^2(\Omega)} \Vert \partial_x \TT^{l} \nu \Vert_{L^\infty(\Omega_{\delta_0})} + \Vert \TT^{k-l} (\epsilon \partial_t)^i v \Vert_{L^2(\Omega)} \Vert \TT^{l} \nu \Vert_{L^\infty(\Omega_{\delta_0})} \bigr) \\ &\indeq + \Vert [\TT^{k}, \dive] (\epsilon \partial_t)^i v \Vert_{L^2(\Omega)} + \Vert [\TT^{k}, \curl] (\epsilon \partial_t)^i v \Vert_{L^2(\Omega)} . \end{split} \label{SDHNRTYERTDGHFHDFGSDFGRTYFGHDFGDFSFGWERTFSDFGDFADWERTFGHDFGHFGH14} \end{align}  \end{Lemma}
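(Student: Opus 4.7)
The plan is to apply the div-curl inequality of Lemma~\ref{L13} to the vector field $w = \TT^k (\epsilon\partial_t)^i v$ and then reduce the boundary term using the impermeability condition~\eqref{SDHNRTYERTDGHFHDFGSDFGRTYFGHDFGDFSFGWERTFSDFGDFADWERTFGHDFGHFGH301} together with the tangentiality of the fields $T_1,T_2,T_3$. Applying \eqref{SDHNRTYERTDGHFHDFGSDFGRTYFGHDFGDFSFGWERTFSDFGDFADWERTFGHDFGHFGH07} to $w$ yields
\begin{align*}
\Vert \partial_x \TT^k (\epsilon\partial_t)^i v \Vert_{L^2(\Omega)}
&\lesssim \Vert \dive \TT^k (\epsilon\partial_t)^i v \Vert_{L^2(\Omega)}
+ \Vert \curl \TT^k (\epsilon\partial_t)^i v \Vert_{L^2(\Omega)} \\
&\indeq + \Vert (\TT^k (\epsilon\partial_t)^i v)\cdot \nu \Vert_{H^{1/2}(\partial\Omega)}
+ \Vert \TT^k (\epsilon\partial_t)^i v \Vert_{L^2(\Omega)},
\end{align*}
after which commuting $\TT^k$ past $\dive$ and $\curl$ produces the two commutator terms that appear on the last line of \eqref{SDHNRTYERTDGHFHDFGSDFGRTYFGHDFGDFSFGWERTFSDFGDFADWERTFGHDFGHFGH14}, and leaves $\TT^k (\epsilon\partial_t)^i \dive v$ and $\TT^k (\epsilon\partial_t)^i \curl v$ as the leading terms.

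The central step is the treatment of the boundary term $\Vert (\TT^k (\epsilon\partial_t)^i v)\cdot \nu \Vert_{H^{1/2}(\partial\Omega)}$. Writing $w = (\epsilon\partial_t)^i v$ (which still satisfies $w \cdot \nu|_{\partial\Omega} = 0$ since the boundary condition is preserved by time differentiation), I would expand $\TT^k(w \cdot \nu)$ by Leibniz to obtain
\begin{align*}
\TT^k(w\cdot\nu)
= \sum_{l=0}^{k} \binom{k}{l} (\TT^{k-l} w)\cdot (\TT^l \nu),
\end{align*}
where $\nu$ is the analytic extension of the unit normal to $\Omega_{\delta_0}$ provided by \eqref{SDHNRTYERTDGHFHDFGSDFGRTYFGHDFGDFSFGWERTFSDFGDFADWERTFGHDFGHFGH111} and \eqref{SDHNRTYERTDGHFHDFGSDFGRTYFGHDFGDFSFGWERTFSDFGDFADWERTFGHDFGHFGH87}. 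Since $w\cdot\nu$ vanishes on $\partial\Omega$ and the $T_j$ are tangential, $\TT^k(w\cdot\nu)|_{\partial\Omega} = 0$. Isolating the $l=0$ term gives
\begin{align*}
(\TT^k w)\cdot \nu\bigm|_{\partial\Omega}
= -\sum_{l=1}^{k} \binom{k}{l} (\TT^{k-l} w)\cdot (\TT^l \nu)\bigm|_{\partial\Omega}.
\end{align*}

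Next, I would use a cutoff supported in $\Omega_{\delta_0}$ together with the trace theorem $\Vert\cdot\Vert_{H^{1/2}(\partial\Omega)} \lesssim \Vert\cdot\Vert_{H^1(\Omega_{\delta_0})}$ applied term by term. Distributing the single gradient produced by the $H^1$ norm over the product via Leibniz yields the three structural contributions
\begin{align*}
\Vert \partial_x \TT^{k-l} (\epsilon\partial_t)^i v \Vert_{L^2(\Omega)} \Vert \TT^l \nu \Vert_{L^\infty(\Omega_{\delta_0})},
\quad
\Vert \TT^{k-l} (\epsilon\partial_t)^i v \Vert_{L^2(\Omega)} \Vert \partial_x \TT^l \nu \Vert_{L^\infty(\Omega_{\delta_0})},
\end{align*}
and the $L^2$--$L^\infty$ pairing with no derivatives, matching exactly the sum over $l$ in \eqref{SDHNRTYERTDGHFHDFGSDFGRTYFGHDFGDFSFGWERTFSDFGDFADWERTFGHDFGHFGH14}. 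Combining these with the div-curl output completes the estimate.

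The main obstacle is bookkeeping the combinatorial coefficients and placing the right factors of $\nu$ in $L^\infty(\Omega_{\delta_0})$ rather than on $\partial\Omega$, because when we later sum the resulting inequality against the analytic weights in \eqref{SDHNRTYERTDGHFHDFGSDFGRTYFGHDFGDFSFGWERTFSDFGDFADWERTFGHDFGHFGH97}, the bound \eqref{SDHNRTYERTDGHFHDFGSDFGRTYFGHDFGDFSFGWERTFSDFGDFADWERTFGHDFGHFGH87} on $\partial^\alpha \nu$ on $\Omega_{\delta_0}$ is what guarantees convergence. A minor technical point is justifying the use of the cutoff version of the trace theorem uniformly in $k$, which is handled by choosing a single smooth cutoff equal to $1$ near $\partial\Omega$ and supported in $\Omega_{\delta_0}$, independent of $k$ and $i$.
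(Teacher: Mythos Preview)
Your proposal is correct and follows essentially the same route as the paper: apply the div--curl estimate \eqref{SDHNRTYERTDGHFHDFGSDFGRTYFGHDFGDFSFGWERTFSDFGDFADWERTFGHDFGHFGH07} to $\TT^k(\epsilon\partial_t)^i v$, commute $\TT^k$ past $\dive$ and $\curl$, kill the boundary term via the Leibniz expansion of $\TT^k\bigl((\epsilon\partial_t)^i v\cdot\nu\bigr)$ and the tangentiality of the $T_j$, then pass from $H^{1/2}(\partial\Omega)$ to $H^1(\Omega_{\delta_0})$ by the trace theorem and distribute the gradient over the product. The only cosmetic difference is that the paper does not introduce a cutoff: since $\partial\Omega$ is a boundary component of the collar $\Omega_{\delta_0}$, the trace inequality $\Vert\cdot\Vert_{H^{1/2}(\partial\Omega)}\lesssim\Vert\cdot\Vert_{H^1(\Omega_{\delta_0})}$ holds directly, so your cutoff is harmless but unnecessary.
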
 \colb \par Note that when $j=0$ and $k\geq1$, we may simply use \begin{align} \begin{split} \Vert \TT^k (\epsilon \partial_t)^i v \Vert_{L^2(\Omega)} \les  \Vert \partial_{x}\TT^{k-1} (\epsilon \partial_t)^i v \Vert_{L^2(\Omega)} \end{split} \label{SDHNRTYERTDGHFHDFGSDFGRTYFGHDFGDFSFGWERTFSDFGDFADWERTFGHDFGHFGH08} \end{align} and apply \eqref{SDHNRTYERTDGHFHDFGSDFGRTYFGHDFGDFSFGWERTFSDFGDFADWERTFGHDFGHFGH14} with the reduced $k$. \par \begin{proof}[Proof of Lemma~\ref{L02}] For $j=1$ and $k \geq 1$, we appeal to \eqref{SDHNRTYERTDGHFHDFGSDFGRTYFGHDFGDFSFGWERTFSDFGDFADWERTFGHDFGHFGH07} and obtain \begin{align} \begin{split} \Vert \partial_x \TT^k (\epsilon \partial_t)^i v \Vert_{L^2(\Omega)} & \lesssim \Vert \dive \TT^k (\epsilon \partial_t)^i v \Vert_{L^2(\Omega)} + \Vert \curl \TT^k (\epsilon \partial_t)^i v \Vert_{L^2(\Omega)} \\ &\indeq + \Vert \TT^k (\epsilon \partial_t)^i v \cdot \nu \Vert_{H^{1/2}(\partial \Omega)} + \Vert \TT^k (\epsilon \partial_t)^i v \Vert_{L^2(\Omega)} . \end{split} \label{SDHNRTYERTDGHFHDFGSDFGRTYFGHDFGDFSFGWERTFSDFGDFADWERTFGHDFGHFGH09} \end{align} Now, note that \begin{align} \begin{split} \TT^k (\epsilon \partial_t)^i v \cdot \nu = \TT^k (\epsilon \partial_t)^i (v \cdot \nu) - \sum_{l=1}^k  \binom{k}{l} \TT^{k-l} (\epsilon \partial_t)^i v \cdot \TT^l \nu , \end{split} \llabel{BHJFx7qPxB55a7RkOyHmSh5vwrDqt0nF7toPJUGqHfY5uAt5kQLP6ppnRjMHk3HGqZ0OBugFFxSnASHBI7agVfqwfgaAleH9DMnXQQTAAQM8qz9trz86VR2gOMMVuMgf6tGLZWEKqvkMEOgUzMxgN4CbQ8fWY9Tk73Gg90jy9dJbOvddVZmqJjb5qQ5BSFfl2tNPRC86tI0PIdLDUqXKO1ulgXjPVlfDFkFh42W0jwkkH8dxIkjy6GDgeM9mbTYtUS4ltyAVuor6w7InwCh6GG9Km3YozbuVqtsXTNZaqmwkzoKxE9O0QBQXhxN5Lqr6x7SxmvRwTSBGJY5uo5wSNGp3hCcfQNafXWjxeAFyCxUfM8c0kKkwgpsvwVe4tFsGUIzoWFYfnQAUT9xclTfimLCJRXFAmHe7VbYOaFBPjjeF6xI3CzOVvimZ32pt5uveTrhU6y8wjwAyIU3G15HMybdauGckOFnq6a5HaR4DOojrNAjdhSmhOtphQpc9jXX2u5rwPHzW032fi2bz160Ka4FDjd1yVFSMTzSvF1YkRzdzbYbI0qjKMNXBFtXoCZdj9jD5AdSrNBdunlTDIaA4UjYSx6DK1X16i3yiQuq4zooHvHqNgT2VkWGBVA4qeo8HH70FflAqTDBKi461GvMgzd7WriqtFq24GYcyifYkWHv7EI0aq5JKlfNDCNmWom3VyXJsNt4WP8yGgAoATOkVWZ4ODLtkza9PadGCGQ2FCH6EQppksxFKMWAfY0JdaSYgo7hhGwHttbb4z5qrcdc9CnAmxqY6m8uGSDHNRTYERTDGHFHDFGSDFGRTYFGHDFGDFSFGWERTFSDFGDFADWERTFGHDFGHFGH13} \end{align} with the first term on the right side vanishing by~\eqref{SDHNRTYERTDGHFHDFGSDFGRTYFGHDFGDFSFGWERTFSDFGDFADWERTFGHDFGHFGH301}. Thus we get \begin{align} \begin{split} \Vert \TT^k (\epsilon \partial_t)^i v \cdot \nu\Vert_{H^{1/2}(\partial \Omega)} \leq \sum_{l=1}^k \binom{k}{l} \Vert \TT^{k-l} (\epsilon \partial_t)^i v \cdot \TT^l \nu \Vert_{H^{1/2} (\partial \Omega)} . \llabel{f7DZQ6FBUPPiOxgsQ0CZlPYPBa75OiV6tZOBpfYuNcbj4VUpbTKXZRJf36EA0LDgAdfdOpSbg1ynCPUVoRWxeWQMKSmuh3JHqX15APJJX2v0W6lm0llC8hlss1NLWaNhRBAqfIuzkx2sp01oDrYsRywFrNbz1hGpq99FwUzlfcQkTsbCvGIIgmfHhTrM1ItDgCMzYttQRjzFxXIgI7FMAp1kllwJsGodXAT2PgoIp9VonFkwZVQifq9ClAQ4YBwFR4nCyRAg84MLJunx8uKTF3FzlGEQtl32y174wLXZm62xX5xGoaCHvgZFEmyDIzj3q10RZrsswByA2WlOADDDQVin8PTFLGmwi6pgRZQ6A5TLlmnFVtNiJbnUkLyvq9zSBP6eJJq7P6RFaim6KXPWaxm6W7fM83uKD6kNj7vhg4ppZ4ObMaSaPH0oqxABG8vqrqT6QiRGHBCCN1ZblTY4zq8lFqLCkghxDUuZw7MXCD4psZcEX9RlCwf0CCG8bgFtiUv3mQeLWJoyFkv6hcSnMmKbiQukLFpYAqo5Fjf9RRRtqS6XWVoIYVDMla5c7cWKJLUqcvtiIOeVCU7xJdC5W5bk3fQbyZjtUDmegbgI179dlU3u3cvWoAIowbEZ0xP2FBMSwazV1XfzVi97mmy5sTJK0hz9O6pDaGctytmHTDYxTUBALNvQefRQuF2OyokVsLJwdqgDhTTJeR7CuPczNLVj1HKml8mwLFr8Gz66n4uA9YTt9oiJGclm0EckA9zkElOB9Js7Gfwhqyglc2RQ9d52aYQvC8ArSDHNRTYERTDGHFHDFGSDFGRTYFGHDFGDFSFGWERTFSDFGDFADWERTFGHDFGHFGH95} \end{split} \end{align} Extending the unit normal vector to $\Omega_{\delta_0}$ as in Section~\ref{sec02}, and using the trace theorem, we obtain \begin{align} \begin{split} \Vert \TT^{k-l} (\epsilon \partial_t)^i v \cdot \TT^l \nu \Vert_{H^{1/2} (\partial \Omega)} \les \Vert \TT^{k-l} (\epsilon \partial_t)^i v \cdot \TT^l \nu \Vert_{H^1 (\Omega_{\delta_0})} . \end{split}    \llabel{K7aCLmENPYd27XImGC6L9gOfyL05HMtgR65lBCsWGwFKGBIQiIRBiT95N78wncbk7EFeiBRB216SiHoHJSkNgxqupJmZ1pxEbWcwiJX5NfiYPGD6uWsXTP94uaFVDZuhJH2d0PLOY243xMK47VP6FTyT35zpLxRC6tN89as3ku8eGrdMKWoMIU946FBjksOTe0UxZD4avbTw5mQ3Ry9AfJFjPgvLFKz0olfZdj3O07EavpWfbM3rBGSyOiuxpI4o82JJ42X1GIux8QFh3PhRtY9vjiSL6x76W9y2Zz3YASGRMp7kDhrgma8fWGG0qKLsO5oQr42t1jP1crM2fClRbETdqra5lVG1lKitbXqbdPKcaUV0lv4Lalo8VTXclaUqh5GWCzAnRnlNNcmwaF8ErbwX32rjiHleb4gXSjLROJgG2yb8OCAxN4uy4RsLQjD7U7enwcYCnZxiKdju74vpjBKKjRRl36kXXzvnX2JrD8aPDUWGstgb8CTWYnHRs6y6JCp8Lx1jzCI1mtG26y5zrJ1nFhX67wCzqF8uZQIS0dnYxPeXDyjBz1aYwzDXaxaMIZzJ3C3QRrahpw8sWLxrAsSqZP5Wvv1QF7JPAVQwuWu69YLwNHUPJ0wjs7RSiVaPrEGgxYaVmSk3Yo1wLn0q0PVeXrzoCIH7vxq5ztOmq6mp4drApdzhwSOlRPDpsClr8FoZUG7vDUYhbScJ6gJb8Q8emG2JG9Oja83owYwjozLa3DB500siGjEHolPuqe4p7T1kQJmU6cHnOo29oroOzTa3j31n8mDLSDHNRTYERTDGHFHDFGSDFGRTYFGHDFGDFSFGWERTFSDFGDFADWERTFGHDFGHFGH44} \end{align} By the Leibniz rule and H\"older's inequality, we conclude that \begin{align} \begin{split} & \Vert \TT^{k-j} (\epsilon \partial_t)^i v \cdot \TT^j \nu \Vert_{H^1(\Omega_{\delta_0})} \\&\indeq \leq \Vert \partial_x \TT^{k-j} (\epsilon \partial_t)^i v \Vert_{L^2(\Omega)} \Vert \TT^j \nu \Vert_{L^\infty(\Omega_{\delta_0})} + \Vert \TT^{k-j} (\epsilon \partial_t)^i v \Vert_{L^2(\Omega)} \Vert \partial_x \TT^j \nu \Vert_{L^\infty (\Omega_{\delta_0})} \\&\indeq\indeq + \Vert \TT^{k-j} (\epsilon \partial_t)^i v \Vert_{L^2(\Omega)} \Vert \TT^j \nu \Vert_{L^\infty(\Omega_{\delta_0})} . \label{SDHNRTYERTDGHFHDFGSDFGRTYFGHDFGDFSFGWERTFSDFGDFADWERTFGHDFGHFGH96} \end{split} \end{align}   Combining \eqref{SDHNRTYERTDGHFHDFGSDFGRTYFGHDFGDFSFGWERTFSDFGDFADWERTFGHDFGHFGH09}--\eqref{SDHNRTYERTDGHFHDFGSDFGRTYFGHDFGDFSFGWERTFSDFGDFADWERTFGHDFGHFGH96}, we then obtain \eqref{SDHNRTYERTDGHFHDFGSDFGRTYFGHDFGDFSFGWERTFSDFGDFADWERTFGHDFGHFGH14}. \end{proof} \par \subsection{Commutator estimates} Here we recall a Leibniz formula for $k$-folded commutators. Given two linear differential operators $Y$, $Z$, the adjoint operator ad $Y(Z)$ is defined as \begin{align*} \ad Y(Z) = [Y, Z] = YZ - ZY. \end{align*} Recall a Leibniz-type formula  \begin{align} [\TT^k, Z] = \sum_{m=1}^k \binom{k}{m} ((\ad \TT)^m (Z))  \TT^{k-m} \comma k\in{\mathbb N} \label{SDHNRTYERTDGHFHDFGSDFGRTYFGHDFGDFSFGWERTFSDFGDFADWERTFGHDFGHFGH16} \end{align}  from \cite[Lemma~3.4]{CKV}, which holds for any differential operator~$Z$. From \cite[Lemma~5.3]{Ko01}, we also recall an analytic estimate for the adjoint operator. \par \cole \begin{Lemma}[\cite{Ko01}] \label{L04} Let $Y_1, \ldots, Y_m$ and $Y_0$ be analytic vector fields defined on $\Omega \subseteq \mathbb{R}^3$ such that \begin{align*} Y_n = \sum_{i=1}^3 a_n^i \partial_i \comma n=0, 1, \ldots, m , \end{align*} where  \begin{align*} \max_{\vert \alpha \vert = k} \vert \partial^\alpha a_n^i \vert  \lesssim k! K_1^k \comma i=1, 2, 3 \commaone n=0, 1,\ldots, m \commaone k\in \mathbb{N}_0 , \end{align*} for some $K_1 \geq 1$. Then there exists $\bar{K}_1, \bar{K}_2 \geq 1$ such that \begin{align*} (\ad Y_m \dots \ad Y_1) (Y_0) = \sum_{i=1}^3 b_m^i \partial_i, \end{align*}   where  \begin{align*} \max_{\vert \alpha \vert = k} \vert \partial^\alpha b_m^i \vert  \lesssim (k+m)! \bar{K}_1^k \bar{K}_2^m , \end{align*} for $i=1,2,3$   and $k\in{\mathbb N}_0$. \end{Lemma}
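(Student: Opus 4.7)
The plan is to proceed by induction on the number of nested brackets $m\in\mathbb{N}_0$. The base case $m=0$ is immediate from the hypothesis on the coefficients of $Y_0$. For the inductive step, I set $Z_{m-1} = (\ad Y_{m-1}\cdots \ad Y_1)(Y_0) = \sum_{i=1}^{3} b_{m-1}^{i}\partial_{i}$ with $b_{m-1}^{i}$ assumed to satisfy the bound of the lemma with $m$ replaced by $m-1$. A direct computation of the Lie bracket then yields
\begin{align*}
[Y_m, Z_{m-1}] = \sum_{i=1}^{3} b_m^{i}\, \partial_i,
\qquad
b_m^{i} = \sum_{j=1}^{3}\bigl(a_m^{j}\, \partial_j b_{m-1}^{i} - b_{m-1}^{j}\, \partial_j a_m^{i}\bigr),
\end{align*}
so the task reduces to estimating $\partial^\alpha b_m^{i}$ for $|\alpha|=k$ using the inductive hypothesis on $b_{m-1}^{i}$ and the assumption on $a_m^j$.

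Next, I would apply the Leibniz rule to each of the two summands and use $|\partial^\beta a_m^{j}| \les |\beta|!\, K_1^{|\beta|}$ together with the inductive hypothesis. Grouping the multi-index sum by the total order $b=|\beta|$ of derivatives falling on $a_m^j$, the first summand produces a bound of the shape
\begin{align*}
\sum_{g=0}^{k}\binom{k}{g}(k-g)!\, K_1^{k-g}(g+m)!\,\bar K_1^{g+1}\bar K_2^{m-1}
= k!\,\bar K_1\,\bar K_2^{m-1}\sum_{g=0}^{k}\frac{(g+m)!}{g!}\, K_1^{k-g}\bar K_1^{g},
\end{align*}
and the second summand is handled symmetrically with the roles of $a_m$ and $b_{m-1}$ interchanged. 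The crucial combinatorial point is the identity $(g+m)!/g! = m!\binom{g+m}{m}$ combined with the monotonicity $\binom{g+m}{m}\leq\binom{k+m}{m}$ for $0\leq g\leq k$; this lets me pull $m!\binom{k+m}{m}$ out of the sum, leaving the geometric series $\sum_{g=0}^{k} K_1^{k-g}\bar K_1^{g}$, which is bounded by $2\bar K_1^{k}$ as soon as $\bar K_1\geq 2K_1$. The factorial identity $k!\, m!\,\binom{k+m}{m} = (k+m)!$ then collapses the bound to
\begin{align*}
\max_{|\alpha|=k}|\partial^\alpha b_m^{i}|
\les (k+m)!\,\bar K_1^{k+1}\bar K_2^{m-1},
\end{align*}
and choosing $\bar K_2$ large enough relative to $\bar K_1$ so as to absorb the implicit multiplicative constants produced at each inductive step closes the induction.

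The main obstacle is to avoid parasitic factors growing with $k$ or $m$ upon iterating the Leibniz rule $m$ times. A cruder estimate such as $\binom{g+m}{m}\leq 2^{g+m}$ would introduce a factor $2^{k+m}$ and force $\bar K_2$ to depend on $m$, breaking the induction; only the finer estimate via the monotonicity of $\binom{g+m}{m}$ in $g$ allows the clean separation of the $\bar K_1^{k}$ and $\bar K_2^{m}$ contributions required by the statement. Once this combinatorial point is handled, the remainder of the argument is standard majorant-type bookkeeping.
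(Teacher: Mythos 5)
The paper does not prove this lemma; it simply cites it as \cite[Lemma~5.3]{Ko01}. Your inductive argument is correct and is the natural way to establish such iterated-commutator bounds: the Leibniz expansion, the grouping by total order, the identity $k!\,m!\binom{k+m}{m}=(k+m)!$, and the geometric-series absorption with $\bar K_1\geq 2K_1$ and $\bar K_2$ proportional to $\bar K_1$ are exactly the ingredients one needs. Two small remarks. First, the second summand $b_{m-1}^j\,\partial_j a_m^i$ is not literally symmetric to the first: the extra derivative $\partial_j$ now falls on $a_m$ rather than on $b_{m-1}$, which produces an additional factor of $k-g+1$ in the sum. This is harmless because $(k+1)(k+m-1)!\leq(k+m)!$ for $m\geq 1$, so the same $(k+m)!$ budget absorbs it, but it deserves a line in a full write-up. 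Second, your diagnosis of why the crude bound $\binom{g+m}{m}\leq 2^{g+m}$ fails is slightly misattributed: the $2^m$ portion could in fact be absorbed by enlarging $\bar K_2$ once and for all; the fatal part is the $2^g$, which upgrades $\bar K_1^g$ to $(2\bar K_1)^g$ inside the geometric sum and hence forces a $(2\bar K_1)^k$ in the output, incompatible with keeping $\bar K_1$ fixed across the induction. The monotonicity $\binom{g+m}{m}\leq\binom{k+m}{m}$ is indeed the device that avoids this, as you identify. Modulo these cosmetic points the plan is sound and complete.
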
   \par \colb   In the following lemma, we derive  commutator estimates for various spatial derivative operators. \par \cole \begin{Lemma}  There exists a constant $\bar{K} \geq1$ such that  \label{L03} \begin{align} \Vert \partial_x^j [\TT^k, \dive] (\epsilon \partial_t)^i v \Vert_{L^2} &\lesssim \sum_{k'=1}^k \sum_{j'=0}^j \binom{k}{k'} \binom{j}{j'} (j' + k')! \bar{K}^{j'+k'} \Vert \partial_x^{j-j'+1} \TT^{k-k'} (\epsilon \partial_t)^i v \Vert_{L^2}, \label{SDHNRTYERTDGHFHDFGSDFGRTYFGHDFGDFSFGWERTFSDFGDFADWERTFGHDFGHFGH18} \\ \Vert \partial_x^j [\TT^k, \nabla] (\epsilon \partial_t)^i v  \Vert_{L^2} &\lesssim \sum_{k'=1}^k \sum_{j'=0}^j \binom{k}{k'} \binom{j}{j'} (j' + k')! \bar{K}^{j'+k'} \Vert \partial_x^{j-j'+1} \TT^{k-k'} (\epsilon \partial_t)^i v \Vert_{L^2} \label{SDHNRTYERTDGHFHDFGSDFGRTYFGHDFGDFSFGWERTFSDFGDFADWERTFGHDFGHFGH19} , \\ \Vert \partial_x^j [\TT^k, \curl] (\epsilon \partial_t)^i v  \Vert_{L^2} &\lesssim \sum_{k'=1}^k \sum_{j'=0}^j \binom{k}{k'} \binom{j}{j'} (j' + k')! \bar{K}^{j'+k'} \Vert \partial_x^{j-j'+1} \TT^{k-k'} (\epsilon \partial_t)^i v \Vert_{L^2} , \label{SDHNRTYERTDGHFHDFGSDFGRTYFGHDFGDFSFGWERTFSDFGDFADWERTFGHDFGHFGH50} \\ \Vert \partial_x^j [\partial_x^k, \TT] \TT^l (\epsilon \partial_t)^i v \Vert_{L^2} &\lesssim \sum_{k'=1}^k \sum_{j'=0}^j \binom{k}{k'} \binom{j}{j'} (j' + k')! \bar{K}^{j'+k'} \Vert \partial_x^{k-k'+j-j'+1} \TT^{l} (\epsilon \partial_t)^i v \Vert_{L^2}, \label{SDHNRTYERTDGHFHDFGSDFGRTYFGHDFGDFSFGWERTFSDFGDFADWERTFGHDFGHFGH22} \end{align} for $i, j, l\in \mathbb{N}_0$ and $k \in \mathbb{N}$. \end{Lemma}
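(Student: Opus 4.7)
The plan is to treat all four estimates in a uniform way by combining the Leibniz-type commutator formula~\eqref{SDHNRTYERTDGHFHDFGSDFGRTYFGHDFGDFSFGWERTFSDFGDFADWERTFGHDFGHFGH16} with the Komatsu-type adjoint bound in Lemma~\ref{L04}. For the first three estimates, I would apply~\eqref{SDHNRTYERTDGHFHDFGSDFGRTYFGHDFGDFSFGWERTFSDFGDFADWERTFGHDFGHFGH16} with $Z \in \{\dive, \nabla, \curl\}$, obtaining
\begin{align*}
[\TT^k, Z] = \sum_{k'=1}^{k} \binom{k}{k'} (\ad \TT)^{k'}(Z)\, \TT^{k-k'}.
\end{align*}
Since $Z$ is a constant-coefficient first-order operator and $\TT$ has analytic coefficients with constant analyticity radii by Theorem~\ref{P01}, Lemma~\ref{L04} gives that $(\ad \TT)^{k'}(Z) = \sum_{i=1}^{3} b_{k',Z}^{\,i}\, \partial_i$ with $|\partial_x^{j'} b_{k',Z}^{\,i}| \lesssim (j' + k')!\, \bar K^{\,j'+k'}$ pointwise.

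Next, I would apply $\partial_x^{j}$ to each term $(\ad \TT)^{k'}(Z)\, \TT^{k-k'}(\epsilon\partial_t)^{i} v$ and distribute derivatives via the Leibniz rule, letting $j'$ of them fall on the coefficient $b_{k',Z}^{\,i}$ and the remaining $j-j'$ on $\partial_i \TT^{k-k'}(\epsilon\partial_t)^{i} v$. Taking $L^2$ norms, pulling out the $L^\infty$ coefficient bound $(j'+k')!\,\bar K^{\,j'+k'}$, and summing over $k'$ and $j'$ yields exactly the right-hand sides of~\eqref{SDHNRTYERTDGHFHDFGSDFGRTYFGHDFGDFSFGWERTFSDFGDFADWERTFGHDFGHFGH18}, \eqref{SDHNRTYERTDGHFHDFGSDFGRTYFGHDFGDFSFGWERTFSDFGDFADWERTFGHDFGHFGH19}, and~\eqref{SDHNRTYERTDGHFHDFGSDFGRTYFGHDFGDFSFGWERTFSDFGDFADWERTFGHDFGHFGH50}, since the factor $\partial_x^{j-j'}\partial_i$ contributes the $\partial_x^{j-j'+1}$ appearing in the claim.

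For the fourth estimate I would use the analogous commutator expansion
\begin{align*}
[\partial_x^{k}, \TT] = \sum_{k'=1}^{k} \binom{k}{k'} (\ad \partial_x)^{k'}(\TT)\, \partial_x^{k-k'},
\end{align*}
noting that $(\ad \partial_x)^{k'}(\TT) = \sum_i (\partial_x^{k'} b_i)\, \partial_i$ is again a first-order operator whose coefficients are higher derivatives of the analytic functions $b_i$ from~\eqref{SDHNRTYERTDGHFHDFGSDFGRTYFGHDFGDFSFGWERTFSDFGDFADWERTFGHDFGHFGH10}, hence bounded by $(j'+k')!\,\bar K^{\,j'+k'}$ after $j'$ further spatial differentiations. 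Then I apply $\partial_x^{j}$ to $(\ad \partial_x)^{k'}(\TT)\, \partial_x^{k-k'} \TT^{l}(\epsilon\partial_t)^{i} v$ and distribute by Leibniz exactly as above; the remaining derivatives combine into $\partial_x^{j-j'+k-k'+1}\TT^{l}(\epsilon\partial_t)^{i} v$, matching~\eqref{SDHNRTYERTDGHFHDFGSDFGRTYFGHDFGDFSFGWERTFSDFGDFADWERTFGHDFGHFGH22}.

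The only real obstacle is bookkeeping: making sure that in each case the combined factorial arising from Komatsu's estimate (i.e., $(j'+k')!$ rather than $j'!\,k'!$ separately) is what naturally appears, so that the constant $\bar K$ absorbs all the composed commutator losses uniformly for $Z \in \{\dive, \nabla, \curl\}$ and also for the case $[\partial_x^{k}, \TT]$. Apart from this combinatorial check, the argument is a direct application of~\eqref{SDHNRTYERTDGHFHDFGSDFGRTYFGHDFGDFSFGWERTFSDFGDFADWERTFGHDFGHFGH16}, Lemma~\ref{L04}, the Leibniz rule, and H\"older's inequality.
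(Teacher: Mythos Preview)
Your proposal is correct and follows essentially the same route as the paper: expand the commutator via the Leibniz-type formula~\eqref{SDHNRTYERTDGHFHDFGSDFGRTYFGHDFGDFSFGWERTFSDFGDFADWERTFGHDFGHFGH16}, apply Lemma~\ref{L04} to bound the coefficients of the iterated adjoints, distribute $\partial_x^j$ by the Leibniz rule, and take $L^2$ norms. The paper writes out~\eqref{SDHNRTYERTDGHFHDFGSDFGRTYFGHDFGDFSFGWERTFSDFGDFADWERTFGHDFGHFGH18} in detail (reducing $\dive$ to its components $\partial_s$ before invoking Lemma~\ref{L04}, since that lemma is stated for vector fields) and then declares~\eqref{SDHNRTYERTDGHFHDFGSDFGRTYFGHDFGDFSFGWERTFSDFGDFADWERTFGHDFGHFGH19}--\eqref{SDHNRTYERTDGHFHDFGSDFGRTYFGHDFGDFSFGWERTFSDFGDFADWERTFGHDFGHFGH22} analogous; your unified treatment and your handling of~\eqref{SDHNRTYERTDGHFHDFGSDFGRTYFGHDFGDFSFGWERTFSDFGDFADWERTFGHDFGHFGH22} via $(\ad\partial_x)^{k'}(\TT)=\sum_i(\partial_x^{k'}b_i)\partial_i$ are exactly what that ``analogous'' entails.
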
 \par \begin{proof}[Proof of Lemma~\ref{L03}] By \eqref{SDHNRTYERTDGHFHDFGSDFGRTYFGHDFGDFSFGWERTFSDFGDFADWERTFGHDFGHFGH16}, we have the expansion 	\begin{align} 	[\TT^k, \dive]          v 	=  	\sum_{s=1}^3 	[\TT^k, \partial_s]          v_s 	=  	\sum_{s=1}^3 \sum_{k'=1}^k  	\binom{k}{k'} ((\ad\TT)^{k'} (\partial_s)) \TT^{k-k'}         v_s 	.    \llabel{7CIvCpKZUs0jVrb7vHIH7NTtYY7JKvVdGLhA1ONCWoQW1fvjmlH7lSlIm8T1QSdUWhTiMPKDZmm4V7ofRW1dnlqg0Ah1QRjdtKZVzEBNE1eXiRRSLLQPESEDeXbiMMFfxC5FI1zviyNsYHPsGxfGiIuhDPDi0OIHuBTTHOCHyCTkABxuCjgOZs965wfeFwvfRpNLLT3EvgKgkO9jyyvotRRlpDTdn9H5ZnqwWr4OUkIlxtsk0RZdODnsoYid6ctgwwQrxQk1S8ajpPiZJlp5pIAT1t482KxtvQ6D1TVzQ7F3xoz6Hw2phWDlCJg7VcEix6XFIdlOlcNbgODKp86tCHVGrzEcVnBk99sq5XGd1DNFANeggJYjfBWjAbJSchyEuVlENawP0DWoZWKuP4IPtvZbmnRL0472K3bBQIH5SpPxtXy5NJjoWceA7FeT7IwpivQdqLaeZE0QfiMW1KozkdUtRsGH6ryobMpDbfLt0Z2FAXbR3QQwuIizgZFQ4Gh4lY5pt9RMTieqBIkdXI979BGU2yYtJSanOMsDLWydCQfolxJWbbIdbEggZLBKbFmKXoRMcUyM8NlGnWyuERUtbAs4ZRPHdIWtlbJRtQwoddmlZhI3I8A9K8SyflGzcVjCqGkZnaZrxHNxIcMaeGQdXXxGHFi6AeYBAlo4Q9HZIjJjtOhl4VLmVvcphmMESM8ltxHQQUHjJhYyf5Ndc0i8mHOTNS7yx5hNrJCyJ1ZFj4QeIom7wczw98Bn6SxxoqPtnXp4FyiEb2MCyj2AHaB8FejdIRhqQVfRSDHNRTYERTDGHFHDFGSDFGRTYFGHDFGDFSFGWERTFSDFGDFADWERTFGHDFGHFGH45} 	\end{align} Using Theorem~\ref{P01} and Lemma~\ref{L04}, we obtain \begin{align} (\ad \TT)^{k'} (\partial_s) = \sum_{l=1}^{3} b_{k',s}^l    \partial_l \comma k'\in {\mathbb N} \commaone s\in \{1,2,3\}   , \label{SDHNRTYERTDGHFHDFGSDFGRTYFGHDFGDFSFGWERTFSDFGDFADWERTFGHDFGHFGH17} \end{align} where  \begin{align} \max_{\vert \beta \vert = k} \vert \partial^\beta b_{k',s}^l \vert \lesssim (k+k')! \bar{K}_1^k \bar{K}_2^{k'} \comma k \in \mathbb{N}_0 \comma k'\in \mathbb{N} \comma s,l \in \{1,2,3\} , \label{SDHNRTYERTDGHFHDFGSDFGRTYFGHDFGDFSFGWERTFSDFGDFADWERTFGHDFGHFGH159} \end{align} for some constants $\bar{K}_1, \bar{K}_2 \geq 1$. From the expression \eqref{SDHNRTYERTDGHFHDFGSDFGRTYFGHDFGDFSFGWERTFSDFGDFADWERTFGHDFGHFGH17} and the Leibniz rule, we arrive at \begin{align} \begin{split} \partial_x^j [\TT^k, \dive] (\epsilon \partial_t)^i v  & = \partial_x^j \sum_{s=1}^3 \sum_{k'=1}^k \sum_{l=1}^3 \binom{k}{k'} b_{k',s}^l \partial_l \TT^{k-k'} (\epsilon \partial_t)^i v_s \\ & = \sum_{s=1}^3 \sum_{k'=1}^k \sum_{l=1}^3 \sum_{j'=0}^j \binom{k}{k'} \binom{j}{j'} \partial_x^{j'} b_{k',s}^l \partial_x^{j-j'} \partial_l \TT^{k-k'} (\epsilon \partial_t)^i v_s . \end{split}    \llabel{8rEtz0mq54IZtbSlXdBmEvCuvAf5bYxZ3LEsJYEX8eNmotV2IHlhJE70cs45KVwJR1riFMPEsP3srHa8pqwVNAHusohYIrkNwekfRbDVLm2axu6caKkTXrgBgnQhUA1z8X6MtqvksUfAFVLgTmqPntrgIggjfJfMGfCuByBS7njWfYRNhpHsjFCzM4f6cRDgjPZkbSUHQBnzQwEnS9CxSfn00xmAfwlTv4HIZIZAyXIs4hPOPjQ3v93iTL0JtNJ8baBBWcY18vifUiGKvSQ4gEkZ10yS5lXCwI4oX2gPBisFp7TjKupgVn5oi4uxKt2QP4kbrChS5ZnuWXWep0mOjW1r2IaXvHle8ksF2XQ529gTLs3uvAOf64HOVIqrbLoG5I2n0XskvcKYFIV8yP9tfMEVPR7F0ipDaqwgQxro5EtIWr3tEaSs5CjzfRRALgvmyMhIztVKjStP744RC0TTPQpn8gLVtzpLzEQe2Rck9WuM7XHGA7O7KGwfmZHLhJRNUDEQeBrqfKIt0Y4RW49GKEHYptgLH4F8rZfYCvcf1pOyjk8iTES0ujRvFpipcwIvLDgikPuqqk9REdH9YjRUMkr9byFJKLBex0SgDJ2gBIeCX2CUZyyRtGNY3eGOaDp3mwQyV1AjtGLgSC1dDpQCBcocMSM4jqbSWbvx6aSnuMtD05qpwNDlW0tZ1cbjzwU5bUdCGAghCw0nICDFKHRkphbtA6nYld6c5TSkDq3Qxo2jhDxQbmb8nPq3zNZQFJJyuVm1C6rzRDCB1meQy4TtYr5jQVWoOfbrSDHNRTYERTDGHFHDFGSDFGRTYFGHDFGDFSFGWERTFSDFGDFADWERTFGHDFGHFGH46} \end{align} Using \eqref{SDHNRTYERTDGHFHDFGSDFGRTYFGHDFGDFSFGWERTFSDFGDFADWERTFGHDFGHFGH159}, we obtain \begin{align} \begin{split} \Vert \partial_x^j [\TT^k, \dive] (\epsilon \partial_t)^i v \Vert_{L^2} & \lesssim \sum_{s=1}^3 \sum_{k'=1}^k \sum_{j'=0}^j \sum_{l=1}^3 \binom{k}{k'} \binom{j}{j'} \Vert \partial_x^{j'} b_{k',s}^l \partial_x^{j-j'+1}  T^{k-k'} (\epsilon \partial_t)^i v_s  \Vert_{L^2} \\ & \lesssim \sum_{k'=1}^k \sum_{j'=0}^j \binom{k}{k'} \binom{j}{j'} (j' + k')! \bar{K}_1^{j'} \bar{K}_2^{k'} \Vert \partial_x^{j-j'+1} T^{k-k'} (\epsilon \partial_t)^i v  \Vert_{L^2}, \end{split}    \llabel{YQ6qakZepHb2b5w4KN3mEHtQKAXsIycbakyID9O8YCmRlEW7fGISs6xazbM6PSBN2Bjtb65zz2NuYo4kUlpIqJVBC4DzuZZN6Zkz0oommnswebstFmlxkKEQEL6bsoYzxx08IQ5Ma7InfdXLQ9jeHSTmigttk4vP7778Hp1o67atRbfcrS2CWzwQ9j0Rjr0VL9vlvkkk6J9bM1XgiYlay8ZEq39Z53jRnXh5mKPPa5tFw7E0nE7CuFIoVlFxguxB1hqlHeOLdb7RKfl0SKJiYekpvRSYnNFf7UVOWBvwpN9mtgGwh2NJCY53IdJXPpYAZ1B1AgSxn61oQVtg7W7QcPC42ecSA5jG4K5H1tQs6TNphOKTBIdGkFSGmV0kzAxavQzjeXGbiSjg3kYZ5LxzF3JNHknrmy4smJ70whEtBeXkSTWEujcAuS0NkHloa7wYgMa5j8g4gi7WZ77Ds5MZZMtN5iJEaCfHJ0sD6zVuX06BP99Fga9GgYMv6YFVOBERy3Xw2SBYZDxixxWHrrlxjKA3fokPh9Y758fGXEhgbBw82C4JCStUeozJfIuGjPpwp7UxCE5ahG5EGJF3nRLM8CQc00TcmXISIyZNJWKMIzkF5u1nvD8GWYqBt2lNxdvzbXj00EEpUTcw3zvyfab6yQoRjHWRFJzPBuZ61G8w0SAbzpNLIVjWHkWfjylXj6VZvjsTwO3UzBosQ7erXyGsdvcKrYzZGQeAM1u1TNkybHcU71KmpyahtwKEj7Ou0A7epb7v4FdqSAD7c02cGvsiW444pFeh8OdjSDHNRTYERTDGHFHDFGSDFGRTYFGHDFGDFSFGWERTFSDFGDFADWERTFGHDFGHFGH47} \end{align} and  \eqref{SDHNRTYERTDGHFHDFGSDFGRTYFGHDFGDFSFGWERTFSDFGDFADWERTFGHDFGHFGH18} follows by setting $\bar{K} \geq \max(\bar{K}_1, \bar{K}_2)$. \par The proofs of \eqref{SDHNRTYERTDGHFHDFGSDFGRTYFGHDFGDFSFGWERTFSDFGDFADWERTFGHDFGHFGH19}--\eqref{SDHNRTYERTDGHFHDFGSDFGRTYFGHDFGDFSFGWERTFSDFGDFADWERTFGHDFGHFGH22} are analogous. \end{proof} \par The next lemma provides an analytic estimate for the unit normal vector to the boundary. \begin{Lemma} \label{L32} \rm There exists a constant $\tilde{\eta}>0$ such that  \begin{align} \sum_{j=0}^\infty \sum_{k=0}^\infty  \frac{\tilde{\eta}^{j+k}}{(j+k-3)!}  \Vert \partial_{x}^j \TT^k \nu \Vert_{L^\infty (\Omega_{\delta_0})} \les 1 , \label{SDHNRTYERTDGHFHDFGSDFGRTYFGHDFGDFSFGWERTFSDFGDFADWERTFGHDFGHFGH187} \end{align} where $\nu$ is the unit normal vector to the boundary satisfying \eqref{SDHNRTYERTDGHFHDFGSDFGRTYFGHDFGDFSFGWERTFSDFGDFADWERTFGHDFGHFGH87}. \end{Lemma}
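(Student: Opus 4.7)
The plan is to represent $\TT^k$ as a differential operator of order at most $k$ with analytic coefficients, and then combine Leibniz's rule with the analyticity bounds available both for these coefficients (from Theorem~\ref{P01}) and for $\nu$ itself (from~\eqref{SDHNRTYERTDGHFHDFGSDFGRTYFGHDFGDFSFGWERTFSDFGDFADWERTFGHDFGHFGH87}).

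First, by induction on $k$ using $T_j = \sum_i b_{ij}\partial_i$ and the Leibniz rule, one writes
\begin{align*}
\TT^k = \sum_{|\gamma| \leq k} c^{(k)}_\gamma(x)\, \partial^\gamma ,
\end{align*}
where each coefficient $c^{(k)}_\gamma$ is a finite sum of products of derivatives of the $b_{ij}$'s whose total order does not exceed $k - |\gamma|$. Exploiting the analytic estimate $|\partial^\alpha b_{ij}(x)| \les C^{|\alpha|}|\alpha|!$ from Theorem~\ref{P01} together with a Komatsu-type combinatorial argument analogous to Lemma~\ref{L04}, one obtains
\begin{align*}
|\partial^\beta c^{(k)}_\gamma(x)| \les C_0^{k+|\beta|}\, (k - |\gamma| + |\beta|)! ,
\end{align*}
for $x \in \bar\Omega$, $\beta \in \mathbb{N}_0^3$, $|\gamma| \leq k$, and some constant $C_0 \geq 1$.

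Next, an application of Leibniz's rule yields
\begin{align*}
\partial_x^j \TT^k \nu = \sum_{|\gamma|\leq k}\, \sum_{j_1+j_2=j} \binom{j}{j_1}\, (\partial_x^{j_1} c^{(k)}_\gamma)\, (\partial_x^{j_2}\partial^\gamma \nu) .
\end{align*}
Combining the coefficient bound with the pointwise consequence $\|\partial^\alpha \nu\|_{L^\infty(\Omega_{\delta_0})} \les \eta^{-|\alpha|}(|\alpha|-3)!$ of~\eqref{SDHNRTYERTDGHFHDFGSDFGRTYFGHDFGDFSFGWERTFSDFGDFADWERTFGHDFGHFGH87}, we obtain
\begin{align*}
\Vert \partial_x^j \TT^k \nu\Vert_{L^\infty(\Omega_{\delta_0})} \les \sum_{|\gamma|\leq k}\, \sum_{j_1+j_2=j} \binom{j}{j_1}\, C_0^{k+j_1}(k - |\gamma| + j_1)!\, \eta^{-(j_2+|\gamma|)}(j_2+|\gamma| - 3)! .
\end{align*}
Using the elementary factorial inequality $a!\,b! \leq (a+b)!$, the bound $\binom{j}{j_1} \leq 2^j$, and the fact that the number of multi-indices $\gamma \in \mathbb{N}_0^3$ with $|\gamma|\leq k$ is at most $C(k+1)^3$, the right-hand side is dominated by $C\, C_1^{j+k}(j+k-3)!$ for some absolute constant $C_1 \geq 1$.

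Finally, choosing $\tilde\eta > 0$ so small that $\tilde\eta\, C_1 < 1$ reduces the double sum in~\eqref{SDHNRTYERTDGHFHDFGSDFGRTYFGHDFGDFSFGWERTFSDFGDFADWERTFGHDFGHFGH187} to a convergent geometric series and gives the desired bound. The main obstacle in this program is establishing the coefficient estimate on $|\partial^\beta c^{(k)}_\gamma|$ with the precise dependence on $k$, $|\gamma|$, and $|\beta|$; once that inductive bound is in hand, the remainder is a routine Leibniz expansion and manipulation of binomial coefficients and factorials.
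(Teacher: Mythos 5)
Your plan takes a genuinely different route from the paper's. You expand $\TT^k$ as a $k$-th order differential operator $\sum_{|\gamma|\le k} c^{(k)}_\gamma\,\partial^\gamma$ with analytic coefficients and then apply Leibniz once, relying on a separate induction to bound the coefficients $c^{(k)}_\gamma$. The paper instead proves directly, by a one-step induction in $k$, that the weighted sum $\sum_{j}\bar\eta^{j+k}\Vert\partial_x^j\TT^k\nu\Vert_{L^\infty(\Omega_{\delta_0})}/(j+k-3)!$ is bounded by $C_0^{k+1}$: it writes $\TT^{k+1}\nu = T(\TT^k\nu)$, applies Leibniz to $\partial_x^{j}\bigl(b_{il}\,\partial_l\TT^k\nu\bigr)$, reindexes, and verifies that the resulting combinatorial factors are summable uniformly in the remaining indices once $\bar\eta$ is small enough. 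The paper's formulation is shorter because it uses the first-order structure of each $T_j$ once per inductive step without ever isolating the coefficients of the full operator $\TT^k$, and the final passage from the one-$k$ estimate to the double sum is just a geometric rescaling $\tilde\eta = \bar\eta/(2C_0)$.

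Your route works too, but it has a genuine gap in the coefficient estimate. The bound $|\partial^\beta c^{(k)}_\gamma| \les C_0^{k+|\beta|}(k-|\gamma|+|\beta|)!$ with a \emph{single} constant $C_0$ does not close under induction on $k$: passing from $\TT^k$ to $T\TT^k$ introduces a fixed constant factor (from the sum over the three components of $T$ and the two Leibniz terms, where the derivative either lands on $c^{(k)}_\gamma$ or is absorbed into $\gamma$) that multiplies the entire right-hand side and cannot be absorbed into the ratio $(k+1-|\gamma|+|\beta|)!/(k-|\gamma|+|\beta|)!$. The correct form, consistent with the two-constant statement of Lemma~\ref{L04}, is $|\partial^\beta c^{(k)}_\gamma| \les C_0^k C_1^{|\beta|}(k-|\gamma|+|\beta|)!$ with $C_0$ much larger than $C_1$, so that the $O(1)$ per-step loss is swallowed by the extra power of $C_0$. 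You should also check the regime $j_2 + |\gamma| \le 2$: there the inequality $(k-|\gamma|+j_1)!\,(j_2+|\gamma|-3)! \le (j+k-3)!$ fails under the paper's convention $n! = 1$ for negative $n$, and the overshoot is polynomial in $j+k$; it is harmless but must be folded into the geometric ratio before $\tilde\eta$ is fixed.
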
 \par \begin{proof}[Proof of Lemma~\ref{L32}] We claim that there exists a constant $\bar{\eta}>0$, such that for all $k\in \mathbb{N}_0$, we have \begin{align} 	\sum_{j=0}^\infty 	\frac{\bar{\eta}^{j+k} }{(j+k-3)!}  	\Vert \partial_{x}^j \TT^k \nu \Vert_{L^\infty (\Omega_{\delta_0})} 	\leq 	C_0^{k+1}, 	\label{SDHNRTYERTDGHFHDFGSDFGRTYFGHDFGDFSFGWERTFSDFGDFADWERTFGHDFGHFGH500} \end{align} for some constant $C_0\geq1$. In \eqref{SDHNRTYERTDGHFHDFGSDFGRTYFGHDFGDFSFGWERTFSDFGDFADWERTFGHDFGHFGH500}, we then choose $\tilde{\eta} =  \bar{\eta}/2C_0$ to get \begin{align} \sum_{j=0}^\infty \sum_{k=0}^\infty	 \frac{\tilde{\eta}^{j+k}}{(j+k-3)!} \Vert \partial_{x}^j \TT^k \nu \Vert_{L^\infty (\Omega_{\delta_0})} \leq \sum_{j=0}^\infty \sum_{k=0}^\infty	 \frac{\bar{\eta}^{j+k}}{2^k C_0^k (j+k-3)!} \Vert \partial_{x}^j \TT^k \nu \Vert_{L^\infty (\Omega_{\delta_0})} \les 1 ,    \llabel{wM7olsSQoeyZXota8wXr6NSG2sFoGBel3PvMoGgamq3YkaatLidTQ84LYKFfAF15vlZaeTTxvru2xlM2gFBbV80UJQvkebsTqFRfmCSVe34YVHOuKokFXYI2MTZj8BZX0EuD1dImocM93NjZPlPHqEll4z66IvF3TOMb7xuVRYjlVEBGePNUgLqSd4OYNeXudaDQ6BjKUrIpcr5n8QTNztBho3LC3rc30it5CN2TmN88XYeTdqTLPlS97uLMw0NAsMphOuPNisXNIlWfXBGc2hxykg50QTN75t5JNwZR3NH1MnVRZj2PrUYveHPEljGaTIx4sCFzKB0qp3PleK68p85w44l5zZl07brv61KkiAuTSA5dkwYS3F3YF3e1xKEJWoAvVOZVbwNYgF7CKbSi92R0rlWh2akhCoEppr6O2PZJDZN8ZZD4IhHPTMvSDTgOy1lZ0Y86n9aMgkWdeuOZjOi2Fg3ziYaSRCjlzXdQKbcnb5pKTqrJp1P6oGyxc9vZZRZeFr5TsSZzGl7HWuIGM0yReYDw3lMuxgAdF6dpp8ZVRcl7uqH8OBMbzL6dKBflWCWdlVhycV5nEpv2JSkD0ccMpoIR38QpeZj9j0ZoPmqXRTxBs8w9Q5epR3tN5jbvbrbSK7U4W4PJ0ovnB0opRpCYNPso834PwtSRqvir4DRqujaJq32QUTG1Pgbp6nJM2CUnENdJCr3ZGBHEgBtdsTd84gM22gKBN7QnmRtJgKUIGEeKx64yAGKGezeJNmpeQkLR389HH9fXLBcE6T4GjVZLIdLQIiQtSDHNRTYERTDGHFHDFGSDFGRTYFGHDFGDFSFGWERTFSDFGDFADWERTFGHDFGHFGH38} \end{align} obtaining \eqref{SDHNRTYERTDGHFHDFGSDFGRTYFGHDFGDFSFGWERTFSDFGDFADWERTFGHDFGHFGH187}. In the remainder of the proof, we proceed by induction to prove \eqref{SDHNRTYERTDGHFHDFGSDFGRTYFGHDFGDFSFGWERTFSDFGDFADWERTFGHDFGHFGH500} for all $k\in \mathbb{N}_0$. \par Firstly, we use \eqref{SDHNRTYERTDGHFHDFGSDFGRTYFGHDFGDFSFGWERTFSDFGDFADWERTFGHDFGHFGH87} to obtain \eqref{SDHNRTYERTDGHFHDFGSDFGRTYFGHDFGDFSFGWERTFSDFGDFADWERTFGHDFGHFGH500} for $k=0$ by taking $\bar{\eta} = \eta$. Now we assume that \eqref{SDHNRTYERTDGHFHDFGSDFGRTYFGHDFGDFSFGWERTFSDFGDFADWERTFGHDFGHFGH500} holds for some $k\in \mathbb{N}_0$ and aim to show that it also holds for $k+1$. Using \eqref{SDHNRTYERTDGHFHDFGSDFGRTYFGHDFGDFSFGWERTFSDFGDFADWERTFGHDFGHFGH10}, the Leibniz rule, and H\"older's inequality, we arrive at \begin{align} \begin{split} & \sum_{j=0}^\infty	 \frac{\bar{\eta}^{j+k+1}}{(j+k-2)!}  \Vert \partial_{x}^j \TT^{k+1} \nu \Vert_{L^\infty (\Omega_{\delta_0})} \\ &\indeq \leq C \sum_{i=1}^3 \sum_{l=1}^3 \sum_{j=0}^\infty	 \sum_{j'=0}^j	 \binom{j}{j'} \frac{\bar{\eta}^{j+k+1}}{(j+k-2)!}  \Vert \partial_{x}^{j'} b_{il} \Vert_{L^\infty (\Omega_{\delta_0})} \Vert \partial_{x}^{j-j'+1} \TT^{k} \nu \Vert_{L^\infty (\Omega_{\delta_0})} \\ &\indeq \leq C \sum_{j=0}^\infty	 \sum_{j'=0}^j	 \bar{\eta}^{j'}  C^{j'}  \frac{j! (j-j'+k-2)!}{(j-j')!(j+k-2)!}  \times \left( \frac{\bar{\eta}^{j-j'+k+1}}{(j-j'+k-2)!} \Vert \partial_{x}^{j-j'+1} \TT^{k} \nu \Vert_{L^\infty (\Omega_{\delta_0})} \right) \\ &\indeq \leq C \sum_{j''=1}^\infty	 \frac{ \bar{\eta}^{j''+k}}{(j''+k-3)!} \Vert \partial_{x}^{j''} \TT^{k} \nu \Vert_{L^\infty (\Omega_{\delta_0})} \times \sum_{j=j''-1}^\infty	 (C_1 \bar{\eta})^{j+1-j''} \frac{j! (j''+k-3)!}{(j''-1)! (j+k-2)!} , \label{SDHNRTYERTDGHFHDFGSDFGRTYFGHDFGDFSFGWERTFSDFGDFADWERTFGHDFGHFGH501} \end{split} \end{align} where $C_1\geq 1$ is a constant. Note that the sum in $j$ is dominated by a constant $C$ uniformly for all $j'', k\in \mathbb{N}_0$, by taking $\bar{\eta} \leq 1/2C_1$. Thus from \eqref{SDHNRTYERTDGHFHDFGSDFGRTYFGHDFGDFSFGWERTFSDFGDFADWERTFGHDFGHFGH501} and the induction hypothesis for $k$, we have \begin{align} \begin{split} & \sum_{j=0}^\infty	 \frac{\bar{\eta}^{j+k+1}}{(j+k-2)!}  \Vert \partial_{x}^j \TT^{k+1} \nu \Vert_{L^\infty (\Omega_{\delta_0})} \leq C C_0^{k+1} ,    \llabel{kBk9G9FzHWIGm91M7SW029tzNUX3HLrOUtvG5QZnDqyM6ESTxfoUVylEQ99nTCSkHA8sfxrONeFp9QLDnhLBPibiujcJc8QzZ2KzDoDHg252clhDcaQ1cnxG9aJljFqmADsfDFA0wDO3CZrQ1a2IGtqKbjciqzRSd0fjSJA1rsie9iqOr5xgVljy6afNuooOyIVlT21vJWfKUdeLbcq1MwF9NR9xQnp6TqgElSk50p43HsdCl7VKkZd12Ijx43vI72QyQvUm77BV23a6Wh6IXdP9n67StlZllbRiDyGNr0g9S4AHAVga0XofkXFZwgGtsW2J492NC7FAd8AVzIE0SwEaNEI8v9ele8EfNYg3uWVH3JMgi7vGf4N0akxmBAIjpx4dXlxQRGJZerTMzBxY9JAtmZCjH9064Q4uzKxgmpCQg8x06NYx02vknEtYX5O2vgP3gcspGswFqhX3apbPWsf1YOzHivDia1eODMILTC2mPojefmEVB9hWwMaTdIGjm9PdpHVWGV4hXkfK5Rtci05ekzj0L8Tme2JPXpDI8EbcqV4FdxvrHIeP8CdORJpTiMVEbAunSGsUMWPts4uBv2QSiXIb7B8zo7bp9voEwNRuXJ4ZxuRZYhc1h339THRXVFw5XVW8gaB39mFSv6MzeznkbLHrtZ73hUuaqLvPhgTlNnVpo1ZggmnRAqM3X31ORYSj8RktS8VGOjrz1iblt3uOuEs8Q3xJ1cA2NKoF8o6U3mW2Hq5y6jposxJgwWZ4Exd79JvlcwauoRDCYZzmpabV09jgumebSDHNRTYERTDGHFHDFGSDFGRTYFGHDFGDFSFGWERTFSDFGDFADWERTFGHDFGHFGH37} \end{split} \end{align} concluding the proof of \eqref{SDHNRTYERTDGHFHDFGSDFGRTYFGHDFGDFSFGWERTFSDFGDFADWERTFGHDFGHFGH500} for $k+1$ by simply taking $C_0\geq C$. \end{proof} \par \startnewsection{Analytic estimate of the entropy}{sec04} The following lemma provides an analytic estimate for the entropy $S$. \par \cole \begin{Lemma} \label{L07} Let $M_0>0$. For any $\kappa$, $\bar{\kappa} \in (0,1]$, there exists $\tau_1 \in (0,1]$ such that if $0<\tau(0) \leq \tau_1$, then \begin{align} \Vert S(t) \Vert_{A(\tau(t))}  \les 1 + tQ(M_{\epsilon, \kappa, \bar{\kappa}}(t)) \comma t\in (0,T_0] , \label{SDHNRTYERTDGHFHDFGSDFGRTYFGHDFGDFSFGWERTFSDFGDFADWERTFGHDFGHFGH227} \end{align} for all $\epsilon \in (0,1]$, provided $K$ in \eqref{SDHNRTYERTDGHFHDFGSDFGRTYFGHDFGDFSFGWERTFSDFGDFADWERTFGHDFGHFGH99} satisfies \begin{align} K \geq Q(M_{\epsilon, \kappa, \bar{\kappa}}(T_0)) ,    \llabel{zcbugpatf9yU9iBEyv3UhS79XdImPNEhN64Rs9iHQ847jXUCAufFmsnUudD4Sg3FMLMWbcBYs4JFyYzlrSfnkxPjOHhsqlbV5eBld5H6AsVtrHgCNYn5aC028FEqoWaKSs9uu8xHrbn1eRIp7sL8JrFQJatogZc54yHZvPxPknqRqGw7hlG6oBkzlEdJSEigf0Q1BoCManS1uLzlQ3HnAuqHGPlcIadFLRkdjaLg0VAPAn7c8DqoV8bRCvOzqk5e0Zh3tzJBWBORSwZs9CgFbGo1EFAK7EesLXYWaOPF4nXFoGQlh3pG7oNtG4mpTMwEqV4pO8fMFjfgktnkwIB8NP60fwfEhjADF3bMqEPV9U0o7fcGqUUL10f65lThLWyoXN4vuSYes96Sc2HbJ0hugJMeB5hVaEdLTXrNo2L78fJmehCMd6LSWqktpMgskNJq6tvZOkgp1GBBqG4mA7tMVp8Fn60ElQGMxjoGWCrvQUYV1KYKLpPzVhhuXVnWaUVqLxeS9efsAi7LmHXCARg4YJnvBe46DUuQYkdjdz5MfPLHoWITMjUYM7Qryu7W8Er0Ogj2fKqXSclGmIgqXTam7J8UHFqzvbVvxNiuj6Ih7lxbJgMQYj5qtgaxbMHwbJT2tlBsib8i7zj6FMTLbwJqHVIiQ3O0LNnLypZCTVUM1bcuVYTejG3bfhcX0BVQl6Dc1xiWVK4S4RW5PyZEVW8AYt9dNVSXaOkkGKiLHhzFYYK1qNGGEEU4FxdjaS2NRREnhHmB8Vy446a3VCeCkwjCMe3DGfMiFopvSDHNRTYERTDGHFHDFGSDFGRTYFGHDFGDFSFGWERTFSDFGDFADWERTFGHDFGHFGH48} \end{align} where $T_0 >0$ is a sufficiently small constant depending on $M_0$. \end{Lemma}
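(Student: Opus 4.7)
\emph{Plan.} Since the entropy equation is the pure transport equation $\partial_t S+v\cdot\nabla S=0$, the strategy is the standard Leibniz/analytic energy estimate: apply $D^{j,k,i}:=\partial_x^j\TT^k(\epsilon\partial_t)^i$ to both sides, test against $D^{j,k,i}S$ in $L^2(\Omega)$, multiply by the weight $w_{j,k,i}:=\kappa^{(j-1)_+}\bar\kappa^k\tau(t)^{(j+k+i-3)_+}/(j+k+i-3)!$, sum over $(j,k,i)\in\mathbb{N}_0^3$, and integrate in time. The core nonlinear contribution is the commutator $[D^{j,k,i},v\cdot\nabla]S$, which is handled using the commutator machinery of Section~\ref{sec03} (in particular Lemma~\ref{L03}) and absorbed via the negative contribution from $\frac{d}{dt}(\tau^n/n!)=-Kn\tau^{n-1}/n!$.

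First I would derive the mode identity $\partial_t D^{j,k,i}S+v\cdot\nabla D^{j,k,i}S=-[D^{j,k,i},v\cdot\nabla]S$ and pair it with $D^{j,k,i}S$ in $L^2(\Omega)$. Integration by parts is clean because $\TT$ is tangential to $\partial\Omega$ by Theorem~\ref{P01} and $v\cdot\nu|_{\partial\Omega}=0$ by \eqref{SDHNRTYERTDGHFHDFGSDFGRTYFGHDFGDFSFGWERTFSDFGDFADWERTFGHDFGHFGH03}, so the transport term contributes only the harmless $\tfrac12\int(\dive v)|D^{j,k,i}S|^2$, yielding
\[
\tfrac{d}{dt}\|D^{j,k,i}S\|_{L^2}\les \|\dive v\|_{L^\infty}\|D^{j,k,i}S\|_{L^2}+\|[D^{j,k,i},v\cdot\nabla]S\|_{L^2}.
\]

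Next I would expand the commutator using Leibniz in $\partial_x^j$ and $(\epsilon\partial_t)^i$ together with Lemma~\ref{L03} for $[\TT^k,\nabla]$. The expansion produces bilinear terms of the shape $D^{a,b,c}v\cdot\nabla D^{j-a,k-b,i-c}S$ with $(a,b,c)\ne 0$, weighted by binomial coefficients and by the factors $(b'+j')!\bar K^{b'+j'}$ coming from the tangential commutators. Each term is bounded by H\"older, placing $L^\infty$ on the low-order factor (via $H^2\hookrightarrow L^\infty$) and $L^2$ on the high-order factor. Multiplying by $w_{j,k,i}$ and rearranging as a Cauchy product yields
\[
\sum_{j,k,i}w_{j,k,i}\|[D^{j,k,i},v\cdot\nabla]S\|_{L^2}\le Q(M_{\epsilon,\kappa,\bar\kappa}(t))+R(t),
\]
where the residual $R(t)$ consists of the top-order terms in which a single extra spatial derivative lands on $S$, carrying the wrong power of $\tau$, namely a factor $\sim(j+k+i-2)/(\kappa\tau)$ relative to the $A(\tau)$ contribution.

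The absorption step is the crux: because $\tau(t)=\tau(0)-Kt$, one has $\frac{d w_{j,k,i}}{dt}=-K(j+k+i-3)_+ w_{j,k,i}/\tau$, which supplies a negative sum of exactly the shape of $R(t)$. Choosing $K\ge Q(M_{\epsilon,\kappa,\bar\kappa}(T_0))$ makes this absorption effective uniformly in the indices $(j,k,i)$, leaving the differential inequality $\tfrac{d}{dt}\|S(t)\|_{A(\tau(t))}\le Q(M_{\epsilon,\kappa,\bar\kappa}(t))$. Integration from $0$ to $t$, combined with the initial bound $\|S_0\|_{A(\tau(0))}\le Q(M_0)$ obtained from \eqref{SDHNRTYERTDGHFHDFGSDFGRTYFGHDFGDFSFGWERTFSDFGDFADWERTFGHDFGHFGH503} upon choosing $\tau(0)\le\tau_1$ sufficiently small, yields \eqref{SDHNRTYERTDGHFHDFGSDFGRTYFGHDFGDFSFGWERTFSDFGDFADWERTFGHDFGHFGH227}. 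The main obstacle will be the combinatorial bookkeeping: the product of binomial coefficients, the factorials $(b'+j')!\bar K^{b'+j'}$ from Lemma~\ref{L03}, and the weight ratio $w_{j,k,i}/(w_{a,b,c}w_{j-a,k-b,i-c})$ must collapse to a constant independent of $(j,k,i)$, so that the double sum genuinely factors as an $\|v\|_{A(\tau)}\cdot\|\nabla S\|_{A(\tau)}$-type product up to the identifiable residual $R(t)$. The smallness of $\bar\kappa$ is used precisely to dominate the constant $\bar K$ arising from the tangential commutators, while Lemma~\ref{L32} supplies the analytic bound on $\nu$ that appears implicitly through the coefficients of $\TT$.
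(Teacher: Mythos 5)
Your proposal matches the paper's proof essentially line by line: the same $L^2$ energy estimate for $\partial_x^j\TT^k(\epsilon\partial_t)^iS$ (integration by parts is harmless thanks to $v\cdot\nu|_{\partial\Omega}=0$ and the tangentiality of $\TT$), the same commutator expansion handled with Lemma~\ref{L03} and Lemma~\ref{L32}, the same rearrangement into a Cauchy product with the top-order residual absorbed by the $\dot\tau=-K$ dissipation, and a Gronwall finish. The one technicality you gloss over is that the paper uses the Agmon-type interpolation $\Vert\cdot\Vert_{L^\infty}\les\Vert\cdot\Vert_{H^2}^{3/4}\Vert\cdot\Vert_{L^2}^{1/4}$ rather than plain $H^2\hookrightarrow L^\infty$, which is precisely what makes the Cauchy-product weights in \eqref{SDHNRTYERTDGHFHDFGSDFGRTYFGHDFGDFSFGWERTFSDFGDFADWERTFGHDFGHFGH63} and \eqref{SDHNRTYERTDGHFHDFGSDFGRTYFGHDFGDFSFGWERTFSDFGDFADWERTFGHDFGHFGH105} collapse to constants uniformly in $(j,k,i)$.
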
 \colb \par \begin{proof}[Proof of Lemma~\ref{L07}] Let \begin{equation} \tau_1=\kappa^{4}    . \label{SDHNRTYERTDGHFHDFGSDFGRTYFGHDFGDFSFGWERTFSDFGDFADWERTFGHDFGHFGH25} \end{equation} Now, we fix $(j,k,i) \in \mathbb{N}_0^3$, apply $\partial_x^j \TT^k (\epsilon \partial_t)^i$ to the equation \eqref{SDHNRTYERTDGHFHDFGSDFGRTYFGHDFGDFSFGWERTFSDFGDFADWERTFGHDFGHFGH02}, and take the $L^2$-inner product with $\partial_x^j \TT^k (\epsilon \partial_t)^i S$, obtaining \begin{align} \frac{1}{2} \frac{d}{dt} \Vert \partial_x^j \TT^k (\epsilon \partial_t)^i S \Vert_{L^2}^2  + \bigl\langle  v \cdot \nabla \partial_x^j \TT^k (\epsilon \partial_t)^i S, \partial_x^j \TT^k (\epsilon \partial_t)^i S \bigr\rangle  =  \bigl\langle [v\cdot \nabla , \partial_x^j \TT^k (\epsilon \partial_t)^i] S, \partial_x^j \TT^k (\epsilon \partial_t)^i S \bigr\rangle , \llabel{lzLp5r0zdXrrBDZQv9HQ7XJMJogkJnsDxWzIN7FUfveeL0ljk83TxrJFDTvEXLZYpEq5emBawZ8VAzvvzOvCKmK2QngMMBAWcUH8FjSJthocw4l9qJTVGsq8yRw5zqVSpd9ArUfVDcDl8B1o5iyUR4KNqb84iOkIQGIczg2ncttxdWfLQlNnsg3BBjX2ETiPrpqigMOSw4CgdGPfiG2HNZhLeaQwywsiiAWrDjo4LDbjBZFDrLMuYdt6k6Hn9wp4Vk7tddFrzCKidQPfCRKUedzV8zISvntBqpu3cp5q7J4FgBq59pSMdEonG7PQCzMcWlVR0iNJhWHVugWPYdIMgtXB2ZSaxazHeWp7rfhk4qrAbJFFG0lii9MWIl44js9gNlu46CfP3HvS8vQxYw9cEyGYXi3wi41aIuUeQXEjG3XZIUl8VSPJVgCJ3ZOliZQLORzOFVKqlyz8D4NB6M5TQonmBvikY88TJONaDfE2uzbcvfL67bnJUz8Sd7yx5jWroXdJp0lSymIK8bkKzqljNn4KxluFhYLg0FrO6yRztwFTK7QRN01O21ZcHNKgRM7GZ9nB1Etq8sqlAsfxotsl927c6Y8IY8T4x0DRhoh0718MZJoo1oehVLr8AEaLKhyw6SnDthg2HMt9D1jUF5b4wcjllAvvOShtK806ujYa0TYO4pcVXhkOOJVtHN98Qqq0J1HkNcmLS3MApQ75AlAkdnMyJMqACerDl5yPys44a7cY7sEp6LqmG3V53pBs2uPNUM7pX6sy95vSv7iIS8VGJ08QKhAS3jIDSDHNRTYERTDGHFHDFGSDFGRTYFGHDFGDFSFGWERTFSDFGDFADWERTFGHDFGHFGH51} \end{align} where $\langle \cdot,\cdot\rangle$ denotes the scalar product in $L^2$. Using the Cauchy-Schwarz inequality and the boundary condition \eqref{SDHNRTYERTDGHFHDFGSDFGRTYFGHDFGDFSFGWERTFSDFGDFADWERTFGHDFGHFGH03}, we obtain \begin{align} \frac{d}{dt} \Vert \partial_x^j \TT^k (\epsilon \partial_t)^i S \Vert_{L^2}  \les \Vert \nabla v \Vert_{L_x^\infty} \Vert \partial_x^j \TT^k (\epsilon \partial_t)^i S \Vert_{L^2}  + \Vert [v\cdot \nabla, \partial_x^j \TT^k (\epsilon \partial_t)^i] S \Vert_{L^2} . \label{SDHNRTYERTDGHFHDFGSDFGRTYFGHDFGDFSFGWERTFSDFGDFADWERTFGHDFGHFGH54} \end{align} Using the notation \eqref{SDHNRTYERTDGHFHDFGSDFGRTYFGHDFGDFSFGWERTFSDFGDFADWERTFGHDFGHFGH97} and  \begin{align} \begin{split} \Vert u \Vert_{\tilde A(\tau)}  = \sum_{ j+k+i \geq 4}  \Vert \partial_x^j \TT^k (\epsilon \partial_t)^i u \Vert_{L^2} \frac{\kappa^{(j-1)_+} \bar{\kappa}^{k} (j+k+i-3) \tau(t)^{j+k+i-4}}{(j+k+i-3)!} , \end{split} \llabel{NTJsfbhIiUNfeH9Xf8WeCxmBLgzJTIN5NLhvdBOzPmopxYqM4VhkybtYga3XVTTqLyAHyqYqofKP58n8qR9AYrRRetBFxHGg7pduM8gm1TdplRKIW9gi5ZxEEAHDeAsfP5hbxAxbWCvpWk9caqNibi5A5NY5IlVAS3ahAaB8zzUTuyK55glDL5XO9CpORXwrEV1IJG7wEgpOag9zbJiGeT6HEmcMaQpDfyDxheTNjwfwMx2CipkQeUjRUVhCfNMo5DZ4h2adEjZTkOx946EeUIZv7rFL6dj2dwgRxgbObqJsYmsDqQAssn9g2kCb1MsgKfx0YjK0GlrXO7xI5WmQHozMPfCXTmDk2Tl0oRrnZvAsFr7wYEJHCd1xzCvMmjeR4ctk7cS2fncvfaN6AO2nIh6nkVkN8tT8aJdb708jZZqvL1ZuT5lSWGo08cLJ1q3TmAZF8qhxaoYJC6FWRuXHMx3Dcw8uJ87Q4kXVac6OOPDZ4vRtsP01hKUkdaCLBiPSAtLu9WLoyxMaBvixHyadnqQSJWgSCkF7lHaO2yGRIlK3aFZenCWqO9EyRofYbkidHQh1G2vohcMPoEUzp6f14NioarvW8OUc426ArsSo7HiBUKdVs7cOjaV9KEUtKne4VIPuZc4bPRFB9ABfqclU2ct6PDQudt4VOzMMUNrnzJXpxkE2NB8pfJiM4UNg4Oi1gchfOU62avNrpcc8IJm2WnVXLD672ltZTf8RDwqTvBXEWuH2cJtO1INQUlOmEPvj3OOvQSHxiKc8RvNnJNNCC3KXp3J8w50WSDHNRTYERTDGHFHDFGSDFGRTYFGHDFGDFSFGWERTFSDFGDFADWERTFGHDFGHFGH57} \end{align}    the estimate \eqref{SDHNRTYERTDGHFHDFGSDFGRTYFGHDFGDFSFGWERTFSDFGDFADWERTFGHDFGHFGH54} implies \begin{align} \begin{split} \frac{d}{dt} \Vert S \Vert_{A(\tau)} & = \dot{\tau}(t) \Vert S \Vert_{\tilde A(\tau)}  +  \sum_{(j,k,i) \in \mathbb{N}_0^3} \frac{\kappa^{(j-1)_+}\bar{\kappa}^{k} \tau^{(j+k+i-3)_+}}{(j+k+i-3)!}  \frac{d}{dt} \Vert \partial_x^j \TT^k (\epsilon \partial_t)^i S\Vert_{L^2} \\ & \les \dot{\tau}(t) \Vert S \Vert_{\tilde A(\tau)} + \Vert \nabla v \Vert_{L_x^\infty} \Vert S \Vert_{A(\tau)}  + \sum_{(j,k,i) \in \mathbb{N}_0^3}  \sum_{\substack{(j',k',i') \leq (j,k,i) \\j'+k'+i' \geq 1}} \mathcal{C}_{j,k,i,j',k',i'} \\ &\indeq + \Vert v \Vert_{L^\infty_x}  \sum_{(j,k,i)\in \mathbb{N}_0^3} \frac{\kappa^{(j-1)_+} \bar{\kappa}^{k}\tau^{(j+k+i-3)_+}}{(j+k+i-3)!} \Vert \partial_{x}^j [\TT^k, \nabla] (\epsilon \partial_t)^i S \Vert_{L^2} , \end{split} \label{SDHNRTYERTDGHFHDFGSDFGRTYFGHDFGDFSFGWERTFSDFGDFADWERTFGHDFGHFGH55} \end{align} where  \begin{align} \mathcal{C}_{j,k,i,j',k',i'} = \frac{\kappa^{(j-1)_+} \bar{\kappa}^{k}\tau^{(j+k+i-3)_+}}{(j+k+i-3)!}  \binom{j}{j'} \binom{k}{k'} \binom{i}{i'}  \Vert \partial_x^{j'} \TT^{k'} (\epsilon \partial_t)^{i'} v \cdot \partial_x^{j-j'} \TT^{k-k'}  (\epsilon \partial_t)^{i-i'} \nabla S \Vert_{L^2} . \llabel{sOTXHHhvL5kBpKr5urqvVFv8upqgPRPQbjCxme33uJUFhYHBhYMOd01Jt7ySfVpF0z6nCK8grRahMJ6XHoLGu4v2o9QxONVY88aum7cZHRNXHpG1a8KYXMayTxXIkO5vV5PSkCp8PBoBv9dBmepms7DDUaicXY8Lx8IBjFBtke2yShNGE7a0oEMFyAUUFkRWWheDbHhAM6Uh373LzTTTxxm6ybDBsIIIAPHhi837ra970Fam4O7afXUGrf0vWe52e8EPyBFZ0wxBzptJf8LiZkdTZSSPpSzrbGEpxb4KXLHLg1VPaf7ysvYsFJb8rDpAMKnzqDg7g2HwCruQNDBzZ5SNMayKB6RIePFIHFQawrRHAx38CHhoBGVIRvxSMYf0g8hacibKG3CuSl5jTKl42o6gAOYYHUB2SVO3Rc4whR8pwkrrANA4j7MfcEMal4HwKPTgZaZ9G8sevuwIAhkhR8WgafzJA0FVNmSCwUB0QJDgRjCSVSrsGMbWABxvzOMMycNSOylZzwFiNPcmfQhwZPanLp01EUVHMA2dE0nLuNVxKxco7opbQzRaAlowoVtorqU5eUXtEl1qh1IPCPEuVPxcnTwZvKJTpHZpqxXOFaGsrQNPnuqc9CMD8mJZoMOCy6XHjWAfEqI95ZjgcPdV8maWwkHlM30VwDjXlx1QfgyLFxe5iwJDnJIrUKL6eCVth3gX8RLACCC2q5kMYXo8NsDfAn3OsapZT8U3FdMIEDKqMo0YvbwCGMR66XTyIOtLUuC6cmcOFsvpWTniQmu0PeHEF9ImolIuSDHNRTYERTDGHFHDFGSDFGRTYFGHDFGDFSFGWERTFSDFGDFADWERTFGHDFGHFGH56} \end{align} We split the third term on the far right side of \eqref{SDHNRTYERTDGHFHDFGSDFGRTYFGHDFGDFSFGWERTFSDFGDFADWERTFGHDFGHFGH55} according to the low and high values of $j'+k'+i'$.  We claim that there exists a constant $C>0$ such that \begin{align} & I_1 = \sum_{0\leq j+k+i \leq 4}  \sum_{\substack{(j', k', i') \leq (j,k,i)\\ j'+k'+i' \geq 1}} \mathcal{C}_{j,k,i,j',k',i'} \les 1 , \label{SDHNRTYERTDGHFHDFGSDFGRTYFGHDFGDFSFGWERTFSDFGDFADWERTFGHDFGHFGH104}   \\ & I_2 = \sum_{j+k+i \geq 5}  \sum_{\substack{(j', k', i') \leq (j,k,i)\\ 1 \leq j'+k'+i'  \leq [ (j+k+i)/2 ]}} \mathcal{C}_{j,k,i,j',k',i'}  \les \Vert v \Vert_{A(\tau)} \Vert S \Vert_{\tilde{A}(\tau)}  + \Vert v \Vert_{A(\tau)} , \label{SDHNRTYERTDGHFHDFGSDFGRTYFGHDFGDFSFGWERTFSDFGDFADWERTFGHDFGHFGH102}   \\ & I_3 = \sum_{j+k+i \geq 5}  \sum_{\substack{(j', k', i') \leq (j,k,i)\\  [( j+k+i )/2] + 1\leq  j'+k'+i'  \leq  j+k+i-3}} \mathcal{C}_{j,k,i,j',k',i'}  \les \Vert v \Vert_{A(\tau)} \Vert S \Vert_{\tilde{A}(\tau)}  + \Vert v \Vert_{A(\tau)} , \label{SDHNRTYERTDGHFHDFGSDFGRTYFGHDFGDFSFGWERTFSDFGDFADWERTFGHDFGHFGH107} \\ & I_4 = \sum_{j+k+i \geq 5}  \sum_{\substack{(j', k', i') \leq (j,k,i)\\   j+k+i-2 \leq  j'+k'+i'  }} \mathcal{C}_{j,k,i,j',k',i'}  \les \Vert v \Vert_{A(\tau)} , \label{SDHNRTYERTDGHFHDFGSDFGRTYFGHDFGDFSFGWERTFSDFGDFADWERTFGHDFGHFGH103} \\ & I_5	 = \sum_{(j,k,i)\in \mathbb{N}_0^3} \frac{\kappa^{(j-1)_+} \bar{\kappa}^{k}\tau^{(j+k+i-3)_+}}{(j+k+i-3)!} \Vert \partial_{x}^j [\TT^k, \nabla] (\epsilon \partial_t)^i S \Vert_{L^2} \les 1+\Vert S \Vert_{\tilde{A}(\tau)} . \label{SDHNRTYERTDGHFHDFGSDFGRTYFGHDFGDFSFGWERTFSDFGDFADWERTFGHDFGHFGH505} \end{align} \par The proofs of \eqref{SDHNRTYERTDGHFHDFGSDFGRTYFGHDFGDFSFGWERTFSDFGDFADWERTFGHDFGHFGH104}--\eqref{SDHNRTYERTDGHFHDFGSDFGRTYFGHDFGDFSFGWERTFSDFGDFADWERTFGHDFGHFGH103} are analogous to those in Section 4 in \cite{JKL}.  Here we only outline necessary modifications. \par \emph{Proof of \eqref{SDHNRTYERTDGHFHDFGSDFGRTYFGHDFGDFSFGWERTFSDFGDFADWERTFGHDFGHFGH104}}\/:  We apply H\"older and Sobolev inequalities on the factor $\Vert \partial_x^{j'} \TT^{k'} (\epsilon \partial_t)^{i'} v \cdot \partial_x^{j-j'} \TT^{k-k'}  (\epsilon \partial_t)^{i-i'} \nabla S \Vert_{L^2}$ and then use \eqref{SDHNRTYERTDGHFHDFGSDFGRTYFGHDFGDFSFGWERTFSDFGDFADWERTFGHDFGHFGH19} to estimate $\Vert \partial_{x}^j \TT^k (\epsilon \partial_t)^i \nabla S \Vert_{L^2}$. Then \eqref{SDHNRTYERTDGHFHDFGSDFGRTYFGHDFGDFSFGWERTFSDFGDFADWERTFGHDFGHFGH104} follows by appealing to \eqref{SDHNRTYERTDGHFHDFGSDFGRTYFGHDFGDFSFGWERTFSDFGDFADWERTFGHDFGHFGH24}. \par \emph{Proof of \eqref{SDHNRTYERTDGHFHDFGSDFGRTYFGHDFGDFSFGWERTFSDFGDFADWERTFGHDFGHFGH102}}\/: Using H\"older and Sobolev inequalities, we obtain \begin{align} \begin{split} 	& \mathcal{C}_{j,k,i,j',k',i'} \mathbbm{1}_{\{j+k+i\geq 5\}} \mathbbm{1}_{\{ 1 \leq j'+k'+i'\leq [(j+k+i)/2] \}} \\ & \les \left(\frac{\kappa^{j'+1} \bar{\kappa}^{k'} \tau^{(j'+k'+i'-1)_+}}{(j'+k'+i'-1)!} \Vert \partial_x^{j'+2} \TT^{k'} (\epsilon \partial_t)^{i'} v \Vert_{L^2} \right)^{3/4} \\&\indeqtimes \left(\frac{\kappa^{(j'-1)_+} \bar{\kappa}^{k'} \tau^{(j'+k'+i'-3)_+}}{(j'+k'+i'-3)!}\Vert \partial_x^{j'} \TT^{k'} (\epsilon \partial_t)^{i'} v \Vert_{L^2} \right)^{1/4}  \\&\indeqtimes \left(\frac{\kappa^{j-j'} \bar{\kappa}^{k-k'} \tau^{(j+k+i-j'-k'-i'-3)_+}}{(j+k+i-j'-k'-i'-3)!} \Vert \partial_x^{j-j'} \TT^{k-k'}  (\epsilon \partial_t)^{i-i'} \nabla S \Vert_{L^2} \right) . \label{SDHNRTYERTDGHFHDFGSDFGRTYFGHDFGDFSFGWERTFSDFGDFADWERTFGHDFGHFGH63} \end{split} \end{align} In \eqref{SDHNRTYERTDGHFHDFGSDFGRTYFGHDFGDFSFGWERTFSDFGDFADWERTFGHDFGHFGH63}, we bounded the rest of the powers  of $\kappa$, $\bar{\kappa}$, and $\tau$  by $C$, which is possible by \eqref{SDHNRTYERTDGHFHDFGSDFGRTYFGHDFGDFSFGWERTFSDFGDFADWERTFGHDFGHFGH25}, and bounded the rest of the factors involving combinatorial symbols by $C$ since $j'+k'+i' \leq [(j+k+i)/2]$. From \eqref{SDHNRTYERTDGHFHDFGSDFGRTYFGHDFGDFSFGWERTFSDFGDFADWERTFGHDFGHFGH63}, we use the discrete H\"older and Young inequalities to get \begin{align} \begin{split} I_2 & \les \Vert v \Vert_{A(\tau)} \Vert S \Vert_{\tilde{A}(\tau)}  + \Vert v \Vert_{A(\tau)} \sum_{j+k+i\geq 3} \frac{\kappa^{j} \bar{\kappa}^{k} \tau^{(j+k+i-3)_+}}{(j+k+i-3)!} \Vert \partial_x^j [\TT^k, \nabla] (\epsilon \partial_t)^i S \Vert_{L^2}  . \label{SDHNRTYERTDGHFHDFGSDFGRTYFGHDFGDFSFGWERTFSDFGDFADWERTFGHDFGHFGH101} \end{split} \end{align} Appealing to \eqref{SDHNRTYERTDGHFHDFGSDFGRTYFGHDFGDFSFGWERTFSDFGDFADWERTFGHDFGHFGH19}, the sum in the second term on the right side of above may be estimated by \begin{align} \begin{split} & \sum_{j+k+i\geq 3} \sum_{k'=1}^k \sum_{j'=0}^j  \binom{k}{k'} \binom{j}{j'} \frac{\kappa^{j} \bar{\kappa}^{k} \tau^{(j+k+i-3)_+}}{(j+k+i-3)!} (j'+k')! \bar{K}^{j'+k'} \Vert \partial_x^{j-j'+1} \TT^{k-k'} (\epsilon \partial_t)^i S \Vert_{L^2} \\ &\indeq \les \sum_{j+k+i\geq 3} \sum_{k'=1}^k \sum_{j'=0}^j  (\kappa \bar{K})^{j'} (\bar{\kappa} \bar{K})^{k'} \mathbbm{1}_{\{i \geq 3 \}} \\ &\indeqtimes \left(\frac{\kappa^{j-j'} \bar{\kappa}^{k-k'} \tau^{(j+k+i-j'-k'-3)_+}}{(j+k+i-j'-k'-3)!} \Vert \partial_x^{j-j'+1} \TT^{k-k'} (\epsilon \partial_t)^i S \Vert_{L^2}  \right) \mathcal{A}_{j,k,i,j',k'} \\ &\indeq\indeq + \sum_{j+k+i\geq 3} \sum_{k'=1}^k \sum_{j'=0}^j  (\kappa \bar{K})^{j'} (\bar{\kappa} \bar{K})^{k'} \mathbbm{1}_{\{0\leq i \leq 2 \}} \\ &\indeqtimes \left(\frac{\kappa^{j-j'} \bar{\kappa}^{k-k'} \tau^{(j+k+i-j'-k'-3)_+}}{(j+k+i-j'-k'-3)!} \Vert \partial_x^{j-j'+1} \TT^{k-k'} (\epsilon \partial_t)^i S \Vert_{L^2}  \right) \mathcal{B}_{j,k,i,j',k'} . \label{SDHNRTYERTDGHFHDFGSDFGRTYFGHDFGDFSFGWERTFSDFGDFADWERTFGHDFGHFGH106} \end{split} \end{align} In \eqref{SDHNRTYERTDGHFHDFGSDFGRTYFGHDFGDFSFGWERTFSDFGDFADWERTFGHDFGHFGH106}, we denote \begin{align} \mathcal{A}_{j,k,i,j',k'} = \binom{k}{k'} \binom{j}{j'} \frac{(j+k+i-j'-k'-3)!(j'+k')!}{(j+k+i-3)!} \mathbbm{1}_{\{i \geq 3 \}} \label{SDHNRTYERTDGHFHDFGSDFGRTYFGHDFGDFSFGWERTFSDFGDFADWERTFGHDFGHFGH213} \end{align} and \begin{align} \mathcal{B}_{j,k,i,j',k'} = \binom{k}{k'} \binom{j}{j'} \frac{(j+k+i-j'-k'-3)!(j'+k')!}{(j+k+i-3)!} \mathbbm{1}_{\{0 \leq i \leq 2 \}} . \llabel{ThHWHwh8Jz4hC0rK2GdNzLXiEY7VuQfRbXpiQnPps9gMA8mWkyXsYFLoiRtl2Kl2pI9bSnyi07mUZqhEsBOCgI4F5AFFdjX3wf0Wu2Yqddp2ZUkjeFMAxnDlsut9qzbyRgDWrHldNZewzEK1cSwWJZywloSof6zVDAB6er0o2HZY1trZhBuL5zYzrAUdMKXVKGWKIHOqqx1zj8tlpxuUD83eLUerjxfHNMZlaqZVhT6Jku15FdLvdeo087AsGC8WdoMnf4dToHw47hglTqjKtAlwR9ufOhLKTDgWZhxHFX5gU5uN2S6esPlxKpXzBmgyWUy5D01WD88a4YmWRfdmev1dBvHOmhTBqurAgTC6yrrRBPn9QfZ9T4mwIh3xjAtkiMlAlTd6fSQ5iQBBY6OErT3gf0DKeECnjgcTXAL8grKBpfcJvq4fpIhWGFSdh6LOqg0ao9AjakqEZKgv95BAqvCSJJgo1Lzsv5yhPQkMpPWnsXvHNZAUQt1DpO47V7AR6JCTR9fnH6QVYwjdZTR3TZSCdOcidDYXYiztVEgNK6hZWtLooE11MiqywCo9kUjdnt1CCc80eJHxMWn7GlDI9yCp9xs9zknCbFsjlKydvYatKpvKJpySVeTP1zRB9N91vA0XXSacVlNzZX3jRgbDjcscBBveadZerkNNT2ni9PDo6HyDNuAvUoYNaIuQ6oUCEi5k5kBw1fwktQDSG4Ky7U2nXSKlOez0PJ0v1SNnMkUdmxxN5t7vhLQxWUulVutc6EMFPa0mIkRDVwluKitmTncyT58CDjRPSDHNRTYERTDGHFHDFGSDFGRTYFGHDFGDFSFGWERTFSDFGDFADWERTFGHDFGHFGH214} \end{align} Applying the combinatorial inequality \begin{align} \binom{k}{k'} \binom{j}{j'}  \leq \binom{k+j}{k'+j'}    \llabel{unXB74DYSJKWEPYi0YxlyAd7HGWyyCeJzd8ht2NnHGfOmsDLbWqhYk2v3J7j2nbB4taYDMN0OkJTdkNPO7JvkTRFYwud2MZ91SZPVQcLlvrOcIN92COu4QpaM7ShSsg1qs8uijWXMMnX9760otPu5BJwtxkMVH4wuj37tRdB7Za2FeTvXLlkC0kZ1ZRCZvbcVw9SRuUimZYbIyqO0qKkkirgpvLzBS44Rwj1NZRJHOafvDTKV3ePSJJ0wuXjKzgeXa11GuCRiRVPRSUNxSqionXMk3f8cKO4inK7IfRUJ0MW6ZdcMT3LaSjZ4IqtQIFDukYcYz700unb8AtSmg2kMKAAL7DB4cgDSBFHX5GPcHAqCytP3oJRryBaylYND2HGJZrTgUURyUwzCli2F9vvyflNg1nR5Y1nxJC235JxySQAmanUoZh1VvDoMy3RL9pUIemC79uRdoMV0hz3sKl3uSB9WEO7EFbVYWJMeDYZe6UuJQ2rbloFIc6ca1hFEdw2d4sSTrHnhAIlQ1o9RphHtZ9C4DInJMbOmYJatZfwEAKGDpQb2Mx5o8ndJnyvrUaP1lkNOGdleO90C3QpE1fEXgQ5Y2APzGVPRJn6rPVXg9U8d9updt0YZpJ4i1h1WTmixB5H0ndNf7UYbKCXm1RHvRl6IgFtPgkhxdX95jIOZ0qtx9xEmzdF1L5sNPY0CKoPBSK6S7faLCutDrBVCtB3NykryNUqACBCto7OVcAjjsNMjfpDA9w2r6CzmQhf2xmwR0HvIhiIjhHoZ4obATsWkJgUBX5JbTtFSDHNRTYERTDGHFHDFGSDFGRTYFGHDFGDFSFGWERTFSDFGDFADWERTFGHDFGHFGH49} \end{align} to \eqref{SDHNRTYERTDGHFHDFGSDFGRTYFGHDFGDFSFGWERTFSDFGDFADWERTFGHDFGHFGH213}, we obtain \begin{align} \mathcal{A}_{j,k,i,j',k'} \les \frac{(j+k+i-j'-k'-3)!(j+k)!}{(j+k+i-3)!(j+k-j'-k')!} \les 1 ,   \label{SDHNRTYERTDGHFHDFGSDFGRTYFGHDFGDFSFGWERTFSDFGDFADWERTFGHDFGHFGH52} \end{align} since $i\geq 3$. Similarly,  \begin{align} \mathcal{B}_{j,k,i,j',k'} \les \frac{(j+k+i-j'-k'-3)!(j+k)!}{(j+k+i-3)!(j+k-j'-k')!} \mathbbm{1}_{\{0 \leq i \leq 2 \}} . \llabel{r3fDq6DjRCxJo6kZKTUfJNw8uCuyAAxqjBCsvLawMMS37OhxIQcaW0SnnMPdZQLjtIIpnIneWsWiUQpO69palKXNRlv9bYG06ptJE7WLpWiXneaUhEszoGmQquLLn5bXdyFia5iLrg7PABBN4v8QapYAv8hGhMd7E10KVuQl78KRe38xdUEFe15K72PTLwYDkutECGADCEBMbcFvLgnnrbdwldq6CS8wVyBzPGZaC3fGkrvmhMj9uvRdnSYXXg12IkXVNSN3601pWdykiU6kaUwDUZ2G8r5XiZXMQ7AGrplYAPlwn11dm00Jodc1h7zFn5rLbVOHMDh0QggiSOKll3vzZ0A6hDO56OyuNBgfzTkNNyR28PsJUDsofPaXgqBUKo9tXhTwgIFAxgS43mPTRh5QLfBBrLyi8we8dXmJnkR7DcCII4Df1yrovtwK6Zq8Fay5DrbFolZgiNNUKQko199y43VW46dUhwt4dEWECfqszQjuZcFNqRA8EBKTjxXvfBSuPr9IelzgCVxKdowNHtbT6jKdPL1MYPfjfmZiLHDZUfPBhjt64XadgQnI2ylWjZqHTp3H9LIGIdsX0liHs3aE1qHcNYwAfL2aJAMnqOiSdxF6cG4bEx1aZjpeJsDcYCD98Z7vMYv77Vfx3eVQakI2aPi3UhSEKfsDNOley7xrpP4S9FoFg8deOZeMJ5PQWSMloZjHqNXrtCh7pqHNFuq1MF5tPbBVgGwFtRYhAi2q52Rw4dFk76zGcROdFIXft1LK3fxk0xMVmqt2h7rqf9OlF4gjjR2BSDHNRTYERTDGHFHDFGSDFGRTYFGHDFGDFSFGWERTFSDFGDFADWERTFGHDFGHFGH58} \end{align} If $j'+k' \geq [(j+k)/2]+1$, then there exists a constant $C\geq 1$ such that \begin{align} \mathcal{B}_{j,k,i,j',k'} \les C^{j'+k'} , \llabel{8FxKi9pwg5yRY8XMXIWISOjbcsRKniRqLwJzkAU7oq6tBKpEv4ESlNOy9u1tciXJCAYUOps6OAh6W4ZxV8lo2dueBiLmZrIw6dFUG4w8PcdiJSFmSwE9KhkgSZ75dycS17zskwXxP9UVlmzJwi5E0CZMvlQLeFZblh53SABDvgFzmlZH5lJ42UOcRojWmp7FtO4atvrDjVQbvSNkhDbNl8R21vnhX3ILYS65pr9OAdnA3j6KEL5tNaNVmsKvlGmy2kMyVvsb73RcLUNGzi6wlc1uFWkYrpXEmfkPv4tQozW2CHUXS2kCG3CfbZ7cLMohJXIukasc5F0vF8GoNZkeI6DEv2OlWLJpuuOiSXadleN1cgyW4u0bp8TDtYFQI8kp29nLjdvVTPfrKxCVr7pdxnVwdHHQubf5O4ixYrddMbrhd60rN8GL1TGfyeCQmNaJN3fJgn0we17GkQAg1WAj6l87vzmOzdKQ1HZ8qATPMo1KAULCHIjKsRXwKT5xYB9iwCmzcM5nlfbkMBhe7ONI5U2NrE5WXsFl5mkw3TzbJicLnBrAjcGa9wJZGTj7YmbbEr0cgxOxs75shL8m7RUxHRQBPihQ1Zg0p7UIyPsopiOaehpYI4z5HbVQxsth4RUeV3BsgK0xxzeyEFCVmbOLA52noFxBk8r1BQcFeK5PIKE9uvUNX3B77LAvk5Gdu4dVuPux0h7zEtOxlvInnvDyGe1qJiATORZ29Nb6q8RI4V3Dv4fBsJDvp6ago5FalZhU1Yx2evycybq7Jw4eJ9oewgCma6lFCjsOSDHNRTYERTDGHFHDFGSDFGRTYFGHDFGDFSFGWERTFSDFGDFADWERTFGHDFGHFGH216} \end{align} uniformly for all $i \in \{0,1,2\}$; if $1\leq j'+k' \leq [(j+k)/2]$, then we have \begin{align} \mathcal{B}_{j,k,i,j',k'} \les 1 , \label{SDHNRTYERTDGHFHDFGSDFGRTYFGHDFGDFSFGWERTFSDFGDFADWERTFGHDFGHFGH217} \end{align} uniformly for all $i \in \{0,1,2\}$. Combining \eqref{SDHNRTYERTDGHFHDFGSDFGRTYFGHDFGDFSFGWERTFSDFGDFADWERTFGHDFGHFGH106}, \eqref{SDHNRTYERTDGHFHDFGSDFGRTYFGHDFGDFSFGWERTFSDFGDFADWERTFGHDFGHFGH52}--\eqref{SDHNRTYERTDGHFHDFGSDFGRTYFGHDFGDFSFGWERTFSDFGDFADWERTFGHDFGHFGH217}, and switching indices from $j-j'$ to $j''$ and $k-k'$ to $k''$, we obtain \begin{align} \begin{split} & \sum_{j+k+i\geq 3}  \frac{\kappa^{j} \bar{\kappa}^{k} \tau^{j+k+i-3}}{(j+k+i-3)!} \Vert \partial_x^j [\TT^k, \nabla] (\epsilon \partial_t)^i S \Vert_{L^2} \\ &\indeq \les \sum_{j''=0}^\infty \sum_{k''=0}^\infty \sum_{i=3}^\infty \left(\frac{\kappa^{j''} \bar{\kappa}^{k''} \tau^{(j''+k''+i-3)_+}}{(j''+k''+i-3)!} \Vert \partial_x^{j''+1} \TT^{k''} (\epsilon \partial_t)^i S \Vert_{L^2}  \right) \\ & \indeqtimes \sum_{j=j''}^\infty \sum_{k=k''+1}^\infty (\kappa \bar{K})^{j-j''} (\bar{\kappa} \bar{K})^{k-k''}  \\ &\indeq\indeq + \sum_{j''=0}^\infty \sum_{k''=0}^\infty \sum_{i=0}^2 \left(\frac{\kappa^{j''} \bar{\kappa}^{k''} \tau^{(j''+k''+i-3)_+}}{(j''+k''+i-3)!} \Vert \partial_x^{j''+1} \TT^{k''} (\epsilon \partial_t)^i S \Vert_{L^2}  \right) \\ & \indeqtimes \sum_{j=j''}^\infty \sum_{k=k''+1}^\infty  (C\kappa \bar{K} )^{j-j''} (C\bar{\kappa} \bar{K})^{k-k''}  . \label{SDHNRTYERTDGHFHDFGSDFGRTYFGHDFGDFSFGWERTFSDFGDFADWERTFGHDFGHFGH100} \end{split} \end{align} Note that the sums in $j$ and $k$ in \eqref{SDHNRTYERTDGHFHDFGSDFGRTYFGHDFGDFSFGWERTFSDFGDFADWERTFGHDFGHFGH100}   are bounded by $C$ for $\kappa \leq 1/C\bar{K}$ and $\bar{\kappa} \leq 1/C\bar{K}$, uniformly for all $j', k', i \in \mathbb{N}_0$. Therefore, by \eqref{SDHNRTYERTDGHFHDFGSDFGRTYFGHDFGDFSFGWERTFSDFGDFADWERTFGHDFGHFGH101} and \eqref{SDHNRTYERTDGHFHDFGSDFGRTYFGHDFGDFSFGWERTFSDFGDFADWERTFGHDFGHFGH100} we conclude \begin{align} \begin{split} I_2 & \les \Vert v \Vert_{A(\tau)} \Vert S \Vert_{\tilde{A}(\tau)}  + \Vert v \Vert_{A(\tau)} \sum_{1 \leq j+k+i \leq 3} \Vert \partial_x^j \TT^k (\epsilon \partial_t)^i S \Vert_{L^2}  \les \Vert v \Vert_{A(\tau)} \Vert S \Vert_{\tilde{A}(\tau)}  + \Vert v \Vert_{A(\tau)} , \end{split}    \llabel{yzeoXOyIagDo4rJPDvdVdAIPfvaxOIsle7l60zfITnPR5IE34RJ1dqTXj0SVuTpTrmkFSn2gIWtUvMdtZIWIZTo2aJpeFRGmLIa5YG6yn0LboerwMPeyvJZHroCE38u158qCnrhQWMU6v8vncMbFGS2vM3vW5qwbO6UKlUBS9Y2oL2juyOlk3XJ7mfyQMGcRyAVSc0Yk8bizEBIGDYdOGoW6emVaisrFm5Ehk6nwh98Pn1pr7Od6qGjlJObLuDe0UqZTHY6w0iZ67Kfw5cPZ1ZFpvG095DYKQTHJ7t5HPbc8WevwqlBtMEFsBgDSaccQONR6sBXLp8nu9ygZ47NBUCknTcXgssyAgKe8nTCWZjy1Ygl83LTLRtp6iSFIDk1BtU6O9pxcNWt3FHEPLmD7TXtkNZr0rYhB8frpuqccVbXrPCHjJbJoqMK7O6CvwuATercnN2SpjTAaMyFNpa9GeKl351nDs3KVr6WMCTYS0zsABO0SwHhCqG0qk62kpIM5YjYCeM76VcZ0cFJZTEHZy5LjzlsDRtfvNE1e4QcGKXy7y7Hp2oXfzX6DZbs8nmLkLREmO32NF7SezN3kn7UZI29gSEw2zk5YBa0aVY6umS1ABaKt5Ahkx6Qz8B2SpqdGFQV8bqASLCGwcdkXBaIL44gxbitC1txjgfQ6AKORWGJKF2ym4xY9EDRne7ODExbjdiFdVElumlJwtcNTiKoLEffm6QI1hmfEXqg4Sd3b8GBBQG7Fsyq6g8ISR17dioqXqfE2jRYpBJkWP2HfthwNrhDsCJ6Xj2O6SDHNRTYERTDGHFHDFGSDFGRTYFGHDFGDFSFGWERTFSDFGDFADWERTFGHDFGHFGH59} \end{align} where the last inequality follows from \eqref{SDHNRTYERTDGHFHDFGSDFGRTYFGHDFGDFSFGWERTFSDFGDFADWERTFGHDFGHFGH24}. \par \emph{Proof of \eqref{SDHNRTYERTDGHFHDFGSDFGRTYFGHDFGDFSFGWERTFSDFGDFADWERTFGHDFGHFGH107}}\/: Using H\"older and Sobolev inequalities, we obtain \begin{align} \begin{split} & \mathcal{C}_{j,k,i,j',k',i'} \mathbbm{1}_{\{j+k+i\geq 5\}} \mathbbm{1}_{\{[(j+k+i)/2] + 1 \leq j'+k'+i'\leq j+k+i-3\}} \\ & \les \left(\frac{\kappa^{(j'-1)_+} \bar{\kappa}^{k'} \tau^{(j'+k'+i'-3)_+}}{(j'+k'+i'-3)!} \Vert \partial_x^{j'} \TT^{k'} (\epsilon \partial_t)^{i'} v \Vert_{L^2} \right) \\ &\indeqtimes \left( \frac{\kappa^{j-j'+2} \bar{\kappa}^{k-k'} \tau^{(j+k+i-j'-k'-i'-1)_+}}{(j+k+i-j'-k'-i')!}  \Vert \partial_x^{j-j'+2} \TT^{k-k'}  (\epsilon \partial_t)^{i-i'} \nabla S \Vert_{L^2} \right)^{3/4}  \\ & \indeqtimes \left(\frac{\kappa^{j-j'} \bar{\kappa}^{k-k'} \tau^{(j+k+i-j'-k'-i'-3)_+}}{(j+k+i-j'-k'-i'-3)!} \Vert \partial_x^{j-j'} \TT^{k-k'}  (\epsilon \partial_t)^{i-i'} \nabla S \Vert_{L^2} \right)^{1/4} . \label{SDHNRTYERTDGHFHDFGSDFGRTYFGHDFGDFSFGWERTFSDFGDFADWERTFGHDFGHFGH105} \end{split} \end{align} Similarly to \eqref{SDHNRTYERTDGHFHDFGSDFGRTYFGHDFGDFSFGWERTFSDFGDFADWERTFGHDFGHFGH63}, by \eqref{SDHNRTYERTDGHFHDFGSDFGRTYFGHDFGDFSFGWERTFSDFGDFADWERTFGHDFGHFGH25}, the remaining factors of $\kappa$, $\bar{\kappa}$, and $\tau$ are bounded by $C$; the product of factors involving combinatorial symbols may be bounded by $C$ since $j'+k'+i' \geq [(j+k+i)/2] + 1$.  By \eqref{SDHNRTYERTDGHFHDFGSDFGRTYFGHDFGDFSFGWERTFSDFGDFADWERTFGHDFGHFGH105}, using the discrete H\"older and the discrete Young inequalities we arrive at \begin{align} \begin{split} I_3 & \les \Vert v \Vert_{A(\tau)} \Vert S \Vert_{\tilde{A}(\tau)}  + \Vert v \Vert_{A(\tau)} \sum_{j+k+i\geq 3} \frac{\kappa^{j} \bar{\kappa}^{k} \tau^{(j+k+i-3)_+}}{(j+k+i-3)!} \Vert \partial_x^j [\TT^k, \nabla] (\epsilon \partial_t)^i S \Vert_{L^2}  . \end{split}    \llabel{ONi9jWzM1HHOlFm2TfMmujPhiVujhpDSS5vLdDjayX8BpBeeKnwz6Po6KUz01etrb2z8jwiJy9GB3F7RNKgEUWkFVuxBJpntmcqZF7NIVIiSWXEy2B7tRn6afnGtzp09oO1UsLvofZRiQcxV7tFgjBZ9mEssTsKSENx0N4gQ0ubFQGfRGM1dMTXJ53WWic7AlLkC76gLZm4Sn9zosCBe9fDlmE6Lq9lJY0heWK3oKvFizSvWqOwh8NvAIcoWO4QfR4CxSOGYXBYo2zigpP0nYii2uivvjhp9zpnfsROtkKm3afLxSh6D4eOoQ0TcRScNt9ZGUvxH0aRPrHhuZrJtGLHt533m8j3jJeSbCYI84loykM7iaY1pC09AJJ8puhnm6yTQO1WjgC5NvJJcAuK0Ib3YxCEleEHjdLl8L677tACt5w4MTfsZXC730oY7Lm9tGfKZQ3UFhPK7LqBEB1tnde3A05ZPCaVVMO4Wp1vl5NWYVAS3A6ppl8D702kndT3RzBToCoeQzgwwkUcHv6n8yGW6ADNujYLGNTMO4OrlG24pCbTFbaEWwLfvUlF7kpNGq0kmJd9voUPmdlK4yovDYNYPQhxO4g1sAZRKOsjpI9NvaBCW4EssGGiWLGR9RPwoE9NROT90cKFKyx5ZwzWBEnqOyITIPyL1ILo7eJfQdKiR1VJwiLx70OjSKHIShGIXvmm10FDxlINtLdHGzZ4OBmclD6VUmJvbO5I3EI1imd1hjxMLf9WNnq0cfLTeFzxKEQHL5u9r94VqDSQs1OhLadtGVbTmqRCtSDHNRTYERTDGHFHDFGSDFGRTYFGHDFGDFSFGWERTFSDFGDFADWERTFGHDFGHFGH60} \end{align} Proceeding as in \eqref{SDHNRTYERTDGHFHDFGSDFGRTYFGHDFGDFSFGWERTFSDFGDFADWERTFGHDFGHFGH106}--\eqref{SDHNRTYERTDGHFHDFGSDFGRTYFGHDFGDFSFGWERTFSDFGDFADWERTFGHDFGHFGH100}, we obtain \eqref{SDHNRTYERTDGHFHDFGSDFGRTYFGHDFGDFSFGWERTFSDFGDFADWERTFGHDFGHFGH107}. \par \emph{Proof of \eqref{SDHNRTYERTDGHFHDFGSDFGRTYFGHDFGDFSFGWERTFSDFGDFADWERTFGHDFGHFGH103}}: Using H\"older and Sobolev inequality, we obtain \begin{align} \begin{split} I_4  & \les \sum_{j+k+i \geq 5}  \sum_{\substack{(j', k', i') \leq (j,k,i)\\   j+k+i-2 \leq  j'+k'+i'  }} \left( \frac{\kappa^{(j'-1)_+} \bar{\kappa}^{k'} \tau^{j'+k'+i'-3}}{(j'+k'+i'-3)!} \Vert \partial_x^{j'} \TT^{k'} (\epsilon \partial_t)^{i'} v \Vert_{L^2} \right) \\&\indeqtimes \Vert \partial_x^{j-j'} \TT^{k-k'} (\epsilon\partial_t)^{i-i'} \nabla S \Vert_{H^2} \\ & \les \Vert v \Vert_{A(\tau)} , \end{split}    \llabel{4RQKTJXDb5X7vc4gRO9owaiUdwAc4uIKr2eA0fjhHu7HukD1pbwm7YzSrpRxdBpLYAGi7aIh0aZot6yMJxsfBpw3JbbH4lIXf9s3jFDyyYSC8wNFPYZiO2AJBZKdaAD3hu4WI0h1zMut1AyXXJ1IWtVOUvgjMEdr3H6ZASxZeGCNEG9U8XLLC8otnqMrr5llwno3yV0tSlPW1LjWvkj8W91VA8T6CXqWMs7WjvwZre71Buv1RkHBPRkBwhsucRyoHn8BL0fGPm3ANnQX1MqDgLrFJmPZ1sTLo46ZhffTFKGFDSIMoV0UtBu6d0TIkEwRX13S45chg6JyD2aHOEAbD98fDOYbXljuO3gIzi2BaEYczmnuOx94YNAzGeE0iHiqoXEtl8Ljgxh3GO5duEBzl0uoVOZLI0xiECVeNFhcQXgqqN2eSobAT4IXuflDOankxzPUd37eYwmvrud8eluLCBkwtTomGSSpWw7m10SlOpCJqqm6hQLau4qM8oyxlORGCT1csPUcGqV6zE9cPoOKWCTPvz9PMUTRd9e0f5g8B7nPURfC1t0u4HPVzeEgMRZ9ZVv1rWPIsPJf1FLiIa2TyZq8haK9V3qN8QsZt7HROzBAheG35fAkpgPKmIuZsYWbSQw6Dg25gM5HG2RWpuipXgEHDco7Ue6aFNVdPf5d27rYWJCOoz1ystngc9xlbcXZBlzAM9czXn1Eun1GxINAXz44Qh8hpfeXqdkdzuIRYMNMcWhCWiqZ5sIVyj9rh1A47uLkTwfyfryoiDx1enHSeip1vsO4KM6ISDHNRTYERTDGHFHDFGSDFGRTYFGHDFGDFSFGWERTFSDFGDFADWERTFGHDFGHFGH61} \end{align} where the last inequality follows from \eqref{SDHNRTYERTDGHFHDFGSDFGRTYFGHDFGDFSFGWERTFSDFGDFADWERTFGHDFGHFGH24}.  \par \emph{Proof of \eqref{SDHNRTYERTDGHFHDFGSDFGRTYFGHDFGDFSFGWERTFSDFGDFADWERTFGHDFGHFGH505}}: For low values of $j+k+i$, from \eqref{SDHNRTYERTDGHFHDFGSDFGRTYFGHDFGDFSFGWERTFSDFGDFADWERTFGHDFGHFGH24} and \eqref{SDHNRTYERTDGHFHDFGSDFGRTYFGHDFGDFSFGWERTFSDFGDFADWERTFGHDFGHFGH19}, we obtain \begin{align} \sum_{0\leq j+k+i \leq 4} \frac{\kappa^{(j-1)_+} \bar{\kappa}^k \tau^{(j+k+i-3)_+}}{(j+k+i-3)!} \Vert \partial_{x}^j [\TT^k, \nabla] (\epsilon \partial_t)^i S \Vert_{L^2} \les	 1. \label{SDHNRTYERTDGHFHDFGSDFGRTYFGHDFGDFSFGWERTFSDFGDFADWERTFGHDFGHFGH508} \end{align} For high values of $j+k+i$, by \eqref{SDHNRTYERTDGHFHDFGSDFGRTYFGHDFGDFSFGWERTFSDFGDFADWERTFGHDFGHFGH19}, we have \begin{align} \begin{split} 	& \sum_{ j+k+i \geq 5} \frac{\kappa^{(j-1)_+} \bar{\kappa}^k \tau^{(j+k+i-3)_+}}{(j+k+i-3)!} \Vert \partial_{x}^j [\TT^k, \nabla] (\epsilon \partial_t)^i S \Vert_{L^2} \\ &\indeq \les \sum_{ j+k+i \geq 5} \sum_{k'=1}^k \sum_{j'=0}^j \frac{\kappa^{(j-1)_+} \bar{\kappa}^k \tau^{(j+k+i-3)_+}}{(j+k+i-3)!}  \binom{k}{k'} \binom{j}{j'} (j' + k')! \bar{K}^{j'+k'}  \Vert \partial_x^{j-j'+1} \TT^{k-k'} (\epsilon \partial_t)^i S \Vert_{L^2} \\ &\indeq \les \sum_{ j+k+i \geq 5} \sum_{k'=1}^k \sum_{j'=0}^j \frac{(j+k)!(j+k+i-j'-k'-3)!}{(j+k-j'-k')!(j+k+i-3)!} (\kappa \bar{K})^{j'+k'}  \\ & \indeqtimes \left( \frac{\kappa^{j-j'} \bar{\kappa}^{k-k'} \tau^{(j+k+i-j'-k'-3)_+}}{(j+k+i-j'-k'-3)!}  \Vert \partial_x^{j-j'+1} \TT^{k-k'} (\epsilon \partial_t)^i S \Vert_{L^2} \right) , \end{split}	    \llabel{5nql8i6XMK0rks0J59i2gzeHrjL1GbnITIBUV33u1He2frwh8SlMFfxURXLEeSH3mRUZPo4qNIEiCDpZPg9jUAiCbJ3yZnYviypDY6UdI0D0Uvh8B5pzubybLzK6Okxiv1n6JFvXnAnLZkB2NBL7XrJXMMKs8CZsG212rnKHzisWyrbRdhAevTuCXC238cSHDCDRuSfLZ6OcD9ILJinlaki39ZA5i8M5PHQYRu9c4V1mhLVzeTTXVpgYHoa1Xj0tDpTKKrzAmUdJKYBVzYf2fuZHOiTzct88pOcWnFTvpWY7rmUX9mStlDy20kd9AusOJf9flOWhZwlWiHSoUyCdlxpPLdWIUhiT3LHokLLeFWnzMXpp1rcNzMdnmqvOOqqu8r9aHjZugynu1WR2Kt1bLAm4xFIJ56unzROpYf9QL0MlwkoJUIPqs7J4FuANaH5PLCga54bD9TmxtzHi82iDykpSWvkcvxuDk7rWAaUjbhIKIz4yML1wz7iFCxXO8nis8uKjb3uglgImLMaYSJSdAKU9AVHZ8G1n2uexpWzyZdj2gvCTeT5702uSeZ9BCuRQVhFJQNMZ3S2esCCUqV3CHgWPGx9gzqE69aQz5VOSuRPnXm57MllKGVzazF7XDlGQJ2Gawooq71nMVyltPDG7L0Ld0OErw0kLAYXZzR6XaeileVHuk7xcwyBOlV2JrQxWjPbeLaiIYqHqpbcvoLZVz1ytufBpcJ4MVecfeep7bry2UjHFLwzWFuMMdhVHxDX8Gsv0RQLK8YqbATHFEX0UkxjLiSKH3Rr5SDHNRTYERTDGHFHDFGSDFGRTYFGHDFGDFSFGWERTFSDFGDFADWERTFGHDFGHFGH65} \end{align} where the last inequality follows from \eqref{SDHNRTYERTDGHFHDFGSDFGRTYFGHDFGDFSFGWERTFSDFGDFADWERTFGHDFGHFGH25}. We then proceed as in \eqref{SDHNRTYERTDGHFHDFGSDFGRTYFGHDFGDFSFGWERTFSDFGDFADWERTFGHDFGHFGH106}--\eqref{SDHNRTYERTDGHFHDFGSDFGRTYFGHDFGDFSFGWERTFSDFGDFADWERTFGHDFGHFGH100} to obtain \begin{align} \sum_{ j+k+i \geq 5} \frac{\kappa^{(j-1)_+} \bar{\kappa}^k \tau^{(j+k+i-3)_+}}{(j+k+i-3)!} \Vert \partial_{x}^j [\TT^k, \nabla] (\epsilon \partial_t)^i S \Vert_{L^2} \les 1+\Vert S \Vert_{\tilde{A}(\tau)} . \label{SDHNRTYERTDGHFHDFGSDFGRTYFGHDFGDFSFGWERTFSDFGDFADWERTFGHDFGHFGH507} \end{align} Thus \eqref{SDHNRTYERTDGHFHDFGSDFGRTYFGHDFGDFSFGWERTFSDFGDFADWERTFGHDFGHFGH505} follows from \eqref{SDHNRTYERTDGHFHDFGSDFGRTYFGHDFGDFSFGWERTFSDFGDFADWERTFGHDFGHFGH508} and \eqref{SDHNRTYERTDGHFHDFGSDFGRTYFGHDFGDFSFGWERTFSDFGDFADWERTFGHDFGHFGH507}. \par Combining \eqref{SDHNRTYERTDGHFHDFGSDFGRTYFGHDFGDFSFGWERTFSDFGDFADWERTFGHDFGHFGH55} and \eqref{SDHNRTYERTDGHFHDFGSDFGRTYFGHDFGDFSFGWERTFSDFGDFADWERTFGHDFGHFGH104}--\eqref{SDHNRTYERTDGHFHDFGSDFGRTYFGHDFGDFSFGWERTFSDFGDFADWERTFGHDFGHFGH505}, we obtain \begin{align} \begin{split} \frac{d}{dt} \Vert S \Vert_{A(\tau)} & \les \Vert S \Vert_{\tilde{A}(\tau)}  \left( \dot{\tau}(t)  + \Vert v \Vert_{A(\tau)} + 1 \right) + \Vert S \Vert_{A(\tau)} + \Vert v \Vert_{A(\tau)} + 1 , \label{SDHNRTYERTDGHFHDFGSDFGRTYFGHDFGDFSFGWERTFSDFGDFADWERTFGHDFGHFGH108} \end{split} \end{align} where we used \eqref{SDHNRTYERTDGHFHDFGSDFGRTYFGHDFGDFSFGWERTFSDFGDFADWERTFGHDFGHFGH24} to bound $\Vert \nabla v \Vert_{L^\infty_x}$ and $\Vert v \Vert_{L^\infty_x}$.  Now, we choose $K$ in \eqref{SDHNRTYERTDGHFHDFGSDFGRTYFGHDFGDFSFGWERTFSDFGDFADWERTFGHDFGHFGH99} to be sufficiently large so that the term next to $\Vert S \Vert_{\tilde{A}(\tau)}$ is less than or equal to $0$. The lemma then follows by integrating \eqref{SDHNRTYERTDGHFHDFGSDFGRTYFGHDFGDFSFGWERTFSDFGDFADWERTFGHDFGHFGH108} on $[0,T_0]$ and using the Gronwall lemma. \end{proof} \par \startnewsection{Velocity estimates}{sec06} In this section, we use derivative reductions for the divergence, curl, and pure time derivative components of the velocity.  First, we split $\Vert v \Vert_{A(\tau)}$ as \begin{align} \Vert v \Vert_{A(\tau)} =  \sum_{l=1}^5 \sum_{J_l}  \frac{\kappa^{(j-1)_+} \bar{\kappa}^k \tau^{(i+j+k-3)_+}}{(i+j+k-3)!} \Vert \partial_x^{j} \TT^k (\epsilon \partial_t)^i v \Vert_{L^2}  = \sum_{l=1}^5 U_l , \label{SDHNRTYERTDGHFHDFGSDFGRTYFGHDFGDFSFGWERTFSDFGDFADWERTFGHDFGHFGH183} \end{align} where \begin{align} U_{l} = \sum_{J_l} \frac{\kappa^{(j-1)_+} \bar{\kappa}^k \tau^{(i+j+k-3)_+}}{(i+j+k-3)!} \Vert \partial_x^j \TT^k (\epsilon \partial_t)^i v \Vert_{L^2}  \comma l=1,\ldots, 5,    \llabel{umoxSEQd0Myjy6DdKVFmCU7TfBI0xlsOWk20HafdxvncOzFEKskwmIHNHlUNhKYaDnErMs9hNUpc2L8ipcs8XwvjOeX35ehokgG6nyzyevqmZYDfdhQXEFk8qgF3J8SySTL1emZ1hqVGLs1rRD8Fs6u4NCoeyUCERC0xh4x67erCg8lf8J1Go29fRN6HMuUIwacxm2Af9sfbNov4CqQKcFloP7BRW85rUNd2BgxWkOpDr11KRKZlaHXQoN7OaohaotKxJGOUty96bCDK33hIEKUogH2RwEYISuKzS4pVz7BzdHrIjRgCvSJUs1x1D3RRqk9icqLUQ9TeTiCnifS9NpGBJFlPNUFbB5IU956UG9PrSw9oNNnvK4g3GFnUxtWPNXVvue5rAIN0eoRq2XpZNDRpQUwIjhrAXb13L7G4OSy8fznjOJBbaVLPazvFW8uCY5if8VkV0vydXHDDIU8ga2EpiAyYYWhIKYvQ5Xjd850u0AyrPIbkOEaxPZDHmR249GiAnrkNORVg45eQC6GzWVplxUGMUR37sseSzNrlavQzMRpUHdQ1jeJFhRqYlOTMQ1sTicKRHoxsD02Ase6ebYrNxUezPV1IhRwGxDgjslQdbWcoICqaGYMERvugV6kyVQl1l68rTm1okgKupWNIfnHZ4RmbdWEJGBaQ4CIXFDzckkQ1BqWuyqxN3N6kj6LoxcvV46tplSOTvyAQ4vQ0ZZVeCxCBvu6O4OkIGj6IXwLBiaouAtrSlFz795kew70VnFBnjEfMfiSep8xiXv83I36SZgoH8sv2SDHNRTYERTDGHFHDFGSDFGRTYFGHDFGDFSFGWERTFSDFGDFADWERTFGHDFGHFGH62} \end{align} and \begin{align*} & J_1 = \{(j,k,i) \in \mathbb{N}_0^3 \colon j\geq 2 \} \comma &&   J_2 = \{(j,k,i) \in \mathbb{N}_0^3 \colon j = 1, k \geq 1 \}  , \\ & J_3 = \{(j,k,i) \in \mathbb{N}_0^3 \colon j = 1, k =0 \}  \comma && J_4 = \{(j,k,i) \in \mathbb{N}_0^3 \colon j = 0, k \geq 1 \} \comma \\ & J_5 = \{(j,k,i) \in \mathbb{N}_0^3 \colon j = 0, k =0\} . \end{align*} For simplicity of notation, we abbreviate, for $(j,k,i)\in \mathbb{N}_0^3$,   \begin{align}    U_{j,k,i}    =    \frac{\kappa^{(j-1)_+} \bar{\kappa}^k \tau^{(i+j+k-3)_+}}{(i+j+k-3)!}    \Vert \partial_x^{j} \TT^k (\epsilon \partial_t)^i v \Vert_{L^2}     \llabel{BrtlOu8jTJ0ugcT08dORqnA2UzjWI4L5JjM3LQdgsTyV3hNC72Q89Qb5bxur3w6rjyNl9kDte492PJf3TxTDd17VbnAPKptpusRQuHiBwvO39DUmCifhow4RUvFoejBw1JOq8mHPwCfIfh3uUxOVXLoz1E1d0V2KbeCLL9M3pMAkLuAcb6KUHnOXqdPGATF1Lrsh5tpx3OZIAD0H5nDGHlXBqTqgyAEA6pbVZcNjRE1BkH3omJFFjm9TJA7NUBmtg5ppAwIh190lJCmYeih6JWiCfyDWAByAbg61BS14QNzvZSQgqLjGvIW7Vz37vT4dZ92l3DdxftojupoeKITksc2uF2YnBJtMrrNsDJ0hPEK2h7KFiQmbEzr1TClt50d3LR9HDyUIetgwmyKv6NMDAzFDvMoxIYoMiKt9ZPAdaDYug53lgYerHdqgXX70KpLETmzD24CryGGRgzrNZVs78R7S0k0ji59HqHYiB0KvtZrqjtqBxRqbcLz69t0O7ceIlLvMkDc725QVgUMo6uO72ftjKNmef1dr5yiZKljQgHKgliZZQtkheh8wZ7ZRoSGVax9RKHQwgBEvzRwdGNcEPCGYhatb2jZ6XmUdNpYdg7l7sBfrhGmlueYkHA4vIesnrWIqxLSPzSnDFx7RPCfsGeziz44BxQVkm9edBAEhMdruUNIa03TUvnhkRmygOHTcTfiLkfUgYhruxfmyqLiFRozIcHkCC0kXQJo5C3jsfLMJe0vbW57SYQbbWW1ovoL2RRm7yIdmwpXqneEKrwBvjvor3pvUVDD8SDHNRTYERTDGHFHDFGSDFGRTYFGHDFGDFSFGWERTFSDFGDFADWERTFGHDFGHFGH64}   \end{align} for the velocity coefficients arising in expansions below.  Similarly, we denote $E= \{(j,k,i)\in \mathbb{N}_0^3 \colon j\geq 1\}$, and we write \begin{align} D_{j,k,i} = \frac{\kappa^{(j-1)_+} \bar{\kappa}^k \tau^{(i+j+k-3)_+}}{(i+j+k-3)!} \Vert \partial_x^{j-1} \TT^k (\epsilon \partial_t)^i \dive v \Vert_{L^2}    \llabel{BBsuq9OobZWmwy1ql338ivREeCOsJK7Tjax80TdcrxrU0WkrjidNQHLObsVEMR5ih9SmKjrgZCMDUD28M5TL5qTCSbIe9MYP3ARqOLxQH60dvw1XCvSnpTBcaBAMObspru0BDxjVUWWfMp10GNLtfRogR2TlC4KKhCbYsRCGh7FNpjgdSCHUzEKRsAcK2iZG9wFpgwCPUi7jh7fw5OfIJ6iw8EU7ECMBzzhKBEAOXFSz3qsSMv5pZldBJ6pQZcQ8ISDMzGh2GQDm2Rco69Z0zEEPSymYZK9vztjRhCxx980eqRB3C9DimZOyq936zi2X9pjxaf4T8T1yNdIFsdPMiU7gXdeHctkQGkJfjXs4uoBxZA2vY4sWxhqgNJoUNvlSsDq3xWsOjrc1Voi6eienlnGHf8UYCUexIgLa3wPm7A37TKTNSTsv5s2pLuyK6jXKqclcpnPEmofpXpXzVtuwpFO8Usppuucs3ZyVMGgG4zKXz1STngNjSRReL07cWjObWU2d1wq5k2UTAHguogqzVj9zR5cltRIRgGaJgQ2RRlMvdikH6UQiHfOJoTsCM5W2y6iNn1xpobqmpgwjyxoRsdmPboozEAyUoTmw1tQmqP9EwyL8HaGxAc0UhK2znvVWcv4xF5xAYXXjpJmE5fV2hUn6jt2Lt2npfeIdgLKL0xSUDMR51Rzg6r6qUCMcV24DAShdpmrmvene8eSZaP5BbbPPSB6eOsVJOV4dmUiZ2mEwiwTNt1JDDwWjT4mfF4jZC9hko7giefw6w1vmlCzxs6ijEv9M3OtrSDHNRTYERTDGHFHDFGSDFGRTYFGHDFGDFSFGWERTFSDFGDFADWERTFGHDFGHFGH66} \end{align} for the coefficients containing the divergence and \begin{align} C_{j,k,i} = \frac{\kappa^{(j-1)_+} \bar{\kappa}^k \tau^{(i+j+k-3)_+}}{(i+j+k-3)!} \Vert \partial_x^{j-1} \TT^k (\epsilon \partial_t)^i \curl v \Vert_{L^2}  ,    \llabel{OxW2pkMWi7zFKYpA0DcFZpnNS1Lk1rPOTNRZNpe0OWgJK5agU63LxqcHw0iSBcYqNRaD38nPvGHselXU3x8bUZuje0xiRm3iRAbwtJBlaV4M2PJMq7jb3KYDNMbl3CVKrZZDRnV4A65HtZiRodJP0T4NFm1G3ukOBMuK52ljg5v9tBsFjgCRa68HjYa694ZjFJ6CVArUC2OVqxQcnOtix4DXkXaU86p3k1dQqfJUcTYg9MU1RGZFJPXYBCZpgFVfYWG2vVeUqL8QD5IkcRtJuuS2jV2j9re9nJq1j1LPnGXB77qxq0Y2azEuZ4u441ZDhP2c6ltZfe7sMqQEKxXmdTmc3NG1cbV4nmp3bGwOzqXPVe7sEDvoeN9DmlBeZGcrMyDBgyvyBb0YaeHbb2PlECWXTTua3s6XI7ft2Ah6eEUiUur3aiIjiIQx2c3fDbvdyB7wo4oR3i1lrvQS8HjwHsbZzQ47M5Upnd0q5kKW6ZWnGPJtIEdnmGALnep3huZwDANUo8G1vcBzxXVdpxFBpUJlItAEBDsjvloldGbWbqs2ZYgwIZ5URD3kPbYDmbFN9fu026aB6pz5DgpDwrEGQ0Fqd0SE0sQdQAr8OTgO4fpavWhGZygRhreK5l6IFBWjy60VkeHEnXbH5mSL45RJe4oS9aMVTcaRLS768nDfgaCeH9JEvoHAo0guEz8XScEWRDNN53wJbB2uFhfL6zuTGzMbitUjWdxZ6a2TIfhtIyuR5IgpZCA5HZMZT9SvLIqSwDe8gBxlEdFkyj2QyuYzwlwpxYas2Xz8SDHNRTYERTDGHFHDFGSDFGRTYFGHDFGDFSFGWERTFSDFGDFADWERTFGHDFGHFGH67} \end{align} for those with the curl.  \par \emph{The term $U_1$}: First we estimate the sum $U_1$.  By \eqref{SDHNRTYERTDGHFHDFGSDFGRTYFGHDFGDFSFGWERTFSDFGDFADWERTFGHDFGHFGH53}, \def\eprtudfsigjsdkfgjsdfg{\partial}we have \begin{align} \begin{split} U_1  & \lesssim \frac{\kappa}{\bar{\kappa}}   U_1 + \kappa U_1 + \sum_{E} D_{j,k,i} + \sum_{E} C_{j,k,i} \\&\indeq\indeq + \sum_{J_1} \frac{\kappa^{(j-1)_+} \bar{\kappa}^k \tau^{(i+j+k-3)_+}}{(i+j+k-3)!} \Vert \eprtudfsigjsdkfgjsdfg_x^{j-1} [\TT^k, \dive] (\epsilon \eprtudfsigjsdkfgjsdfg_t)^i v \Vert_{L^2} \\ &\indeq\indeq + \sum_{J_1} \frac{\kappa^{(j-1)_+} \bar{\kappa}^k \tau^{(i+j+k-3)_+}}{(i+j+k-3)!} \Vert \eprtudfsigjsdkfgjsdfg_x^{j-1} [\TT^k, \curl] (\epsilon \eprtudfsigjsdkfgjsdfg_t)^i v \Vert_{L^2} \\ &\indeq\indeq + \sum_{J_1} \frac{\kappa^{(j-1)_+} \bar{\kappa}^k \tau^{(i+j+k-3)_+}}{(i+j+k-3)!} \Vert [\TT, \eprtudfsigjsdkfgjsdfg_x^{j-2}] \TT^k (\epsilon \eprtudfsigjsdkfgjsdfg_t)^i v \Vert_{L^2} \\ &\indeq\indeq + \sum_{J_1} \frac{\kappa^{(j-1)_+} \bar{\kappa}^k \tau^{(i+j+k-3)_+}}{(i+j+k-3)!} \Vert  \eprtudfsigjsdkfgjsdfg_x [\TT, \eprtudfsigjsdkfgjsdfg_x^{j-2}] \TT^k (\epsilon \eprtudfsigjsdkfgjsdfg_t)^i v \Vert_{L^2} \\ & = I_{11} + I_{12} + I_{13} + I_{14} + I_{15} + I_{16} + I_{17} + I_{18} . \label{SDHNRTYERTDGHFHDFGSDFGRTYFGHDFGDFSFGWERTFSDFGDFADWERTFGHDFGHFGH160} \end{split} \end{align} For the term $I_{15}$, we use \eqref{SDHNRTYERTDGHFHDFGSDFGRTYFGHDFGDFSFGWERTFSDFGDFADWERTFGHDFGHFGH18} to write \begin{align} \begin{split} I_{15} & \lesssim \sum_{J_1} \sum_{k'=1}^k \sum_{j'=0}^{j-1}  \binom{k}{k'} \binom{j-1}{j'} \frac{\kappa^{j-1} \bar{\kappa}^k \tau^{(i+j+k-3)_+}}{(i+j+k-3)!} \\&\indeqtimes (j'+k')! \bar{K}^{j'+k'} \Vert \eprtudfsigjsdkfgjsdfg_x^{j-j'} \TT^{k-k'} (\epsilon \eprtudfsigjsdkfgjsdfg_t)^i v \Vert_{L^2} \\ & \lesssim \sum_{i=0}^\infty \sum_{k''=0}^\infty \sum_{j''=0}^\infty \frac{\kappa^{j''} \bar{\kappa}^{k''} \tau^{(i+j''+k''-2)_+}}{(i+j''+k''-2)!} \Vert \eprtudfsigjsdkfgjsdfg_x^{j''+1} \TT^{k''} (\epsilon \eprtudfsigjsdkfgjsdfg_t)^i v \Vert_{L^2} \\ &\indeqtimes \sum_{j=j''+1}^\infty \sum_{k=k''+1}^\infty \frac{(k+j-1)! (i+j''+k''-2)!}{(j''+k'')! (i+j+k-3)!}  \kappa^{j-j''-1}\bar{\kappa}^{k-k''}  \bar{K}^{k-k''+j-j''-1} . \label{SDHNRTYERTDGHFHDFGSDFGRTYFGHDFGDFSFGWERTFSDFGDFADWERTFGHDFGHFGH81} \end{split} \end{align} Note that the sum in $j,k$ is dominated by \begin{align} \bar{\kappa} \bar{K} \sum_{j=j''+1}^\infty \sum_{k=k''+1}^\infty \frac{(k+j-1)! (i+j''+k''-2)!}{(j''+k'')! (i+j+k-3)!}  \kappa^{j-j''-1}\bar{\kappa}^{k-k''-1}   \bar{K}^{k-k''+j-j''-2} \les \bar{\kappa}  , \label{SDHNRTYERTDGHFHDFGSDFGRTYFGHDFGDFSFGWERTFSDFGDFADWERTFGHDFGHFGH82} \end{align} uniformly for all $i, k'', j'' \in \mathbb{N}_0$, by taking $\bar{\kappa} \leq 1/CK$. Thus, from \eqref{SDHNRTYERTDGHFHDFGSDFGRTYFGHDFGDFSFGWERTFSDFGDFADWERTFGHDFGHFGH81}--\eqref{SDHNRTYERTDGHFHDFGSDFGRTYFGHDFGDFSFGWERTFSDFGDFADWERTFGHDFGHFGH82} we obtain \begin{align} I_{15} \lesssim \bar{\kappa} U_1 + \bar{\kappa} U_2 + \bar{\kappa} U_3 . \label{SDHNRTYERTDGHFHDFGSDFGRTYFGHDFGDFSFGWERTFSDFGDFADWERTFGHDFGHFGH161} \end{align} The term $I_{16}$ can be treated analogously as $I_{15}$. Namely, using \eqref{SDHNRTYERTDGHFHDFGSDFGRTYFGHDFGDFSFGWERTFSDFGDFADWERTFGHDFGHFGH50}, we get \begin{align} I_{16} \lesssim \bar{\kappa} U_1 + \bar{\kappa} U_2 + \bar{\kappa} U_3 . \label{SDHNRTYERTDGHFHDFGSDFGRTYFGHDFGDFSFGWERTFSDFGDFADWERTFGHDFGHFGH162} \end{align} For the term $I_{17}$, we appeal to \eqref{SDHNRTYERTDGHFHDFGSDFGRTYFGHDFGDFSFGWERTFSDFGDFADWERTFGHDFGHFGH22} obtaining \begin{align} \begin{split} I_{17} & \lesssim \sum_{J_1} \sum_{j'=1}^{j-2}  \binom{j-2}{j'} j'! \bar{K}^{j'}  \frac{\kappa^{j-1} \bar{\kappa}^k \tau^{(i+j+k-3)_+}}{(i+j+k-3)!} \Vert \eprtudfsigjsdkfgjsdfg_x^{j-j'-1} \TT^k (\epsilon \eprtudfsigjsdkfgjsdfg_t)^i  v \Vert_{L^2}  \\ & \lesssim \sum_{i=0}^\infty \sum_{k=0}^\infty \sum_{j''=0}^\infty  \left( \frac{\kappa^{j''} \bar{\kappa}^k \tau^{(i+j''+k-2)_+}}{(i+j''+k-2)!} \Vert \eprtudfsigjsdkfgjsdfg_x^{j''+1} \TT^k (\epsilon \eprtudfsigjsdkfgjsdfg_t)^i v \Vert_{L^2}  \right) \\ &\indeqtimes \sum_{j=j''+3}^\infty  \frac{(i+j''+k-2)!(j-2)!}{(i+j+k-3)!j''!} \kappa^{j-j''-1} \bar{K}^{j-j''-2} . \label{SDHNRTYERTDGHFHDFGSDFGRTYFGHDFGDFSFGWERTFSDFGDFADWERTFGHDFGHFGH83} \end{split} \end{align} Note that the sum in $j$ is dominated by \begin{align} 	\kappa 	\sum_{j=j''+3}^\infty  	\frac{(i+j''+k-2)!(j-2)!}{(i+j+k-3)!j''!}  	\kappa^{j-j''-2} \bar{K}^{j-j''-2}  	\les 	\kappa 	, 	\label{SDHNRTYERTDGHFHDFGSDFGRTYFGHDFGDFSFGWERTFSDFGDFADWERTFGHDFGHFGH84} \end{align} uniformly for all $i,k, j''\in \mathbb{N}_0$, by taking $\kappa \leq 1/C\bar{K}$. From \eqref{SDHNRTYERTDGHFHDFGSDFGRTYFGHDFGDFSFGWERTFSDFGDFADWERTFGHDFGHFGH83}--\eqref{SDHNRTYERTDGHFHDFGSDFGRTYFGHDFGDFSFGWERTFSDFGDFADWERTFGHDFGHFGH84}, we obtain \begin{align} I_{17}  \lesssim \kappa U_1 + \kappa U_2 + \kappa U_3 . \label{SDHNRTYERTDGHFHDFGSDFGRTYFGHDFGDFSFGWERTFSDFGDFADWERTFGHDFGHFGH163} \end{align} The term $I_{18}$ can be estimated analogously as $I_{17}$, and we arrive at \begin{align} I_{18} \lesssim \kappa U_1 + \kappa U_2 + \kappa U_3 . \label{SDHNRTYERTDGHFHDFGSDFGRTYFGHDFGDFSFGWERTFSDFGDFADWERTFGHDFGHFGH164} \end{align} Collecting the estimates \eqref{SDHNRTYERTDGHFHDFGSDFGRTYFGHDFGDFSFGWERTFSDFGDFADWERTFGHDFGHFGH160}, \eqref{SDHNRTYERTDGHFHDFGSDFGRTYFGHDFGDFSFGWERTFSDFGDFADWERTFGHDFGHFGH161}--\eqref{SDHNRTYERTDGHFHDFGSDFGRTYFGHDFGDFSFGWERTFSDFGDFADWERTFGHDFGHFGH162}, and \eqref{SDHNRTYERTDGHFHDFGSDFGRTYFGHDFGDFSFGWERTFSDFGDFADWERTFGHDFGHFGH163}--\eqref{SDHNRTYERTDGHFHDFGSDFGRTYFGHDFGDFSFGWERTFSDFGDFADWERTFGHDFGHFGH164}, we obtain \begin{align} U_1  \les \left(  \frac{\kappa}{\bar{\kappa}} + \kappa + \bar{\kappa} \right) U_1 + (\kappa+ \bar{\kappa} ) U_2 + (\kappa+ \bar{\kappa}) U_3 + \sum_{E} D_{j,k,i} + \sum_{E} C_{j,k,i} . \label{SDHNRTYERTDGHFHDFGSDFGRTYFGHDFGDFSFGWERTFSDFGDFADWERTFGHDFGHFGH174} \end{align}   \par \emph{The term $U_2$}: Next, we estimate the sum $U_2$. By \eqref{SDHNRTYERTDGHFHDFGSDFGRTYFGHDFGDFSFGWERTFSDFGDFADWERTFGHDFGHFGH14}, we have \begin{align} \begin{split} U_2 & \lesssim \sum_{E} D_{j,k,i} + \sum_{E} C_{j,k,i} + U_4 + \sum_{k=1}^\infty \sum_{i=0}^\infty \sum_{l=1}^k   \binom{k}{l} \frac{\bar{\kappa}^k \tau^{(k+i-2)_+}}{(k+i-2)!} \Vert \eprtudfsigjsdkfgjsdfg_x \TT^{k-l} (\epsilon \eprtudfsigjsdkfgjsdfg_t)^i v \Vert_{L^2} \Vert \TT^l \nu \Vert_{L^\infty(\Omega_{\delta_0})} \\ &\indeq + \sum_{k=1}^\infty \sum_{i=0}^\infty \sum_{l=1}^k   \binom{k}{l} \frac{\bar{\kappa}^k \tau^{(k+i-2)_+}}{(k+i-2)!} \Vert \TT^{k-l} (\epsilon \eprtudfsigjsdkfgjsdfg_t)^i v \Vert_{L^2} \Vert \eprtudfsigjsdkfgjsdfg_x \TT^l \nu \Vert_{L^\infty(\Omega_{\delta_0})} \\ &\indeq + \sum_{k=1}^\infty \sum_{i=0}^\infty \sum_{l=1}^k   \binom{k}{l} \frac{\bar{\kappa}^k \tau^{(k+i-2)_+}}{(k+i-2)!} \Vert \TT^{k-l} (\epsilon \eprtudfsigjsdkfgjsdfg_t)^i v \Vert_{L^2} \Vert \TT^l \nu \Vert_{L^\infty(\Omega_{\delta_0})} \\ &\indeq + \sum_{k=1}^\infty \sum_{i=0}^\infty \frac{\bar{\kappa}^k \tau^{(k+i-2)_+}}{(k+i-2)!} \Vert [\TT^k, \dive] (\epsilon \eprtudfsigjsdkfgjsdfg_t)^i v \Vert_{L^2}  \\ &\indeq + \sum_{k=1}^\infty \sum_{i=0}^\infty \frac{\bar{\kappa}^k \tau^{(k+i-2)_+}}{(k+i-2)!} \Vert [\TT^k, \curl] (\epsilon \eprtudfsigjsdkfgjsdfg_t)^i v \Vert_{L^2}  \\ & = I_{21} + I_{22} + I_{23} + I_{24} + I_{25} + I_{26} + I_{27} + I_{28} . \label{SDHNRTYERTDGHFHDFGSDFGRTYFGHDFGDFSFGWERTFSDFGDFADWERTFGHDFGHFGH151} \end{split} \end{align} We split the sum $I_{24}$ according to the values of $k$ and $i$, obtaining \begin{align} \begin{split} I_{24} & = \sum_{k=1}^\infty \sum_{i=2}^\infty \sum_{l=1}^k \binom{k}{l} \frac{ \bar{\kappa}^{k} \tau^{(k+i-2)_+}}{(k+i-2)!} \Vert \eprtudfsigjsdkfgjsdfg_x \TT^{k-l} (\epsilon \eprtudfsigjsdkfgjsdfg_t)^i v \Vert_{L^2} \Vert \TT^l \nu \Vert_{L^\infty(\Omega_{\delta_0})} \\&\indeq + \sum_{k=2}^\infty \sum_{l=1}^k \binom{k}{l} \frac{ \bar{\kappa}^{k} \tau^{(k-2)_+}}{(k-2)!} \Vert \eprtudfsigjsdkfgjsdfg_x \TT^{k-l} v \Vert_{L^2} \Vert \TT^l \nu \Vert_{L^\infty(\Omega_{\delta_0})} \\&\indeq + \sum_{k=2}^\infty \sum_{l=1}^k \binom{k}{l} \frac{ \bar{\kappa}^{k} \tau^{(k-1)_+}}{(k-1)!} \Vert \eprtudfsigjsdkfgjsdfg_x \TT^{k-l} (\epsilon \eprtudfsigjsdkfgjsdfg_t) v \Vert_{L^2} \Vert \TT^l \nu \Vert_{L^\infty(\Omega_{\delta_0})} \\&\indeq + \sum_{i=0}^1 \frac{\bar{\kappa}\tau^{(i-1)_+}}{(i-1)!} \Vert \eprtudfsigjsdkfgjsdfg_x (\epsilon \eprtudfsigjsdkfgjsdfg_t)^i v \Vert_{L^2} \Vert \TT \nu \Vert_{L^\infty (\Omega_{\delta_0})} \\& = I_{241} + I_{242} + I_{243} + I_{244} . \end{split}    \llabel{vCmy2027Jm0LHDMi8X5I255a2UfDR4mcoTUjWxZ3ZJ03alP15KPWffdCPQvOfakMNcpuVDt4cjR1oY7dqRGJvGT0ikreblMGbaNXGb8OEx6aIsztj7eeTt9OKLBFuMQbPLyqTdkjPdF7DNGeSMBOEYZ0aHFSUGw6GNIl3rCv2gKZvonEoA4IixK1D6RmzZgMEtWfXdAB9HMHCjO0woSnFR5ouCA2ug9Qr9w68xDKat9r3pnFjuBuaLmuZkyxCOZpfAbl4tXoggaAaws7Te4IOHrgzIvJq03hLQ585CRSLUZAxXz6RjOpeSx7BRT7txkA3dYoipdOpjMYASRDUXR02w2BAyxHnxBn5huR3MgQzB8mVlx3oBYOTTUMygvzKk6xrgiJZBGwQeoe3ToBsNhgzl64FVnOdf74cinPCeqU7hVu03CSpkpK0SZ9faBxRdRWaIz7Qlt63qkh0bbrEZ2FzUfvSD29l3cwm9uA1zYlTAONVdcOOfYcDS6Hgl4QNmCuPdgFEpuc4nz4vO8N6HC52kSn7ZMrzuz2zK9tk6DAjaUW6vghspP3MDTE9TD8WbIreHTM8VaqwE1kMM3vDcFVic5wDMXThn1NdqgIJZEwPXjazE6sa9UMIlsAZS4EF2vehcxfoAuLVxunTvRTKawGcHoviaXbtCuyMVT7DQdbuGB6WemA8AO9XEFkO8f6ArkAbi397pw9UoR7FCtPGytKq01TEmnbbNVQOuWDUJdgzsaNJLPMAg1EYKs7vNI45FZxQzabloG5bT4UMeFFsgh4PzviDrNUmfuDSDHNRTYERTDGHFHDFGSDFGRTYFGHDFGDFSFGWERTFSDFGDFADWERTFGHDFGHFGH68} \end{align} For the term $I_{241}$, we estimate \begin{align} \begin{split} I_{241} & \lesssim \sum_{k=1}^\infty \sum_{i=2}^\infty \sum_{l=1}^k \frac{\bar{\kappa}^l  \tau^l}{\tilde{\eta}^l} \left( \frac{\bar{\kappa}^{k-l} \tau^{(k+i-l-2)_+}}{(k+i-l-2)!} \Vert \eprtudfsigjsdkfgjsdfg_{x} \TT^{k-l} (\epsilon\eprtudfsigjsdkfgjsdfg_t)^i v \Vert_{L^2} \right) \left( \frac{\tilde{\eta}^{l}}{(l-3)!} \Vert \TT^l \nu \Vert_{L^\infty (\Omega_{\delta_0})} \right) \\ &\indeqtimes \frac{k!(k+i-l-2)!(l-3)!}{l!(k-l)!(k+i-2)!}  \\& \lesssim \bar{\kappa} \sum_{k=1}^\infty \sum_{i=2}^\infty \sum_{l=1}^k \frac{ \bar{\kappa}^{l-1}}{\tilde{\eta}^{l-1}} \left( \frac{\bar{\kappa}^{k-l} \tau^{(k+i-l-2)_+}}{(k+i-l-2)!} \Vert \eprtudfsigjsdkfgjsdfg_{x} \TT^{k-l} (\epsilon\eprtudfsigjsdkfgjsdfg_t)^i v \Vert_{L^2} \right) \left( \frac{\tilde{\eta}^{l}}{(l-3)!} \Vert \TT^l \nu \Vert_{L^\infty (\Omega_{\delta_0})} \right) . \llabel{EumNFpNWbgBdstqQkxVgebWnkXxwEx5mnT5ClbHNY3dv7bvSXqVAKwZq9OZCVhcnFplqS46GIDIS6gByGMKMuec2QsaXEt6nqtuzWxEMcmWcY7Yo1TKsEhXIlBg4dU6fdJT1HYhGyNpqvl2LQazFBoUQpdn8OiN1cGW8lFI3sKQR4EpX4Sfs7d6xxt8hoY8FtaiiwWROI3r1shgHgY6CFK2738bWZCkOb3oNdMQ3Ex3PGeYrE1574bD7yGiOpfQeHH5HO7RQtOJ9ZZpbtYpaHlL9C0pbmVtRSVOzKACwQMj4Q4o4eJArRdciatphUeJADtQdkWk6m70KApmTV15qYnboDBktnk4ukg8SPIjOm3loj895BhImn8bXAUAmLWRxp2cOFiWNjXPoqOHNnZN9WqkEUNHh13JHCpYI5qYrOwvGfgUxhi4UTdN3VMSiRTvvH7xsEtlI5jkmHa35re1gHeq4LViMpiUNoWhTp9zvYhGulekd0rsCBZfjeHQGyhWiEQX1Jgrj0L9iprpDqFNiZsFSBY0Jz81ojF3V4dZnehUR0JXtNXswcRq7BspAp8od99NsP3ssZex8aSlCgDdurUaYHI1SdqCnaMKPrTWQCROzhuZqHxPEGLM9BBfabHUiSq4Qy1LPhQWaIWYo9IakJbRNCxHhp6eiWv7p7JUJgSBNgUSVZsHUJ3oeV1uWw9Aeeixbp8Axt5ZgQPREAQsA0iwxlNK0td3JrPygKrvUR0YXdIlWwJdEao9WMuReYfQUqUoebEgHTLIcL62xVJZhnGxEHyBRff4wSDHNRTYERTDGHFHDFGSDFGRTYFGHDFGDFSFGWERTFSDFGDFADWERTFGHDFGHFGH150} \end{split} \end{align} We choose $\bar{\kappa} \leq \tilde{\eta}$ and use \eqref{SDHNRTYERTDGHFHDFGSDFGRTYFGHDFGDFSFGWERTFSDFGDFADWERTFGHDFGHFGH187} to get \begin{align} \begin{split} I_{241} \lesssim \bar{\kappa} U_2 + \bar{\kappa} U_3 . \end{split}    \llabel{T9LuePpwjrpRogWJ0Iv6sVWMIDiLzotM3ZrY4UKtDEJo2JotOUhTMblm3oNAr4FUYddaug68TX9BgoI0N0C5ySekTx6JxUvudb3KmMocy1WtAyC1uaC9rD3l579WJJ59LQaO7hxpDHsE1LtilFTP6jhASQePrSiosf5TsUVjOPX5qZZo0m0dzwu6ctAw6a3boSxFqDWJauvpSgcgUo3aPoBwWE2BJ9GAlK3c0ujcsQFNb4yCzpRUiQeZEDLIkzqkkEGLKC45BngCXim0yQL5UY0jWzkX2gvw2ui3S30GXpRRuRIOYERuRMD0bBNOb6yvIpeNpLKyV1DqnTur1ckV4yM5Z0dh7u9KmsrPGrxRkRPb09I1YXI9IMf6kPUDYOz9P535CU39tLxBPHIq5gL4wd3S87hiGY0diFePnJBIRNwR60scpaOS5GkEpCyWMEXyMH0v86FRcr8aQYquO5ZZda1IfuGzD9TN1PQLNYFtxB6NX3yOIAULFEswa9VuVUzvoN7i8Au67gdRnyN3ntXN75CDXmhDVWSDzclZWQ3D7FniEs4KRwvAMdkgXKS8feURSISDF3KoWBCeW8fkYNgH8ESnonm8fw8AhCulfSYR4LvuF5xYNuBFM0s8Q9fVrVjS6qkEdI9B7AaABnFEHyQ4FE3p2zNYbeCWWsjICZdMiQxwExryPWHPEdyQb0vtY4GOLLAXSYFDdvhbwojup5CPrRfoTWAfFdeQy4QwgGS8Ss4K6UrdkhKQBOIMpMAPsjTHPwz4WqITmhrrrrh2ZJ2AmbTsaJHj3QxISDHNRTYERTDGHFHDFGSDFGRTYFGHDFGDFSFGWERTFSDFGDFADWERTFGHDFGHFGH71} \end{align} The terms $I_{242}$, $I_{243}$, and $I_{244}$ can be estimated analogously to $I_{241}$. Thus we arrive at \begin{align} I_{24}  \lesssim \bar{\kappa} U_2 + \bar{\kappa} U_3 . \label{SDHNRTYERTDGHFHDFGSDFGRTYFGHDFGDFSFGWERTFSDFGDFADWERTFGHDFGHFGH152} \end{align} Similarly, the terms $I_{25}$ and $I_{26}$ are treated analogously to $I_{24}$, obtaining \begin{align} I_{25} + I_{26} \lesssim \bar{\kappa} U_2 + \bar{\kappa} U_3 . \label{SDHNRTYERTDGHFHDFGSDFGRTYFGHDFGDFSFGWERTFSDFGDFADWERTFGHDFGHFGH153} \end{align} For the term $I_{27}$, we use \eqref{SDHNRTYERTDGHFHDFGSDFGRTYFGHDFGDFSFGWERTFSDFGDFADWERTFGHDFGHFGH18} and obtain \begin{align} \begin{split} I_{27} & \lesssim \sum_{k=1}^\infty\sum_{i=0}^\infty\sum_{k'=1}^k  \binom{k}{k'}  \frac{\bar{\kappa}^k \tau^{(i+k-2)_+}}{(k+i-2)!} k'! \bar{K}^{k'} \Vert \eprtudfsigjsdkfgjsdfg_x \TT^{k-k'} (\epsilon \eprtudfsigjsdkfgjsdfg_t)^i v \Vert_{L^2} \\ & \lesssim \sum_{i=0}^\infty \sum_{k''=0}^\infty  \frac{\bar{\kappa}^{k''} \tau^{(i+k''-2)_+}}{(i+k''-2)!} \Vert \eprtudfsigjsdkfgjsdfg_x \TT^{k''} (\epsilon \eprtudfsigjsdkfgjsdfg_t)^i v \Vert_{L^2} \times \sum_{k=k''+1}^\infty \frac{k! (i+k''-2)!}{k''! (k+i-2)!}  \bar{\kappa}^{k-k''} \bar{K}^{k-k''} . \label{SDHNRTYERTDGHFHDFGSDFGRTYFGHDFGDFSFGWERTFSDFGDFADWERTFGHDFGHFGH114} \end{split} \end{align} Note that the sum in $k$ is bounded by \begin{align} \bar{\kappa} \bar{K} \sum_{k=k''+1}^\infty \frac{k! (i+k''-2)!}{k''! (k+i-2)!}  \bar{\kappa}^{k-k''-1} \bar{K}^{k-k''-1} \lesssim \bar{\kappa} , \label{SDHNRTYERTDGHFHDFGSDFGRTYFGHDFGDFSFGWERTFSDFGDFADWERTFGHDFGHFGH112} \end{align} uniformly for all $k'', i\in \mathbb{N}$, by taking $\bar{\kappa} \leq 1/CK$. Thus from \eqref{SDHNRTYERTDGHFHDFGSDFGRTYFGHDFGDFSFGWERTFSDFGDFADWERTFGHDFGHFGH114}--\eqref{SDHNRTYERTDGHFHDFGSDFGRTYFGHDFGDFSFGWERTFSDFGDFADWERTFGHDFGHFGH112} we estimate \begin{align} I_{27} \lesssim \bar{\kappa}  U_2 + \bar{\kappa}  U_3 .          \label{SDHNRTYERTDGHFHDFGSDFGRTYFGHDFGDFSFGWERTFSDFGDFADWERTFGHDFGHFGH155} \end{align} The term $I_{28}$ can be estimated analogously as $I_{27}$ by using \eqref{SDHNRTYERTDGHFHDFGSDFGRTYFGHDFGDFSFGWERTFSDFGDFADWERTFGHDFGHFGH50}, and we arrive at \begin{align} I_{28} \lesssim \bar{\kappa} U_2 + \bar{\kappa} U_3 . \label{SDHNRTYERTDGHFHDFGSDFGRTYFGHDFGDFSFGWERTFSDFGDFADWERTFGHDFGHFGH156} \end{align} Collecting the estimates \eqref{SDHNRTYERTDGHFHDFGSDFGRTYFGHDFGDFSFGWERTFSDFGDFADWERTFGHDFGHFGH151}, \eqref{SDHNRTYERTDGHFHDFGSDFGRTYFGHDFGDFSFGWERTFSDFGDFADWERTFGHDFGHFGH152}--\eqref{SDHNRTYERTDGHFHDFGSDFGRTYFGHDFGDFSFGWERTFSDFGDFADWERTFGHDFGHFGH153}, and \eqref{SDHNRTYERTDGHFHDFGSDFGRTYFGHDFGDFSFGWERTFSDFGDFADWERTFGHDFGHFGH155}--\eqref{SDHNRTYERTDGHFHDFGSDFGRTYFGHDFGDFSFGWERTFSDFGDFADWERTFGHDFGHFGH156}, we conclude  \begin{align} U_2 \lesssim \bar{\kappa} U_2 + \bar{\kappa} U_3 + U_4 + \sum_{E} D_{j,k,i} + \sum_{E} C_{j,k,i} , \llabel{xTN0HyXIgrXl7sQdnBWrzolZxCCwSlP6b52rqIT8yOeZKUW8iASz8WKiu3bjhg1zCxce2fpagwSyoiD3YxMRrbrprB8oFpJMtItcgQCJD8qQPX3tFyOyK9kqYsp1He43FFjfTYAoH9TrsZulJLfz5PN9MwOwlhKXLn4BWKHpc5r7rV3Ivf2qAk0ckyWiPt2Oo9BTfM5cXjaltIM7PPtTpRX1OysUMBp8XCuzgAd0NKAvdil19GwZVPrGLoGONxeU03Sm0BA1cxyfKGITFLNfgWPMpfRmO63LFMDUmz7qGhj190n34Oc9gieJVzMNOgAhfGTqBpw53q4eyb5J8ZeUNT8R38gKs708pFfj1ELgKiz0EACdAlPNN4okghcRHpmUuoeX4JJ2jtEXNXPRfb9FbN29ZtYUeX5EK9Lv22fvW5Q0VI0sq1IR1a5aojpZWyEKcOLaRUd2wxsHXws4XSrIVgWKywytyA00FoMN8est52u9fFKESCiP7WBvbjOhWuZIU5qDYQZOYVc1m2uyyWrATV9JIVCcILzr0XfXvLRjHkQ7dvQ5qQkKbUddVj2NgD9SbOe2NDztnIjaLQEEnLY4K0Do66oKZnE5tg9p8WgFJrymnIR4JuqanKB0Tu9gTTdhmFWJr7SkcmRgSVZBHXMKPdve2TKbYwe1w6LjLt3fqNqPNrkL9xZ312OlYPJzYHIR9FOwtOda3azKYIZmfOpiDP2UY1xdXMxnx0FHt0Z7KK5meIiZqgEIAVqkXuCIau0ge7syt7wPL1Somn8trVNAP5vOeApza4OmSDHNRTYERTDGHFHDFGSDFGRTYFGHDFGDFSFGWERTFSDFGDFADWERTFGHDFGHFGH115} \end{align} from where we arrive at \begin{align} U_2 \lesssim \bar{\kappa} U_3 + U_4 + \sum_{E} D_{j,k,i} + \sum_{E} C_{j,k,i} , \label{SDHNRTYERTDGHFHDFGSDFGRTYFGHDFGDFSFGWERTFSDFGDFADWERTFGHDFGHFGH166} \end{align} by taking $\bar{\kappa} \leq 1/C$. \par \emph{The terms $U_3$ and $U_4$}: For $U_3$, we have by \eqref{SDHNRTYERTDGHFHDFGSDFGRTYFGHDFGDFSFGWERTFSDFGDFADWERTFGHDFGHFGH72}, \begin{align} \begin{split}   U_3 \lesssim \sum_{E} D_{j,k,i}  + \sum_{E} C_{j,k,i} + U_5 . \label{SDHNRTYERTDGHFHDFGSDFGRTYFGHDFGDFSFGWERTFSDFGDFADWERTFGHDFGHFGH171} \end{split} \end{align} For $U_4$, we have by \eqref{SDHNRTYERTDGHFHDFGSDFGRTYFGHDFGDFSFGWERTFSDFGDFADWERTFGHDFGHFGH08}, \begin{align} \begin{split} U_4 \lesssim \bar{\kappa} \sum_{k=1}^\infty \sum_{ i=0}^\infty   \frac{ \bar{\kappa}^{k-1} \tau^{(k+i-3)_+}}{(k+i-3)!} \Vert \eprtudfsigjsdkfgjsdfg_x \TT^{k-1} (\epsilon \eprtudfsigjsdkfgjsdfg_t)^i v \Vert_{L^2} = \bar{\kappa} U_2 + \bar{\kappa} U_3 . \llabel{cKk2KSIDKbjSH9Rt0UKyyFShxSWRzxSsuMN2vFJgdTwYNCC7fYPYG52l9DuZ30jM3nxAAjYnsbz3ZREXuoEuVh6vUiKQXl9xplK4pv4r1SOF3gu9JSquV3aSxxP4r2sqmh8Vnc5lAnLI692OJ3I9hfl4tXzaJe6GPjnSEJW9IKlRk7tDlXhKdgKqL5vJLR7DGF4USAIttBcO5DLELRkZR6VJCCuPGSsCMuMUqIge2P0YmcB39mmPK7m3iNo3LPlKT6DxnGuwVqMZrP9zdW0MIXlRsCvy2x5hWn7UxgtJMGkQSm5W7JEQfvmB2Gi6cXhyQt1WjTdz4Samd0QLh4lfRnGntTOT6YaQsAyNoew52Km6kidUykKV0tntPTpBygKcOhcx77EkXSLocdpouOwaCQ3fNM4vtwIJvNz7dwFNviTOv3Medg8iLhaM3kdcjDT30paBswxFVIaXyjw0jOSbROdY3e3Phl4qm1wtaM08QrmGBISKqyoygoFISt0tQp0pZ1QQJknImQg436gpzdYuwl5msooOh9l4Z25dXGcSBaavBCelg09qWH49vDRFPE6SRgMSrOEiFDqSmv7MqC8n0B4OujjipuVia053hgXhQqEIzH7cbOABdHwOdTEXvoykC4wrajhzqnquDojXl0Pz7UtybiOOsEINnWEXehc70MvZ7n6V9qYzryOYuiqBEmXaktOJI8ZAdNdvleWiAh0gwDT6fWH6x8uSAjGkKuV6bwiUfcfOWjyzWdRn7KpFuCAHIogHdU3WkbDq1yV8oMfJskTSJAFV4ehtSDHNRTYERTDGHFHDFGSDFGRTYFGHDFGDFSFGWERTFSDFGDFADWERTFGHDFGHFGH113} \end{split} \end{align} \par \emph{The term $U_5$}: We claim that there exists $t_0>0$ sufficiently small depending only on $M_0$ and $\epsilon_1 >0$ sufficiently small depending on $M_{\epsilon, \kappa, \bar{\kappa}}(T)$, such that \begin{align} \sum_{ i=0}^\infty   \frac{ \tau^{(i-3)_+}}{(i-3)!} \Vert (\epsilon \eprtudfsigjsdkfgjsdfg_t)^i (p,v) \Vert_{L^2}	 \les 1+t Q(M_{\epsilon, \kappa, \bar{\kappa}}(t) ) , \label{SDHNRTYERTDGHFHDFGSDFGRTYFGHDFGDFSFGWERTFSDFGDFADWERTFGHDFGHFGH555} \end{align} for all $\epsilon \in (0, \epsilon_1)$. The proof of \eqref{SDHNRTYERTDGHFHDFGSDFGRTYFGHDFGDFSFGWERTFSDFGDFADWERTFGHDFGHFGH555} relies on the energy estimate of the partially linearized equation as in the case $\Omega = \mathbb{R}^3$, cf.~\cite[Lemma~6.2]{JKL}.  Here we only outline the modification needed for the presence of boundary. Instead of using the elliptic regularity, we appeal to Lemma~\ref{L13} to estimate the dissipative term. Namely, we have \begin{align} \begin{split} \Vert (\epsilon \eprtudfsigjsdkfgjsdfg_t)^i \nabla v\Vert_{L^2} & \les \Vert (\epsilon \eprtudfsigjsdkfgjsdfg_t)^i \dive v \Vert_{L^2} + \Vert (\epsilon \eprtudfsigjsdkfgjsdfg_t)^i \curl v \Vert_{L^2} + \Vert (\epsilon \eprtudfsigjsdkfgjsdfg_t)^i v \cdot \nu \Vert_{H^{1/2} (\eprtudfsigjsdkfgjsdfg \Omega)} + \Vert (\epsilon \eprtudfsigjsdkfgjsdfg_t)^i v \Vert_{L^2} \\ & \les \Vert (\epsilon \eprtudfsigjsdkfgjsdfg_t)^i \dive v \Vert_{L^2} + \Vert (\epsilon \eprtudfsigjsdkfgjsdfg_t)^i \curl v \Vert_{L^2} + \Vert (\epsilon \eprtudfsigjsdkfgjsdfg_t)^i v \Vert_{L^2} , \end{split}    \llabel{kSEpCWQaSgnZkk3aeesYU7TC7DjdycEy6YhXSt7PPC75VyXVX0033ywEoy37OVWeFB6f5A4zI5U5FYFTvRFmZbF16anRnJcMwFJuXCgDVU7FkanwUZqfhivmvYiUAZaiJLeDmZbMKcvxRNJpwxkgIOqH6pTR821OCO97sijRlLagXkCrxnH5nGq2b8TD1QLjwMUG7wLB8hV0tcfNtxrtAKTbTV70Cus2Z9zWbi3fBhUBueLctDXuk88jYpxJzuARu5GQSMoTAOBuuByUhESR0IVrVGFcH0rtSLRCVouoU5QbOUKP8WaFjZ0RLHqyX8xXBLOwUtkiEwWG9fmn7Ifh4MwTpuLWjOVCmpvGnoyHPV3Q2xzY5iDGuMIyY2zHhLTMRUHz9PrIp7eqfjbQMhsqm7f21zapbnCNGWxjGJAHfCykEOXMfRkQMSOnRmHDslDBlTsX1OYIh6VnhtORVgXFclHw0MsU8taIoG1w40uBD1sOoHWK3IngjGMdIjSUJarrbi4PVOmzDgPpUKxjKv1BOITyQEs2wltfw1QmqU5NgZs044dVr2FikdG6eqDEgvajQFWQPmQeNscdxSAyhqL2LfTUWHJoEsYAre3LvakeYHXXZEbdJodq2jjufZFwkFwMLCiBAzKslRU0wObDZ08O1IxMMACdT6Cuj3WOgHbMfWXYKctRoIofd0EECQVbFHq3GVsjjJoPiGcrg9Oa2u5s6j2W5AphHhdSrLt2Ctx8DHzuZXSEjHgy397e86ALC5AbRC7FQ4wELlpDvHtoumXBqq92Lgd2nSRcSDHNRTYERTDGHFHDFGSDFGRTYFGHDFGDFSFGWERTFSDFGDFADWERTFGHDFGHFGH69} \end{align} since $v \cdot \nu =0$ on $\eprtudfsigjsdkfgjsdfg \Omega$. Then we may proceed as in \cite[Lemma~6.2]{JKL} since the  term $\Vert (\epsilon \eprtudfsigjsdkfgjsdfg_t)^i v \Vert_{L^2}$ can be absorbed in the $A$~norm. It then follows from \eqref{SDHNRTYERTDGHFHDFGSDFGRTYFGHDFGDFSFGWERTFSDFGDFADWERTFGHDFGHFGH555} that \begin{align} 	U_5 	\les 	1 	+ 	t Q(M_{\epsilon, \kappa, \bar{\kappa}}(t)) 	\comma t\in(0,t_0) 	, 	\label{SDHNRTYERTDGHFHDFGSDFGRTYFGHDFGDFSFGWERTFSDFGDFADWERTFGHDFGHFGH176} \end{align} \par \subsection{Nonhomogeneous transport equation} \par As shown in \cite{JKL} and \cite{A05}, the curl component of the velocity satisfies the non-homogeneous transport equation. Thus, we consider \begin{align} \eprtudfsigjsdkfgjsdfg_t \tilde{S} + v\cdot \nabla \tilde{S}  = G,    \llabel{RG0a4DbmyLRpMSH3BXeD4ty54XXqEMbCkHuuytF38almrJcZeHlP8cgwNXGTZXeLavohyQaSEh1B7TEz54dRZGzhPJZtYAFK4PoB5vc5JDphrxfvSFcW2hSRzpnyvlqRWwhzhZ7fcd766Gh9DEWg5OcwF3FcGMUe42nZF1yw047XIG22Jd1INGkQLrXcvIW54aU33tnhD9NqKOLtMKkt5UCl2CmLLGkDZs8pORclJ3BqG5hKneVXEUFbnERg2k1yzfokQoCsnOZVSZk0X7sUjP5RruAQiLb6j2y3XRi9uzWFZZQnfC3ZisnU24uqcESJ9ATwmRCmbjfWBbjvQbZ1GmaYOiFK2s7Z20eOVLynbcwRtelbgInPM0f6Bd17EoyU9WOzSvNJh6ruX82bA5FvGC0sjhJsR110KLyNnrwYeVxgGE2h6LZS6AuVvmZ22sBOAVLsDyC1Hx9AqVK0xjKEnBwZAEnjXaHBePhlGArtTYDqq5AI3n843Vc1ess4dN0ehvkiAGcytH6lpoRT3A2eYLTK0WLJOpp5mbSzoaMNxjCJvzeTs9fXWiIocB0eJ8ZLK7UuRshiZWs4gQ6t6tZSg2PblDsYoLBhVb2CfR4XpTOROqtoZGj6i5W89NZGbFuP2gH5BmGsIbWwrDvr9oomzXTFB1vDSZrPg95mCQf9OyHYQf47uQYJ5xoMCpH9HdH5W5LsA0SHBcWAWGo3BiXzdcJxBISVMLxHKySoLJcJshYQyybWSSGdwcdEZ6bu0qfmmj7WMsVRYqJD7WciZqArimTf6csurioASDHNRTYERTDGHFHDFGSDFGRTYFGHDFGDFSFGWERTFSDFGDFADWERTFGHDFGHFGH70} \end{align} where $\tilde{S}= \tilde{S}(t,x)$, $v= v(t,x)$ and $G=G(t,x)$.  Denote \begin{align} \Vert u \Vert_{B(\tau)} = \sum_{(j,k,i)\in \mathbb{N}_0^3}  \frac{\kappa^{j} \bar{\kappa}^k \tau^{(j+k+i-2)_+}}{(j+k+i-2)!} \Vert \eprtudfsigjsdkfgjsdfg_{x}^j \TT^k (\epsilon \eprtudfsigjsdkfgjsdfg_t)^i u \Vert_{L^2}    ,    \llabel{8Ow5xADChPG7slxY1gyegeB9OC39KFnGn7aB1yr4vo4hHwIC93Lq9KQvIgFnBoS04NZd5ay1rlErPpbvxpseYDufgnOiibGhYOCBwwv4pxm37IgUW9XY4zxXMVQDNmYwkHaG258v6MBSerLf0Fya6yCLCzGm3P8COVDg7Rpp3FaWdG0kIQYRIs0ieIu2tgMmiYoJOmYt3fBSHj29Sq8ab4N94WEhm8BC79jHL9KmhK9Zfr6AlpyfIhpG93BnOMt16adBKKu3oNcfIhWjMfkeGdnzssVHKcphPZlBgcewSd0u1PePYEZowysGuGBvc6ea5iWB3XgSzyHnOaDp8VRDVqmVy3099pj5gFkDpqzPQ96m5MyJSQ7bj2LyLRn1PsvRqAaN5QOIcJKVPgGovU3Q69LN4W4v8wbS0jsi6krY7uOj9j1aBSZ6uonf5hsIj6BQ8Nnb2fcKcWDzL7nMsD7NxvuY95USbCwaHEBadGIqHgoriu5esF7AbYAYtJodoqq8VAyc8JApV0lJlYwyn6Sc6qTU7O1ek3r6y8zE0NvqFxOe0etCQfFQYU6wydAK5xsgtWyoHf4wkLJE7vsDtaSeyQYnecGAEE9Yiehg1bbi18Qic5rJyA6E1GNznKFeO8F23Ren7C5mlsRUt5moBQE0P0vMb5sNRJiRENqaZl1S0XQCLAWKOfnhsEcrp3BDnd8xRXBqgedxRV9ds7hxDYsXfuUOYUpPd6nkpnqU1BZylozBiHlOZqUwQHw6uyx1CevAKJpX8j3FDHKdkFGe1PLPwa3zuUlrnPllSDHNRTYERTDGHFHDFGSDFGRTYFGHDFGDFSFGWERTFSDFGDFADWERTFGHDFGHFGH73} \end{align} with the corresponding dissipative analytic norm \begin{align} \Vert u \Vert_{\tilde{B}(\tau)} = \sum_{j+k+i \geq 3}  \frac{\kappa^{j} \bar{\kappa}^k (j+k+i-2) \tau^{j+k+i-3}}{(j+k+i-2)!} \Vert \eprtudfsigjsdkfgjsdfg_{x}^j \TT^k (\epsilon \eprtudfsigjsdkfgjsdfg_t)^i u \Vert_{L^2} .    \llabel{ZsvR42FozOOJ2xv7SOMypK6I9dxqWRHMdk90PZn1CJiHjFiQOpyPnFWcnPNLzFCUTZbYKKM8QyLgFXtFqKzGFo8PnWkYxYg2TsGQZfmqqDqizHd93oVTk6QGDUljuHwS88DFPcV0PfY1PqwpsQSbMrVVNeZDe1F6mt7PgelfBt5ita8qXMizwKxCNoigAqPysitAbI7g4R8XE7tAKNexkg5a2urRig9qMqHSkERbIp9hrW5jzTcJlTgpMeLWFzzSVNgDtecg2YfSfNjVFP155DAWMbntnVm17177lYTObMWmVtwfPqJ0nF6baQxuSP13ZJubfmgRHsWKVnCRDcMkLRVXd3PEvZme7xKUmMURkbif3lAOuTemZqkO5fId0htO7xUpLztA9RO7gysAfqj751DjTsfImVIPQne9Fo9aRex4T4oypenSzEDID7KWchKsIrTolagXLh4f0vi4yl25MHPpbk9zYIfpFVV19CRh1tkqoQ1bQGtzGL0f4xN6GCHP7RXOnRbpX2WlP56rNQlEyARF84JeprN20CZxiyjplnNJiE3CfUxUjGxVfClOs8VwvDylP654xADcvWYtPPlqyDzXN83tJgydFQhHueuqU6ZyVijYQjtdJf4J93VVpNwzagiEmsHTInsFJaiogOY7GnLhHqUyLxrAq8itXPsMtyXO32mVYU2GMV729mGz8vXx4rNFR84VgcS6Bmn15nKdREfkhXGuf2c9CrXBqsEVOhPCASyf9Js77D9kqMznbcTe2Uu2Jnq8DBs8rfU7rd0BqQ4zzXOxTg7CSDHNRTYERTDGHFHDFGSDFGRTYFGHDFGDFSFGWERTFSDFGDFADWERTFGHDFGHFGH74} \end{align} \par \cole \begin{Lemma} \label{L20} For any $\kappa, \bar{\kappa} \in (0,1]$, there exists $\tau_1 \in (0,1]$ such that if $0 < \tau(0) \leq \tau_1$, then \begin{align} \Vert \tilde{S}(t)\Vert_{A(\tau)}  \les \Vert \tilde{S}(0)\Vert_{A(\tau)} + \int_0^t  \left( \Vert G(s) \Vert_{A(\tau)}  + \Vert v(s) \Vert_{A(\tau)} \right) + t ,    \llabel{rZuthjYW4aLtrFcCjwGZs7MWMmSpHFHfLiB7kX34MWAu5LIj3gvVGAkt1HUEE8UBYWVMJzMsv5P5JZvQnVHI1ZMg6AfsQjxl2VB9XKgEZrlgD5GqiFivOJgx0uYgE71IUqgjKnLSelC6w6tFI8fKMiHsqHDSeNXK9vtSmVcR8qXEEOqdh6WiObbuBi1dL1TGyUlerX0GfVwJXZcO4PCie5n0gNVUs6zGEiK4Qu0eVOBbGIv31f7MmZWGUui0XsJeIKrc6e1Hdz1A1uAiEJ1cBUWYPsoTyxIE5BV9Eyujob6iZVLOiNN6OdNQmB1lIjdwiBRH8d2G2I62dVz4IwScfAfPtOMSH1H1XALjFIXixvFrN9eesLSkD92y7poOY5zfXT21phWyN6C3iGT3jHRxnLx4ShHFPSeag0pIBSVsPSRRgqoHFPB9xiM4Lzsaaon5qdO9eSIckw1TWpA0dPRxm0mJS4CO20oacZNRhyXd9vF8myOtVO1yy8IPXRi69P7nEiVT9tqwzd7Si8yQLJx6HizMwkKqPKVouNGwhrsqKiGALuz34Qk0yTDFsJvXIYjMYHS9XWlgEwN6gCwyVdnIxK4B05Xchlwsf7kVsFFU6TUiBVUraV7WrTLbySahdqdcZgGtKtv0Uqp86jB630pHaqGwDnGpg0ZprB4fFDmtx2J23Ed37CNW5LkvQFWEouXhTIqdUmrBx7BGPMED17hd24zjCEavON1DJoxfmZadkTVq0VaQaDqE0vOGEazPBEKbULHqxEUgdPZyHW7xH2QWZqjxAaP0GszJSDHNRTYERTDGHFHDFGSDFGRTYFGHDFGDFSFGWERTFSDFGDFADWERTFGHDFGHFGH75} \end{align}  for some constant $C>0$ and sufficiently small $T_0>0$, provided $K$ in \eqref{SDHNRTYERTDGHFHDFGSDFGRTYFGHDFGDFSFGWERTFSDFGDFADWERTFGHDFGHFGH99} satisfies \begin{align} K \geq C \Vert v \Vert_{A(\tau)} \comma t\in [0,T_0] , \label{SDHNRTYERTDGHFHDFGSDFGRTYFGHDFGDFSFGWERTFSDFGDFADWERTFGHDFGHFGH223} \end{align} where $C\geq1$ is a sufficiently large constant, and $T_0$ is chosen so that \eqref{SDHNRTYERTDGHFHDFGSDFGRTYFGHDFGDFSFGWERTFSDFGDFADWERTFGHDFGHFGH148} holds. Similarly, for any $\kappa, \bar{\kappa} \in (0,1]$, there exists $\tau_1 \in (0,1]$ such that if $0 < \tau(0) \leq \tau_1$, then \begin{align} \Vert \tilde{S}(t)\Vert_{B(\tau)}  \les \Vert \tilde{S}(0)\Vert_{B(\tau)} + \int_0^t  \left( \Vert G(s) \Vert_{B(\tau)}  + \Vert v(s) \Vert_{B(\tau)} \right) + t ,    \llabel{TMSWwxJ2sZwiYA2Hu5wNNpJNQr1x3edvP8QVEszZCf2893tzSk3sLWLvXG2Ro4lH9AvQb5Ty4O1FTOAYScPcPIhJBPFPuHDAtdSw9x8vu9hrWf4XOypGsNQvxf9kolHSNEkMEGaZMPg1AwXYaRSUot7Fmp3SE4ZMRFbqq9KCjfC9LHi1X5nEhUw8q2Y6HLATRc4TNlNWkstGQ2RiLjSdaizOb6P848n3HLWHyAptpfQyXEcd7x0zqVNmDKzl5NtEVG2EvbbivLHBtnas3T4ddSM0QJyx3fU3CuYD2A5yhtcshT2ktVFKXXUDHoEs9pq3V1LmrePCwx4R8R51mNsW3ViX6mzJoctVZW1os7k61bwJoz5nOEsiih2VHu4D0bfy5RNqGQbIH8lXV9pKsHGGjm4xCypKPPTAUwxsx7SLjPjQdzHCVuarB6XUve32Z5b1lfFSRyfI0SaOdaYTBXN9qmijDO3az1zo9yKuqnB2UShf0GnWhQEht1eHPAd4a9l5p3Yi81r25uZwwR8G3OIzhTOGYsITG5YwtjFM4r2gLKn87OPhoVn11UEIn32O8sGpRnbNnRIZe0JQ29vldLUrB00Fd3eabLWFPv66hXwjkEcRTo4b0deLWv7rsDc7jiveH75j8F32Vz9lkcNphuH21hOE9LV28bc6f8a6szNZXRpJxDwCkqBrr6ewILJZFhHe8QzMVDJtJewQZJUI551nJ9PeovNSZZalieWJsew3kjdkmKn3Q6GKkfsnKeFlMnycL7fD9IaZ2zgk6dlqd01Z1rwEcbdMRH5ISDHNRTYERTDGHFHDFGSDFGRTYFGHDFGDFSFGWERTFSDFGDFADWERTFGHDFGHFGH76} \end{align}  for some constant $C>0$ and sufficiently small $T_0>0$, provided $K$ in \eqref{SDHNRTYERTDGHFHDFGSDFGRTYFGHDFGDFSFGWERTFSDFGDFADWERTFGHDFGHFGH99} satisfies \begin{align} K \geq C \Vert v \Vert_{B(\tau)} \comma t\in [0,T_0] , \label{SDHNRTYERTDGHFHDFGSDFGRTYFGHDFGDFSFGWERTFSDFGDFADWERTFGHDFGHFGH224} \end{align} for $C\geq 1$ sufficiently large and  where $T_0$ is chosen so that \eqref{SDHNRTYERTDGHFHDFGSDFGRTYFGHDFGDFSFGWERTFSDFGDFADWERTFGHDFGHFGH148} holds.  \end{Lemma}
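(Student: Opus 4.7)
The plan is to mirror the approach used for Lemma~\ref{L07}, since the equation $\partial_t \tilde{S} + v\cdot\nabla \tilde{S} = G$ has the same structure as the entropy equation, with the added nonhomogeneous term $G$ and with $\tilde{S}$ now independent of $v$. Applying $\partial_x^j \TT^k (\epsilon\partial_t)^i$ to the equation, pairing in $L^2$ with $\partial_x^j \TT^k (\epsilon\partial_t)^i \tilde{S}$, and integrating by parts in the transport term (which produces no boundary contribution by \eqref{SDHNRTYERTDGHFHDFGSDFGRTYFGHDFGDFSFGWERTFSDFGDFADWERTFGHDFGHFGH03}), we obtain, analogously to \eqref{SDHNRTYERTDGHFHDFGSDFGRTYFGHDFGDFSFGWERTFSDFGDFADWERTFGHDFGHFGH54},
\begin{align*}
\frac{d}{dt}\Vert \partial_x^j \TT^k (\epsilon\partial_t)^i \tilde{S}\Vert_{L^2}
&\les \Vert \nabla v\Vert_{L^\infty}\Vert \partial_x^j \TT^k (\epsilon\partial_t)^i \tilde{S}\Vert_{L^2}
+ \Vert [v\cdot\nabla,\partial_x^j \TT^k (\epsilon\partial_t)^i]\tilde{S}\Vert_{L^2}
\\ &\indeq + \Vert \partial_x^j \TT^k (\epsilon\partial_t)^i G\Vert_{L^2}.
\end{align*}

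Multiplying by the $A(\tau)$ weights and summing over $(j,k,i)\in \mathbb{N}_0^3$ gives, as in \eqref{SDHNRTYERTDGHFHDFGSDFGRTYFGHDFGDFSFGWERTFSDFGDFADWERTFGHDFGHFGH55},
\begin{align*}
\frac{d}{dt}\Vert \tilde{S}\Vert_{A(\tau)}
\les \dot{\tau}\Vert \tilde{S}\Vert_{\tilde A(\tau)}
+ \Vert \tilde{S}\Vert_{A(\tau)}
+ \Vert G\Vert_{A(\tau)}
+ \sum_{(j,k,i)}\sum_{\substack{(j',k',i')\leq(j,k,i)\\j'+k'+i'\geq 1}} \mathcal{C}_{j,k,i,j',k',i'}
+ I_5,
\end{align*}
where $\mathcal{C}_{j,k,i,j',k',i'}$ is defined exactly as in the proof of Lemma~\ref{L07} but with $S$ replaced by $\tilde{S}$, and $I_5$ is the commutator sum $\sum \Vert \partial_x^j [\TT^k,\nabla](\epsilon\partial_t)^i \tilde S\Vert_{L^2}$ weighted as in \eqref{SDHNRTYERTDGHFHDFGSDFGRTYFGHDFGDFSFGWERTFSDFGDFADWERTFGHDFGHFGH505}. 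The splittings into $I_1$--$I_5$ by low/high values of $j'+k'+i'$ carry over verbatim, producing bounds of the form $1$ for low modes, and $\Vert v\Vert_{A(\tau)}\Vert \tilde S\Vert_{\tilde A(\tau)} + \Vert v\Vert_{A(\tau)}$ for high modes; the analysis uses Lemma~\ref{L03} and the choices $\kappa,\bar\kappa \leq 1/(C\bar K)$ and $\tau_1=\kappa^4$ exactly as in \eqref{SDHNRTYERTDGHFHDFGSDFGRTYFGHDFGDFSFGWERTFSDFGDFADWERTFGHDFGHFGH25}--\eqref{SDHNRTYERTDGHFHDFGSDFGRTYFGHDFGDFSFGWERTFSDFGDFADWERTFGHDFGHFGH100}. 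The only difference is that low-mode terms pick up an extra $\Vert \tilde S\Vert_{A(\tau)}$ (since we no longer have \eqref{SDHNRTYERTDGHFHDFGSDFGRTYFGHDFGDFSFGWERTFSDFGDFADWERTFGHDFGHFGH24} available for $\tilde S$ directly); this will be handled by Gronwall.

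Combining these estimates produces
\begin{align*}
\frac{d}{dt}\Vert \tilde{S}\Vert_{A(\tau)}
\les \Vert \tilde{S}\Vert_{\tilde A(\tau)}\bigl(\dot{\tau} + C\Vert v\Vert_{A(\tau)} + 1\bigr)
+ \Vert \tilde{S}\Vert_{A(\tau)}
+ \Vert G\Vert_{A(\tau)}
+ \Vert v\Vert_{A(\tau)}
+ 1.
\end{align*}
The main point, and the key obstacle, is to absorb the dissipative term $\Vert \tilde S\Vert_{\tilde A(\tau)}$ into the $\dot\tau=-K$ contribution: choosing $K\geq C(\Vert v\Vert_{A(\tau)}+1)$ on $[0,T_0]$, as prescribed in \eqref{SDHNRTYERTDGHFHDFGSDFGRTYFGHDFGDFSFGWERTFSDFGDFADWERTFGHDFGHFGH223}, makes the coefficient of $\Vert \tilde S\Vert_{\tilde A(\tau)}$ nonpositive and eliminates that term. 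Integrating on $[0,t]$ and applying the Gronwall lemma yields the claimed bound
\begin{align*}
\Vert \tilde S(t)\Vert_{A(\tau)}
\les \Vert \tilde S(0)\Vert_{A(\tau)}
+ \int_0^t\bigl(\Vert G(s)\Vert_{A(\tau)}+\Vert v(s)\Vert_{A(\tau)}\bigr)\,ds + t,
\end{align*}
provided $T_0$ satisfies \eqref{SDHNRTYERTDGHFHDFGSDFGRTYFGHDFGDFSFGWERTFSDFGDFADWERTFGHDFGHFGH148}.

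The $B(\tau)$-norm estimate is obtained by an identical procedure; the only bookkeeping change is that the weight $\kappa^{(j-1)_+}$ is replaced by $\kappa^{j}$ and the factorial denominator shifts from $(j+k+i-3)!$ to $(j+k+i-2)!$, which simply shifts all power counts by one but leaves invariant every combinatorial bound in \eqref{SDHNRTYERTDGHFHDFGSDFGRTYFGHDFGDFSFGWERTFSDFGDFADWERTFGHDFGHFGH52}--\eqref{SDHNRTYERTDGHFHDFGSDFGRTYFGHDFGDFSFGWERTFSDFGDFADWERTFGHDFGHFGH217} and Lemma~\ref{L03}. The same absorption argument with $K\geq C\Vert v\Vert_{B(\tau)}$ then concludes the proof.
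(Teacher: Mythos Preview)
Your proposal is correct and follows the approach the paper itself takes: the paper does not give a self-contained proof of Lemma~\ref{L20} but states that it is analogous to \cite[Lemma~4.2]{JKL}, which is precisely the adaptation of the Lemma~\ref{L07} argument to the nonhomogeneous transport equation that you carry out. Your handling of the extra source term $G$, the replacement of the $H^5$ bound \eqref{SDHNRTYERTDGHFHDFGSDFGRTYFGHDFGDFSFGWERTFSDFGDFADWERTFGHDFGHFGH24} for $\tilde S$ by $\Vert\tilde S\Vert_{A(\tau)}$ (absorbed via Gronwall), and the absorption of $\Vert\tilde S\Vert_{\tilde A(\tau)}$ through the choice of $K$ are exactly the modifications needed.
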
 \colb Note that $\Vert v \Vert_{A(\tau)} \geq \Vert v \Vert_{B(\tau)}$ for all $v$, and thus \eqref{SDHNRTYERTDGHFHDFGSDFGRTYFGHDFGDFSFGWERTFSDFGDFADWERTFGHDFGHFGH223} implies \eqref{SDHNRTYERTDGHFHDFGSDFGRTYFGHDFGDFSFGWERTFSDFGDFADWERTFGHDFGHFGH224}.  \par \subsection{Product rule} In this section, we introduce the product rule which is needed in the estimates of divergence, curl, and pure time derivative components of the velocity. We denote  \begin{align} \Vert u \Vert_{Y} = \sup_{(j,k,i)\in \mathbb{N}_0^3, 0\leq j+k+i \leq 4} \Vert \eprtudfsigjsdkfgjsdfg_x^j \TT^k (\epsilon \eprtudfsigjsdkfgjsdfg_t)^i u \Vert_{L^2} .    \llabel{49KqEFRtg0tvkExZCRtMCkHE5snrnPg6Ijvsob8KThiOslyOFYa4mbysgIga7pzCdiXnubntkexBbWpwpMo4k8yxesQnXcG8mEFNWaegqbClcZ5bzZyNjEVtxND8cqbDf7Y7CZmIa140Y9sQMljx4EzBM6U0PHNOBsii1AKPov8wBSPX2VPf8xV3D3TDIWXly4kOf59po0Skn07HCFxP9bjyZBmyHMTfaFdJGqzVFPGNy6poIBElRsWC485AFty6t60NFgbMhZajDei2xvrczUQK9iMPhtiC1JBiBOtS95oFWT7jV0Q25YSeG3jID3SXk9xJIJ1QgYMR2hjzlnmTNtfqC5QI5IwpeqxKG1t8wq4sthwH6nBDBUclkzZWz0B2CqsO4fDgXc3OuMGJonsZM2EKIUOKr73Yv7JL9CdyguQqtIUrrhkHnHJ5JigfRrnKyNp0T37DBJiFzVqyPnQSY5QBv3LYZ5LrCdTqazeczlmGVAcx7uExXYu8ik7BIX3DEpaIyldWvAtDxvl70fkOcQ7RreyXDJE1gj3cLSNQTmK6lqOE4SVGxgDJO9muKDvvMaHGeevgMM7lWB15Wq9lFoUxFcPSyjHSWX5kgPup5yvw28JWVZ9At0WjfJuLfhFQd6aE7VnPq7ow2C6hK2loMkgJn03Vz5EteIMoiHWS2Htw4f99HNj7318gaIvJxjakNjunHNNM8kpBy8bp9KEQclHhTNFgu2uEkb6lPKahMNekvnAo0mMoeLXY0spWIDgTgbamnkCTSUvKLj6dtN3xYQr7kycTga6PSDHNRTYERTDGHFHDFGSDFGRTYFGHDFGDFSFGWERTFSDFGDFADWERTFGHDFGHFGH118} \end{align} The following lemma provides a product rule for the analytic $B$-norm. \par \cole \begin{Lemma} \label{L06} Let $k\in \{2,3, \ldots \}$ and $\tau >0$. For $f_1, \ldots, f_k \in B(\tau)$, and any $\kappa$, $\bar{\kappa} \in (0,1]$, there exists $\tau_1 \in (0,1]$ and $T_0>0$ such that if $0< \tau(0) \leq \tau_1$, then \begin{align} \Vert \prod_{i=1}^k f_i \Vert_{B(\tau)} \leq C^k \sum_{i=1}^k \Bigl( \Vert f_i \Vert_{B(\tau)} \prod_{1\leq j \leq k; j\neq i}  \left( \Vert f_j \Vert_{B(\tau)} + \Vert f_j \Vert_{Y} \right) \Bigr) . \llabel{3EVDPR1kftVZdHMlaCiTwUNJAw1pfZrEtytozvY85OB0FtLYpfKqFkodCG2d5ezkg1rqSaY8EJdLoUYFDMLsaWIITSAphuqtx20vtbdYDk8ReLv274NHf3aKJU0coov2EcMcwomvuDxZkoHa4L8uhvoKCiMy7UzuiN29wfPfG8TFtjOdqlG4M0luRt0lxZM3YA5uYoGkNkBzhbHo4aOEE5rxCUp1ArJjBN1h5cWs1F6qQVapGNgBgx7M5AswfEMyMgPsoVexGPimA4yXPma5dTjJ2BN3F3SMYKxqSegaB769y5PVO8QyJTRBe2vVe0pkNSyOnhkXc9d0clSXlbsqp8tCvKLTo5alzigOORUlP3qMGvTJARYozYDJLvTUuqMOONYNP82Ps9OiDs17x75SajMoJGBdp885oixszl6oKVEaWAZWwF96XfzlOd9zCCgma4P67OY59PgbGtzVZ4KWPNxAjy0Fp94LWBmVqjqW3vXH4rCz4IuAL6FdNeNtlHCh4NRn6UxnBYSpywkHC2I0Gictzw30lLmQmNTZ8HjuybH1W0zwXr9JEACx0pSDpGuO60FM2QmewNRFhP9fiIm1FeGcDRw2ovhfk2fSx8NUAIwPv4CmFCE8lJz5IeHrsPwiuDaT69r6Mj4keBfBhvgBXOKJdTbjUFNA59p2fDj1WpyNbiYAL9OywosNCiw4tSyNk8avAm4NgApm0XgUy3yAhP3TWptUe3EjdLgM1O9lt32XbpFzK4Avi5WwFdPyHAYdZrfFhrdkYrdfnJAepd29E66Cjsi1JRZ6SDHNRTYERTDGHFHDFGSDFGRTYFGHDFGDFSFGWERTFSDFGDFADWERTFGHDFGHFGH77} \end{align} \end{Lemma}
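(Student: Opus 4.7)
The plan is to argue by induction on $k$. For the base case $k=2$, we expand $\partial_x^j \TT^{k'}(\epsilon\partial_t)^i(f_1 f_2)$ by the multinomial Leibniz rule (valid because each of $\partial_x$, $\TT = \sum b_{ij}\partial_i$, and $\epsilon\partial_t$ is first-order):
\[
\partial_x^j \TT^{k'}(\epsilon\partial_t)^i(f_1 f_2) = \sum_{\substack{j_1+j_2=j\\k'_1+k'_2=k'\\i_1+i_2=i}} \binom{j}{j_1}\binom{k'}{k'_1}\binom{i}{i_1} \partial_x^{j_1}\TT^{k'_1}(\epsilon\partial_t)^{i_1}f_1 \cdot \partial_x^{j_2}\TT^{k'_2}(\epsilon\partial_t)^{i_2}f_2.
\]
Setting $n_l = j_l + k'_l + i_l$, we split the sum by symmetry according to whether $n_1 \geq n_2$ or $n_1 < n_2$ and, in each case, apply H\"older to put the heavier-derivative factor in $L^2$ and the lighter one in $L^\infty$. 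For the $L^\infty$ factor we further distinguish: when $n_2 \leq 2$, the Sobolev embedding $H^2 \hookrightarrow L^\infty$ and the four-derivative reservoir of $Y$ give $\Vert \partial^{n_2}f_2\Vert_{L^\infty}\les \Vert f_2\Vert_Y$; when $n_2 \geq 3$, Sobolev gives $\Vert \partial^{n_2}f_2\Vert_{L^\infty}\les \sum_{|\beta|\leq 2}\Vert \partial_x^\beta \partial^{n_2}f_2\Vert_{L^2}$, which becomes a term of $\Vert f_2\Vert_{B(\tau)}$ at the cost of a factor $\kappa^{-2}$ absorbed into the final constant.

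The combinatorial heart of the $k=2$ step is the bound, valid when $n_1 \geq n_2$ and $n_1 \geq 2$,
\[
\binom{j}{j_1}\binom{k'}{k'_1}\binom{i}{i_1}\frac{(n_1-2)!\,n_2!}{(n_1+n_2-2)!} \les 1,
\]
which follows from the Vandermonde-type inequality $\binom{j}{j_1}\binom{k'}{k'_1}\binom{i}{i_1}\leq \binom{n_1+n_2}{n_2}$ together with the identity
\[
\binom{n_1+n_2}{n_2}\frac{(n_1-2)!\,n_2!}{(n_1+n_2-2)!} = \frac{(n_1+n_2)(n_1+n_2-1)}{n_1(n_1-1)} \leq 4.
\]
This identity lets each Leibniz term factorize into a term of $\Vert f_1\Vert_{B(\tau)}$ times either a term of $\Vert f_2\Vert_{B(\tau)}$ (with the two-derivative Sobolev shift accounted for) or a contribution to $\Vert f_2\Vert_Y$, and summing yields
\[
\Vert f_1 f_2 \Vert_{B(\tau)} \les \Vert f_1\Vert_{B(\tau)}(\Vert f_2\Vert_{B(\tau)} + \Vert f_2\Vert_Y) + \Vert f_2\Vert_{B(\tau)}(\Vert f_1\Vert_{B(\tau)} + \Vert f_1\Vert_Y),
\]
which is the $k=2$ case.

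For the inductive step, we write $\prod_{i=1}^k f_i = f_1\cdot \prod_{i=2}^k f_i$ and apply the $k=2$ estimate to the pair. The $B(\tau)$-norm of $\prod_{i=2}^k f_i$ is bounded by the induction hypothesis, while its $Y$-norm is controlled by the elementary product bound $\Vert \prod f_i \Vert_Y \les C^{k-1}\prod \Vert f_i\Vert_Y$, which follows from Leibniz, the Sobolev algebra property of $H^s$ in ${\mathbb R}^3$ for $s>3/2$, and the fact that the $Y$-norm includes derivatives of total order up to~$4$. Assembling the pieces and using $\Vert f_i\Vert_Y \leq \Vert f_i\Vert_{B(\tau)}+\Vert f_i\Vert_Y$ produces the stated bound with overall constant $C^k$.

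The main obstacle is the bookkeeping when some $n_l$ falls below the shift threshold of the $B$-norm (i.e.\ $n_l \leq 2$, so that $(n_l-2)!$ is interpreted as $1$ via the paper's convention): here the combinatorial identity above degenerates and the relevant factor must be closed using the $Y$-norm rather than the $B$-norm. The symmetric structure of the right-hand side of the lemma---each factor $f_i$ taking a turn as the carrier of the $B$-norm while the remaining ones are measured in $B(\tau)+Y$---reflects exactly which factor in each Leibniz term has been placed in $L^2$ and which has been treated in $L^\infty$.
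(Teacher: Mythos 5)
The paper does not actually write out a proof of Lemma~\ref{L06}; it refers to Lemma~5.1 of the companion paper~\cite{JKL}. Your proposal takes the natural and essentially standard route for analytic product lemmas: Leibniz expansion, split by whichever factor carries the bulk of the derivatives, H\"older with the heavy factor in $L^2$ and the light one in $L^\infty$, Sobolev control of the $L^\infty$ factor, the combinatorial Vandermonde bound, and then induction from $k=2$ to general $k$. The $k=2$ Leibniz expansion you write down is correct even though $\partial_x$ and $\TT$ do not commute, because in every summand the derivatives hit each factor in the same order as in the original operator, which is exactly what a term-by-term Leibniz argument produces. The inductive closure and the submultiplicativity of the $Y$-norm are also correct, so the overall skeleton is sound and almost certainly the intended one.

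There are two points worth flagging. The minor one: under $n_1\ge n_2\ge 2$, the quantity $\binom{n_1+n_2}{n_2}(n_1-2)!\,n_2!/(n_1+n_2-2)!=\frac{(n_1+n_2)(n_1+n_2-1)}{n_1(n_1-1)}$ attains the value $6$ at $n_1=n_2=2$, so the stated bound ``$\le 4$'' is off; this does not affect the argument since one only needs a uniform bound. The more substantive point is the weight-matching in the high-derivative case $n_2\ge 3$. The $B$-norm weight carries $\kappa^{j}$ (not $\kappa^{(j-1)_+}$ as in the $A$-norm), so the Sobolev shift $\Vert \partial^{n_2}f_2\Vert_{L^\infty}\les \Vert\partial^{n_2+2}f_2\Vert_{L^2}+\cdots$ forces you to match the $B$-norm weight at index $(j_2+2,k_2,i_2)$, which costs exactly $\kappa^{-2}$; you say this is ``absorbed into the final constant,'' but the paper's generic constant $C$ is not supposed to depend on $\kappa$, and in the downstream application (the divergence estimate \eqref{SDHNRTYERTDGHFHDFGSDFGRTYFGHDFGDFSFGWERTFSDFGDFADWERTFGHDFGHFGH179}) a spurious $\kappa^{-2}$ would undermine the $(\kappa+\epsilon)$-smallness that is being extracted. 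The fix is to exploit the hypothesis $0<\tau(0)\le\tau_1$: this is exactly why the lemma carries a smallness requirement on $\tau_1$ (cf.\ \eqref{SDHNRTYERTDGHFHDFGSDFGRTYFGHDFGDFSFGWERTFSDFGDFADWERTFGHDFGHFGH25}, where $\tau_1=\kappa^4$). If you replace the crude $H^2\hookrightarrow L^\infty$ step by the Gagliardo--Nirenberg bound $\Vert\partial^{n_2}f_2\Vert_{L^\infty}\les\Vert\partial^{n_2}f_2\Vert_{L^2}^{1/4}\Vert\partial^{n_2+2}f_2\Vert_{L^2}^{3/4}$, the $\kappa$-loss shrinks to $\kappa^{-3/2}$ while the $\tau$-exponent accounting yields a spare $\tau^{1/2}$; under $\tau\le\kappa^4$ the net correction is $\kappa^{-3/2}\tau^{1/2}\le\kappa^{1/2}\le 1$, and the constant is $\kappa$-independent as it should be. Once this is done the rest of your argument closes.
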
 \colb \par The next statement provides an analytic estimate for composition of functions. \par \cole \begin{Lemma} \label{L21} Assume that $f$ is an entire real-analytic function. Then  for $\tau>0$ and $w\in A(\tau)$, we have \begin{align} \Vert f(w) \Vert_{A(\tau)} \les Q(\Vert w\Vert_{A(\tau)}  + \Vert w \Vert_{Y} )    \llabel{lJUOAgxAp9MosewoBEV4n6aH75qqkvnXybkmeyYj4TaXPWupqUzKbkpqyo78o4zwXErs8jZMpW6ij9AP4qU6Bofbe06qWG1LY6kbllcrPmrXjpULG6mkqqGOqtbvMOO3tXZlHfb2xBrXkKXCfbTjxRvDbKlDrZ1aDF352Sg5Aav3lzOXMjar4a7qGoYXu3m60XPlQBE1PUxOiviBcglptUEyz5EDs7eDQpxJvDhG8DJzvScH9gTnXUuUftbxSmTtdb33oeQ6aiHUncNe4HecijPLdrnbGzeArrypC5pWBqjIsjCp4x25HWZwo8j2MNbdX2CfvqMUtl0eT39LzGoZGUXcC1cxRKQmrXFI2A5XLOpxHgNP1vXnZ35YTT3VGgG0Y6loTkgX0LjtDCaBSFgBKQvpoDC9HobyECQkmxBrJyBuZ3W39tu4Dea8xsYqE4OxT15FIkmYw0KTTYcNjy8VvmxWdyr6uNTV13OrOhZwAeZavnDhcEmBSweWlJps7N31mM2jXy92SdSas7K9c3MBcWyUSHdIXC9V5oGtzaEY8nMBJ5DNZHwBDOyWamRPVYnoV2qlhDcqpB8tHmM0e4ja5ptjDRAUaAgTEbYwBr4rapjLtCHQORU6i4GrYkFzx91EBrqRGhmlSu3RKzh489F6L6b4bxcdxYhlbk26D4DPDjmcj3Gm6MqaXGeNphBOSVYr0mOQs30RQc5wEge7aeVYf4gvJBA2eWoh292cHSKXVNgRN8T0oSND87zYU63epzrMIO2auIWoZe6SOIILTpn4dMOk7XtfvenMSDHNRTYERTDGHFHDFGSDFGRTYFGHDFGDFSFGWERTFSDFGDFADWERTFGHDFGHFGH78} \end{align} and \begin{align} \Vert f(w) \Vert_{B(\tau)} \les Q(\Vert w\Vert_{B(\tau)}  + \Vert w \Vert_{Y} ) ,    \llabel{lUF0r5GvYIauSAak20YFPB9jOxRu8zz3Dq32eq6iUeKOXgJ0zCW18zX2n0RhKCtqtZJJx2v2Polu2Sxjglz9aeKoC1db0OnKaouVpCm4lvK41H5Gev2sruVbkFYDjm7MwfiT1blqc93lFjHNwUtfPPNvyGdZOkimHRHTNNNzkw71lIGsYTSWR3ulXvokSf1KE3N9QLhfH7OEAELTre0BXLhex4qypTkhHsZBVnrgnHatAiEJv6Mz3X5fP5iiBI7m9q5VzUODo9YgA9uFrhGKOYZH4NNngGYwPYwLPrtmj63dkYx01d8zwbroon3LtCqpSeeQHTdVyRdz2GIxyjxKPpLa43Qqkz9bJVgNuivcKNtiU4ujWunorFDIcKzhicQZNU6Pz2uuRdQuKOmeh3mgffnVsf6MJMjBMsQhPH3o32OSx2MDEZMH5HnpWJibL5OFkDOUwiRxT3v06s2BYw4IJjX1zNhb7vTWSRff0ksZRBBODc5bCv1kW9ImjwmTRJwtKdlnqPn05tygZWviIEgd8s1MtAoe0mEjEIz3EafdFHNRWD785FlERfwQnJKGnjmzfVU24NgtQrxrOBtTKwBpqF7DJJJAJCF81h2eRhZvuStrU9VrPvjDzscPPw3rRiPiJgkT7MfVPTQHq12JwvTwx4cNeEegDTNyQWJX10wYClRDLGm1xe7jcIASC2Jgx3cfRzO9Ig6MxcdNRwZL0nyw5ufePnsNP09sz5ZIWrMQ3uh0NiF92ae0vXnX05RreQzCA5jvDdHXnSHmkTwKQaSFKKKQC5JeHMTGSDHNRTYERTDGHFHDFGSDFGRTYFGHDFGDFSFGWERTFSDFGDFADWERTFGHDFGHFGH79} \end{align} for some function $Q$. \end{Lemma}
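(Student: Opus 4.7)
The plan is to reduce the composition estimate to the product rule by expanding $f$ in its Taylor series. Writing $f(z) = \sum_{n=0}^\infty b_n z^n$ and using the triangle inequality gives
\begin{align*}
\Vert f(w) \Vert_{B(\tau)} \leq \sum_{n=0}^\infty |b_n| \Vert w^n \Vert_{B(\tau)} ,
\end{align*}
and likewise for $A(\tau)$. The two cases are treated identically once the appropriate product rule is in hand, so I focus on $B(\tau)$ first. Applying Lemma~\ref{L06} with $f_1 = \cdots = f_n = w$ and exploiting the symmetry of the right-hand side, the sum over $i$ collapses and yields, for $n \geq 2$,
\begin{align*}
\Vert w^n \Vert_{B(\tau)} \leq n C^n \Vert w \Vert_{B(\tau)} (\Vert w \Vert_{B(\tau)} + \Vert w \Vert_Y)^{n-1} \leq n C^n X^n ,
\end{align*}
where $X = \Vert w \Vert_{B(\tau)} + \Vert w \Vert_Y$ and $C$ is independent of $n$.

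The next step is to sum the resulting series using the entire-analyticity of $f$. By the Cauchy estimates, $|b_n| \leq M(R) R^{-n}$ for every $R > 0$, where $M(R) = \max_{|z| = R} |f(z)|$. Picking $R = 2 C X$ yields $|b_n| C^n X^n \leq M(2 C X) 2^{-n}$, so
\begin{align*}
\sum_{n=2}^\infty n |b_n| C^n X^n \leq M(2 C X) \sum_{n=2}^\infty n \, 2^{-n} \les M(2 C X) .
\end{align*}
Combining with the trivial bound on the $n=0,1$ terms produces a function $Q_1(X) = |b_0| + |b_1| X + C' M(2 C X)$, which is nondecreasing and continuous in $X$, and furnishes the desired $B(\tau)$ estimate.

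To handle the $A(\tau)$ inequality I would run the same argument with an $A$-norm version of Lemma~\ref{L06}. Such a variant is not explicitly stated in the excerpt but its proof is a direct adaptation of the $B$-norm product rule: one distributes $\partial_x^j \TT^k (\epsilon\partial_t)^i$ across the product by the Leibniz rule, isolates the factor carrying the top-order derivative (to which the full analytic weight attaches), and bounds the remaining factors in $Y$ via Sobolev embedding. The weight $\kappa^{(j-1)_+}$ in the $A$-norm rather than $\kappa^j$ in the $B$-norm forces a minor bookkeeping adjustment when the top-order derivative count on the isolated factor is zero or one, but introduces no new analytical difficulty.

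The only real obstacle is this verification of the $A$-norm product rule: one must check that the combinatorial splittings used in Lemma~\ref{L06} (separating indices into ``high'' and ``low'' parts and invoking Sobolev embedding on the low part) survive the mildly weaker $\kappa^{(j-1)_+}$ weight. Once confirmed, the Taylor-series reduction above applies verbatim, and one sets $Q$ to be the maximum of the two resulting nondecreasing majorants.
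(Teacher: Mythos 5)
Your approach — expanding $f$ in its Taylor series, bounding $\Vert w^n\Vert$ via the product rule with all factors equal, and summing using Cauchy estimates for entire functions — is the canonical reduction and, given that Lemma~\ref{L06} (the product rule) sits directly before the composition estimate both here and in the reference [JKL] to which the paper defers this proof, it is essentially certainly the route the authors intend. Your $B$-norm calculation is correct: with $f_1=\cdots=f_n=w$, Lemma~\ref{L06} gives $\Vert w^n\Vert_{B(\tau)}\le nC^nX^n$ with $X=\Vert w\Vert_{B(\tau)}+\Vert w\Vert_Y$, and choosing $R=2CX$ in the Cauchy estimate makes the series converge with sum controlled by $M(2CX)$, yielding a single nondecreasing $Q$.

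The gap you flag is genuine: Lemma~\ref{L06} is stated only for the $B$-norm, so the $A(\tau)$ half of Lemma~\ref{L21} requires an $A$-norm product rule that this paper does not write down. You are right that it survives the change of weight, and the point worth recording explicitly is the elementary inequality that makes the bookkeeping close: for a Leibniz splitting $j=j_1+\cdots+j_m$ and $\kappa\in(0,1]$ one has
\begin{equation*}
\kappa^{(j-1)_+}\le\prod_{\ell=1}^m\kappa^{(j_\ell-1)_+},
\end{equation*}
since $(j-1)_+\ge\sum_\ell(j_\ell-1)_+$ (both sides vanish for $j=0$; for $j\ge1$ the right side equals $j$ minus the number of nonzero $j_\ell$, of which there is at least one). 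The $\bar\kappa^k$ weight factors multiplicatively exactly as for the $B$-norm, and the $\tau^{(j+k+i-3)_+}/(j+k+i-3)!$ weight, having a shift of $3$ rather than $2$, only changes the constant $C$ in Lemma~\ref{L06}. With that verification the $A$-norm product rule holds in the same form, and your Taylor-series argument applies verbatim.

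A minor caveat: setting $R=2CX$ presupposes $X>0$; if $w\equiv 0$ then $f(w)\equiv f(0)$ and the estimate is trivial (noting also that on the exterior domain $\Omega$ nonzero constants are not in $L^2$, so in applications one works with compositions vanishing appropriately or handles the constant term $b_0$ separately). This does not affect the substance of your argument.
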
 \colb \par Recall that $E$ is assumed to be the product of two entire real-analytic functions. Suppose that $\tilde{e}$ is one of the components of the matrix $E$. Thus, \begin{align} \tilde{e}(S, \epsilon u) = f(S) g(\epsilon u) . \label{SDHNRTYERTDGHFHDFGSDFGRTYFGHDFGDFSFGWERTFSDFGDFADWERTFGHDFGHFGH225} \end{align} \par In the next statement, we provide an analytic estimates of~$\tilde{e}$. \par \cole \begin{Lemma} \label{L22} Let $M_0>0$ and assume that $\tilde{e}$ satisfies \eqref{SDHNRTYERTDGHFHDFGSDFGRTYFGHDFGDFSFGWERTFSDFGDFADWERTFGHDFGHFGH225}. Then  \begin{align} \Vert \eprtudfsigjsdkfgjsdfg_t \tilde{e} \Vert_{B(\tau)} \les Q(\Vert u \Vert_{A(\tau)} + \Vert u \Vert_{Y},  \Vert S \Vert_{A(\tau)} + \Vert S \Vert_{Y} ) \comma \epsilon\in(0,1] \llabel{vF4peEMRxWeVYXcqNHhqFBvBuEXPxCf80MNsv3RRJVhcMyUPRU89oE0z7TCgQFf4WPet8YMqfaL8BmyuoDrCSc5yZ5DJj9icvgYu0SwkO9exAq2B2SIi96GD0UY9M6YcTlOvAjeT1AVE9R01xsVSOKOLxXtRzLb4LO1iXmyBmOTsiN6yYd9By2JhhZs7aek4dsjUZgANw1V9Q3EQHil8oz1jQVr4AbJQe7CPnRsumj5y1JvKb3ZeVPOx39tp3IaPyxezp10fAv5r50op2rVlfsC8SzD4oXaOPZdzKv8bQrCMmkOr6bXwVexQZPKXAyF29DYhbtcMcnWNvlSqrTKy2EZ76xdl2I7t2UznuGacBurSENCW7zNZH7j5Yy6DqUUG300pPFdrveXdhomN4iuraJ8k2FbP40Zxt9u69P5wCLbgZ6kTGSR8Thvau6fXjuSS6ERpRdkXha4tn5qaDKBVosbHi4mzuWFPFWeNzpRqESErUDkhmdF26MZJU6AODd2aoIazsQyxNwMW0x74T0cSdSnCTxaSDymsItYD6PfKil2DpE7Oc8jd2hf9U345GP8r88CjSMa0rqr0aDPg0FclzFatVBeiWFnnul8x6MekggEHNtGwoLbsvlctwjdtVt5DR8YU1hMVGstvfAzJ5QTGI1sBM6CT8PJZ2i1oylPtOOxN1AOAEvni73holwUaOOVCnKsU2VP4FJwi7ZjBSlbfzcpPubtsJSfu18LQjcoy2nmBvKLSwT0uqTVUlJY1k9kIXtCl46vGilSEgIkdy9ku8Tyn5C1le0FgSDHNRTYERTDGHFHDFGSDFGRTYFGHDFGDFSFGWERTFSDFGDFADWERTFGHDFGHFGH80} \end{align} and \begin{align} \Vert \tilde{e} \Vert_{A(\tau)}  \les Q(\Vert u \Vert_{A(\tau)} + \Vert u \Vert_{Y},  \Vert S \Vert_{A(\tau)} + \Vert S \Vert_{Y}) \comma \epsilon \in(0,1] ,    \llabel{3khMNu7ASqRthgPQIv8qlADHUJj7eGzm6T58HckDkZB95DFKrUx95ZWuFaoxKkyHUckpKg0hj576zokEXuGZOKIqhFuZ36Rzlx8JVFMOpexwpQB1JHH18HU1zvhqzykpbXrhunclgW8x6lvAe7jZ6k5OzcXh0G4XDPAoFIC3c51hWt1T3u2Kg9lk0aMMKiCIKoXN1X7S6fpYbm8vr4GWb2341o5oXTXboc0HigOGeP7GrDjucYcu5dKxRuKqeLRj4Jvv2ATxoHOHYx9HluGjpNXLJgPCNnaObg0zv8TFhT4LF1VPP1gF4EDNXBTIAZTrkHu7jREL59J8x4Q1LCCgsRqjOy9531BZrk8LjenbjnZxzXbyhTjqSWBncnmW6RC3xjhxgclBiyrZBSNpeysqVY8dUkg1ECGXRwYhMkWlx0YMCMuLz4kPnCcBzWAGBgkS1AkqJystPTHYq0jnAuj2izfOt4c7T2XyQw8LDBH1S4OqMrTPQevzqImDi1lu7AqznMT06GdWG18YkmZMAprJ80gqCMSoSaSK10401RJ3aAaivMlE5hnLj3umYPL9ZDUl2K48oEP5OfcQpaGuZxiOuCd4HN2gzwz26bd821FDge0Wi9AUrIa9i1GJT9bmDdxfE68pNo8cGYvu7UYzYoN5CGJT8Oslp4rLBezUcDD4dBTFjOmzd4QrW6O9volY9siRkzcMNyRL58ptXTEMsMvkWFYGUxutZLG4SL2hetQ2O6qG0gfak95iq7fphqolryvnOzFQDPhEpL4dYI0sakvwh4ylHPKA7m3MSDHNRTYERTDGHFHDFGSDFGRTYFGHDFGDFSFGWERTFSDFGDFADWERTFGHDFGHFGH85} \end{align} where $Q$ is a function. \end{Lemma}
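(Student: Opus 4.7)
The plan is to combine the compositional estimate of Lemma~\ref{L21} with the product rule of Lemma~\ref{L06}, and to use the transport equation \eqref{SDHNRTYERTDGHFHDFGSDFGRTYFGHDFGDFSFGWERTFSDFGDFADWERTFGHDFGHFGH02} to trade the bare time derivative $\partial_t S$ for spatial derivatives, so as to avoid any $1/\epsilon$ penalty.

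For the bound on $\Vert \tilde e\Vert_{A(\tau)}$, I would write $\tilde e = f(S)\cdot g(\epsilon u)$ and apply Lemma~\ref{L21} to each factor separately: $\Vert f(S)\Vert_{A(\tau)} \les Q(\Vert S \Vert_{A(\tau)} + \Vert S\Vert_Y)$ and $\Vert g(\epsilon u)\Vert_{A(\tau)} \les Q(\Vert \epsilon u\Vert_{A(\tau)} + \Vert \epsilon u\Vert_Y) \les Q(\Vert u\Vert_{A(\tau)}+\Vert u\Vert_Y)$, using $\epsilon\leq 1$ to absorb the scaling. The product of these two compositions is then handled by the $A$-norm analogue of Lemma~\ref{L06}, which follows from the same dyadic splitting, H\"older, and Sobolev embedding argument as in the $B$-norm case.

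For $\Vert\partial_t\tilde e\Vert_{B(\tau)}$, the chain rule yields
\begin{equation*}
\partial_t \tilde e = f'(S)(\partial_t S)\,g(\epsilon u) + f(S)\,g'(\epsilon u)\,(\epsilon \partial_t u),
\end{equation*}
and a bare $\partial_t u$ in the second term would introduce a bad $1/\epsilon$ factor. I would substitute $\partial_t S = -v\cdot\nabla S$ from \eqref{SDHNRTYERTDGHFHDFGSDFGRTYFGHDFGDFSFGWERTFSDFGDFADWERTFGHDFGHFGH02} to rewrite the first summand as $-f'(S)(v\cdot\nabla S)\,g(\epsilon u)$, so that every factor is either a composition of $S$ or of $\epsilon u$, or one of the admissible derivatives $v$, $\nabla S$, $\epsilon\partial_t u$. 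Then Lemma~\ref{L06} splits each product into terms carrying exactly one strong $B(\tau)$-factor and otherwise $Y$-norm factors, and Lemma~\ref{L21} (in its $B$-norm form) handles the compositions $f(S), f'(S), g(\epsilon u), g'(\epsilon u)$. The remaining factors are estimated in $B(\tau)$ by the corresponding $A(\tau)$-norms: a direct comparison of the weights in the definition of $A(\tau)$ and of $B(\tau)$ gives $\Vert \nabla S\Vert_{B(\tau)}\les \Vert S\Vert_{A(\tau)}$, $\Vert\epsilon\partial_t u\Vert_{B(\tau)}\les\Vert u\Vert_{A(\tau)}$, and $\Vert v\Vert_{B(\tau)}\les\Vert u\Vert_{A(\tau)}$, since shifting one derivative costs precisely one factor of the weight.

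The principal obstacle will be the combinatorial bookkeeping required to organize the products, after the substitution, so that each resulting term places exactly one factor in the strong analytic norm while all other factors sit in the weaker $Y$-norm; only then does the right-hand side depend exclusively on $\Vert u\Vert_{A(\tau)}+\Vert u\Vert_Y$ and $\Vert S\Vert_{A(\tau)}+\Vert S\Vert_Y$, as required. This is exactly the structure built into Lemmas~\ref{L06} and~\ref{L21}, so the bookkeeping goes through uniformly for each of the (at most four) factors appearing in the two summands, and the estimates follow.
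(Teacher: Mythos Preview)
Your proposal is correct and matches the approach the paper intends: the paper does not spell out a proof of Lemma~\ref{L22} but defers to \cite{JKL}, stating that the argument is analogous to Lemma~5.3 there, built from the product rule (Lemma~\ref{L06}) and the composition estimate (Lemma~\ref{L21}). Your reconstruction---factoring $\tilde e=f(S)g(\epsilon u)$, applying Lemma~\ref{L21} to each factor, combining via the product rule, and using \eqref{SDHNRTYERTDGHFHDFGSDFGRTYFGHDFGDFSFGWERTFSDFGDFADWERTFGHDFGHFGH02} to replace $\partial_t S$ by $-v\cdot\nabla S$ so that no bare $1/\epsilon$ appears---is exactly this scheme; the only detail you gloss over is that the estimate $\Vert\nabla S\Vert_{B(\tau)}\les\Vert S\Vert_{A(\tau)}$ also requires the commutator bound \eqref{SDHNRTYERTDGHFHDFGSDFGRTYFGHDFGDFSFGWERTFSDFGDFADWERTFGHDFGHFGH19} to pass $\nabla$ through $\TT^k$, handled just as in \eqref{SDHNRTYERTDGHFHDFGSDFGRTYFGHDFGDFSFGWERTFSDFGDFADWERTFGHDFGHFGH505}.
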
 The proofs of Lemmas~\ref{L20}, \ref{L06}, \ref{L21}, and \ref{L22} are analogous to Lemmas~4.2, 5.1, 5.2, and~5.3 in \cite{JKL}. Thus we simply state them and refer to \cite{JKL} for details. \colb \par \subsection{The divergence components} \label{subsec01} In this section, we estimate the sum of terms involving the divergence of the velocity. First, we rewrite the equation \eqref{SDHNRTYERTDGHFHDFGSDFGRTYFGHDFGDFSFGWERTFSDFGDFADWERTFGHDFGHFGH01} as \begin{align} L(\eprtudfsigjsdkfgjsdfg_x ) u = -E(S, \epsilon u) (\epsilon \eprtudfsigjsdkfgjsdfg_t u + \epsilon v \cdot \nabla u) . \label{SDHNRTYERTDGHFHDFGSDFGRTYFGHDFGDFSFGWERTFSDFGDFADWERTFGHDFGHFGH178} \end{align} For $j\geq 2$ and $k,i \in \mathbb{N}_0$, we commute $\eprtudfsigjsdkfgjsdfg_{x}^{j-1} \TT^k (\epsilon \eprtudfsigjsdkfgjsdfg_t)^i$ with \eqref{SDHNRTYERTDGHFHDFGSDFGRTYFGHDFGDFSFGWERTFSDFGDFADWERTFGHDFGHFGH178}, obtaining \begin{align} \begin{split} & \Vert \eprtudfsigjsdkfgjsdfg_{x}^{j-1} \TT^k (\epsilon \eprtudfsigjsdkfgjsdfg_t)^i L(\eprtudfsigjsdkfgjsdfg_x) u \Vert_{L^2} \\ &\indeq \les \sum_{j'=0}^{j-1} \sum_{k'=0}^k \sum_{i'=0}^i \binom{j-1}{j'} \binom{k}{k'} \binom{i}{i'}  \Vert \eprtudfsigjsdkfgjsdfg_{x}^{j'} \TT^{k'} (\epsilon \eprtudfsigjsdkfgjsdfg_t)^{i'} E \eprtudfsigjsdkfgjsdfg_{x}^{j-1-j'} \TT^{k-k'} (\epsilon \eprtudfsigjsdkfgjsdfg_t)^{i-i'+1} u \Vert_{L^2}  \\ &\indeq\indeq + \epsilon \Vert Ev \Vert_{L^\infty}  \Vert \eprtudfsigjsdkfgjsdfg_{x}^{j-1} \TT^k (\epsilon \eprtudfsigjsdkfgjsdfg_t)^i \nabla u \Vert_{L^2} \\ &\indeq\indeq + \epsilon  \sum_{j'=0}^{j-1} \sum_{k'=0}^k \sum_{i'=0}^i \binom{j-1}{j'} \binom{k}{k'} \binom{i}{i'}  \Vert \eprtudfsigjsdkfgjsdfg_{x}^{j'} \TT^{k'} (\epsilon \eprtudfsigjsdkfgjsdfg_t)^{i'} (Ev) \eprtudfsigjsdkfgjsdfg_{x}^{j-1-j'} \TT^{k-k'} (\epsilon \eprtudfsigjsdkfgjsdfg_t)^{i-i'} \nabla u \Vert_{L^2}  . \end{split}    \label{SDHNRTYERTDGHFHDFGSDFGRTYFGHDFGDFSFGWERTFSDFGDFADWERTFGHDFGHFGH86} \end{align} Multiplying the above estimate with appropriate weights and following the arguments from \cite[Lemma~6.3]{JKL}, which is justified since we have Lemmas~\ref{L06},~\ref{L21}, and~\ref{L22}, we obtain \begin{align} \sum_{j=2}^\infty \sum_{k=0}^\infty \sum_{i=0}^\infty D_{j,k,i} \les (\kappa+\epsilon) Q(M_{\epsilon, \kappa, \bar{\kappa}}(t)) . \label{SDHNRTYERTDGHFHDFGSDFGRTYFGHDFGDFSFGWERTFSDFGDFADWERTFGHDFGHFGH179} \end{align} For $j=1$, $k=0$, and $i\in\mathbb{N}_0$, we proceed as in \cite[Lemma~6.3]{JKL} to get \begin{align} \sum_{i=0}^\infty D_{1,0,i} \les 1 + (t+\epsilon) Q(M_{\epsilon, \kappa, \bar{\kappa}}(t)) . \label{SDHNRTYERTDGHFHDFGSDFGRTYFGHDFGDFSFGWERTFSDFGDFADWERTFGHDFGHFGH180} \end{align} For $j = 1$, $k\in \mathbb{N}$, and $i\in\mathbb{N}_0$, from \eqref{SDHNRTYERTDGHFHDFGSDFGRTYFGHDFGDFSFGWERTFSDFGDFADWERTFGHDFGHFGH08}, we have \begin{align} \Vert \TT^k (\epsilon \eprtudfsigjsdkfgjsdfg_t)^i L(\eprtudfsigjsdkfgjsdfg_{x}) u \Vert_{L^2} \les \Vert \eprtudfsigjsdkfgjsdfg_x \TT^{k-1} (\epsilon \eprtudfsigjsdkfgjsdfg_t)^i L(\eprtudfsigjsdkfgjsdfg_{x}) u \Vert_{L^2} ,    \llabel{YtUSmTx15Qt0X54AVmgdCsHmUrA7DAbpNXCtJ0P1eb0Q6Vo2Jaff0EOAQUf0510tfWogcc6p0dVuaWtW0uRsj2XoOJ2OWmCpXbaFvysWRSnkWo6mEyK3GHLNkxe3B5TF1qBnOpngLqytbymb68q36Z5f6EOm9IDEZkYycjnLpeLi1xpyIwueT2WP1xWO39QiM5YRvaH8I1gYrfa3ciagDowq9Tneks4wb92M8lXtz9TelMyPkGiwPOvlk4wSZZNVwvJCD1vhy3wL4tZ7jWEJfyJLXLlmEZjCLuhUhE8DznevGxQse4yPSmj8mZWQ39zhnVKLXerveM9vXS8NoyyIsflj0xzdc4tGEME5eMYGEERC5uOp5VbWSeXJsGlmo4OKC3kEdmZXpeXz9CdSMteccbWFEbpr9EZ9ggzFD5btTrFtoNmgfG2S9TEZvwdWLK7SS5VD91VYWQEkAEy3fDCZFxAr3hOXO7SumpBsZ1fDpn5wGzW9QgoFHsLsL0QGw5m0JS7L0T9IdJ6oFRHfr8V3nT3mBWBbw43JnCoAmgjgvYo39JNasqEblQ7LJfimsl7hE6su9iIWXlxCGQdPEv5laV3TGcQODdaI20rJTSrgUHC4oCjW3ybypA1TqznoO4tiE7aGkqt0W0MeafqkPM7cnkvhJY0BXzMbQTO71ZsW2zPjKOf80k1Sq95G2MTbvnLYEiGu0QNGTbEFJZm6dmEzN3b9wKiuwVk5YFNRTxRuiAiqTdoXn15E4VtxaotyYO9yfH7tT3VompVndiZP46icLlIxBJCnpYsFSDHNRTYERTDGHFHDFGSDFGRTYFGHDFGDFSFGWERTFSDFGDFADWERTFGHDFGHFGH119} \end{align} from where we proceed as in \eqref{SDHNRTYERTDGHFHDFGSDFGRTYFGHDFGDFSFGWERTFSDFGDFADWERTFGHDFGHFGH86}--\eqref{SDHNRTYERTDGHFHDFGSDFGRTYFGHDFGDFSFGWERTFSDFGDFADWERTFGHDFGHFGH179} to obtain \begin{align} 	\sum_{k=1}^\infty \sum_{i=0}^\infty 	D_{1,k,i} 	\les 	(\bar{\kappa}+\epsilon) Q(M_{\epsilon, \kappa, \bar{\kappa}}(t)) 	. 	\label{SDHNRTYERTDGHFHDFGSDFGRTYFGHDFGDFSFGWERTFSDFGDFADWERTFGHDFGHFGH479} \end{align} Combining \eqref{SDHNRTYERTDGHFHDFGSDFGRTYFGHDFGDFSFGWERTFSDFGDFADWERTFGHDFGHFGH179}--\eqref{SDHNRTYERTDGHFHDFGSDFGRTYFGHDFGDFSFGWERTFSDFGDFADWERTFGHDFGHFGH180} and \eqref{SDHNRTYERTDGHFHDFGSDFGRTYFGHDFGDFSFGWERTFSDFGDFADWERTFGHDFGHFGH479}, we arrive at \begin{align} \sum_{E} D_{j,k,i,} \les 1 + (\kappa + \bar{\kappa} + t+ \epsilon) Q(M_{\epsilon, \kappa, \bar{\kappa}}(t)) . \label{SDHNRTYERTDGHFHDFGSDFGRTYFGHDFGDFSFGWERTFSDFGDFADWERTFGHDFGHFGH181} \end{align} \par \subsection{The curl components} As in \cite[Section~6.1]{JKL}, we use Lemmas~\ref{L06}, \ref{L21}, and~\ref{L22} to obtain \begin{align} \sum_{E} C_{j,k,i} \les 1 + (t+\tau) Q(M_{\epsilon, \kappa, \bar{\kappa}}(t)) , \label{SDHNRTYERTDGHFHDFGSDFGRTYFGHDFGDFSFGWERTFSDFGDFADWERTFGHDFGHFGH182} \end{align} for all $t\in[0,T_0]$,  \par \subsection{The pressure estimates} The analytic norm of the pressure can be recovered by the mixed space-time derivatives and pure time derivatives.  Namely, for $j\in \mathbb{N}$ and $k\in \mathbb{N}_0$, we have \begin{align} \Vert \eprtudfsigjsdkfgjsdfg_{x}^j \TT^k (\epsilon \eprtudfsigjsdkfgjsdfg_t)^i p \Vert_{L^2} \les \Vert \eprtudfsigjsdkfgjsdfg_{x}^{j-1} \TT^k (\epsilon \eprtudfsigjsdkfgjsdfg_t)^i L(\eprtudfsigjsdkfgjsdfg_x) u \Vert_{L^2} + \Vert \eprtudfsigjsdkfgjsdfg_{x}^{j-1} [\TT^k, \nabla] (\epsilon \eprtudfsigjsdkfgjsdfg_t)^i p \Vert_{L^2} . \label{SDHNRTYERTDGHFHDFGSDFGRTYFGHDFGDFSFGWERTFSDFGDFADWERTFGHDFGHFGH510} \end{align} The first term on the right side is estimated in Section~\ref{subsec01}, while the second term is estimated analogously to \eqref{SDHNRTYERTDGHFHDFGSDFGRTYFGHDFGDFSFGWERTFSDFGDFADWERTFGHDFGHFGH505}. Thus, we arrive at \begin{align} \sum_{j=1}^\infty \sum_{k=0}^\infty \sum_{i=0}^\infty \frac{\kappa^{(j-1)_+} \bar{\kappa}^k \tau^{(j+k+i-3)_+}}{(j+k+i-3)!} \Vert \eprtudfsigjsdkfgjsdfg_{x}^j \TT^k (\epsilon \eprtudfsigjsdkfgjsdfg_t)^i p \Vert_{L^2} \les 1 + (\kappa + \bar{\kappa} + t+ \epsilon) Q(M_{\epsilon, \kappa, \bar{\kappa}}(t)) . \label{SDHNRTYERTDGHFHDFGSDFGRTYFGHDFGDFSFGWERTFSDFGDFADWERTFGHDFGHFGH511} \end{align} For $j=0$ and $k\in \mathbb{N}$, we may use \eqref{SDHNRTYERTDGHFHDFGSDFGRTYFGHDFGDFSFGWERTFSDFGDFADWERTFGHDFGHFGH08} and \eqref{SDHNRTYERTDGHFHDFGSDFGRTYFGHDFGDFSFGWERTFSDFGDFADWERTFGHDFGHFGH510}--\eqref{SDHNRTYERTDGHFHDFGSDFGRTYFGHDFGDFSFGWERTFSDFGDFADWERTFGHDFGHFGH511} to get \begin{align} \begin{split} \sum_{k=1}^\infty \sum_{i=0}^\infty \frac{ \bar{\kappa}^k \tau^{(k+i-3)_+}}{(k+i-3)!} \Vert \TT^k (\epsilon \eprtudfsigjsdkfgjsdfg_t)^i p \Vert_{L^2} & \les \sum_{k=1}^\infty \sum_{i=0}^\infty \frac{ \bar{\kappa}^k \tau^{(k+i-3)_+}}{(k+i-3)!} \Vert \eprtudfsigjsdkfgjsdfg_{x} \TT^{k-1} (\epsilon \eprtudfsigjsdkfgjsdfg_t)^i p \Vert_{L^2} \\ & \les 1 + (\kappa + \bar{\kappa} + t+ \epsilon) Q(M_{\epsilon, \kappa, \bar{\kappa}}(t)) . \label{SDHNRTYERTDGHFHDFGSDFGRTYFGHDFGDFSFGWERTFSDFGDFADWERTFGHDFGHFGH512} \end{split} \end{align} Combining \eqref{SDHNRTYERTDGHFHDFGSDFGRTYFGHDFGDFSFGWERTFSDFGDFADWERTFGHDFGHFGH511}--\eqref{SDHNRTYERTDGHFHDFGSDFGRTYFGHDFGDFSFGWERTFSDFGDFADWERTFGHDFGHFGH512} with the pure time derivative estimates of the pressure obtained in \eqref{SDHNRTYERTDGHFHDFGSDFGRTYFGHDFGDFSFGWERTFSDFGDFADWERTFGHDFGHFGH555}, we arrive at \begin{align} \Vert p \Vert_{A(\tau)} \les 1 + (\kappa + \bar{\kappa} + t+ \epsilon) Q(M_{\epsilon, \kappa, \bar{\kappa}}(t)) .	 \label{SDHNRTYERTDGHFHDFGSDFGRTYFGHDFGDFSFGWERTFSDFGDFADWERTFGHDFGHFGH513} \end{align} \par \subsection{Proof of the main lemma} Here we combine the results of the previous section to prove Lemma~\ref{L12}. \par \begin{proof}[Proof of Lemma~\ref{L12}] Combining the estimates \eqref{SDHNRTYERTDGHFHDFGSDFGRTYFGHDFGDFSFGWERTFSDFGDFADWERTFGHDFGHFGH183}, \eqref{SDHNRTYERTDGHFHDFGSDFGRTYFGHDFGDFSFGWERTFSDFGDFADWERTFGHDFGHFGH174}, \eqref{SDHNRTYERTDGHFHDFGSDFGRTYFGHDFGDFSFGWERTFSDFGDFADWERTFGHDFGHFGH166}, \eqref{SDHNRTYERTDGHFHDFGSDFGRTYFGHDFGDFSFGWERTFSDFGDFADWERTFGHDFGHFGH171}--\eqref{SDHNRTYERTDGHFHDFGSDFGRTYFGHDFGDFSFGWERTFSDFGDFADWERTFGHDFGHFGH176}, and \eqref{SDHNRTYERTDGHFHDFGSDFGRTYFGHDFGDFSFGWERTFSDFGDFADWERTFGHDFGHFGH181}--\eqref{SDHNRTYERTDGHFHDFGSDFGRTYFGHDFGDFSFGWERTFSDFGDFADWERTFGHDFGHFGH182}, we arrive at \begin{align} \begin{split} \Vert v \Vert_{A(\tau)} & \les 1 + \left( t  + \kappa + \epsilon + \tau(0) + \bar{\kappa} \right) Q(M_{\epsilon, \kappa, \bar{\kappa}}(t)) , \label{SDHNRTYERTDGHFHDFGSDFGRTYFGHDFGDFSFGWERTFSDFGDFADWERTFGHDFGHFGH226} \end{split} \end{align} by taking $\bar{\kappa} \leq 1/C$ and $\kappa \leq \bar{\kappa}/C$. Thus Lemma~\ref{L12} follows by combining \eqref{SDHNRTYERTDGHFHDFGSDFGRTYFGHDFGDFSFGWERTFSDFGDFADWERTFGHDFGHFGH513}--\eqref{SDHNRTYERTDGHFHDFGSDFGRTYFGHDFGDFSFGWERTFSDFGDFADWERTFGHDFGHFGH226} with \eqref{SDHNRTYERTDGHFHDFGSDFGRTYFGHDFGDFSFGWERTFSDFGDFADWERTFGHDFGHFGH227}. \end{proof} \par \startnewsection{Analyticity assumptions on the initial data}{sec07} In this section, we assume the initial data satisfies \eqref{SDHNRTYERTDGHFHDFGSDFGRTYFGHDFGDFSFGWERTFSDFGDFADWERTFGHDFGHFGH23}, and show that for low values of $i$, \begin{align} \sum_{i=0}^3 \sum_{j,k=0}^\infty  \Vert \eprtudfsigjsdkfgjsdfg_x^j \TT^k (\epsilon \eprtudfsigjsdkfgjsdfg_t)^i u (0) \Vert_{L^2} \frac{\tau_0^{(j+k+i-3)_+}}{(j+k+i-3)!} \leq \Gamma , \label{SDHNRTYERTDGHFHDFGSDFGRTYFGHDFGDFSFGWERTFSDFGDFADWERTFGHDFGHFGH92} \end{align} and \begin{align} \sum_{i=0}^3 \sum_{j,k=0}^\infty  \Vert \eprtudfsigjsdkfgjsdfg_x^j \TT^k (\epsilon \eprtudfsigjsdkfgjsdfg_t)^i S (0) \Vert_{L^2} \frac{\tau_0^{(j+k+i-3)_+}}{(j+k+i-3)!} \leq \Gamma , \label{SDHNRTYERTDGHFHDFGSDFGRTYFGHDFGDFSFGWERTFSDFGDFADWERTFGHDFGHFGH93} \end{align} where $\Gamma>0$ is a sufficiently large constant depending on $M_0$; for high values of $i$, there exists a sufficiently large constant $C>1$ such that for all $n\geq 4$ we have \begin{align} \sum_{i=4}^n \sum_{j,k=0}^\infty  \Vert \eprtudfsigjsdkfgjsdfg_x^j \TT^k (\epsilon \eprtudfsigjsdkfgjsdfg_t)^i u(0) \Vert_{L^2} \frac{ \tau_0^{j+k+i-3}}{C^{i-3}(j+k+i-3)!} \leq 1 , \label{SDHNRTYERTDGHFHDFGSDFGRTYFGHDFGDFSFGWERTFSDFGDFADWERTFGHDFGHFGH90} \end{align} and \begin{align} \sum_{i=4}^n \sum_{j,k=0}^\infty \Vert \eprtudfsigjsdkfgjsdfg_x^j \TT^k (\epsilon \eprtudfsigjsdkfgjsdfg_t)^i S(0) \Vert_{L^2} \frac{\tau_0^{j+k+i-3}}{C^{i-3} (j+k+i-3)!} \leq 1 . \label{SDHNRTYERTDGHFHDFGSDFGRTYFGHDFGDFSFGWERTFSDFGDFADWERTFGHDFGHFGH91} \end{align} In \eqref{SDHNRTYERTDGHFHDFGSDFGRTYFGHDFGDFSFGWERTFSDFGDFADWERTFGHDFGHFGH90} and \eqref{SDHNRTYERTDGHFHDFGSDFGRTYFGHDFGDFSFGWERTFSDFGDFADWERTFGHDFGHFGH91} we then choose $\tilde{\tau}_0 = \tau_0/C$ and using \eqref{SDHNRTYERTDGHFHDFGSDFGRTYFGHDFGDFSFGWERTFSDFGDFADWERTFGHDFGHFGH92}--\eqref{SDHNRTYERTDGHFHDFGSDFGRTYFGHDFGDFSFGWERTFSDFGDFADWERTFGHDFGHFGH93}, we obtain \begin{align} \begin{split} \Vert (p_0, v_0, S_0) \Vert_{A(\tilde{\tau}_0)} & \leq \sum_{(j,k,i) \in \mathbb{N}_0^3}  \Vert \eprtudfsigjsdkfgjsdfg_x^j \TT^k (\epsilon \eprtudfsigjsdkfgjsdfg_t)^i (u, S)(0) \Vert_{L^2} \frac{ \tilde{\tau}_0^{(j+k+i-3)_+}}{(j+k+i-3)!} \\ &\leq \sum_{i=0}^3 \sum_{j,k=0}^\infty \Vert \eprtudfsigjsdkfgjsdfg_x^j \TT^k (\epsilon \eprtudfsigjsdkfgjsdfg_t)^i S(0) \Vert_{L^2} \frac{\tau_0^{(j+k+i-3)_+}}{(j+k+i-3)!} \\ &\indeq + \sum_{i=4}^\infty \sum_{j,k=0}^\infty \Vert \eprtudfsigjsdkfgjsdfg_x^j \TT^k (\epsilon \eprtudfsigjsdkfgjsdfg_t)^i S(0) \Vert_{L^2} \frac{ \tau_0^{j+k+i-3}}{C^{i-3}(j+k+i-3)!} \\ & \leq \Gamma+1 , \end{split}    \llabel{UuMNQbvnG9hYiesb9N47f43mmBa9OTZtDHBTlfZWAgVRjoEt8R9hJt51LtxxkSrewmHE8O4a7EM7NLdqZbHtrVr0wQEzeNqhPsyb3Ve0OCynaamXp8chaECFTH3T5ikwA3bSVgy7uE9coQZT3qOIUDx42NqgDMFNTdMvwgD1fZFPRkvHKWJfvGMZiDn7SKsz4i8Y7q6rgpj1lNQ6bj3WRZSDtSy0XpePGkzv1DrV2ojU7FyHd97XY5LvKEu8TxFpQzrUzmNf9Lh1XhK5LIJYtAROxR4h0gsFuUocRIEHVuHnUmln7Xzc2SnjcQmAAmnlUoIXhkWG9BfRtd6GlFdZpHHvXGeyBrnGvjderN5OdLReh4E5blwZCdd01DSjopD4IGoyeLL5WvrNhbyGZg5N9NBMIi0t9GBWqaEXzIIXee67NKdmy5gFliPWOzcvxiNA7y4czlk4zeQKWY5YuhXpCYJX2T1lRzgRbTjmsmqUeYU1LqVcFfeZ5UxOoAKjOuNn2YvvPByHIqI23R7LrsbkILe0K05Ad2qBJQdgdUYw6NuUH3Qff5SKjAGxxlelehaFQwnzF4oUbe2vqhPcri2YpwZrWQ4bLPVBctPeMQhFQgZBY27KyJN3gbDAZfo2X5KKZZPksuknOOSk9M4w8d26KRzXWOnOxKgMphzUxC4PO35hQoD0RmsBYDSw151D2Ib1Zja6HDtyKlHqACT8H88oti35EFFdWlyiDKxFwymGdrdvnz24CRPVdcv8mowWaQumLO2lbYLIPZjmUe9UfklvaRck3KhqKhd5SDHNRTYERTDGHFHDFGSDFGRTYFGHDFGDFSFGWERTFSDFGDFADWERTFGHDFGHFGH88} \end{align} and we conclude as in \cite[Section~8]{JKL}. The proofs of \eqref{SDHNRTYERTDGHFHDFGSDFGRTYFGHDFGDFSFGWERTFSDFGDFADWERTFGHDFGHFGH92}--\eqref{SDHNRTYERTDGHFHDFGSDFGRTYFGHDFGDFSFGWERTFSDFGDFADWERTFGHDFGHFGH91} are analogous to those in \cite[Section~8]{JKL} by using the commutator estimate \eqref{SDHNRTYERTDGHFHDFGSDFGRTYFGHDFGDFSFGWERTFSDFGDFADWERTFGHDFGHFGH19}, and thus we omit the details. \par \startnewsection{Proof of the convergence theorem}{sec08} The following lemma is needed in the proof of the Theorem~\ref{T02}. \par \cole \begin{Lemma} \label{L23} There exists a constant $C_1>1$ such that for any $\alpha \in \mathbb{N}_0^3$ with $|\alpha |= j$ where $j\in \mathbb{N}_0$, we have \begin{align} \Vert \eprtudfsigjsdkfgjsdfg_x^j u \Vert_{L^2} \leq C_1^j \Vert u \Vert_{L^2}^{1/(j+1)} \Vert \eprtudfsigjsdkfgjsdfg_x^{j+1} u \Vert_{L^2}^{j/(j+1)} , \label{SDHNRTYERTDGHFHDFGSDFGRTYFGHDFGDFSFGWERTFSDFGDFADWERTFGHDFGHFGH228} \end{align} for all $u \in H^{j+1}(\Omega)$. \end{Lemma}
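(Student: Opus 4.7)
My plan is to prove this by induction on $j$. The base case $j=0$ is the tautology $\|u\|_{L^2} \leq \|u\|_{L^2}$, so it holds with $C_1 = 1$. For $j \geq 1$, the key ingredient is the intermediate-derivative bound
\[
\|\partial_x^j u\|_{L^2(\Omega)}^2 \leq C \|\partial_x^{j-1} u\|_{L^2(\Omega)} \|\partial_x^{j+1} u\|_{L^2(\Omega)}
\]
with a constant $C$ independent of $j$. Combining this with the inductive hypothesis $\|\partial_x^{j-1} u\|_{L^2} \leq C_1^{j-1} \|u\|_{L^2}^{1/j} \|\partial_x^j u\|_{L^2}^{(j-1)/j}$ and moving the factor $\|\partial_x^j u\|_{L^2}^{(j-1)/j}$ to the left gives $\|\partial_x^j u\|_{L^2}^{(j+1)/j} \leq C C_1^{j-1} \|u\|_{L^2}^{1/j} \|\partial_x^{j+1} u\|_{L^2}$; raising to the power $j/(j+1)$ produces the lemma at index $j$ with constant $(C C_1^{j-1})^{j/(j+1)}$. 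Since $j - j(j-1)/(j+1) = 2j/(j+1) \geq 1$, this constant is bounded by $C_1^j$ as soon as $C_1^2 \geq C$, closing the induction.

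For the intermediate-derivative bound I would reduce to the whole space $\mathbb{R}^3$. Since $\partial \Omega$ is analytic (in particular smooth) and compact, a Stein- or Seeley-type total extension operator $E$ is available, simultaneously bounded $H^m(\Omega) \to H^m(\mathbb{R}^3)$ for every $m \in \mathbb{N}_0$, with $(Eu)|_\Omega = u$. On $\mathbb{R}^3$, Plancherel together with Cauchy-Schwarz on the Fourier side yields
\[
\int_{\mathbb{R}^3} |\xi|^{2j} |\widehat{Eu}(\xi)|^2 \, d\xi \leq \Bigl( \int_{\mathbb{R}^3} |\xi|^{2(j-1)} |\widehat{Eu}|^2 \, d\xi \Bigr)^{1/2} \Bigl( \int_{\mathbb{R}^3} |\xi|^{2(j+1)} |\widehat{Eu}|^2 \, d\xi \Bigr)^{1/2},
\]
i.e.\ $\|\partial^j Eu\|_{L^2(\mathbb{R}^3)}^2 \leq \|\partial^{j-1} Eu\|_{L^2(\mathbb{R}^3)} \|\partial^{j+1} Eu\|_{L^2(\mathbb{R}^3)}$; restricting back to $\Omega$ and using the extension bounds gives the desired estimate. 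An alternative that stays inside $\Omega$ is to integrate by parts directly: writing $|\alpha|=j$ as $\alpha = \alpha' + e_i$, one obtains
\[
\|\partial^\alpha u\|_{L^2(\Omega)}^2 = - \int_\Omega \partial^{\alpha'} u \cdot \partial^{\alpha + e_i} u + \int_{\partial \Omega} \partial^{\alpha'} u \cdot \partial^\alpha u \cdot \nu_i \, dS,
\]
and the boundary contribution is controlled via the trace estimate $\|w\|_{L^2(\partial \Omega)}^2 \les \|w\|_{L^2(\Omega)}(\|w\|_{L^2(\Omega)} + \|\nabla w\|_{L^2(\Omega)})$ applied to $\partial^{\alpha'} u$ and $\partial^\alpha u$, followed by Young's inequality to absorb $\|\partial^j u\|_{L^2}^2$ into the left-hand side.

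The main obstacle is arranging the constant $C$ in the intermediate-derivative bound to be uniform in $j$. Along the extension route, one has to know (or cite) that the $H^m \to H^m$ operator norms of $E$ stay controlled — this is standard for smooth compact boundaries but is the place where the argument depends on the geometry of $\partial \Omega$. Along the integration-by-parts route, the nuisance is the trace term: a naive application of the trace theorem brings $\|\partial^j u\|_{L^2}$ back on the right side, so one needs an Ehrling- or Young-style absorption carried out with a constant that does not degrade with $j$. Either route then feeds cleanly into the induction above.
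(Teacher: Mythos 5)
Your overall strategy — induction on $j$ driven by a $j$-uniform intermediate-derivative estimate $\|\partial_x^j u\|_{L^2}^2\le C\,\|\partial_x^{j-1}u\|_{L^2}\,\|\partial_x^{j+1}u\|_{L^2}$, then closing the recursion by absorbing the $\|\partial_x^j u\|$ factor and tracking the constants — is exactly the paper's plan, and your bookkeeping of the constant (that $C_1^2\ge C$ suffices because $j-\tfrac{j(j-1)}{j+1}=\tfrac{2j}{j+1}\ge 1$) is correct. The integration-by-parts variant you offer as an "alternative" is in fact what the paper does: it chooses $\alpha_-,\alpha,\alpha_+$ with $|\alpha_\pm|=|\alpha|\mp 1$, integrates by parts once, and controls the boundary integral by a multiplicative trace inequality and Cauchy--Schwarz.

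There are, however, two genuine gaps in the routes as you wrote them. First, the extension route does not deliver the clean intermediate-derivative estimate. What extension operators give is boundedness $H^m(\Omega)\to H^m(\mathbb R^3)$, so from $\|\partial^j Eu\|_{L^2(\mathbb R^3)}^2\le\|\partial^{j-1}Eu\|_{L^2(\mathbb R^3)}\,\|\partial^{j+1}Eu\|_{L^2(\mathbb R^3)}$ you can only conclude $\|\partial_x^j u\|_{L^2(\Omega)}^2\le A_{j-1}A_{j+1}\,\|u\|_{H^{j-1}(\Omega)}\,\|u\|_{H^{j+1}(\Omega)}$; the right-hand side involves the full $H^{j\pm1}$ norms, not just the top-order derivative norms, and the $H^m\to H^m$ operator norms $A_m$ of Stein/Seeley extensions are not uniformly bounded in $m$ in any case. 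Replacing $\|\partial_x^{j\pm1}u\|_{L^2(\Omega)}$ by $\|u\|_{H^{j\pm1}(\Omega)}$ destroys the recursion, so this route would need substantial additional work.

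Second — and this is the point where the exterior geometry really enters — your trace inequality $\|w\|_{L^2(\partial\Omega)}^2\lesssim\|w\|_{L^2}(\|w\|_{L^2}+\|\nabla w\|_{L^2})$ is weaker than what the paper uses and, with it, the inductive step does not close. If one tracks the resulting terms with $a_m=\|\partial_x^m u\|_{L^2}$, the extra $\|w\|_{L^2}$ in the second factor produces, after Young absorption of $a_j$, an irreducible $a_{j-1}^2$ contribution alongside $a_{j-1}a_{j+1}$; there is no way to dominate $a_{j-1}^2$ by $a_{j-1}a_{j+1}$, and pushing it through the induction hypothesis inflates the constant like $C_1^{2j(j-1)}$, far exceeding the claimed $C_1^j$. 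The paper instead uses the sharper multiplicative trace bound $\|w\|_{L^2(\partial\Omega)}\lesssim\|w\|_{L^2(\Omega)}^{1/2}\|\nabla w\|_{L^2(\Omega)}^{1/2}$ without the lower-order term. That sharper form is valid here precisely because $\Omega$ is exterior: for instance, one may take a bounded vector field $N$ on $\Omega$ with $N\cdot\nu=1$ on $\partial\Omega$ and $\operatorname{div}N=0$ (such as $N=\nabla\phi$ for a suitably decaying harmonic $\phi$ with $\partial_\nu\phi=1$ on $\partial\Omega$), so that $\int_{\partial\Omega}|w|^2=\int_\Omega 2\,w\,\nabla w\cdot N\le 2\|N\|_{L^\infty}\|w\|_{L^2}\|\nabla w\|_{L^2}$. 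Replacing your trace inequality by this sharper one is exactly what makes the Young absorption clean and the constant $j$-independent; without it, the induction as you set it up does not close.
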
 \colb \par We emphasize that $C_1$ is $j$-independent. \par \begin{proof}[Proof of Lemma~\ref{L23}] We proceed by induction on $j\in \mathbb{N}_0$. The case $j=0$ is clear. Now we assume that \eqref{SDHNRTYERTDGHFHDFGSDFGRTYFGHDFGDFSFGWERTFSDFGDFADWERTFGHDFGHFGH228} holds for some $j\in \mathbb{N}_0$ and aim to prove it for the case $j+1$.  Let $\alpha\in \mathbb{N}_0^3$ be any multiindex with $|\alpha| = j+1$.  We choose a multiindex $\alpha_{-} \in \mathbb{N}_0^3$ such that there  exists $\xi\in \mathbb{N}_0^3$ for which $\alpha_{-} + \xi = \alpha$ and $|\xi| =1$. Let $\alpha_{+} = 2\alpha - \alpha_{-}$. Using integration by parts, we obtain \begin{align} \Vert \eprtudfsigjsdkfgjsdfg^\alpha u\Vert_{L^2}^2 = \int_{\eprtudfsigjsdkfgjsdfg \Omega} \eprtudfsigjsdkfgjsdfg^{\alpha_{-}}u \cdot \eprtudfsigjsdkfgjsdfg^{\alpha} u \nu^i \,d\sigma - \int_{\Omega} \eprtudfsigjsdkfgjsdfg^{\alpha_{-}}u \cdot \eprtudfsigjsdkfgjsdfg^{\alpha_{+}} u \,dx = I_1 + I_2 , \label{SDHNRTYERTDGHFHDFGSDFGRTYFGHDFGDFSFGWERTFSDFGDFADWERTFGHDFGHFGH231} \end{align} for some $i \in \{1,2,3\}$. For the term $I_1$, we use the Cauchy-Schwarz inequality to get \begin{align} \begin{split} I_1 & \leq \left( \int_{\eprtudfsigjsdkfgjsdfg \Omega} |\eprtudfsigjsdkfgjsdfg^{\alpha_{-}}u|^2 \,d\sigma \right)^{1/2} \left( \int_{\eprtudfsigjsdkfgjsdfg \Omega} |\eprtudfsigjsdkfgjsdfg^{\alpha}u|^2 \,d\sigma \right)^{1/2} \\ & \leq C \Vert \eprtudfsigjsdkfgjsdfg^{\alpha_{-}} u \Vert_{L^2}^{1/2} \Vert \nabla \eprtudfsigjsdkfgjsdfg^{\alpha_{-}} u \Vert_{L^2}^{1/2} \Vert \eprtudfsigjsdkfgjsdfg^{\alpha} u \Vert_{L^2}^{1/2} \Vert \nabla \eprtudfsigjsdkfgjsdfg^{\alpha} u \Vert_{L^2}^{1/2} , \label{SDHNRTYERTDGHFHDFGSDFGRTYFGHDFGDFSFGWERTFSDFGDFADWERTFGHDFGHFGH229} \end{split} \end{align} where the last inequality follows from the trace theorem. For the term $I_2$, using the Cauchy-Schwarz inequality,  we arrive at \begin{align} I_2 \leq \Vert \eprtudfsigjsdkfgjsdfg^{\alpha_{-}} u \Vert_{L^2} \Vert \eprtudfsigjsdkfgjsdfg^{\alpha_{+}} u \Vert_{L^2} . \label{SDHNRTYERTDGHFHDFGSDFGRTYFGHDFGDFSFGWERTFSDFGDFADWERTFGHDFGHFGH230} \end{align} Summing \eqref{SDHNRTYERTDGHFHDFGSDFGRTYFGHDFGDFSFGWERTFSDFGDFADWERTFGHDFGHFGH231} over $|\alpha| = j+1$ and using \eqref{SDHNRTYERTDGHFHDFGSDFGRTYFGHDFGDFSFGWERTFSDFGDFADWERTFGHDFGHFGH229}--\eqref{SDHNRTYERTDGHFHDFGSDFGRTYFGHDFGDFSFGWERTFSDFGDFADWERTFGHDFGHFGH230}, we obtain \begin{align} \begin{split} \Vert \eprtudfsigjsdkfgjsdfg_{x}^{j+1} u \Vert_{L^2}^2 & \leq C \sum_{|\alpha| = j+1}  \Vert \eprtudfsigjsdkfgjsdfg^{\alpha_{-}} u \Vert_{L^2}^{1/2}  \Vert \nabla \eprtudfsigjsdkfgjsdfg^{\alpha_{-}} u \Vert_{L^2}^{1/2}  \Vert \eprtudfsigjsdkfgjsdfg^{\alpha} u \Vert_{L^2}^{1/2}  \Vert \nabla \eprtudfsigjsdkfgjsdfg^{\alpha} u \Vert_{L^2}^{1/2} + \sum_{|\alpha| = j+1} \Vert \eprtudfsigjsdkfgjsdfg^{\alpha_{-}} u \Vert_{L^2}  \Vert \eprtudfsigjsdkfgjsdfg^{\alpha_{+}} u \Vert_{L^2} \\ & \leq C\Vert \eprtudfsigjsdkfgjsdfg_{x}^j u \Vert_{L^2}^{1/2} \Vert \eprtudfsigjsdkfgjsdfg_{x}^{j+1} u \Vert_{L^2} \Vert \eprtudfsigjsdkfgjsdfg_{x}^{j+2} u \Vert_{L^2}^{1/2} + C\Vert \eprtudfsigjsdkfgjsdfg_{x}^j u \Vert_{L^2} \Vert \eprtudfsigjsdkfgjsdfg_{x}^{j+2} u \Vert_{L^2} . \end{split}    \llabel{oo6xQ3SJZlHPlpJmWaI9Gnouxhms0FFYOekUe45v50iD6E0b6SKde3z6MPE2e5HTHswZgbxuaEtdSpNDkM9H7rqtV7pPuMCjAz4ep33DiUKte6TbQ0zKosGYfQZfHL32d6ttwctf6erXgnauYjRkdJD0DS9kyXCpDg3mIXvHjkryboW5i8yYm1ubQ65gWSeLPrv3iv0o9ioWBpGgxr9wAj4XYYFcCcQyjnWGXr8bh2z28gXq1IC5PPzMuxLRlG2gVdIFXc77P8Sj6bEyteAZ1V3pXgXLhM4Y9cnuk8DYhn9LsPFOJrD9Zvr9evJnqS8Un9Bqy6exFGa12BvaSvhfYowVAw5NTM1W9KaIC6ZsoHCQvpGM7eGB5Vtp4RIGBfYiCH8s4Tg29MX0Bh5XomESjnYLZLfEHuzueADNDSPbTuF57zpdaFlyN1u73A20KAQvMgfjgNkezAxDz9SAVc9wiwMQiD66xHJQDJroYipvIxiRQjT006B3H6RkI7tbHaT52HFO7VT1TpsD2U2yAiyN0Z6V9hQWcIlJqlTRbtlHbSsoYlJJMicYWC7K8YkJcvwaduLx1MbvG0pEWu3AzbhRoIkEscznk0e2QssuO11PRn84FQAjIuew3yJ9Etg9O0aDiE8mqqvwyMePqPjuTa1Dl4rAJOc039CqDwcIVaPfzK0idQFferzOIAED9ybgiq71ZEGsW46j2IkJCIZcnpPwLeHtFHrtXQUZHVnp21potXVUViymcdbdZXVvvKMi2gJ9ZhRsXR1p9oICEMU0rCcdsYrACZR2thZeSDHNRTYERTDGHFHDFGSDFGRTYFGHDFGDFSFGWERTFSDFGDFADWERTFGHDFGHFGH89} \end{align} Using the Cauchy-Schwarz inequality to absorb $\Vert\eprtudfsigjsdkfgjsdfg_{x}^{j+1}u\Vert_{L^2}$ from the first factor and then using the induction hypothesis for the case $j$, we obtain \begin{align} \begin{split} \Vert \eprtudfsigjsdkfgjsdfg_{x}^{j+1} u \Vert_{L^2}^2 & \leq C \Vert \eprtudfsigjsdkfgjsdfg_{x}^j u \Vert_{L^2} \Vert \eprtudfsigjsdkfgjsdfg_{x}^{j+2} u \Vert_{L^2} \leq C C_1^j \Vert u \Vert_{L^2}^{1/(j+1)}  \Vert \eprtudfsigjsdkfgjsdfg_{x}^{j+1} u \Vert_{L^2}^{j/(j+1)} \Vert \eprtudfsigjsdkfgjsdfg_{x}^{j+2} u \Vert_{L^2} , \end{split}    \llabel{BF72obwxcCXmFLGNodosE5btAuYUIapWSFafYFE8YM70jkK8DbRnCZdMNFwtfFFRzej2AawAKtsPJ9MtskxbZSWbZXG2wjyMQw6dHMwtOkR6eLbYuIZeahyfVDDX6zZBgTSnY3npeknXH1O2A7PxNUrWT62ypaJFM5jp2mOnPnSE14cueyJwjdvmUUg8YdgO50JWttI2wVCSWq2sXNux8GLNh3g573bvXnCCyUXxGMsAtmsFDAFuveOQBn1CZNYnQ29uHGLd3BY3qekCCzgg5P50j9b1hhQy627jU6niwi7yWB53cJqRiST4tEyr16YjC7tW3Oos9MSxuwn6Nv8q9it0g9se9yUwjL3ceFhVhiLmZV1ijFbewnyziUjge7jTjWjA1txNA22jRaDjenZ9zmfiFQlYw6yXu9381mppYfUTpzTlkpm7IedFg3KIBmlonV50rz2nQXGVKD92hJEh7iV1YF92Ag4Z40SanXBh4JIMBQoazlDgnlFlh3VfAhpdRk9lKnkT9bMY9nSfF11dw2gXsyK93lCFBTA97P0tX0sZJwWrkLwbWvA66OVG1YifxH1uYXrv6pNCtKX0hjLSFfnBp7WZlVSGOfwlQJN1N97EaQqw9khRirFGnOvC496pcfsJ7JKzUSaiN0WBRHSOtVMDIaNQnTdNlRTAMdFV7xODZ9HqW8rB7kOaM4CRPCW3FJUEUToZhazPAowJbjhJobkojLqBPDAzMyTmUdmsdjyQSQ0sBh7WhA6ifhjdnJ7RFc3aL3oWf7ZhSURFaplLj1dizgbmqaOCSDHNRTYERTDGHFHDFGSDFGRTYFGHDFGDFSFGWERTFSDFGDFADWERTFGHDFGHFGH94} \end{align} from where we arrive at \begin{align} \begin{split} \Vert \eprtudfsigjsdkfgjsdfg_{x}^{j+1} u \Vert_{L^2} & \leq C^{(j+1)/(j+2)} C_1^{j(j+1)/(j+2)} \Vert u \Vert_{L^2}^{1/(j+2)} \Vert \eprtudfsigjsdkfgjsdfg_{x}^{j+2} u \Vert_{L^2}^{(j+1)/(j+2)} \\& \leq C_1^{j+1} \Vert u \Vert_{L^2}^{1/(j+2)} \Vert \eprtudfsigjsdkfgjsdfg_{x}^{j+2} u \Vert_{L^2}^{(j+1)/(j+2)} , \llabel{5qO1gwB6fYWREmh6EusXKMDdRqgbsTDqWE3usJb3BtjyeL9eoTVFJmEyZ64CUP3wXSjP4ah5lL8k62vSbGG6uCUpvKTYhsadJfm3sReIsdsHcOCUP3gHLxRmk3cmgeCORhCAw9QaaNWqvNyM3b2fgKN2O0VgUGUBds8Mq8dvqOlnHxaZIz0fqnGIXFFwynYp4xUz2MruIFaJTdtFBKMWYXZmAKwmkIJvrBj387HcXfqs9m8jQDCuvtoqIT11mYgiLNtBjcaNMvKi5qeKNblMrm4VpiyafU176NwyjxqRe3OEV7989uPV6pykdrJWcJTyNrac0hpr8NFlZR4cEO4JGqODL0KSA3J563kw4cfKJjc02ZlcWQCe96grnNj23z0CnpET7tKoPpAy2faNw13QSR3DUpsGyd2QKfHIh6DLorpU4tldL8pDin8RWR86wfeJ7jbUYQNMCvyTtyqmCQfSvEBURtB1P1tGvxRObO152ujxQ2x62wK9jlGNnR9COkeXi9DO0TPeiasnfO5WnEEyIXsmW4BkMKKIYeZpkYraRI9b6JIskSbxcxqa35JKpRQgcGRYy224xdbvD1m0XdbDZbQLUpb2EkMhQon5SWdrrOrK9D9dbSXEhsdlZrZ2ZhHGdP9WTLS52Sm9EUts59qFxX2twhNZhqzhq46bgrxloeb4sSpU1xCoyCCc1ER7SCXuGNTCRH1ER5mwlQ76EbV21tUGFm0VATYXbi9ezVyqQtfy6iy92Z5kq0Z8WiZhxvK1h611cQ8Ehji0CA30Hq5uVTQVeifNgiWfSDHNRTYERTDGHFHDFGSDFGRTYFGHDFGDFSFGWERTFSDFGDFADWERTFGHDFGHFGH232} \end{split} \end{align} by taking $C_1 \geq C$. Therefore, \eqref{SDHNRTYERTDGHFHDFGSDFGRTYFGHDFGDFSFGWERTFSDFGDFADWERTFGHDFGHFGH228} is proven for $j+1$ for some sufficiently large  constant $C_1 >1$. \end{proof} \par Next we prove the second main result on convergence of the Mach limit.  \par \begin{proof}[Proof of Theorem~\ref{T02}] Denote the spatial analyticity radius \begin{align} \delta = \frac{\kappa \tau(0)}{C_0},  \llabel{vRu8jt85LrIxcUMxg3i7u5amTLxScQ0GfKjWV89OWeuwbnpXC0WlZmg7rgnK65JXVckuK1PTLfbxK8cOTgP4kOTVfGOZWHpXrhRnV3Vh81I08FJx2pNiAE14IS1VN1OTPJYNcRP6g5svQbHcgg34tjx86NWCV477R2ynzILTD8kQLKuEPZFxRCi0h3XHCXsw9eSbWagGHVbvfaBH1MiFtg232mxVhbxh9A3FIdJfCViRAccom54uJpPko0pxMuod9XoPedVGSzluukRpg5blSNEeCXTi71OZsHnnnCY2r1wRwQwPBHcHG6d3PqIC1eLr0xQhXbWLEz54LeTBw83OX3Bm6qWpdqPqfaB5QXMLTflrmUVXLwizx4YsYDGr5qFsHetvIdw61f7kFABBspoHrtTcknv5oUauttyTZvsWa5ZoVUDvPrAZgo0pm70qqDmkbNJvQA4imEuqlbCifrfxv8mKxG13OyYEYSOTVQDKmyMoNgbNeDV9nKjuBB1iGOnpHJ70lMcnBZh8PxxLvUbVzmrOuWQliTMAxSnZcWXDyNc0x6IOOxhUCkgCiTH1rUbPiOLmF84lFuFLVXDddgqRcO33Jr4n9a2KNMl8cluomBbP44AXHzw5mKWPfo2LoslbcXJVPQ7v5QRmhzVZMefgoAjEyxn3NtECysUjwnj0ZKfVNKCm8YWb8UXji6tap8f018lqygV1idPL43gYdmeLdhPOJ5eCtsPdvNZkuYMtspEam1wqycdiRx6OsIePbtLeoR4XfeFWPWPn3mx554xOWlD1gEfQj1zTSDHNRTYERTDGHFHDFGSDFGRTYFGHDFGDFSFGWERTFSDFGDFADWERTFGHDFGHFGH205} \end{align} where $\tau(0)$, $\kappa \in(0,1]$ are fixed constants as in Theorem~\ref{T01} and $C_0\geq 2$ is a sufficiently large constant to be determined below. It is proved in Theorem~\ref{T01} that if the initial data $(p_0^\epsilon, v_0^\epsilon, S_0^\epsilon)$ satisfies \eqref{SDHNRTYERTDGHFHDFGSDFGRTYFGHDFGDFSFGWERTFSDFGDFADWERTFGHDFGHFGH109}--\eqref{SDHNRTYERTDGHFHDFGSDFGRTYFGHDFGDFSFGWERTFSDFGDFADWERTFGHDFGHFGH23} for some constants $M_0$, $\tau_0>0$, as well as the compatibility condition of all orders, then \begin{align} \Vert (p^\epsilon, v^\epsilon, S^\epsilon)(t) \Vert_{X(\delta)}  \les  M\kappa^{-2} \comma \epsilon \in (0,\epsilon_0] \commaone t\in [0,T_0],    \llabel{OAthIhD9pviyhCcDsnyPcvwhAdksAVSgefN4kQVPbwZEUgjE46jfYKgMprkBhjXh2nPkL2Eu2N0DB0OfQpwyk8ZEAW0RC40FhP70sshNFqXZGzasQhiiVmbPwhXgHI1VQYzGBXmQdlpSuhe0j8PFGMXKqeu6y5ZObG0SAEeVxavrD7Agawe0praTtfuZIqrwUgqzBSODq3cbrK0S6QxA4fGEwwCllz6Fozqfsq0bDe0lBq29Nb4jyA0PYxIW4No0f9fgD5tYAAkrjISrHseV6Ybntr7l9dwnxTF4XD3wpPgHjBxuAW2jQlFd5Rl9H0mXCvBBOlvuT6jxQPbv2svdkNXmwdKJbZ9fPRoAX4CyZNAOanf2miuAhYAjT8D430zXnRWIRHs7tClGeHCu48zin3r6Am4Jd4b8J8pAW3vAtYeKbl0Le0zuiwxc384oC1Keyjf0RHSXC2UH7AmrbirQvOaxlTvlssI1a9WRSagmneWfLEes9pSS9E7J0NSqFelUIY0FEmHizVpLJHBcN9GsyXfTxslP8Zwcor2KTBYrTRPp0mbHaa6cdDqEPkJWCOoYjB1cDFJj6BLwYhzXr0EiN4WcqzysJnwo57PCp6YaqCBa7xRIPCUrBiCi4bwgJDmI5Nj6CiwPMlxJfSCIMbIsDt3FR9QcnkAuJtwJxTFcUd2QEZb3nWPTpLl0KboxlcaO3VJW3TRUMaDY5BFT7QcotGbmKuvlGHJIRHOz3yVaCEjRjf7Mytqx7t6dDxxYwR5RqcmGTG1UgLQnkNCrcgc0hTEKO5DlNihkSDHNRTYERTDGHFHDFGSDFGRTYFGHDFGDFSFGWERTFSDFGDFADWERTFGHDFGHFGH110} \end{align} for some parameters $\kappa$, $\tau(0)$, $\epsilon_0$, and $T_0>0$. \par We proceed differently than in \cite[Section~7]{JKL}.  For the sake of contradiction,  we assume that $(v^\epsilon, p^\epsilon, S^\epsilon)$  does not converge to  $(v^{(\rm inc)}, 0, S^{(\rm inc)})$ in $L^{2} ([0,T], X(\delta))$.  Then there exists a sequence $\{\epsilon_n\} \to 0$ such that $\{(v^{\epsilon_n}, p^{\epsilon_n}, S^{\epsilon_n})\}$ does not converge to $(v^{(\rm inc)}, 0, S^{(\rm inc)})$ in $L^2 ([0,T], X(\delta))$. Recall from \cite[Theorem~1.2]{A05} that $(v^{\epsilon_n}, p^{\epsilon_n}, S^{\epsilon_n})$  converges to $(v^{(\rm inc)}, 0, S^{(\rm inc)})$ in $L^2([0,T], L^2(\Omega))$ as $\epsilon_n \to 0$.  We define $v_{kn} (t) = v^{\epsilon_k}(t) - v^{\epsilon_n} (t)$, for $k,n\in \mathbb{N}$.  Using Lemma~\ref{L23}, we obtain   \begin{align}   \begin{split}   \phi_{kn}(t)   =   \sum_{m=4}^\infty \sum_{|\alpha| = m}   \frac{\delta^{m-3}}{(m-3)!}   \Vert\eprtudfsigjsdkfgjsdfg^\alpha v_{kn} \Vert_{L_x^2}   \leq   \sum_{m=4}^\infty   \frac{C_1^m \delta^{m-3}}{(m-3)!}   \Vert v_{kn} \Vert_{L_x^2}^{1/(m+1)}   \Vert \eprtudfsigjsdkfgjsdfg_{x}^{m+1} v_{kn} \Vert_{L_x^2}^{m/(m+1)}   ,   \label{SDHNRTYERTDGHFHDFGSDFGRTYFGHDFGDFSFGWERTFSDFGDFADWERTFGHDFGHFGH235}   \end{split} \end{align} and thus, using the Minkowski and H\"older inequalities   \begin{align}    \begin{split}    \Vert \phi_{kn}(t)\Vert_{L_t^2}    \les    \sum_{m=4}^\infty    \frac{C_1^m \delta^{m-3}}{(m-3)!}    \Vert v_{kn} \Vert_{L_{x,t}^2}^{1/(m+1)}    \Vert \eprtudfsigjsdkfgjsdfg_{x}^{m+1} v_{kn} \Vert_{L_{x,t}^2}^{m/(m+1)}    ,    \end{split}    \label{SDHNRTYERTDGHFHDFGSDFGRTYFGHDFGDFSFGWERTFSDFGDFADWERTFGHDFGHFGH120}   \end{align} where the space and time domains are understood to be  $\Omega$ and $[0,T]$, respectively. To show convergence to zero, we shall apply the discrete dominated convergence theorem. To get a uniform bound, we use the discrete Young inequality, we get \begin{align} \begin{split} & \sum_{m=4}^\infty  \frac{(C_1\delta)^{m-3}}{(m-3)!} \Vert \eprtudfsigjsdkfgjsdfg_{x}^{m+1} v_{kn} \Vert_{L_{x,t}^2}^{m/(m+1)} \\ &\indeq = \sum_{m=4}^\infty  \left( \frac{(C_1\delta)^{m-3}}{(m-3)!} \Vert \eprtudfsigjsdkfgjsdfg_{x}^{m+1} v_{kn} \Vert_{L_{x,t}^2} \right)^{m/(m+1)} \left( \frac{(C_1\delta)^{m-3}}{(m-3)!} \right)^{1/(m+1)} \\ &\indeq \leq \sum_{m=4}^\infty  \frac{m(C_1\delta)^{m-3}}{(m+1)(m-3)!} \Vert \eprtudfsigjsdkfgjsdfg_{x}^{m+1} v_{kn} \Vert_{L_{x,t}^2} + \sum_{m=4}^\infty  \frac{(C_1\delta)^{m-3}}{(m+1)(m-3)!} . \end{split}    \llabel{G4gHaNERiFMktmRwNMlkjjUVRULqT1imwQMZrVoe0q6JMwACyQr0sJKJkXmKDJdD5rk3g6QSRonXTxOJOwCB7xi2qPtJp2g4hyupZ18KBRCvk0Pl4A3DkJC9O1uwO7FdxZP23MIHHdEY0dy4cbsQ0CYkNYruoqO48XwVY4fa8YG22fZydwrk0SumBo1ZAWTu6sjtTgyigZRG8e1PD1qQMroRAKnX2A4FyAhmPhnmjyrx82UUh9L9rXqN9mYwcwByy3U9HksxcpPfRtJTP6MkFNV11ASQv0sQesPGq0QUqs3aEdSMIIiOkF0H2BW0NE0zs4IG3DuyeVCMJYksDWbIdKwFZl1xqKeyFxrnaR6kbyQDsPCUXMFosezxIVH8fTNprVlT7kajWKg5NBiP2mmbDIAyVG79Ekp2w2XVjQ44V6RD9w9CIC7QB1ZJEQEXseJUNweZYKOUOJ7Rn3WVEZnLbgtCeqJVD6Cl3jOXAKqEwnZPZl7Uxx2JuKxqkROsLyqg8O9csoG2F620l2OqHXv18rKuij89Cqf94GDjFJp2f6OO53TduXXyGQTVfBWnRrXbrXYcu9tio0pO7W9nOGtWSgjJKYfTxpYRtB9FuFXOOmO9rMmwEq9xVBGfpBLhEfGgib8prW1E4ETHJVIDHAPwUtX29Yvy4qIQwJxRaQnpgUu42IX0RX3oBQdY2TnL50jOVAFMIXFXcXThlqYv99z3QeITPuQnPl50gPB1uTUZwEgOyZ34iWUgjAtQ2S5tRpoUlQzRZDbSzBhJ7R9rE2DahWTLgVCBYKVgSDHNRTYERTDGHFHDFGSDFGRTYFGHDFGDFSFGWERTFSDFGDFADWERTFGHDFGHFGH116} \end{align} Now, choose $\delta = \kappa \tau(0)/ C_1 C_2$, where $C_1$ is the constant from Lemma~\ref{L23} and $C_2 \geq 1$. We obtain   \begin{align}   \sum_{m=4}^\infty    \frac{(C_1\delta)^{m-3}}{(m-3)!}   \Vert \eprtudfsigjsdkfgjsdfg_{x}^{m+1} v_{kn} \Vert_{L_{x,t}^2}^{m/(m+1)}   \leq   C M \kappa^{-2} \delta^{-1} C_1^{-1}   +   C   .   \label{SDHNRTYERTDGHFHDFGSDFGRTYFGHDFGDFSFGWERTFSDFGDFADWERTFGHDFGHFGH234}   \end{align} Thus the discrete dominated convergence theorem applies, and it  thus follows from \eqref{SDHNRTYERTDGHFHDFGSDFGRTYFGHDFGDFSFGWERTFSDFGDFADWERTFGHDFGHFGH235}--\eqref{SDHNRTYERTDGHFHDFGSDFGRTYFGHDFGDFSFGWERTFSDFGDFADWERTFGHDFGHFGH234} that   \begin{align}   \sum_{m=4}^\infty \sum_{|\alpha| = m}   \frac{\delta^{(m-3)_+}}{(m-3)!}   \Vert\eprtudfsigjsdkfgjsdfg^\alpha v_{kn} \Vert_{L_{x,t}^2}   \to   0   \inon{as $k,n\to \infty$}   \label{SDHNRTYERTDGHFHDFGSDFGRTYFGHDFGDFSFGWERTFSDFGDFADWERTFGHDFGHFGH237}   \end{align} since $\Vert v_{kn}(t) \Vert_{L_{x,t}^2} \to 0$ as $k,n \to \infty$. Note that from \eqref{SDHNRTYERTDGHFHDFGSDFGRTYFGHDFGDFSFGWERTFSDFGDFADWERTFGHDFGHFGH24} we have \begin{align} \Vert v_{kn} \Vert_{L^2_t H^3}^2  \leq  C \Vert v_{kn} \Vert_{H^4}^{3/2} \Vert v_{kn} \Vert_{L_{x,t}^2}^{1/2} + C \Vert v_{kn} \Vert_{L_{x,t}^2}^2  \leq  C\Vert v_{kn} \Vert_{L_{x,t}^2}^{1/2} + C \Vert v_{kn} \Vert_{L_{x,t}^2}^2   \to 0 \inon{as $k,n\to \infty$} . \label{SDHNRTYERTDGHFHDFGSDFGRTYFGHDFGDFSFGWERTFSDFGDFADWERTFGHDFGHFGH236} \end{align} From \eqref{SDHNRTYERTDGHFHDFGSDFGRTYFGHDFGDFSFGWERTFSDFGDFADWERTFGHDFGHFGH237}--\eqref{SDHNRTYERTDGHFHDFGSDFGRTYFGHDFGDFSFGWERTFSDFGDFADWERTFGHDFGHFGH236} and analogous inequalities for $p_{kn}$ and $S_{kn}$, we infer that the sequence $\{(v^{\epsilon_n}, p^{\epsilon_n}, S^{\epsilon_n})\}$ is Cauchy in $L^2([0,T_0], X(\delta))$,  which along with  $(v^\epsilon, p^\epsilon, S^\epsilon)\to (v^{(\rm inc)}, 0, S^{(\rm inc)})$ in $L^2([0,T],L^2(\Omega))$ leads to a contradiction.  Therefore, $\{(v^{\epsilon}, p^{\epsilon}, S^{\epsilon})\}$ is convergent and thus goes to $(v^{(\rm{inc})}, 0, S^{(\rm{inc})})$ in $L^2([0,T_0], X(\delta))$ as $\epsilon \to 0$. \end{proof} \par \section*{Acknowledgments} JJ was supported in part by the NSF Grant DMS-2009458 and WiSE Program at the University of Southern California, IK was supported in part the NSF grant DMS-1907992, while LL was supported in part by the NSF grants DMS-2009458 and DMS-1907992. The work was completed while the authors participated in the MSRI program during the Spring~2021 semester (NSF DMS-1928930). \par  \par \end{document}